\newtheorem{thm}{Theorem}
\newtheorem{prp}[thm]{Proposition}
\newtheorem{cor}[thm]{Corollary}
\newtheorem{lem}[thm]{Lemma}
\theoremstyle{definition}
\newtheorem{dfn}[thm]{Definition}
\newtheorem{ntn}[thm]{Notation}
\theoremstyle{remark}
\newtheorem{rmk}[thm]{Remark}
\newcommand{\Bb}{\mathcal{B}}
\newcommand{\Kk}{\mathcal{K}}
\newcommand{\Ll}{\mathcal{L}}
\newcommand{\Nn}{N}
\newcommand{\Uu}{\mathcal{U}}
\newcommand{\Hh}{\mathcal{H}}
\newcommand{\G}{G}
\newcommand{\R}{\mathcal{R}}
\newcommand{\CC}{\mathbb{C}}
\newcommand{\NN}{\mathbb{N}}
\newcommand{\TT}{\mathbb{T}}
\newcommand{\ZZ}{\mathbb{Z}}
\newcommand{\go}{G^{(0)}}
\newcommand{\ho}{H^{(0)}}
\newcommand{\Io}[1]{\operatorname{Iso}(#1)^\circ}
\newcommand{\supp}{\operatorname{supp}}
\newcommand{\osupp}{\operatorname{supp^\circ}}
\newcommand{\lsp}{\operatorname{span}}
\newcommand{\clsp}{\overline{\lsp}}
\newcommand{\id}{\operatorname{id}}
\newcommand{\dom}{\operatorname{dom}}
\newcommand{\ran}{\operatorname{ran}}
\newcommand{\Aut}{\operatorname{Aut}}
\newcommand{\Ad}{\operatorname{Ad}}
\newcommand{\op}{\operatorname{op}}
\newcommand{\Stab}{\operatorname{Stab}}
\newcommand{\ess}{\operatorname{ess}}
\newcommand{\TS}[1]{\overline{#1}}
\numberwithin{equation}{section}
\title[Reconstruction of groupoids and $C^*$-rigidity]
{Reconstruction of groupoids and $C^*$-rigidity of dynamical systems}
\author[T.M. Carlsen]{Toke Meier Carlsen}
\address[T.M. Carlsen]{Department of Science and Technology\\University of the Faroe Islands
N\'oat\'un 3\\ FO-100 T\'orshavn\\the Faroe Islands}
\email{toke.carlsen@gmail.com}
\author[E. Ruiz]{Efren Ruiz}
\address[E. Ruiz]{Department of Mathematics\\University of Hawaii,
Hilo\\200 W. Kawili St.\\
Hilo, Hawaii\\
96720-4091 USA}
\email{ruize@hawaii.edu}
\author[A. Sims]{Aidan Sims}
\address[A. Sims]{School of Mathematics and Applied Statistics\\
University of Wollongong\\
NSW 2522\\
Australia}
\email{asims@uow.edu.au}
\author[M. Tomforde]{Mark Tomforde}
\address[M. Tomforde]{Mark Tomforde\\
	Department of Mathematics\\
	University of Colorado\\
	Colorado Springs\\
	CO 80918-3733 USA}
\email{mtomford@uccs.edu}
\keywords{Groupoid; $C^*$-algebra; Cartan subalgebra; orbit equivalence}
\subjclass{46L05 (primary); 20M20, 22A22, 37B05, 37B10, 46L55}
\thanks{This research was initiated while the authors were participating in the research
program \emph{Classification of operator algebras: complexity, rigidity, and dynamics}
at the Mittag-Leffler Institute, January--April 2016. It also benefitted significantly
from the support of the MATRIX@Melbourne Research Program \emph{Refining $C^*$-algebraic
invariants for dynamics using $KK$-theory, July 18--29, 2016} and from the Intensive Research
Program \emph{Operator algebras: dynamics and interactions} at the Centre de Recerca Matem\`atica
in Barcelona. This research was supported by
Australian Research Council grant DP150101598 and by grants from the Simons Foundation
(\#279369 to Efren Ruiz and \#527708 to Mark Tomforde).}
\begin{document}

\begin{abstract}
We show how to construct a graded locally compact Hausdorff \'etale groupoid from a
$C^*$-algebra carrying a coaction of a discrete group, together with a suitable abelian
subalgebra. We call this groupoid the extended Weyl groupoid. When the coaction is
trivial and the subalgebra is Cartan, our groupoid agrees with Renault's Weyl groupoid.
We prove that if $\G$ is a second-countable locally compact \'etale groupoid carrying a
grading of a discrete group, and if the interior of the trivially graded isotropy is
abelian and torsion free, then the extended Weyl groupoid of its reduced $C^*$-algebra is
isomorphic as a graded groupoid to $\G$. In particular, two such groupoids are isomorphic
as graded groupoids if and only if there is an equivariant diagonal-preserving
isomorphism of their reduced $C^*$-algebras. We introduce graded equivalence of
groupoids, and establish that two graded groupoids in which the trivially graded isotropy
has torsion-free abelian interior are graded equivalent if and only if there is an
equivariant diagonal-preserving Morita equivalence between their reduced $C^*$-algebras.
We use these results to establish rigidity results for a number of classes of dynamical
systems, including all actions of the natural numbers by local homeomorphisms of locally
compact Hausdorff spaces.
\end{abstract}

\maketitle

\setcounter{tocdepth}{1}


\renewcommand*{\thethm}{\Alph{thm}}

\section*{Introduction}
%

\subsection*{Background} The use of operator algebras to encode dynamics goes all the way back to the foundational
results of Murray and von Neumann on the group von Neumann algebra construction
\cite{MvN4}. Crossed-product algebras and their generalisations have played a crucial
role in both von Neumann algebra theory and $C^*$-algebra theory ever since. Recently,
particularly since the work of Cuntz and Krieger \cite{CK} on operator-algebraic
representations of shifts of finite type, and connections with Bowen--Franks theory
\cite{BF}, significant strides have been made in the direction of $C^*$-rigidity of
dynamical systems. In broad terms this is the principle that dynamical systems can be
recovered, up to a suitable notion of equivalence, from associated $C^*$-algebraic data.

A seminal result in this direction was Krieger's celebrated theorem \cite{Krieger} showing that
nonsingular ergodic actions of $\ZZ$ are classified up to orbit equivalence by isomorphism of the
associated von Neumann factors. This was soon followed by Cuntz and Krieger's construction
\cite{CK} of $C^*$-algebras from irreducible shifts of finite type and R{\o}rdam's proof
\cite{Rordam} that stable isomorphism of Cuntz--Krieger algebras classifies irreducible shifts of
finite type up to equivalence via the combination of flow equivalence and the so-called Cuntz
splice on directed graphs. Later, building on work of Boyle \cite{Boyle}, Giordano--Putnam--Skau
\cite{GPS} proved the remarkable result that for minimal homeomorphisms of the Cantor set, flip
conjugacy, continuous orbit equivalence, and diagonal-preserving\footnote{Throughout this paper,
we often use the phrase ``diagonal-preserving" to describe homomorphisms interwining distinguished
abelian sulgebras of $C^*$-algebras; we do not necessarily mean $C^*$-diagonals in the technical
sense of Kumjian \cite{Kumjian:CJM86}.} isomorphism of the associated crossed-product
$C^*$-algebras are equivalent. Tomiyama \cite{Tomiyama} and Boyle--Tomiyama \cite{BT} subsequently
proved that topologically free homeomorphisms of the Cantor set (minimal or not), are continuously
orbit equivalent if and only if they each decompose as a disjoint union of two subsystems one pair
of which are conjugate and the other pair of which are flip-conjugate. These results introduced,
in particular, \emph{diagonal-preserving} $C^*$-isomorphisms as a key ingredient in
$C^*$-algebraic rigidity of topological dynamics.

The importance of diagonal-preserving isomorphism in operator algebras associated to
groupoids goes back further. Feldman and Moore \cite{FM1, FM2, FM3} proved that a Borel
equivalence relation $R$ can be reconstructed from the pair consisting of its associated
von Neumann algebra $M$ and the canonical Cartan subalgebra $D \subseteq M$, and that
every Cartan pair of von Neumann algebras arises from such an $R$ and a Borel 2-cocycle
on $R$. In his thesis \cite{Renault80}, Renault introduced a notion of a Cartan
subalgebra of a $C^*$-algebra, and proved that a topologically principal \'etale groupoid
$\G$ and a continuous cocycle $c$ on $\G$ can be recovered from the Cartan pair
$(C^*(\G), C_0(\G^{(0)}))$, and that every Cartan pair arises this way. Subsequently
Kumjian \cite{Kumjian:CJM86} refined Renault's notion of a twisted groupoid $C^*$-algebra
and showed that Renault's theorem extended to these more-general twists, in the setting
of principal \'etale groupoids. Later in \cite{Ren2008}, Renault further extended
Kumjian's results to topologically principal groupoids. Renault's machinery, and
techniques from groupoid homology, underpinned Matsumoto and Matui's remarkable recent
results \cite{MM} that irreducible (two-sided) shifts of finite type are flow equivalent
if and only if there is a diagonal-preserving isomorphism of the stabilisations of the
associated Cuntz--Krieger algebras, and that the corresponding one-sided shifts are
continuously orbit equivalent if and only if the Cuntz--Krieger algebras are isomorphic
in a diagonal-preserving way.

These results all require topologically freeness of actions, or topologically principal
groupoids. While seemingly fairly natural, these conditions are not generic. General
$C^*$-algebraic rigidity theorems for homeomorphisms, local homeomorphisms, and more
general group actions, require a version of Renault's theory for
non-topologically-principal groupoids. Ad hoc results in this direction have been
achieved recently for graph $C^*$-algebras \cite{BCW, CR}, but there is no general
theory available.

\subsection*{Our results}
A version of Renault's theory is impossible for general groupoids: for example, there is
no way to distinguish the groupoids $\ZZ_4$ and $\ZZ_2 \oplus \ZZ_2$ using their
$C^*$-algebras. Our key observation, inspired by techniques developed in \cite{BCW, CR2}
is that this obstruction disappears if we insist that the fibres of the interior of the
isotropy bundle of the groupoid are torsion-free and abelian; in the 1-unit case, we can
then use \cite[Theorem~8.57]{HofmannMorris} to recover the groupoid as the quotient of the unitary group of its $C^*$-algebra by
the connected component of the identity. Groupoids of this sort include all groupoids
arising from actions of $\ZZ^k$ by homeomorphisms or of $\NN^k$ by local homeomorphisms;
this includes all Cantor systems, all groupoids associated to graphs and $k$-graphs and
their topological analogues, and many other natural examples.

Our key results, following the idea introduced in \cite{ABHS}, deal with groupoids $\G$
graded by cocycles $c$ into discrete groups $\Gamma$. The reduced $C^*$-algebra
$C^*_r(G)$ then carries a natural coaction $\delta_c$ of $\Gamma$. We prove that if the
interior of the isotropy in $c^{-1}(\id_\Gamma)$ (where $\id_\Gamma$ is the identity element of $\Gamma$) is torsion-free and abelian, then $\G$ and $c$
can be reconstructed from $C^*_r(\G)$, the subalgebra $C_0(\G^{(0)})$ and the coaction
$\delta_c$. We also obtain a $C^*$-algebraic characterisation of groupoid equivalences
that respect gradings in an appropriate sense.

Incorporating gradings and coactions has significant advantages \cite{CR}, but the reader
may, in the first instance, wish to keep in mind the case where $\Gamma$ is the trivial
group, so $c$ is trivial: our results in this situation are special cases of our general
theorems, but the statements are simpler, and still have substantial new content (we
summarise our results in this setting in Section~\ref{sec:summary ungraded}).

We detail the consequences of our results for dynamical systems, demonstrating the breadth of
their applicability: we develop $C^*$-algebraic characterisations of appropriate notions of
stabiliser-preserving orbit equivalence and topological conjugacy for group actions whose
essential stabilisers are torsion-free and abelian, generalising Li's
continuous-orbit-equivalence-rigidity theorem \cite[Theorem 1.2]{Li}; we characterise
stabiliser-preserving continuous orbit equivalence and eventual conjugacy both of local
homeomorphisms and of the associated stabilised systems in the same terms, generalising
\cite[Theorem~5.1]{BCW}, \cite[Corollary~6.3]{CEOR}, \cite[Theorem 4.1 and Theorem~5.1]{CR},
\cite[Theorem~2.3]{MM}, and \cite[Theorem~2]{Tomiyama}; and we generalise Boyle and Tomiyama's
theorem \cite[Theorem 3.6]{BT} to arbitrary homeomorphisms of second-countable compact Hausdorff
spaces. Our results have many other potential applications, particularly to topological graphs, to
$k$-graphs and their topological analogues, to actions of $\ZZ^k$ on locally compact spaces, and
to actions of $\NN^k$ by local homeomorphisms. In fact, our results have recently been applied to
shift spaces in \cite{BC}, to $k$-graphs in \cite{CR3}, and to self-similar groups in \cite{Yi}.

\subsection*{Pr\'ecis} The paper is laid out as follows. We give some very brief background in
Section~\ref{sec:background}, and recall some facts about normalisers in Section~\ref{sec:weakly cartan pairs}. Our main results are
about $C^*$-algebras $A$ carrying coactions $\delta$ of discrete groups and an abelian $C^*$-subalgebra $D$ containing an approximate unit of the generalised fixed-point algebra $A^\delta$,
but in Section~\ref{sec:summary ungraded}, we state these results as they apply to
ungraded groupoids and $C^*$-algebras. In Section~\ref{sec:sdcs} we show how to construct
a locally compact Hausdorff \'etale groupoid $\Hh(A, D, \delta)$ from a separable
$C^*$-algebra $A$, a coaction $\delta$ of a discrete group on $A$ and an abelian
$C^*$-subalgebra $D$ of the generalised fixed-point algebra $A^\delta$. Our construction builds
on those of Kumjian \cite{Kumjian:CJM86} and Renault \cite{Renault80, Ren2008}, but
extends them by incorporating the structure of the unitary groups of the fibres of the
relative commutant $D_{A^\delta}'$ in $A^\delta$. Our main application is to groupoid
$C^*$-algebras, but  this general construction yields an interesting invariant for
general systems $(A, D, \delta)$. In Section~\ref{sec:intiso} we prove that if $\G$ is an
\'etale groupoid in which the interior of the isotropy is abelian, then the $C^*$-algebra
of the interior of the isotropy is a maximal abelian subalgebra of $C^*_r(\G)$; this is
needed in Section~\ref{sec:reconstruction}, but also answers a question left open in
\cite{BNRSW}.

In Section~\ref{sec:reconstruction} we prove our main theorem: if $(\G, c)$ is a graded second-countable locally compact Hausdorff étale
groupoid and the interior of the isotropy in $c^{-1}(\id)$ is torsion-free and abelian,
then $\Hh(C^*_r(\G), C_0(\G^{(0)}), \delta_c) \cong \G$ via an isomorphism that
intertwines gradings. Consequently, two such graded groupoids $(\G_1, c_1)$ and $(\G_2, c_2)$ are isomorphic if and only if there is an isomorphism between the corresponding triples $(C^*_r(\G_1), C_0(\G_1^{(0)}), \delta_{c_1})$ and $(C^*_r(\G_2), C_0(\G_2^{(0)}), \delta_{c_2})$. Sections \ref{sec:group
actions}--\ref{sec:homeomorphisms} detail the consequences of
Section~\ref{sec:reconstruction} for group actions, for local homeomorphisms, and for
homeomorphisms, including extensions of Li's rigidity theorem, Matsumoto and Matui's
theorem about continuous orbit equivalence, and Boyle and Tomiyama's theorem.

The final two sections deal with Morita equivalence. In Section~\ref{sec:Eq Me} we
introduce equivariant Morita equivalence of triples $(A_i, D_i, \delta_i)$ as above, and
prove that such a Morita equivalence induces a graded equivalence of extended Weyl
groupoids. In Section~\ref{sec:E vs Me}, we apply this result to triples $(C^*_r(\G_i),
C_0(\G_i^{(0)}), \delta_{c_i})$ corresponding to graded groupoids $(G_i, c_i)$ such that
the interior of the isotropy in each $c_i^{-1}(\id)$ is torsion-free and abelian. We prove
that $(G_1, c_1)$ and $(G_2, c_2)$ are graded equivalent if and only if the associated
triples $(C^*_r(\G_i), C_0(\G_i^{(0)}), \delta_{c_i})$ are equivariantly Morita
equivalent. Restricting attention to ample groupoids, we use the results of \cite{CRS} to
relate these notions to versions of graded Kakutani equivalence and to graded stable
isomorphism of groupoids. We finish by detailing our applications to surjective local
homeomorphisms and their dilations.

\subsection*{Acknowledgments}
In a preliminary version of this paper, we introduced the notion of a weakly Cartan subalgebra
(which was different from the notion introduced in \cite{EP}). In this version, we have added
Lemma~\ref{lem:semidiag} which shows that what we had previously called a weakly Cartan subalgebra
of $A^\delta$ is simply an abelian $C^*$-subalgebra of $A^\delta$ containing an approximate unit
for of $A^\delta$. So we have eliminated the phrase ``weakly Cartan subalgebra" in this version.
We thank Bartosz Kwa\'sniewski and Ralf Meyer for showing us (1)~and~(2) in
Lemma~\ref{lem:semidiag}. We would also like to thank the two referees for valuable comments and
suggestions.

\renewcommand*{\thethm}{\arabic{thm}}
\numberwithin{thm}{section}

\section{Background}\label{sec:background}

We establish some brief background and notational conventions for \'etale groupoids and
their reduced $C^*$-algebras, $C_0(X)$-algebras, coactions on $C^*$-algebras, and
$C^*$-algebraic Morita equivalence. For more details see \cite{SimsNotes},
\cite[Appendix~C]{Williams:cp}, \cite{EKQR}, and \cite{tfb}.

When $\Gamma$ is a group, then we use $\id_\Gamma$ to denote its identity element, and when $X$ is a set, then we let $(e_x)_{x\in X}$ be an orthonormal basis for the Hilbert space $l^2(X)$.

\subsection{\'Etale groupoids}

A groupoid $G$ is the set of morphisms of a small category with inverses; the space of
identity morphisms is called the \emph{unit space} and denoted $\G^{(0)}$, and the set of
composable pairs of morphisms is denoted $G^{(2)}$. A locally compact Hausdorff groupoid
is a groupoid $\G$ with a locally compact Hausdorff topology under which the inverse and
multiplication maps are continuous. A map $c$ from $\G$ to a discrete group $\Gamma$ is a
\emph{cocycle} if $c(\eta_1\eta_2)=c(\eta_1)c(\eta_2)$ for $(\eta_1,\eta_2)\in \G^{(2)}$
(this forces $c(\G^{(0)}) = \{\id_\Gamma\}$ and $c(\eta^{-1}) = c(\eta)^{-1}$). A \emph{graded groupoid} is a pair $(G,c)$ consisting of a groupoid $G$ and a cocycle $c:G\to\Gamma$. We say that $(G,c)$ is \emph{trivially graded} if $\Gamma$ is the trivial group.

We write $r,s$ for the range and source maps $r(\eta) = \eta\eta^{-1}$ and $s(\eta) =
\eta^{-1}\eta$ from $\G$ to $\G^{(0)}$. A subset $X \subseteq \go$ is \emph{full} if
$\{r(\eta) : s(\eta) \in X\} = \go$. The \emph{isotropy} of $\G$ is
$\operatorname{Iso}(\G) := \{\eta \in \G : r(\eta) = s(\eta)\}$. We say that $\G$ is
\emph{\'etale} if $r$ (equivalently $s$) is a local homeomorphism from $\G$ to $\go$, and
that it is \emph{ample} if it is \'etale and $\G^{(0)}$ is totally disconnected. For $u
\in \G^{(0)}$, we write $\G_u := s^{-1}(u)$ and $\G^u := r^{-1}(u)$. Using that $r,s$ are
local homeomorphisms and that $\G$ is Hausdorff, one can check that $\G^{(0)}$ is clopen
in $C^*_r(\G)$, and also that $\G$ has a basis of open sets $U$ such that $r|_U$ is a
homeomorphism of $U$ onto $r(U)$ and $s|_U$ is a homeomorphism of $U$ onto $s(U)$; we
call such sets \emph{bisections}.

Given a locally compact Hausdorff \'etale groupoid $\G$, the space $C_c(\G)$ is a
$^*$-algebra under the operations $f^*(\gamma) = \overline{f(\gamma)}$ and $(f *
g)(\gamma) = \sum_{\alpha \in \G^{r(\gamma)}} f(\alpha)g(\alpha^{-1}\gamma)$. For each $u
\in \G^{(0)}$, there is a $^*$-representation $\rho_u$ of $C_c(\G)$ on $\ell^2(\G_u)$
defined by $\rho_u(f)e_\gamma = \sum_{\alpha \in G_{r(\gamma)}} f(\alpha)
e_{\alpha\gamma}$. We call $\rho_u$ the \emph{regular representation} of
$C_c(\G)$ at $u$. The \emph{reduced $C^*$-algebra} $C^*_r(\G)$ is the completion of
$C_c(\G)$ with respect to the norm $\|f\| = \sup_{u \in \G^{(0)}} \|\rho_u(f)\|$. This
norm agrees with the supremum norm on functions $f$ supported on bisections. Since
$\G^{(0)}$ is clopen, $C_c(\G^{(0)})$ includes in $C_c(\G)$ in the canonical way, and
this extends to an injection $C_0(\G^{(0)}) \hookrightarrow C^*_r(\G)$.  The
representations $\rho_u$ extend to representations $\rho_u : C^*_r(\G) \to
\Bb(\ell^2(\G_u))$. So there is a norm-decreasing map $a \mapsto f_a$ from $C^*_r(\G)$ to
$C_0(\G)$ given by
\begin{equation}\label{eq:Jean's jmap}
    f_a(\gamma) = \big(\rho_{s(\gamma)}(a)e_{s(\gamma)} \mid e_\gamma\big)\quad\text{ for all $a \in C^*_r(\G)$ and $\gamma \in \G$,}
\end{equation}
and $a \mapsto f_a$ restricts to the identity map on $C_c(\G)$.

\subsection{\texorpdfstring{$C_0(X)$}{C(X)}-algebras}

Let $X$ be a locally compact Hausdorff space. A $C_0(X)$-algebra is a $C^*$-algebra $A$
together with a nondegenerate inclusion $\iota : C_0(X) \to ZM(A)$ of $C_0(X)$ into the
centre of the multiplier algebra of $A$. We obtain a family of ideals $I_x :=
\overline{\iota(\{f \in C_0(X) : f(x) = 0\}) A}$ of $A$ (these subsets are automatically
linear subspaces), and then a bundle of $C^*$-algebras $\{A_x : x \in X\}$ over $X$ given
by $A_x := A/I_x$. Each $a \in A$ determines a section $f_a : X \to \mathcal{A} =
\bigsqcup_{x \in X} A_x$ such that $f_a(x) = a + I_x \in A_x$. There is a unique topology
on $\mathcal{A}$ under which these sections are all continuous. With respect to this
topology, $\mathcal{A}$ is an upper-semicontinuous bundle of $C^*$-algebras in the sense
that $b \mapsto \|b\|$ is upper semicontinuous from $\mathcal{A}$ to $[0,\infty)$.

Given any upper-semicontinuous bundle $\mathcal{A}$ of $C^*$-algebras over $X$, the space
$\Gamma_0(X, \mathcal{A})$ of continuous sections of $\mathcal{A}$ that vanish at
infinity is a $C^*$-algebra under pointwise operations and the supremum norm, and it
becomes a $C_0(X)$-algebra with respect to the map $\iota : C_0(X) \to ZM(\Gamma_0(X,
\mathcal{A}))$ given by $(\iota(f)\xi)(x) = f(x)\xi(x)$ for $f \in C_0(X)$ and $\xi \in
\Gamma_0(X, \mathcal{A})$. If $\mathcal{A}$ is the bundle coming from a $C_0(X)$-algebra
$A$ as above, the map $a \mapsto f_a$ is an isomorphism  $A \cong \Gamma_0(X,
\mathcal{A})$.

\subsection{Coactions}

Given a discrete group $\Gamma$, we write $\lambda_g$ for the image of $g \in \Gamma$ in
the left regular representation of $\Gamma$ on $\ell^2(\Gamma)$. We write $\delta_\Gamma
: C^*_r(\Gamma) \to C^*_r(\Gamma) \otimes C^*_r(\Gamma)$ (we use the minimal tensor
product) for the comultiplication such that $\delta_\Gamma(\lambda_g) = \lambda_g \otimes
\lambda_g$ for $g \in \Gamma$. Given a $C^*$-algebra $A$, a \emph{coaction} of $\Gamma$
on $A$ is a nondegenerate homomorphism $\delta : A \to A \otimes C^*_r(\Gamma)$
satisfying the coaction identity $(\delta \otimes 1) \circ \delta = (1 \otimes
\delta_\Gamma) \circ \delta$. The spectral subspaces of $A$ are the spaces $A_g := \{a
\in A : \delta(a) = a \otimes \lambda_g\}$; we write $A^\delta$ for the neutral spectral
subspace $A_{\id_\Gamma}$, and call it the \emph{generalised fixed-point algebra} for $\delta$.
Since we are dealing with reduced coactions, they automatically satisfy $A = \clsp
\big(\bigcup_g A_g\big)$.

For each $g \in \Gamma$ there is a norm-decreasing linear map $\Phi_g : A \to A_g$ that
fixes $A_g$ pointwise and annihilates $A_h$ for $h \not=g$. Specifically, writing
$\operatorname{Tr}$ for the canonical trace on $C^*_r(\Gamma)$, the map $\Phi_g$ is given
by $\Phi_g(a) = (\id_A \otimes \operatorname{Tr})(\delta(a)(1_A \otimes
\lambda_{g^{-1}}))$ for $a \in A$. In particular $\Phi^\delta := \Phi_{\id_\Gamma} : A \to A^\delta$
is a conditional expectation.

We say that the coaction $\delta$ is \emph{trivial} if $\Gamma$ is the trivial group.

\subsection{Morita equivalence}

Throughout the paper, we say that an element or a subset of a $C^*$-algebra $A$ is
\emph{$A$-full} (or just \emph{full}) if it generates $A$ as an ideal. Given
$C^*$-algebras $A$ and $B$, an $A$--$B$-imprimitivity bimodule is an $A$--$B$-bimodule
$X$ carrying a left $A$-linear $A$-valued inner-product ${_A\langle \cdot, \cdot
\rangle}$ and a right $B$-linear $B$-valued inner-product $\langle\cdot, \cdot\rangle_B$
such that ${_A\langle x, y\cdot b \rangle} = {_A\langle x\cdot b^*, y \rangle}$ and
$\langle a\cdot x, y\rangle_b = \langle x, a^* \cdot y\rangle_B$ for all $x,y \in X$, $a
\in A$ and $b \in B$, and such that $x \cdot \langle y, z\rangle_B = {_A\langle x,
y\rangle}\cdot z$ for all $x,y,z \in X$. We say that $C^*$-algebras $A$ and $B$ are
\emph{Morita equivalent} if there exists an $A$--$B$-imprimitivity bimodule. For any
$C^*$-algebra $A$ and any positive element (in particular, any projection) $a \in M(A)$,
the space $Aa$ is an imprimitivity bimodule from the ideal $A a A := \clsp\{b a c : b,c
\in A\}$ generated by $a$ to the hereditary subalgebra $\overline{a A a}$ under
${_{AaA}\langle x, y\rangle} = xy^*$ and $\langle x,y\rangle_{aAa} = x^*y$. So if $a$ is
$A$-full, then $A$ is Morita equivalent to $\overline{aAa}$. If $X$ is an
$A$--$B$-imprimitivity bimodule, then the conjugate module $X^*$ is a
$B$--$A$-imprimitivity bimodule, and $L := A \oplus X \oplus X^* \oplus B$ becomes a
$C^*$-algebra called the \emph{linking algebra}, under the natural operations obtained by
regarding elements $(a, x, y^*, b)$ as $2 \times 2$-matrices $\big(\begin{smallmatrix} a
& x \\ y^* & b\end{smallmatrix}\big)$: products of the form $xy^*$ and $y^*x$ are given
by ${_A\langle x, y\rangle}$ and $\langle y,x\rangle_B$ respectively. The projections $P
= 1_{M(A)}$ and $Q = 1_{M(B)}$ are complementary full multiplier projections such that $P
L P \cong A$, $Q L Q \cong B$, and $PLQ \cong X$. The Brown--Green--Rieffel theorem
implies that if $A$ and $B$ admit countable approximate units, then they are Morita
equivalent if and only if they are stably isomorphic.

\section{Normalisers}\label{sec:weakly cartan pairs}

In this section we
recall the notion of
\emph{normalisers}, and prove some fundamental results that we will need later (mainly in
Section~\ref{sec:sdcs}).

\begin{dfn}[See \cite{Kumjian:CJM86, Renault80, Ren2008}]\label{dfn:normaliser}
Given a $C^*$-algebra $A$ and a $C^*$-subalgebra $D$ of $A$ containing an approximate unit for
$A$, a \emph{normaliser} $n$ of $D$ in $A$ is an element $n \in A$ such that $nDn^* \cup
n^*Dn \subseteq D$. We write $\Nn_A(D)$, or just $\Nn(D)$ if $A$ is clear from context,
for the set of normalisers of $D$ in $A$.
\end{dfn}

\begin{ntn} For the remainder of the section, $A$ is a $C^*$-algebra and $D \subseteq A$ is an abelian $C^*$-subalgebra
containing an approximate unit for $A$. We write $D'_A$ for the
relative commutant $D'_A = \{a \in A : ad = da\text{ for all }d \in D\}$ and
$\widehat{D}$ for the set of characters of $D$.

For $d \in D$, let $\osupp(d) := \{\phi \in
\widehat{D} : \phi(d) \not= 0\}$ and $I(d):=\{d'\in D : \osupp(d')\subseteq\osupp(d)\}$.
So $\osupp(d)$ is an open subset of $\widehat{D}$, and $I(d)$ is an ideal of $D$.
\end{ntn}

We establish some basic properties of normalisers, several of which also appear in
\cite{Kumjian:CJM86}.

\begin{lem}[Kumjian, Renault]\label{lem:alpha-n}
Let $A$ be a separable $C^*$-algebra and $D$ an abelian $C^*$-subalgebra containing an
approximate unit for $A$. For $m,n \in N(D)$,
\begin{enumerate}
\item\label{it:nn*,n*n in D} $n^*n, nn^* \in D$;
\item\label{it:alphan} there is a unique homeomorphism $\alpha_n : \osupp(n^*n) \to
    \osupp(nn^*)$ such that\linebreak $\phi(n^*n) \alpha_n(\phi)(d) = \phi(n^*dn)$
    for $\phi\in \osupp(n^*n)$ and  $d \in D$;
\item\label{it:alphand} there is a unique isomorphism $\alpha_n^\#:I(nn^*)\to
    I(n^*n)$ such that $\phi(\alpha_n^\#(d))=\alpha_n(\phi)(d)$ for $\phi\in \osupp (n^*n)$ and $d\in I(nn^*)$;
\item\label{it:d<->n} if $d\in I(nn^*)$, then $dn=n\alpha_n^\#(d)$;
\item\label{it:alphanm} $mn \in \Nn(D)$, $\osupp((mn)^*(mn)) =
    \alpha_n^{-1}(\osupp(m^*m)\cap\osupp(nn^*))$, and on this domain, $\alpha_m \circ
    \alpha_n = \alpha_{mn}$;
\item\label{it:alphan*} $\alpha_{n^*} = \alpha_n^{-1}$; and
\item\label{it:commute} if $U\subseteq \osupp(n^*n)\cap\osupp (m^*m) \subseteq
    \widehat{D}$ is open and satisfies $\alpha_n \vert_U = \alpha_m \vert_U$, then
    $dn^*m=n^*md$ for all $d\in D$ with $\osupp(d)\subseteq U$, and $\phi(m^*nn^*m) =
    \phi(m^*m)\phi(n^*n)$ for $\phi\in U$.
\end{enumerate}
\end{lem}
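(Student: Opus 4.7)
The plan is to prove parts (1)--(7) in order, exploiting that the identities to be checked all reduce to statements in the abelian algebra $D$, where characters are available. The structural fact driving everything is that the normaliser relations $nDn^*\cup n^*Dn\subseteq D$ together with commutativity of $D$ force ``quadratic'' expressions such as $n^*dn$, $nn^*$, and $n^*n$ to lie in $D$. Throughout I will make repeated use of the identity $(nn^*)\cdot n = n\cdot(n^*n)$ to move $D$-valued expressions past $n$ or $n^*$.

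For (1), write $n^*n=\lim_\lambda n^*e_\lambda n$ for an approximate unit $(e_\lambda)\subseteq D$; each $n^*e_\lambda n\in D$ by the normaliser property, so closedness of $D$ gives $n^*n\in D$, and similarly $nn^*\in D$. For (2), define $\alpha_n(\phi)(d):=\phi(n^*dn)/\phi(n^*n)$ for $\phi\in\osupp(n^*n)$. Linearity and $*$-compatibility are immediate, and multiplicativity reduces to
\[
(n^*d_1n)(n^*d_2n) \;=\; n^*d_1(nn^*)d_2 n \;=\; n^* d_1 d_2 (nn^*) n \;=\; (n^*d_1d_2 n)(n^*n),
\]
where the first equality uses $d_2,nn^*\in D$ and the second uses $(nn^*)n=n(n^*n)$. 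Choosing $d$ from an approximate unit shows $\alpha_n(\phi)\ne 0$, so it is a character; its range lies in $\osupp(nn^*)$ since $\alpha_n(\phi)(nn^*)=\phi(n^*n)\ne 0$, and continuity is automatic. Bijectivity of $\alpha_n$ will be deferred to (6). Part (3) is then the standard Gelfand-duality identification $I(e)\cong C_0(\osupp(e))$ combined with the fact that a homeomorphism of open sets induces a $*$-isomorphism of associated ideals by pullback.

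Part (4) is the technical heart. I propose to set $x:=dn-n\alpha_n^{\#}(d)$ and show $x^*x=0$. Expanding shows $x^*x\in D$, so it suffices to show every character of $D$ vanishes on it. For $\phi\in\osupp(n^*n)$, all four resulting terms can be rewritten using the defining relations of $\alpha_n$ and $\alpha_n^{\#}$, and they cancel in pairs. For $\phi\notin\osupp(n^*n)$, I need the subsidiary fact that $\phi(n^*cn)=0$ for every $c\in D$ whenever $\phi(n^*n)=0$; this follows from Cauchy--Schwarz applied to any state $\tilde\phi$ on $A$ extending $\phi$, via
\[
|\tilde\phi(n^*cn)|^2\;\leq\;\tilde\phi(n^*n)\,\tilde\phi(n^*c^*cn)
\]
together with the fact that $\tilde\phi$ agrees with $\phi$ on the $D$-valued elements $n^*n$ and $n^*cn$. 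This Cauchy--Schwarz step is the only place the argument needs anything beyond algebraic manipulation, and I expect it to be the main obstacle.

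Parts (5)--(7) will then follow quickly. For (5), $mn\in\Nn(D)$ is immediate from $mnDn^*m^*\subseteq mDm^*\subseteq D$, and for $\phi\in\osupp(n^*n)$ one computes
\[
\phi((mn)^*d(mn))\;=\;\phi(n^*n)\,\alpha_n(\phi)(m^*dm)\;=\;\phi(n^*n)\,\alpha_n(\phi)(m^*m)\,\alpha_m(\alpha_n(\phi))(d),
\]
from which both the support identity and $\alpha_{mn}=\alpha_m\circ\alpha_n$ read off. For (6), note that $n^*n\in D$ is trivially a normaliser with $\alpha_{n^*n}=\id$, while (5) gives $\alpha_{n^*n}=\alpha_{n^*}\circ\alpha_n$; symmetry yields $\alpha_{n^*}=\alpha_n^{-1}$, which also supplies the bijectivity postponed in (2). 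For (7), combine (5) and (6) to get $\alpha_{n^*m}=\alpha_n^{-1}\circ\alpha_m$, which restricts to the identity on $U$ by hypothesis. Applying (4) to the normaliser $n^*m$ with $d$ supported in $U$ then gives $d(n^*m)=(n^*m)\alpha_{n^*m}^{\#}(d)=(n^*m)d$, using that $\alpha_{n^*m}^{\#}(d)=d$ since $\osupp(d)\subseteq U$ and $\alpha_{n^*m}|_U=\id$. The identity $\phi(m^*nn^*m)=\phi(m^*m)\phi(n^*n)$ falls out of the defining relation of $\alpha_m$ applied to $nn^*\in D$, together with $\alpha_m(\phi)=\alpha_n(\phi)$ on $U$ and $\alpha_n(\phi)(nn^*)=\phi(n^*n)$, already observed in the proof of (2).
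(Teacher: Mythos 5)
Your proposal is correct, but for the core items (2)--(4) it takes a genuinely different route from the paper. The paper follows Kumjian: it invokes \cite[Proposition~1.6]{Kumjian:CJM86} and works with the polar decomposition $n=v|n|$ in $A^{**}$, so that $\alpha_n^\#$ is implemented concretely as $d\mapsto v^*dv$, multiplicativity of $\alpha_n(\phi)$ comes from $\phi(|n|v^*dv|n|)=\phi(n^*dn)$, and item~(4) is the one-line computation $dn=dv|n|=v\alpha_n^\#(d)|n|=n\alpha_n^\#(d)$ using $|n|=(n^*n)^{1/2}\in D$. You instead stay inside $A$: you define $\alpha_n(\phi)$ directly by the quotient formula, verify multiplicativity via the algebraic identity $(n^*d_1n)(n^*d_2n)=(n^*d_1d_2n)(n^*n)$ in the abelian algebra $D$, obtain $\alpha_n^\#$ by Gelfand pullback, and prove~(4) by showing the $D$-valued positive element $x^*x$ with $x=dn-n\alpha_n^\#(d)$ is killed by every character --- using a state-extension/Cauchy--Schwarz argument to get $\phi(n^*cn)=0$ whenever $\phi(n^*n)=0$. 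That vanishing lemma does work (and could even be done without state extension, since $0\le n^*c'n\le\|c'\|\,n^*n$ for positive $c'$ reduces it to positivity of $\phi$ on $D$), and your four terms do cancel as claimed, so the approach is sound; its advantage is that it is self-contained and avoids the bidual, while the paper's advantage is that the partial isometry $v$ does all the bookkeeping at once. One organisational caveat: as written, your~(3) needs $\alpha_n$ to be a homeomorphism, which you only obtain in~(6) via $\alpha_{n^*n}=\id$ and~(5), and your~(5) in turn needs the Cauchy--Schwarz vanishing fact to pin down $\osupp((mn)^*(mn))$; so you should extract that vanishing fact as a standalone preliminary and order the argument as (1), (2) (as a continuous map), (5), (6), then (3), (4), (7). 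With that reordering there is no circularity, and parts (5)--(7) themselves agree in substance with the paper's proofs.
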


\begin{proof}
\eqref{it:nn*,n*n in D} Let $(u_j)_j$ be an approximate unit
for $A$ in $D$. Then each $n^*u_j n, nu_j n^* \in D$ because $n \in \Nn(D)$.
So $n^*n = \lim_j n^* u_j n \in D$ and $nn^* = \lim_j nu_j n^*
\in D$.

\eqref{it:alphan} and \eqref{it:alphand} This is proved in
\cite[Proposition~1.6]{Kumjian:CJM86}. We summarise the points of the proof that we need
for the remaining statements. Let $n = v |n|$ be the polar decomposition of $n$ in
$A^{**}$. Then $vv^*d=dvv^*=d$ for $d\in I(nn^*)$, and $v^*vd=dv^*v=d$ for $d\in
I(n^*n)$. Kumjian shows that $v I(n^*n) v^* \subseteq I(nn^*)$ and $v^* I(nn^*) v
\subseteq I(n^*n)$. So $d\mapsto v^* dv$ defines an isomorphism $\alpha_n^\#:I(nn^*)\to
I(n^*n)$, and there is a homeomorphism $\alpha_n : \osupp(n^*n) \to \osupp(nn^*)$ such
that $\alpha_n(\phi)(d) = \phi(\alpha_n^\#(d))$ for all $d \in I(nn^*)$. If $d\in D$,
then
\begin{equation*}
\phi(n^*n)\alpha_n(\phi)(d)=\phi(|n|)\phi(\alpha_n^\#(d))\phi(|n|)=
\phi(|n|v^*dv|n|)=\phi(n^*dn).
\end{equation*}
That each of $\widehat{D}$ and $D$ separates elements of the other gives uniqueness of
$\alpha_n$ and $\alpha_n^\#$.

\eqref{it:d<->n} As above, let $n = v|n|$ be the polar decomposition of $n$.
By~(\ref{it:nn*,n*n in D}), we have $|n|=(n^*n)^{1/2}\in D$. It follows that if $d\in
I(nn^*)$, then
\[
    dn = d v|n| = vv^* d v |n| = v \alpha_n^\#(d) |n| = v|n| \alpha_n^\#(d) = n \alpha_n^\#(d).
\]

\eqref{it:alphanm} Fix $d \in D$ and $m,n \in N(D)$. Then $(mn)^*d(mn) = n^*(m^*dm)n^*
\in n^*Dn \subseteq D$; likewise $(mn)d(mn)^* \in D$, so $mn\in N(D)$. We have
$n^*m^*mnn^*n=n^*n\alpha_n^\#(m^*mnn^*)$ by~\eqref{it:d<->n}. So $\osupp((mn)^*(mn)) =
\alpha_n^{-1}(\osupp(m^*m)\cap\osupp(nn^*))$. For $\phi\in \osupp((mn)^*(mn))$,
\begin{align*}
\phi(n^*m^*mn)\alpha_{mn}(\phi)(d) &=
\phi(n^*m^*dmn)=
\phi(n^*n)\alpha_n(\phi)(m^*dm)\\ &=
\phi(n^*n)\alpha_n(m^*m)\alpha_m(\alpha_n(\phi))(d)=
\phi(n^*m^*mn)\alpha_m(\alpha_n(\phi))(d).
\end{align*}
This shows that $\alpha_{mn}(\phi)=\alpha_m(\alpha_n(\phi))$.

\eqref{it:alphan*} Suppose $\phi\in\osupp ((n^*n)^*(n^*n))$ and $d\in D$. Then
\begin{equation*}
\phi((n^*n)^*(n^*n))\alpha_{n^*n}(\phi)(d)=
\phi((n^*n)^*d(n^*n))=
\phi(n^*n)\phi(d)\phi(n^*n)=
\phi((n^*n)^*(n^*n))\phi(d).
\end{equation*}
So $\alpha_{n^*n}=\id_{\osupp(n^*n)}$, and then~\eqref{it:alphanm} gives $\alpha_{n^*}
\circ \alpha_n = \id_{\osupp(n^*n)}$. Thus, $\alpha_{n^*} = \alpha_n^{-1}$.

\eqref{it:commute} Fix $d\in D$ with $\osupp(d)\subseteq U$. As $\alpha_n \vert_U =
\alpha_m \vert_U$, statement \eqref{it:alphand}~and~\eqref{it:alphan*} give
$\alpha_{n^*}^\#(d)=\alpha_{m^*}^\#(d)$. Two applications of \eqref{it:d<->n} then give
$dn^*m=n^*\alpha_{n^*}(d)m=n^*md$. The definition of $\alpha\#_n$ shows that
$\alpha^\#_n(nn^*) = n^*n$, so $\alpha_n(\phi)(nn^*) = \phi(n^*n)$ for all $\phi$. So for
$\phi \in U$, we have $\phi(m^*nn^*m) = \phi(m^*m)\alpha_m(\phi)(nn^*) =
\phi(m^*m)\alpha_n(\phi) = \phi(m^*m)\phi(n^*n)$.
\end{proof}

\section{Results for ungraded groupoids and \texorpdfstring{$C^*$}{C*}-algebras}\label{sec:summary ungraded}

To maximise their generality, we formulate our key results later in the paper for graded
groupoids and $C^*$-algebras carrying coactions, under suitable hypotheses on the
trivially-graded subgroupoids and generalised fixed-point subalgebras. In this section,
we summarise the consequences of our main results for trivial gradings and coactions.

Lemma~\ref{lem:equivalence-construction} below applied to a separable $C^*$-algebra $A$, an
abelian $C^*$-subalgebra $D\subseteq A$ containing an approximate unit for $A$, and the trivial
coaction on $A$ yields an equivalence relation $\sim$ on $\{(n,\phi) : n \in N_A(D),\
\phi\in\osupp(n^*n)\}$ as follows: given $\phi \in \widehat{D}$, we write $J_\phi$ for the ideal
of $D_A'$ generated by $\ker(\phi)$; and then $(n,\phi) \sim (m,\psi)$ if and only if $\phi =
\psi$, $\alpha_n$ and $\alpha_m$ agree on a neighbourhood $U$ of $\phi$, and there exists $d \in
D$ with $\osupp(d) \subseteq U$ and $\phi(d) = 1$ such that
$\phi(m^*m)^{-\frac{1}{2}}\phi(n^*n)^{-\frac{1}{2}} d n^*m d + J_\phi$ is homotopic to the
identity in the unitary group of $D_A'/J_\phi$. We write $[n,\phi]$ for the equivalence class of
$(n,\phi)$ with respect to this equivalence relation.

Our first main theorem says that the quotient space can be made into an \'etale groupoid.

\begin{thm}[see Theorem~\ref{thm:lche gpd}]\label{prp:no-coaction H}
Let $A$ be a separable $C^*$-algebra and $D$ an abelian $C^*$-subalgebra of $A$ that contains an approximate unit for $A$. Then
$$\Hh(A, D) := \{[n,\phi] : n \in \Nn_A(D),\ \phi\in\osupp(n^*n)\}$$
is a second-countable locally compact locally Hausdorff \'etale groupoid with composable
pairs $\Hh(A, D)^{(2)} = \{([m,\phi], [n,\psi]) : \phi =\alpha_n(\psi)\}$, multiplication
and inverses given by
\[
[m,\alpha_n(\psi)][n,\psi] = [mn,\psi]
    \quad\text{ and }\quad
[n,\psi]^{-1}=[n^*,\alpha_n(\psi)],
\]
and topology with basic open sets
\[
Z(n, U) = \{[n,\phi] : \phi \in U\}
\]
indexed by $n \in \Nn(D)$ and open $U \subseteq \osupp(n^*n) \subseteq \widehat{D}$.
\end{thm}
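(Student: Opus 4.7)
The plan is to proceed in three stages: verify that the algebraic operations are well-defined and satisfy the groupoid axioms, verify that the sets $Z(n, U)$ form a basis for a topology under which these operations are continuous, and then check the remaining topological properties.

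For the algebraic part, I would first confirm (using the equivalence-construction lemma referenced just before the statement) that $\sim$ is indeed an equivalence relation, and that the formulas $[m, \alpha_n(\psi)][n, \psi] = [mn, \psi]$ and $[n, \psi]^{-1} = [n^*, \alpha_n(\psi)]$ respect equivalence classes. The composability condition $\phi = \alpha_n(\psi)$ together with Lemma~\ref{lem:alpha-n}(\ref{it:alphanm}) shows that $\psi \in \osupp((mn)^*(mn))$, so the product is defined on the right domain. If $(m, \alpha_n(\psi)) \sim (m', \alpha_n(\psi))$ and $(n, \psi) \sim (n', \psi)$, then the localising elements $d \in D$ produced by each equivalence can be multiplied (after shrinking neighbourhoods using Lemma~\ref{lem:alpha-n}(\ref{it:d<->n})) to yield a single localising element witnessing $(mn, \psi) \sim (m'n', \psi)$; here one uses that the map to the unitary group of $D'/J_\psi$ is multiplicative and that concatenation of homotopies gives a homotopy. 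The unit space consists of classes $[d, \phi]$ with $d \in D$ and $\phi(d) = 1$, identified with $\widehat{D}$ by the source map, and associativity and the inverse axioms reduce to associativity in $A$ and the identities $\alpha_{mn} = \alpha_m \circ \alpha_n$ and $\alpha_{n^*} = \alpha_n^{-1}$ from Lemma~\ref{lem:alpha-n}.

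For the topology, I would check that the sets $Z(n, U)$ form a basis. Given $[n, \phi] \in Z(n_1, U_1) \cap Z(n_2, U_2)$, the defining equivalence $[n_1, \phi] = [n_2, \phi]$ produces an open neighbourhood $V$ of $\phi$ on which $\alpha_{n_1}|_V = \alpha_{n_2}|_V$, together with a $d \in D$ with $\osupp(d) \subseteq V \cap U_1 \cap U_2$ and $\phi(d) = 1$, yielding a basic open set $Z(dn_1, \osupp(d))$ containing $[n,\phi]$ and sitting inside the intersection. Continuity of multiplication and inversion then follows from the inclusions $Z(n, U)^{-1} = Z(n^*, \alpha_n(U))$ and $Z(m, U_1) \cdot Z(n, U_2) \subseteq Z(mn, \alpha_n^{-1}(U_1) \cap U_2)$, both consequences of the algebraic structure already established.

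For étaleness, I would use that $s([n, \phi]) = \phi$ restricts on $Z(n, U)$ to the homeomorphism $[n, \phi] \mapsto \phi$ from $Z(n, U)$ onto $U \subseteq \osupp(n^*n) \subseteq \widehat{D}$, and similarly that $r|_{Z(n,U)}$ is a homeomorphism onto $\alpha_n(U)$ via Lemma~\ref{lem:alpha-n}(\ref{it:alphan}). Hence $\Hh(A, D)$ is étale, locally compact (since $\widehat{D}$ is), and locally Hausdorff (each $Z(n, U)$ embeds homeomorphically into $\widehat{D}$). Separability of $A$ gives separability of $D$ (so $\widehat{D}$ is second countable), and a norm-dense countable subset of $\Nn(D)$ combined with a countable base of open sets in $\widehat{D}$ produces a countable basis for $\Hh(A, D)$.

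The principal obstacle is the basis step: extracting, from a pointwise equality $[n_1, \phi] = [n_2, \phi]$, a normaliser and neighbourhood that witness the equality everywhere on the neighbourhood, and compatibly so with a second such equality. This requires combining the homotopy-of-unitaries condition inside the fibre $D'/J_\phi$ with the local extensions guaranteed by Definition~\ref{dfn:semidiag}(\ref{it:sd unital sections}) and Lemma~\ref{lem:semidiag}, upgraded from the single point $\phi$ to a neighbourhood; this is precisely where the weakly Cartan hypothesis (rather than merely an abelian subalgebra) is essential, because it provides uniform local identifications of the units across the fibres of $D'$.
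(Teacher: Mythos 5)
Your overall architecture matches the paper's: establish the equivalence relation and the well-definedness of the operations, show the $Z(n,U)$ form a basis by propagating a pointwise equality of classes to a neighbourhood, and read off \'etaleness and local Hausdorffness from the fact that $s$ and $r$ restrict to homeomorphisms of basic sets onto open subsets of $\widehat D$. However, the step you yourself flag as ``the principal obstacle'' is the real mathematical content of the topological half of the theorem, and your proposal neither proves it nor points at the right tools. Concretely: to show that $Z(n_1,U_1)\cap Z(n_2,U_2)$ is open you must show that $[n_1,\phi]=[n_2,\phi]$ implies $[n_1,\psi]=[n_2,\psi]$ for all $\psi$ in a neighbourhood of $\phi$. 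The conditions on $\alpha_{n_1},\alpha_{n_2}$ localise easily, but the homotopy condition $U^\phi_{n_1^*n_2}\in\Uu_0(D'/J_\phi)$ lives in the single fibre $D'/J_\phi$, and there is no a priori reason it should persist in nearby fibres $D'/J_\psi$. The ingredients you cite for this --- condition~(\ref{it:sd unital sections}) of Definition~\ref{dfn:semidiag} and Lemma~\ref{lem:semidiag} --- only provide locally constant identifications of the \emph{units} of the fibres, not of arbitrary unitaries. What is actually needed (Lemma~\ref{lem:classes agree}) is to subdivide the path of unitaries in $D'/J_\phi$ into finitely many steps of length less than $\tfrac14$, lift each node to an element $a_j$ of $D'$, and use upper semicontinuity of $\psi\mapsto\|\pi_\psi(a)\|$ coming from the $C_0(\widehat D)$-algebra structure of $D'$ to arrange that, for all $\psi$ near $\phi$, the $\pi_\psi(a_j)$ remain invertible and pairwise close; then $U^\psi_{n_1^*n_2}\sim_h 1$ by the standard stability of homotopy classes of invertibles. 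Without an argument of this kind the basis claim, and everything downstream of it, is unproved.

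Two further points of the same nature. First, your second-countability argument takes for granted that a countable dense subset of $\Nn(D)$ generates a basis; but to place $[n,\phi]$ inside some $Z(n_i,U_j)\subseteq Z(n,X)$ with $n_i$ from the countable set, you must show $[n_i,\psi]=[n,\psi]$ for $\psi$ near $\phi$, which requires a perturbation lemma (if $\|n-m\|<\|n\|/5$ then $\alpha_n=\alpha_m$ wherever $\phi(n^*n)>\|n\|^2/2$; Lemma~\ref{lem:same alpha}) together with the estimate that $U^\psi_{n^*n_i}$ lies within distance $2$ of the identity and is hence homotopic to it. Second, on the algebraic half: well-definedness of the product in its first factor requires transporting the unitary $U^\phi_{n^*n'}$ from the fibre $D'/J_\phi$ at $\phi=\alpha_m(\psi)$ to the fibre $D'/J_\psi$, via the isomorphism $\iota_m$ of Lemma~\ref{lem:trans} satisfying $\iota_m(\pi_{\alpha_m(\psi)}(a))=\pi_\psi(m^*am)$; ``multiplying localising elements'' within a single fibre does not capture this change of base point, which is where Lemma~\ref{lem:alpha-n} alone is insufficient.
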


If $D \subseteq A$ is Cartan as in \cite{Ren2008}, then $\Hh(A,D)$ is the Weyl groupoid
of \cite{Ren2008}. If $A=C^*(E)$ and $D=\mathcal{D}(E)$ where $E$ is a countable graph,
then $\Hh(A, D)$ is the extended Weyl groupoid $\mathcal{G}_{(C^*(E),\mathcal{D}(E))}$ of
\cite{BCW}.
If $A=C^*(\Gamma)$ where $\Gamma$ is a torsion free and abelian discrete group, and $D=\mathbb{C} 1_A$, then $N_A(D)$ consists of the nonzero scalar multiples of unitaries in $C^*(\Gamma)$, and it then follows from \cite[Theorem~8.57]{HofmannMorris} that $\Hh(A, D)$ is isomorphic to $\Gamma$ by an isomorphism that takes $[n,\phi]$ to the class of the unitary $(n^*n)^{-\frac{1}{2}}n$ in $K_1(C^*(\Gamma))\cong \Gamma$. The following generalises this.

\begin{prp}[see Lemma~\ref{lem:gpd sds} and Proposition~\ref{prp:isomorphism}]\label{prp:isomorphism-noaction}
Let $\G$ be a second-countable locally compact Hausdorff \'etale groupoid. Then $C_0(\go)$ is an abelian $C^*$-subalgebra of $C^*_r(\G)$ and contains an approximate unit for $C^*_r(\G)$. If
$\Io{\G}$ is torsion free and abelian, then there is an isomorphism $\G \cong \Hh(C^*_r(\G),
C_0(\go))$ that carries $\gamma \in \G$ to $[f, \widehat{s(\gamma)}]$ for any $f \in
C_c(\G)$ supported on a bisection with $f(\gamma) \not= 0$.
\end{prp}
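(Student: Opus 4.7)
The plan is first to verify that $D := C_0(\go)$ is weakly Cartan in $A := C^*_r(\G)$, and then to construct the isomorphism $\pi : \G \to \Hh(A, D)$ sending $\gamma$ to $[f, \widehat{s(\gamma)}]$ for any $f \in C_c(\G)$ supported on an open bisection $B \owns \gamma$ with $f(\gamma) \neq 0$. Axioms (1) and (2) of Definition~\ref{dfn:semidiag} are standard. For (3) and the equivalent condition ($4'$) of Lemma~\ref{lem:semidiag}, I would invoke the result of Section~\ref{sec:intiso}: since $\Io{\G}$ is abelian, $D'$ equals $C^*_r(\Io{\G})$, which is a $C_0(\go)$-algebra whose fibre at $x$ is $C^*(\Io{\G} \cap \G_x^x)$. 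This fibre is unital because $\Io{\G} \cap \G_x^x$ is a group, and every $d \in D$ maps to the scalar $d(x)\cdot 1$ in that fibre.

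To see that $\pi$ is well-defined, if $f$ and $g$ are two such choices then both $\alpha_f$ and $\alpha_g$ carry $\widehat{s(\gamma)}$ to $\widehat{r(\gamma)}$, and by continuity of the map $\beta \mapsto f_n(\beta)$ on $\G$ for normalisers $n$, they agree on an open neighbourhood $U$ of $\widehat{s(\gamma)}$. Taking $d \in D$ with $\osupp(d) \subseteq U$ and $d(s(\gamma)) = 1$, Lemma~\ref{lem:alpha-n}(\ref{it:commute}) gives $df^*gd = d^2 f^*g \in D'$, and a convolution computation shows $f_{df^*gd}$ is supported only at $\gamma^{-1}\gamma = s(\gamma)$. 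Its image in $D'/J_{\widehat{s(\gamma)}}$ is therefore a scalar unitary, homotopic to the identity via a linear path of phases. Compatibility of $\pi$ with multiplication and inversion then follows because the convolution $f * g$ of bisection-supported functions through composable $\gamma, \eta$ is supported on the product bisection through $\gamma\eta$ with nonzero value $f(\gamma)g(\eta)$ there.

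For injectivity, suppose $\pi(\gamma) = \pi(\eta)$. Equality of source characters gives $s(\gamma) = s(\eta) =: x$, and agreement of $\alpha_f$ and $\alpha_g$ near $\widehat x$ forces $r(\gamma) = r(\eta)$, so $\zeta := \gamma^{-1}\eta \in \G_x^x$. The convolution formula shows $f_{df^*gd}$ is supported at $\zeta$, but $df^*gd \in D' = C^*_r(\Io{\G})$ forces its $f$-function to be supported in $\Io{\G}$; thus either $\zeta \in \Io{\G}\cap\G_x^x$, or $df^*gd = 0$ (and is not a unitary). In the nonzero case, identifying $D'/J_{\widehat x}$ with $C(\widehat{\Io{\G} \cap \G_x^x})$ via the Gelfand transform, the normalised $df^*gd$ corresponds to the character $\chi \mapsto \chi(\zeta)$, which is null-homotopic in $C(\widehat{\Io{\G}\cap\G_x^x}, \TT)$ only if $\zeta$ is trivial. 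This is where the torsion-free hypothesis is essential: $\Io{\G}$ torsion-free makes $\widehat{\Io{\G}\cap\G_x^x}$ compact connected abelian, and Pontryagin duality then gives an isomorphism between homotopy classes of continuous maps $\widehat{\Io{\G}\cap\G_x^x} \to \TT$ and the original group $\Io{\G}\cap\G_x^x$, implemented by $g \mapsto [\chi \mapsto \chi(g)]$. Hence $\gamma = \eta$.

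Surjectivity is the main technical obstacle. Given $[n, \widehat x]$, use the averaging formula $\widehat x(n^*dn) = \sum_{\alpha\in\G_x}|f_n(\alpha)|^2 d(r(\alpha))$ (obtained from the regular representation of $C_c(\G)$) to show that $\alpha_n(\widehat x)$ is evaluation at a unique point $y \in \go$, and every $\alpha \in \G_x$ with $f_n(\alpha) \neq 0$ satisfies $r(\alpha) = y$. Pick such a $\gamma_0$ and a small open bisection $B \owns \gamma_0$ whose induced partial homeomorphism matches $\alpha_n$ on a neighbourhood of $\widehat x$ (possible because $\{\beta \in B : f_n(\beta) \neq 0\}$ is open). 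Choose $f_0 \in C_c(\G)$ supported in $B$ with $f_0(\gamma_0) = 1$; then $df_0^*nd + J_{\widehat x}$ is a unitary in $C(\widehat{\Io{\G}\cap\G_x^x})$ whose homotopy class, under Pontryagin duality, corresponds to some $\sigma \in \Io{\G}\cap\G_x^x$. Set $\gamma := \gamma_0 \sigma$ and take $f$ supported on the bisection $B\cdot B_\sigma$ through $\gamma$ (where $B_\sigma$ is an isotropy bisection through $\sigma$); a direct check shows $d f^* n d + J_{\widehat x}$ has trivial homotopy class, whence $\pi(\gamma) = [f, \widehat x] = [n, \widehat x]$. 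Finally, $\pi$ is a homeomorphism because it carries each open bisection $B$ of $\G$ bijectively onto the basic open set $Z(f, s(B))$ of $\Hh(A,D)$, for any $f \in C_c(\G)$ nonvanishing on $B$ and supported in $\overline{B}$.
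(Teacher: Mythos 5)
Your argument is correct and follows essentially the same route as the paper's proof of Proposition~\ref{prp:isomorphism} (specialised to the trivial grading): the weakly Cartan property via the identification $D'=C^*_r(\Io{\G})$ with fibres $C^*(\Io{\G}_x)$, well-definedness and injectivity via the computation that the relevant unitary is the class of a point mass $\delta_\zeta$ together with the fact that a discrete torsion-free abelian group $G$ is recovered as $\Uu(C^*(G))/\Uu_0(C^*(G))$, and surjectivity by locating $\gamma_0$ with $f_n(\gamma_0)\neq 0$ and correcting by an isotropy element. The only point you gloss is continuity of $\pi$, which (as in the paper) needs the fact that two classes $[n,\phi]=[f,\phi]$ agree on a whole neighbourhood of $\phi$ (Lemma~\ref{lem:classes agree}) so that the images of open bisections form a basis of $\Hh(A,D)$.
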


As an almost immediate consequence of Proposition~\ref{prp:isomorphism-noaction}, we
obtain the following.

\begin{thm}[see Theorem~\ref{thm:1}]\label{thm:no-coaction 1}
Let $\G_1, \G_2$ be second-countable locally compact Hausdorff \'etale groupoids such
that each $\Io{\G_i}$ is torsion-free and abelian.
\begin{enumerate}
\item Any isomorphism $\kappa : \G_2 \to \G_1$ induces an isomorphism $\phi :
    C^*_r(\G_1)\to C^*_r(\G_2)$ such that $\phi(f) = f \circ \kappa$ for $f \in
    C_c(\G_1)$, and in particular $\phi(C_0(\G_1^{(0)})) = C_0(\G_2^{(0)})$.
\item Any isomorphism $\phi:C^*_r(\G_1)\to C^*_r(\G_2)$ satisfying
    $\phi(C_0(\G_1^{(0)}))=C_0(\G_2^{(0)})$ induces an isomorphism $\kappa : \G_2 \to
    \G_1$ such that $f \circ \kappa = \phi(f)$ for $f \in C_0(\G_1^{(0)})$.
\end{enumerate}
\end{thm}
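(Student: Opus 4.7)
The plan is to deduce both parts of the theorem from Proposition~\ref{prp:isomorphism-noaction} combined with the functoriality of the two constructions $\G \mapsto (C^*_r(\G), C_0(\go))$ and $(A,D) \mapsto \Hh(A,D)$. For~(1), the strategy is the standard one for reduced groupoid $C^*$-algebras: the pullback map $\phi_0 : C_c(\G_1) \to C_c(\G_2)$, $f \mapsto f\circ \kappa$, is a $*$-algebra homomorphism because $\kappa$ commutes with the range, source, inversion, and composition maps. To extend $\phi_0$ across the reduced norm, for each $u \in \G_2^{(0)}$ the restriction of $\kappa$ to a bijection $\G_{2,u}\to \G_{1,\kappa(u)}$ induces a unitary $U_u : \ell^2(\G_{2,u})\to \ell^2(\G_{1,\kappa(u)})$ that intertwines $\lambda^{(2)}_u(\phi_0(f))$ and $\lambda^{(1)}_{\kappa(u)}(f)$. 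Taking the supremum over $u$ shows $\phi_0$ is isometric, so it extends by density to the required isomorphism $\phi : C^*_r(\G_1)\to C^*_r(\G_2)$; the fact that $\kappa$ restricts to a homeomorphism $\G_2^{(0)} \to \G_1^{(0)}$ forces $\phi(C_0(\G_1^{(0)})) = C_0(\G_2^{(0)})$ automatically.

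For~(2), the first step is to establish functoriality of the extended Weyl groupoid: any $*$-isomorphism $\psi : A_1\to A_2$ sending a weakly Cartan subalgebra $D_1$ onto $D_2$ induces an \'etale groupoid isomorphism $\Hh(\psi) : \Hh(A_1,D_1)\to \Hh(A_2,D_2)$ defined on representatives by $[n,\chi] \mapsto [\psi(n), \chi\circ (\psi|_{D_1})^{-1}]$. The essential check is well-definedness on equivalence classes. Since $\psi$ carries $\Nn_{A_1}(D_1)$ bijectively onto $\Nn_{A_2}(D_2)$, intertwines $\alpha_n$ with $\alpha_{\psi(n)}$ via the Gelfand dual of $\psi|_{D_1}$, and restricts to a $C^*$-isomorphism $D_1' \to D_2'$ that sends each ideal $J_\chi$ onto $J_{\chi \circ (\psi|_{D_1})^{-1}}$, it induces unital isomorphisms between the fibres $D_1'/J_\chi$ and $D_2'/J_{\chi\circ(\psi|_{D_1})^{-1}}$. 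These identifications preserve the scalar normalisation and the homotopy relation in the unitary groups of the fibres, so every clause of the equivalence relation transports cleanly. Continuity of $\Hh(\psi)$ and its inverse is immediate from the description of the basis $Z(n,U)$, since $\Hh(\psi)(Z(n,U)) = Z(\psi(n), \widehat{\psi|_{D_1}}(U))$.

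The second step combines this functoriality with Proposition~\ref{prp:isomorphism-noaction}: applying it to the given $\phi$ in the role of $\psi$ above produces $\Hh(\phi) : \Hh(C^*_r(\G_1), C_0(\G_1^{(0)}))\to \Hh(C^*_r(\G_2), C_0(\G_2^{(0)}))$, while Proposition~\ref{prp:isomorphism-noaction} furnishes isomorphisms $\iota_i : \G_i \cong \Hh(C^*_r(\G_i), C_0(\G_i^{(0)}))$. One then sets $\kappa := \iota_1^{-1} \circ \Hh(\phi)^{-1} \circ \iota_2 : \G_2 \to \G_1$. On unit spaces, $\iota_i$ is the Gelfand identification $x \mapsto \operatorname{ev}_x$, and $\Hh(\phi)^{-1}$ acts on unit characters as $\chi \mapsto \chi \circ \phi|_{C_0(\G_1^{(0)})}$; unwinding these identifications yields $f\circ\kappa = \phi(f)$ for $f\in C_0(\G_1^{(0)})$. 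The main obstacle is the well-definedness verification for $\Hh(\psi)$ on equivalence classes in the first step; once that is in hand the rest is formal, because every object is manipulated via manifestly natural constructions, and the torsion-free abelian hypothesis is used only to invoke Proposition~\ref{prp:isomorphism-noaction}.
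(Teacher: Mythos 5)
Your proposal is correct and follows essentially the same route as the paper: part~(1) is the standard pullback argument (which the paper simply declares clear), and part~(2) is exactly the paper's proof of Theorem~\ref{thm:1}(2) specialised to the trivial grading, namely transporting the isomorphism to the extended Weyl groupoids via $[n,\chi]\mapsto[\phi^{-1}(n),\chi\circ\phi]$ and conjugating by the reconstruction isomorphisms of Proposition~\ref{prp:isomorphism-noaction}. Your identification of the well-definedness of this map on $\sim$-classes as the one real check, and your account of why an isomorphism carrying $D_1$ onto $D_2$ preserves normalisers, the partial homeomorphisms $\alpha_n$, the fibres $D'/J_\chi$ and the homotopy classes of the unitaries $U^\chi_{n^*m}$, match what the paper leaves implicit.
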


If $D_i \subseteq A_i$ is a nested pair of $C^*$-algebras for $i = 1,2$, we say that
$(A_1, D_1)$ and $(A_2, D_2)$ are Morita equivalent if there is an
$A_1$--$A_2$-imprimitivity bimodule $X$ such that
\[
X = \clsp\{x \in X : \langle x, D_1 \cdot x\rangle_{A_2} \subseteq D_2\text{ and } {_{A_1}\langle x\cdot D_2, x\rangle} \subseteq D_1\}.
\]

\begin{thm}[see Theorem~\ref{thm:eqiv systems->eqiv groupoids}]\label{thm:no-coaction eqiv systems->eqiv groupoids}
Let $A_1, A_2$ be separable $C^*$-algebras. Suppose, for $i = 1,2$, that $D_i$ is an abelian $C^*$-subalgebra of $A_i$ containing an approximate unit for $A_i$. Suppose that $X$ is a Morita equivalence between
$(A_1, D_1)$ and $(A_2, D_2)$. Let $A$ be the linking algebra of $X$ and let $D := D_1
\oplus D_2 \subseteq A$. Then $D$ is a $C^*$-algebra of $A$ that contains an approximate unit for $A$. The groupoid $\Hh := \Hh(A,
D)$ contains $\widehat{D}_1$ and $\widehat{D}_2$ as complementary full clopen subsets of
$\ho$, $\widehat{D}_i\Hh\widehat{D}_i \cong \Hh(A_i, D_i)$ for $i = 1,2$, and
$\widehat{D}_1 \Hh \widehat{D}_2$ is an equivalence from $\Hh(A_1, D_1)$ to $\Hh(A_2,
D_2)$.
\end{thm}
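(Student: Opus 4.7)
The plan is to exploit the $2 \times 2$ matrix structure of the linking algebra. Let $P = 1_{M(A_1)}$ and $Q = 1_{M(A_2)}$ be the complementary multiplier projections, so $D_1 = PDP$, $D_2 = QDQ$, and $\widehat{D} = \widehat{D}_1 \sqcup \widehat{D}_2$ as clopen subsets, each a locally compact Hausdorff space. Since $P,Q$ commute with $D$, the decomposition $D'_A = P D'_A P + P D'_A Q + Q D'_A P + Q D'_A Q$ holds; any $x = PxQ \in X$ in $D'_A$ satisfies $xd_2 = 0$ for all $d_2 \in D_2$ (take $d = (0,d_2) \in D$ in $dx = xd$), and nondegeneracy of the right $A_2$-action on $X$ forces $x = 0$, so $D'_A = D'_{A_1} \oplus D'_{A_2}$. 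For a character $\phi \in \widehat{D}_1$ (extended by $0$ to $D_2$), $\ker(\phi)$ contains $D_2$, which in turn contains an approximate unit for $D'_{A_2}$, so $J_\phi = J_{\phi_1} \oplus D'_{A_2}$ and
\[
D'_A/J_\phi \;\cong\; D'_{A_1}/J_{\phi_1}.
\]
This quotient is unital and satisfies condition ($4'$) of Lemma~\ref{lem:semidiag} because $(A_1, D_1)$ is weakly Cartan; combined with the evident approximate unit for $A$ (the diagonal sum of approximate units from each $D_i$, which approximates the off-diagonal corners by nondegeneracy of the bimodule actions), this verifies that $D$ is weakly Cartan in $A$.

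Next, to identify $\widehat{D}_i \Hh \widehat{D}_i \cong \Hh(A_i, D_i)$ (for $i = 1$), consider a normaliser $n = \bigl(\begin{smallmatrix} a & x \\ y^* & b\end{smallmatrix}\bigr) \in N_A(D)$ with $n^*n, nn^* \in D_1$. The $(2,2)$-entry of $n^*n$ is $x^*x + b^*b$; requiring $n^*n \in D_1$ (supported in the top-left corner) forces $x^*x = b^*b = 0$, hence $x = b = 0$, and then $nn^* \in D_1$ similarly forces $y = 0$, so $n \in N_{A_1}(D_1)$. A general class $[n, \phi] \in \widehat{D}_1 \Hh \widehat{D}_1$ can be represented, after cutting down by $d, d' \in D_1$ with $\phi(d) = \alpha_n(\phi)(d') = 1$ and small supports in $\widehat{D}_1$, by $d' n d \in N_{A_1}(D_1)$; the cut-down preserves the class because $\alpha_{d'nd} = \alpha_n$ near $\phi$ by Lemma~\ref{lem:alpha-n}\eqref{it:alphanm} and the homotopy class in the unitary group of $D'_A/J_\phi = D'_{A_1}/J_{\phi_1}$ is unchanged. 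This yields the claimed groupoid isomorphism, with the $i = 2$ case being symmetric.

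Finally, for fullness and the equivalence, note that the off-diagonal normalisers of $D$ in $A$ with $n^*n \in D_2$ and $nn^* \in D_1$ are exactly the elements of $X^N := \{x \in X : \langle x, D_1 x\rangle_{A_2} \subseteq D_2,\ {_{A_1}\langle x D_2, x\rangle} \subseteq D_1\}$ (by the same corner argument as above); for such $x$ one has $x^*x = \langle x, x\rangle_{A_2} \in D_2$ and $xx^* = {_{A_1}\langle x, x\rangle} \in D_1$ using Lemma~\ref{lem:alpha-n}\eqref{it:nn*,n*n in D}. To see $\widehat{D}_2$ is full in $\ho$, fix $\phi_2 \in \widehat{D}_2$; since $\langle X, X\rangle_{A_2}$ is dense in $A_2$, choose $y \in X$ with $\phi_2(\langle y, y\rangle_{A_2}) > 0$, approximate $y$ by $x \in X^N$ using the hypothesis $X = \clsp X^N$, and apply continuity of the inner product to obtain $\phi_2(x^*x) > 0$, so that $[x, \phi_2]$ witnesses a morphism from $\widehat{D}_1$ to $\widehat{D}_2$; the symmetric argument handles $\widehat{D}_1$. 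With both $\widehat{D}_i$ full clopen subsets of the unit space of the \'etale groupoid $\Hh$, the standard restriction of groupoid multiplication equips $\widehat{D}_1 \Hh \widehat{D}_2$ with commuting free and proper actions of the corner groupoids $\Hh(A_i, D_i)$ whose orbit spaces are the complementary clopen sets, establishing the equivalence. The main obstacle throughout is the cut-down step in the corner identification: one must verify that the homotopy class of $\phi(m^*m)^{-1/2}\phi(n^*n)^{-1/2} d n^*m d + J_\phi$ in the unitary group of $D'_A/J_\phi$ is genuinely determined by the $A_i$-parts of $n$ and $m$, since this is what aligns the equivalence relation defining $\Hh$ with the one defining $\Hh(A_i, D_i)$ under the fibre isomorphism from the first paragraph.
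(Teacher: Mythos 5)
Your architecture is the same as the paper's: establish $D'_A = (D_1)'_{A_1}\oplus (D_2)'_{A_2}$ and the fibre isomorphisms $D'_A/J_{\overline\phi}\cong (D_i)'_{A_i}/J_{\phi_i}$ (Lemma~\ref{lem-linking-unital-quotient}), show that a normaliser of $D$ whose class lies over $\widehat D_i\times\widehat D_i$ can be replaced by a normaliser of $D_i$ in $A_i$, match the equivalence relations through the fibre isomorphism (the paper's~\eqref{eq:Us consistent}), get fullness of each $\widehat D_i$ from fullness of $X$ together with the density of normalisers in $X$, and finish with the standard linking-groupoid fact. Your cut-down $n\mapsto d'nd$ with $d,d'\in D_1$ is a workable substitute for the paper's Lemma~\ref{lem:normalizer-comp}, which instead shows directly that the $(i,i)$-corner $n_i$ of $n$ satisfies $[n,\psi]=[n_i,\psi]$; and the ``obstacle'' you flag at the end is not really open once the cut-down is done, since for representatives already in $A_1$ the element $dn^*md$ is literally the same in $A_1$ and in $A$ and the fibre isomorphism carries one unitary to the other, so homotopy classes correspond. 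You should also record that the resulting bijection is a homeomorphism (basic open sets $Z(n,U)$ with $n\in \Nn_{A_1}(D_1)$ map to basic open sets of $\widehat D_1\Hh\widehat D_1$ and conversely), which the paper checks explicitly.

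The one step that does not work as written is the fullness argument. First, $\phi_2$ is a character of $D_2$ only, so $\phi_2(\langle y,y\rangle_{A_2})$ is undefined for a general $y\in X$. Second, $X=\clsp X^N$ means $y$ is approximated by finite \emph{sums} $\sum_i x_i$ with $x_i\in X^N$, and $\langle\sum_i x_i,\sum_i x_i\rangle_{A_2}$ contains cross terms $\langle x_i,x_j\rangle_{A_2}$ that need not lie in $D_2$; so ``continuity of the inner product'' does not let you conclude that some single $x_i$ satisfies $\phi_2(x_i^*x_i)>0$. Two standard repairs: (a) extend $\phi_2$ to a state $\omega$ on $A_2$; if $\omega(x^*x)=0$ for every $x\in X^N$ then Cauchy--Schwarz gives $\omega(\langle x,y\rangle_{A_2})=0$ for all $x,y\in\clsp X^N=X$, whence $\omega=0$ on $\clsp\langle X,X\rangle_{A_2}=A_2$, a contradiction; or (b) as in the paper's proof of Theorem~\ref{thm:eqiv systems->eqiv groupoids}, use the inequality $\eta_j^*\eta_k\eta_k^*\eta_j\le\|\eta_k\|^2\,\eta_j^*\eta_j$ with $\eta_j^*\eta_j\in D_2$ to deduce $\phi_2(\eta_j^*\eta_j)\neq 0$ from $\phi_2(\eta_j^*\eta_k\eta_k^*\eta_j)\neq 0$. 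With either fix the rest of your argument goes through.
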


\begin{thm}[see Theorem~\ref{thm:2}]\label{thm:no-coaction 2}
Let $\G_1, \G_2$ be second-countable locally compact Hausdorff \'etale groupoids such
that each $\Io{\G_i}$ is torsion-free and abelian. The following are equivalent:
\begin{enumerate}
\item $\G_1$ and $G_2$ are equivalent;
\item there exist a second-countable locally compact Hausdorff \'etale groupoid $G$
    such that $\Io{\G}$ is torsion-free and abelian, and a pair of complementary
    $\G$-full open subsets $K_1, K_2\subseteq \go$ such that $K_i \G K_i \cong \G_i$
    for $i = 1,2$.
\item $(C^*_r(\G_1), C_0(\G_1^{(0)}))$ and $(C^*_r(\G_2), C_0(\G_2^{(0)}))$ are
    Morita equivalent; and
\item there are a separable $C^*$-algebra $A$, an abelian $C^*$-subalgebra $D \subseteq A$ containg an approximate unit for $A$ and
    satisfying $\clsp \Nn(D) = A$, a pair of complementary $A$-full projections $P_1,
    P_2\in M(D)$, and isomorphisms $\phi_i : P_i A P_i \to C^*_r(\G_i)$ such that
    $\phi_i(P_i D P_i) = D_i$.
\end{enumerate}
\end{thm}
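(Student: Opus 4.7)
The plan is to prove the cycle $(1) \Rightarrow (3) \Rightarrow (4) \Rightarrow (2) \Rightarrow (1)$, leveraging the extended Weyl groupoid construction of Theorem~\ref{prp:no-coaction H}, the identification $\Hh(C^*_r(\G_i), C_0(\G_i^{(0)})) \cong \G_i$ of Proposition~\ref{prp:isomorphism-noaction}, and the Morita-equivalence theorem, Theorem~\ref{thm:no-coaction eqiv systems->eqiv groupoids}. For $(1) \Rightarrow (3)$, given a $(\G_1, \G_2)$-equivalence $Z$, I would run the Muhly--Renault--Williams construction to obtain a $C^*_r(\G_1)$--$C^*_r(\G_2)$-imprimitivity bimodule $X$ as the completion of $C_c(Z)$. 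Checking Morita equivalence of \emph{pairs} reduces to evaluating inner products on functions $\xi \in C_c(Z)$ supported on open subsets where both moment maps $Z \to \G_i^{(0)}$ are homeomorphisms, for which a direct computation shows that $\langle \xi, C_0(\G_1^{(0)}) \cdot \xi\rangle_{C^*_r(\G_2)}$ lands in $C_0(\G_2^{(0)})$ and ${_{C^*_r(\G_1)}\langle \xi \cdot C_0(\G_2^{(0)}), \xi\rangle}$ lands in $C_0(\G_1^{(0)})$.

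For $(3) \Rightarrow (4)$, given a Morita equivalence $X$ of the pairs $(C^*_r(\G_i), C_0(\G_i^{(0)}))$, let $A$ be the linking algebra and set $D := C_0(\G_1^{(0)}) \oplus C_0(\G_2^{(0)})$, with $P_i := 1_{M(C^*_r(\G_i))} \in M(D)$. Theorem~\ref{thm:no-coaction eqiv systems->eqiv groupoids} gives that $D$ is weakly Cartan in $A$; the isomorphisms $\phi_i : P_i A P_i \to C^*_r(\G_i)$ restricting to $P_i D P_i \cong C_0(\G_i^{(0)})$ and the $A$-fullness of the $P_i$ are built into the linking construction. The condition $\clsp \Nn_A(D) = A$ follows because functions supported on bisections densely span each $C^*_r(\G_i) \cong P_i A P_i$, while the Morita-pair hypothesis exhibits $X \cong P_1 A P_2$ and $X^* \cong P_2 A P_1$ as densely spanned by elements $x$ satisfying $x D x^* = x D_2 x^* \subseteq D_1$ and $x^* D x = x^* D_1 x \subseteq D_2$ (the other components vanish in the linking algebra because $x \in P_1 A P_2$), making such $x$ normalisers of $D$ in $A$.

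For $(4) \Rightarrow (2)$, set $\G := \Hh(A, D)$ and let $K_i := \{\phi \in \widehat{D} : \phi(P_i) = 1\}$. Since $P_1 + P_2 = 1_{M(D)}$, these form a complementary pair of clopen subsets of $\go$; fullness of each $K_i$ translates from $A$-fullness of $P_i$ together with $\clsp \Nn(D) = A$ via the partial homeomorphisms $\alpha_n$. A direct inspection of the equivalence relation defining $\Hh$ shows that restricting to normalisers lying in $P_i A P_i$ induces an isomorphism $K_i \G K_i \cong \Hh(P_i A P_i, P_i D P_i)$, and the latter is $\cong \G_i$ by Proposition~\ref{prp:isomorphism-noaction}. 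Since $K_i$ is open in $\go$, $\Io{\G}|_{K_i}$ agrees with $\Io{K_i \G K_i} \cong \Io{\G_i}$, which is torsion-free and abelian; as $K_1 \cup K_2 = \go$, the whole interior isotropy of $\G$ inherits these properties. For $(2) \Rightarrow (1)$, the subset $K_1 \G K_2 \subseteq \G$ with its natural left $K_1 \G K_1 = \G_1$-action and right $K_2 \G K_2 = \G_2$-action is a $(\G_1, \G_2)$-equivalence, with moment maps open and surjective by fullness of the $K_i$.

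The principal obstacle will be the verification in $(3) \Rightarrow (4)$ that $\clsp \Nn_A(D) = A$, and the parallel structural identification $K_i \Hh(A, D) K_i \cong \Hh(P_i A P_i, P_i D P_i)$ needed for $(4) \Rightarrow (2)$: because the equivalence relation defining $\Hh$ involves the relative commutant $D'_A$ and a homotopy condition in the unitary groups of the fibres $D'/J_\phi$, one must check that these ingredients behave well under passage to the corner $P_i A P_i$, ideally by showing that $(P_i A P_i)'_{P_i D P_i} = P_i D'_A P_i$ and that the corresponding fibre quotients match up.
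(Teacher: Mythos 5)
Your proposal is correct in substance, but it routes the implications differently from the paper. The paper never proves $(1)\Rightarrow(3)$ or $(4)\Rightarrow(2)$ directly: it establishes $(1)\Leftrightarrow(2)$ entirely at the groupoid level via the linking groupoid $L(\G_1,\G_2)$ (Lemma~\ref{lem:graded equiv}), establishes $(3)\Leftrightarrow(4)$ entirely at the $C^*$-level via the linking algebra (Corollary~\ref{cor:ehn linking<->Me}), and then builds two bridges: $(2)\Rightarrow(4)$ by taking $A=C^*_r(\G)$ for the linking groupoid and invoking the Sims--Williams equivalence theorem together with Lemma~\ref{lem:bisecton alpha} to get $\clsp\Nn(D)=A$ and $P_1AP_2=\clsp\{P_1nP_2\}$; and $(3)\Rightarrow(1)$ via Theorem~\ref{thm:eqiv systems->eqiv groupoids}, i.e.\ by passing to the extended Weyl groupoid of the linking system. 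Your $(1)\Rightarrow(3)$ therefore does by hand what the paper gets for free from $(1)\Rightarrow(2)\Rightarrow(4)\Rightarrow(3)$: the inner-product computation you describe for $\xi\in C_c(Z)$ supported where both moment maps are injective is exactly the statement that such $\xi$ are normalisers of $C_0(L^{(0)})$ in $C^*_r(L(\G_1,\G_2))$, so your route is more self-contained but repeats the content of Lemma~\ref{lem:bisecton alpha} inside the MRW framework. Your $(4)\Rightarrow(2)$ is essentially the core of Theorem~\ref{thm:eqiv systems->eqiv groupoids}: the obstacle you flag is real and is resolved exactly as you predict, since Lemma~\ref{lem-linking-unital-quotient} shows $D'_A=(D_1)'_{A_1}\oplus(D_2)'_{A_2}$ (off-diagonal components of a commuting element are killed by Cohen factorisation), whence the fibre quotients $D'_A/J_{\overline{\phi}}\cong (D_i)'_{A_i}/J_{i,\phi}$ match up and Lemma~\ref{lem:normalizer-comp} gives $\widehat{D}_i\,\Hh(A,D)\,\widehat{D}_i\cong\Hh(P_iAP_i,P_iDP_i)$. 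The one point you should add to $(4)\Rightarrow(2)$ is that condition~(2) demands a \emph{Hausdorff} groupoid, whereas Theorem~\ref{prp:no-coaction H} only guarantees that $\Hh(A,D)$ is locally Hausdorff; this is repaired by noting that the four corners $K_i\,\Hh(A,D)\,K_j$ are complementary open (hence clopen) subsets, the diagonal corners are Hausdorff being isomorphic to the $\G_i$, so $\Hh(A,D)^{(0)}$ is closed in $\Hh(A,D)$ and the standard criterion for \'etale groupoids gives Hausdorffness --- a step the paper sidesteps by manufacturing the Hausdorff linking groupoid directly from~(1).
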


As in \cite{CRS}, we let $\R = \NN \times \NN$ regarded as a discrete groupoid; we have
$C^*_r(\R) \cong \Kk$, the compact operators on $\ell^2(\NN)$, with canonical diagonal
subalgebra $\mathcal{C}$. As in \cite{CRS}, we say $\G_1$ and $\G_2$ are \emph{weakly Kakutani
equivalent} if there are full open subsets $U_i \subseteq \G_i^{(0)}$ such that $U_1 \G_1
U_1 \cong U_2\G_2 U_2$. If $U_1$ and $U_2$ are \emph{compact} open, $\G_1$ and $\G_2$ are
\emph{Kakutani equivalent} as defined in \cite{Matui}. Combining our results with
\cite{CRS}, we obtain the following.

\begin{cor}[see Corollary~\ref{cor:strong equiv ample}]\label{cor:no-coaction strong equiv ample}
Let $\G_1, \G_2$ be second-countable ample Hausdorff groupoids such that each $\Io{\G}$
is torsion-free and abelian. Then the equivalent conditions of
Theorem~\ref{thm:no-coaction 2} are also equivalent to the following:
\begin{enumerate}
\item $\G_1\times\R \cong \G_2\times\R$;
\item $\G_1$ and $\G_2$ are Kakutani equivalent;
\item $\G_1$ and $\G_2$ are weakly Kakutani equivalent;
\item there is an isomorphism $\phi:C^*_r(\G_1) \otimes \Kk \to
    C^*_r(\G_2)\otimes\Kk$ satisfying $\phi(C_0(\G_1^{(0)}) \otimes \mathcal{C}) =
    C_0(\G_2^{(0)})\otimes \mathcal{C}$;
\item there exist $C^*_r(\G_i)$-full projections $p_i\in M(C_0(\G_1^{(0)}))$ and an
    isomorphism $\phi$ of $p_1C^*_r(\G_1)p_1$ onto $p_2C^*_r(\G_2)p_2$ such that
    $\phi(p_1C_0(\G_1^{(0)})) = p_2C_0(\G_1^{(0)})$; and
\item there are ideals $I_i \subseteq C_0(\G_i^{(0)})$ that are full in
    $C^*_r(\G_i)$, and an isomorphism $\phi : I_1C^*_r(\G_1)I_1 \to
    I_2C^*_r(\G_2)I_2$ such that $\phi(I_1)=I_2$.
\end{enumerate}
\end{cor}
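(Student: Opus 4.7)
The strategy is to use Theorem~\ref{thm:no-coaction 2} as a central hub: conditions (1)--(3) are groupoid-level statements that I will match to Theorem~\ref{thm:no-coaction 2}(i)--(ii) using the ample-groupoid machinery of \cite{CRS}, while (4)--(6) are $C^*$-level statements that I will match to Theorem~\ref{thm:no-coaction 2}(iii)--(iv) via a combination of Theorem~\ref{thm:no-coaction 1}, the Brown--Green--Rieffel theorem, and the identification of reductions with corners.

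First, observe that $\R$ is a principal ample groupoid with $C^*_r(\R)\cong\Kk$ and $C_0(\R^{(0)})\cong c_0$. Hence each $\G_i\times\R$ is second-countable ample Hausdorff, and because $\R$ is principal,
\[
\Io{\G_i\times\R}=\Io{\G_i}\times\R^{(0)}
\]
is torsion-free and abelian, so Theorems~\ref{thm:no-coaction 1} and \ref{thm:no-coaction 2} also apply to the $\G_i\times\R$. Moreover $C^*_r(\G_i\times\R)\cong C^*_r(\G_i)\otimes\Kk$ carries $C_0(\G_i^{(0)})\otimes c_0$ as the canonical diagonal subalgebra. The equivalences among (1), (2), (3), and Theorem~\ref{thm:no-coaction 2}(i)--(ii) are then immediate from \cite{CRS}, where it is shown that for second-countable ample Hausdorff groupoids, groupoid equivalence, $\R$-stable isomorphism, Kakutani equivalence, and weak Kakutani equivalence coincide.

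Next I would establish (1)$\Leftrightarrow$(4). Applying Theorem~\ref{thm:no-coaction 1} to $\G_1\times\R$ and $\G_2\times\R$ and using the identifications above, an isomorphism $\G_1\times\R\cong\G_2\times\R$ corresponds to a diagonal-preserving isomorphism $C^*_r(\G_1)\otimes\Kk\to C^*_r(\G_2)\otimes\Kk$ sending $C_0(\G_1^{(0)})\otimes c_0$ onto $C_0(\G_2^{(0)})\otimes c_0$, which is precisely~(4). To connect (4) to Theorem~\ref{thm:no-coaction 2}(iii)--(iv), I would invoke Brown--Green--Rieffel: the pairs are separable, so Morita equivalence of $(C^*_r(\G_i),C_0(\G_i^{(0)}))$ is the same as the existence of a diagonal-preserving stable isomorphism, which after unpacking is~(4). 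For (5)$\Rightarrow$(6), simply take $I_i:=p_iC_0(\G_i^{(0)})p_i$, which is an ideal of $C_0(\G_i^{(0)})$, full in $C^*_r(\G_i)$ because $p_i$ is, and the corner isomorphism restricts appropriately. The implication (6)$\Rightarrow$(3) uses that an ideal $I_i\trianglelefteq C_0(\G_i^{(0)})$ is of the form $C_0(U_i)$ for an open $U_i\subseteq\G_i^{(0)}$, that $I_iC^*_r(\G_i)I_i\cong C^*_r(U_i\G_i U_i)$ as a pair with diagonal $I_i$, and that $I_i$ is $C^*_r(\G_i)$-full iff $U_i$ is $\G_i$-full; a diagonal-preserving isomorphism of $I_1C^*_r(\G_1)I_1$ with $I_2C^*_r(\G_2)I_2$ then gives, via Theorem~\ref{thm:no-coaction 1} applied to the reductions $U_i\G_iU_i$, the weak Kakutani equivalence in~(3). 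Finally, Theorem~\ref{thm:no-coaction 2}(iv) applied in the ample setting, after cutting by a single compact open full bisection inside $M(D_i)$, yields projections in $M(C_0(\G_i^{(0)}))$ witnessing~(5).

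The main obstacle is the careful bookkeeping for the implications involving~(5) and~(6): one must ensure that the full projections $p_i$ can be taken in $M(C_0(\G_i^{(0)}))$ rather than only in $M(C^*_r(\G_i))$, and that the corner/ideal identifications $I_iC^*_r(\G_i)I_i\cong C^*_r(U_i\G_iU_i)$ respect the diagonals. Both amount to checking that the standard descriptions of ideals of $C^*_r(\G)$ coming from open invariant sets, and of compact-open reductions of ample groupoids, behave well with respect to the diagonal; this is exactly the content of the ample-groupoid results of \cite{CRS} and is what lets the chain of equivalences close up.
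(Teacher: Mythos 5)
Your overall architecture is the paper's: Theorem~\ref{thm:no-coaction 2} is the hub; the groupoid-level equivalences among (1)--(3) and plain groupoid equivalence are imported from \cite{CRS} (whose theorems apply verbatim in this ungraded ample setting); Theorem~\ref{thm:no-coaction 1} applied to $\G_i\times\R$ (using $\Io{\G_i\times\R}=\Io{\G_i}\times\R^{(0)}$, which is correct since $\R$ is principal and discrete) converts (1) into (4); and your treatment of (5)$\Rightarrow$(6) via $I_i=p_iC_0(\G_i^{(0)})$ and of (6)$\Rightarrow$(3) via $I_i=C_0(U_i)$, the diagonal-respecting identification $I_iC^*_r(\G_i)I_i\cong C^*_r(U_i\G_iU_i)$, and fullness of $I_i$ versus $\G_i$-fullness of $U_i$ is exactly the paper's route.

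Two steps need repair. First, you cannot invoke a ``diagonal-preserving Brown--Green--Rieffel theorem'' to pass from Morita equivalence of the pairs $(C^*_r(\G_i),C_0(\G_i^{(0)}))$ to condition (4): Brown--Green--Rieffel produces a stable isomorphism with no control over the diagonal, and upgrading a diagonal-compatible Morita equivalence to a diagonal-preserving stable isomorphism is precisely part of what this corollary asserts, so citing it here is circular. Fortunately the step is redundant --- you already have (4)$\iff$(1)$\iff$(2)$\iff$(3)$\iff$ groupoid equivalence, and Theorem~\ref{thm:no-coaction 2} already contains ``groupoid equivalence $\iff$ Morita equivalence of pairs'', so the chain closes once that sentence is deleted. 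Second, your closing step deriving (5) from ``Theorem~\ref{thm:no-coaction 2}(iv) after cutting by a compact open full bisection inside $M(D_i)$'' is not a proof as stated: bisections do not live in $M(D_i)$, and condition (iv) only hands you the two complementary corners of a linking algebra, not proper corners of the $C^*_r(\G_i)$ cut by projections in $M(C_0(\G_i^{(0)}))$. The clean route, and the paper's, is (2)$\Rightarrow$(5): Kakutani equivalence gives compact open full subsets $X_i\subseteq\G_i^{(0)}$ and an isomorphism $X_1\G_1X_1\cong X_2\G_2X_2$; then $p_i:=1_{X_i}\in M(C_0(\G_i^{(0)}))$ and Theorem~\ref{thm:no-coaction 1} applied to the reductions (whose interiors of isotropy remain torsion-free and abelian) produce the required diagonal-preserving isomorphism of corners. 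The genuinely hard content you are leaning on --- that equivalence of ample groupoids yields such a compact open full bisection in the linking groupoid --- is \cite[Theorem~3.2]{CRS}, and your write-up should say so explicitly rather than gesture at a cut-down inside a multiplier algebra.
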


\section{The extended Weyl groupoid}\label{sec:sdcs}

We construct a graded groupoid $(\Hh(A, D, \delta),c_\delta)$ from a separable
$C^*$-algebra $A$, a coaction of a discrete group $\Gamma$ on $A$, and an abelian
$C^*$-subalgebra $D \subseteq A^\delta$ contaning an approximate unit of $A^\delta$. The reader may wish to keep in mind the case where
$\delta$ is trivial, so $A^\delta=A$; in this case, if $D$ is a Cartan subalgebra of $A$,
then $\Hh(A, D, \delta)$ agrees with Renault's Weyl groupoid \cite{Ren2008}. Given a
countable directed graph, then $\Hh(C^*(E), C_0(\partial E),\delta)$ is the extended Weyl
groupoid $\mathcal{G}_{(C^*(E),\mathcal{D}(E))}$ of \cite{BCW}, both when $\delta$ is the
trivial coaction, and when $\delta$ is the coaction of $\ZZ$ dual to the gauge action.

Given a $C^*$-algebra $A$ and an abelian subalgebra $D$ of $A$, we write $D'_A$ for the
relative commutant $D'_A = \{a \in A : ad = da\text{ for all }d \in D\}$ and
$\widehat{D}$ for the set of characters of $D$.

\begin{lem}\label{lem:semidiag}
Let $A$ be a separable $C^*$-algebra and $D$ an abelian $C^*$-subalgebra of $A$ containing an approximate unit for $A$. Then
\begin{enumerate}
\item\label{it:sd unital fibres} for each $\phi \in \widehat{D}$, the quotient
    $D_A'/J_\phi$ by the ideal $J_\phi := \overline{\ker(\phi)D_A'}$ of $D_A'$ is unital,
\item\label{it:sd unital sections} for each $\phi \in \widehat{D}$, there exist $d \in D$ and an open neighbourhood $U$ of $\phi$ such that $d + J_\psi =
    1_{D_A'/J_\psi}$ for all $\psi \in U$, and
\item  for each $\phi \in \widehat{D}$ and for each $d \in D$, we have $d +
    J_\phi = \phi ( d ) 1_{ D_A'/J_\phi }$.
\end{enumerate}
\end{lem}

\begin{proof}
We first prove~(2), and then~(1) follows directly. So fix $\phi \in \widehat{D}$. Since $D$ is abelian, we have $D \cong C_0(\widehat{D})$, so there exists an open neighbourhood $U$ of $\phi$ and an element $d \in D$ such that $\psi(d) = 1$ for all $\psi \in U$. We claim that $d + J_\psi$ is a unit for $D_A'/J_\psi$ for each $\psi \in U$. To establish the claim, fix $\psi \in U$. It suffices to show that $d + J_\psi$ is a unit for $D_A'/J_\psi$. For this, fix $a \in D_A'$ so that $a + J_\psi$ is a typical element of $D_A'/J_\psi$. Let $(u_j)_j$ be an approximate unit for $A$ such that each $u_j\in D$. Since $\psi(d) = 1$, we have $\psi(du_j - u_j) = 0$ for all $j$, giving $du_j - u_j \in J_\psi$ for all $j$. Hence $du_j a - u_j a \in J_\psi$ for all $j$. Since $u_j a \to a$, we deduce that $da - a \in J_\psi$. Hence $(d + J_\psi)(a + J_\psi) = a + J_\psi$. Thus $d + J_\psi$ is a unit for $D_A'/J_\psi$ as required.

To see (3), fix $\phi \in \widehat{D}$ and $d
\in D$. Choose $d_0 \in D$ and an open $U \owns \phi$ such that $d_0 + J_\psi =
1_{D_A'/J_\psi}$ for all $\psi$ in $U$.  Since $( d_0^2 + J_\phi )^2 = (1_{D_A'/J_\phi})^2 =
1_{D_A'/J_\phi} = d_0 + J_\phi$, we have $\phi ( d_0^2 ) = \phi ( d_0 )$. Thus, $\phi ( d_0
) = 1$ and $\phi ( \phi ( d ) d_0 ) = \phi (d)$. Consequently, $d - \phi (d) d_0 \in
J_\phi$, giving
\[
d + J_\phi = \phi (d) d_0 + J_\phi = \phi (d) ( d_0 + J_\phi ) = \phi (d) 1_{D_A'/J_\phi}.\qedhere
\]
\end{proof}

Throughout the remaining part of this section, $A$ is a separable $C^*$-algebra, $\Gamma$ is a discrete group,
$\delta$ is a coaction of $\Gamma$ on $A$, and $D$ is a an abelian $C^*$-subalgebra of
$A^\delta$ that contains an approximate unit for $A^\delta$. We write
\[
D'_{A^\delta} := \{a \in A^\delta : ad = da\text{ for all } d \in D\},
\]
for the relative commutant of $D$ in $A^\delta$, $J_\phi$ for the ideal $\overline{\ker(\phi)D'_{A^\delta}}$, and
\[
\pi_\phi : D'_{A^\delta} \to D'_{A^\delta}/J_\phi
\]
for the canonical quotient map.


Following \cite{ABHS}, we say that a normaliser $n$ of $D$ is a \emph{homogeneous
normaliser} if $n \in A_g$ for some $g \in \Gamma$. We write $\Nn_g(D) := \Nn(D) \cap
A_g$, and we write $\Nn_\star(D) := \bigcup_{g \in \Gamma} \Nn_g(D)$. The groupoid
$\Hh(A, D, \delta)$ consists of equivalence classes $[n,\phi]$ where $n\in\Nn_\star(D)$
and $\phi \in \osupp(n^*n)$. To define the appropriate equivalence relation, we first
need two lemmas.

\begin{lem}\label{lem:unitary-normalizers}
Let $A$ be a separable $C^*$-algebra, $\delta$ a coaction of a discrete group $\Gamma$ on
$A$, and $D$ an abelian $C^*$-subalgebra of $A^\delta$ containing an approximate unit for $A^\delta$. Let $n, m \in N(D)$ and $\phi \in
\osupp(n^*n) \cap \osupp(m^*m)$, and suppose that there is an open neighbourhood $U$ of
$\phi$ such that $U \subseteq \osupp(n^*n) \cap \osupp(m^*m)$ and $\alpha_n \vert_U =
\alpha_m \vert_U$. Fix $d \in D$ with $\osupp(d) \subseteq U$ and $\phi(d) = 1$, and let
\begin{equation}\label{eq:wdef}
w := \phi(m^*m)^{-1/2}\phi(n^*n)^{-1/2} d n^*m d.
\end{equation}
Then $w\in D'_{A^\delta}$ and $\pi_\phi(w)$ is unitary in $D'_{A^\delta}/J_\phi$. We have

\[
\pi_\phi\big(\phi(n^*n)^{-1/2}\phi(m^*m)^{-1/2} d m^*n d\big) = \pi_\phi(w^*),
\]
and $\pi_\phi(w)$ is
independent of the choices of $U$ and $d$.
\end{lem}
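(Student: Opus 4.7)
The proof rests on the normaliser calculus of Lemma~\ref{lem:alpha-n}, especially clause~(\ref{it:commute}), and on property~($4'$) of Lemma~\ref{lem:semidiag}. Without loss of generality, take $d$ self-adjoint (e.g., replace $d$ by $d^*d$, which has the same support and satisfies $\phi(d^*d) = 1$). Writing $c = \phi(m^*m)^{-1/2}\phi(n^*n)^{-1/2}$, we have $w = c\, dn^*m d$ and $w^* = c\, dm^*n d$, so the claimed identification $\pi_\phi(w^*) = \pi_\phi(\phi(n^*n)^{-1/2}\phi(m^*m)^{-1/2}\, dm^*nd)$ is tautological once the other assertions are in hand. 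The substantive content splits into: (a) $w$ commutes with $D$; (b) $w \in A^\delta$; (c) $\pi_\phi(w)$ is unitary in $D'_{A^\delta}/J_\phi$; and (d) independence of $\pi_\phi(w)$ from $U$ and $d$.

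For (a), Lemma~\ref{lem:alpha-n}(\ref{it:commute}) gives $d'\, n^*m = n^*m\, d'$ for every $d' \in D$ with $\osupp(d') \subseteq U$. For arbitrary $d' \in D$, apply this to $dd'$: its support lies in $\osupp(d) \subseteq U$, so $(dd')n^*m = n^*m (dd')$; rearranging using that $D$ is abelian yields $d'w = wd'$. For (b), which I expect to be the main technical obstacle, the same commutation rewrites $w = c\, n^*m\, d^2$; applying the conditional expectation $\Phi^\delta$ and using its $A^\delta$-bimodularity reduces $w \in A^\delta$ to showing $(n^*m - \Phi^\delta(n^*m))\, d^2 = 0$. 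This identity must be extracted from $\alpha_n|_U = \alpha_m|_U$ via a spectral argument showing that the non-trivially graded components of $n^*m$ are annihilated on the right by $d^2$ — the step where the coaction interacts nontrivially with the normaliser structure.

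For (c), one application of the commutation $d^2 n^*m = n^*m d^2$ from Lemma~\ref{lem:alpha-n}(\ref{it:commute}) together with the fact that $m^*nn^*m \in D$ (since $nn^* \in D$ by Lemma~\ref{lem:alpha-n}(\ref{it:nn*,n*n in D}) and $m \in \Nn(D)$) simplify
\[
w^*w = c^2\, dm^*n\, d^2\, n^*m\, d = c^2\, (m^*nn^*m)\, d^4 \in D.
\]
Property~($4'$) of Lemma~\ref{lem:semidiag} then gives $\pi_\phi(w^*w) = \phi(w^*w)\, 1_{D'_{A^\delta}/J_\phi}$, and substituting $\phi(m^*nn^*m) = \phi(m^*m)\phi(n^*n)$ from Lemma~\ref{lem:alpha-n}(\ref{it:commute}) together with $\phi(d) = 1$ yields $\phi(w^*w) = 1$. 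The computation for $ww^*$ is symmetric (swap the roles of $m$ and $n$). For (d), a second admissible pair $(V, d')$ yielding $w'$ has $\osupp(dd') \subseteq U \cap V$, on which $\alpha_n$ and $\alpha_m$ still agree, so the same argument gives $w^*w' = c^2 (m^*nn^*m)\, d^2 (d')^2 \in D$ with $\pi_\phi(w^*w') = 1_{D'_{A^\delta}/J_\phi}$. Combined with $\pi_\phi(ww^*) = 1$ from (c), this forces $\pi_\phi(w') = \pi_\phi(ww^*)\pi_\phi(w') = \pi_\phi(w)\pi_\phi(w^*w') = \pi_\phi(w)$, establishing independence.
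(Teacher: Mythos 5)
Your parts (a), (c) and (d) are correct and are essentially the paper's own argument: commutation of $w$ with all of $D$ follows from Lemma~\ref{lem:alpha-n}\eqref{it:commute} applied to products $dd'$; unitarity comes from reducing $w^*w$ to a scalar multiple of $(m^*nn^*m)d^4 \in D$, invoking property~($4'$) of Lemma~\ref{lem:semidiag} and the identity $\phi(m^*nn^*m)=\phi(m^*m)\phi(n^*n)$ from Lemma~\ref{lem:alpha-n}\eqref{it:commute}; and independence of the choices is obtained by showing that the ``mixed'' product of two admissible representatives maps to $1_{D'_{A^\delta}/J_\phi}$ under $\pi_\phi$ (the paper computes $\pi_\phi(w_1w_2^*)$ where you compute $\pi_\phi(w^*w')$, which is the same argument).

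The genuine problem is your step (b). The identity $(n^*m-\Phi^\delta(n^*m))d^2=0$ that you hope to extract from $\alpha_n|_U=\alpha_m|_U$ is false in general, and no ``spectral argument'' can rescue it: take $A=C^*_r(\ZZ)$ with the comultiplication as coaction of $\ZZ$, so that $A^\delta=D=\CC 1$ and $\widehat{D}$ is a single point; then $n=1$ and $m=\lambda_1$ lie in $\Nn(D)$ with $n^*n=m^*m=1$ and $\alpha_n=\alpha_m$, yet $w=\lambda_1\notin A^\delta$, so even the conclusion $w\in D'_{A^\delta}$ fails. The resolution is that the lemma carries an implicit hypothesis $n^*m\in A^\delta$ --- precisely condition~(R2) of Lemma~\ref{lem:equivalence-construction} --- which is in force everywhere the lemma and Notation~\ref{ntn:Un*m} are actually used (either assumed outright, or automatic because the two normalisers lie in the same spectral subspace, or because $n^*m\in D$). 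Once $n^*m\in A^\delta$ is granted, membership of $w=\phi(m^*m)^{-1/2}\phi(n^*n)^{-1/2}\,n^*m\,d^2$ in $A^\delta$ is immediate, since $d\in D\subseteq A^\delta$ and $A^\delta$ is a subalgebra; there is nothing to prove. You were right to identify this as the one point where the coaction must enter, and the paper's proof (which asserts $w\in D'_{A^\delta}$ citing only Lemma~\ref{lem:alpha-n}\eqref{it:commute}) is silent about it; but the correct move is to supply the missing hypothesis, not to attempt a derivation that cannot succeed.
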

\begin{proof}
We have $w\in D'_{A^\delta}$ by Lemma~\ref{lem:alpha-n}\eqref{it:commute}. By
Lemma~\ref{lem:semidiag} and Lemma~\ref{lem:alpha-n}, and since $\osupp(d)\subseteq U$,
$\phi(d) = 1$, and $\alpha_m|_U = \alpha_n|_U$, we have
\begin{align*}
\phi(m^*m)\phi(n^*n) \pi_\phi(w^*w)
&=\pi_\phi ((d n^*m d)^*(d n^*m d))
=\pi_\phi(d^*d)^2  \pi_\phi(m^*nn^*m)\\
&= \phi(m^*nn^*m) 1_{D'_{A^\delta}/J_\phi}
=\phi(m^*m)\phi(n^*n)1_{D'_{A^\delta}/J_\phi},
\end{align*}
so $\pi_\phi(w^*w) = 1_{D'_{A^\delta}/J_\phi}$. Switching the roles of $m,n$ gives
$\pi(ww^*) = 1_{D'_{A^\delta}/J_\phi}$. We clearly have
$\pi_\phi(\phi(m^*m)^{-1/2}\phi(n^*n)^{-1/2} d m^*n d) = \pi_\phi(w^*)$.

Now fix open $U_1, U_2 \subseteq \widehat{D}$ and $d_1,d_2 \in D$ with $\phi \in U_1 \cap
U_2$, $\alpha_n|_{U_i} = \alpha_m|_{U_i}$, $\osupp(d_i) \subseteq U_i$ and $\phi(d_i) =
1$. It follows from Lemma~\ref{lem:alpha-n}\eqref{it:commute} that $n^*m d_1d_2 = d_1d_2
n^*m$. Thus
\[
\pi_\phi(d_1 n^*m d_1 d_2m^*n d_2 )
    = \pi_\phi (d_1^2d_2 n^*m m^* n d_2 )
    = \pi_\phi (n^*m m^* n)
    = \phi(n^*m m^*n)1_{D'_{A^\delta}/J_\phi},
\]
and we deduce that
\[
\pi_\phi\big(\phi(m^*m)^{-1/2}\phi(n^*n)^{-1/2} d_1 n^*m d_1\big) =
\pi_\phi\big(\phi(m^*m)^{-1/2}\phi(n^*n)^{-1/2} d_2 n^*m d_2\big).\qedhere
\]
\end{proof}

\begin{ntn}\label{ntn:Un*m}
Let $A$ be a separable $C^*$-algebra, $\delta$ a coaction of a discrete group $\Gamma$ on
$A$, and $D$ an abelian $C^*$-subalgebra of $A^\delta$ containing an approximate unit for $A^\delta$. Suppose that $n,m \in N(D)$,
$\phi\in\osupp (n^*n)\cap\osupp (m^*m)$, and that there is an open $U \owns \phi$ such
that $\alpha_m|_U = \alpha_n|_U$. We write
\[
U^\phi_{n^*m} := \pi_\phi(w)
\]
for any $w$ of the form~\eqref{eq:wdef}. If $\phi$ is clear from context, we just write
$U_{n^*m}$ for $U^\phi_{n^*m}$
\end{ntn}

\begin{lem}\label{lem:unitary}
Let $A$ be a separable $C^*$-algebra, $\delta$ a coaction of a discrete group $\Gamma$ on
$A$, and $D$ an abelian $C^*$-subalgebra of $A^\delta$ containing an approximate unit for $A^\delta$. Take $n_1, n_2,m \in N(D)$,
$\phi\in\osupp (n_1^*n_1)\cap \osupp(n_2^*n_2) \cap\osupp (m^*m)$, and open $U_i \owns
\phi$ with $\alpha_m|_{U_i} = \alpha_{n_i}|_{U_i}$. Then
\begin{enumerate}
	\item\label{it:u1} $U_{n_i^*n_i}=1_{D'_{A^\delta}/J_\phi}$,
	\item\label{it:u2} $U_{n_i^*m}^*=U_{mn_i^*}$, and
	\item\label{it:u3} $U_{n_1^*m}U_{m^*n_2}=U_{n_1^*n_2}$.
\end{enumerate}
\end{lem}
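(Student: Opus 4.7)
The plan is to treat the three claims in order; parts (1) and (2) are essentially immediate from the definitions, while (3) carries the substantive content. For (1), taking $m:=n_i$ in Notation~\ref{ntn:Un*m} makes the compatibility condition $\alpha_n|_U=\alpha_m|_U$ trivial, and the representative reduces to $\phi(n_i^*n_i)^{-1}\,d n_i^*n_i d\in D$, whose image under $\pi_\phi$ is $\phi(n_i^*n_i)^{-1}\phi(d^2 n_i^*n_i)\cdot 1 = 1_{D'_{A^\delta}/J_\phi}$ by Lemma~\ref{lem:semidiag}$(4')$ together with $\phi(d)=1$. For (2), Lemma~\ref{lem:unitary-normalizers} already records that $\pi_\phi(w^*) = \pi_\phi(\phi(n_i^*n_i)^{-1/2}\phi(m^*m)^{-1/2} dm^* n_i d)$, which is exactly the element whose class defines $U_{m^*n_i}$.

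For (3), I would fix an open $U\owns\phi$ inside $U_1\cap U_2\cap\osupp(m^*m)$ and $d\in D$ with $\osupp(d)\subseteq U$ and $\phi(d)=1$. Set $A:= dn_1^*m d$ and $B:= dm^*n_2 d$, both of which lie in $D'_{A^\delta}$ by Lemma~\ref{lem:unitary-normalizers}. It suffices to prove $\pi_\phi(AB) = \phi(m^*m)\,\pi_\phi(dn_1^*n_2 d)$, since then the scalar $\phi(m^*m)$ cancels the $\phi(m^*m)^{-1}$ arising in the prefactor $\phi(m^*m)^{-1}\phi(n_1^*n_1)^{-1/2}\phi(n_2^*n_2)^{-1/2}$ of the product $U_{n_1^*m}U_{m^*n_2}$, producing $U_{n_1^*n_2}$.

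The reduction of $AB = dn_1^*\,m d^2 m^*\,n_2 d$ proceeds by two applications of Lemma~\ref{lem:alpha-n}\eqref{it:d<->n}. First, since $\osupp(d)\subseteq \osupp(m^*m)$ gives $d^2\in I(m^*m)$, we have $d^2 m^* = m^*\alpha_{m^*}^\#(d^2)$, whence $m d^2 m^* = mm^*\alpha_{m^*}^\#(d^2)\in D$. Second, the hypothesis $\alpha_m|_{U_2}=\alpha_{n_2}|_{U_2}$ yields $\osupp(md^2m^*)\subseteq\osupp(\alpha_{m^*}^\#(d^2))=\alpha_m(\osupp(d))\subseteq\alpha_{n_2}(U_2)\subseteq\osupp(n_2n_2^*)$, so $md^2m^*\in I(n_2n_2^*)$ and $(md^2m^*)n_2 = n_2\alpha_{n_2}^\#(md^2m^*)$. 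Since $\alpha_{n_2}^\#(md^2m^*)\in D\subseteq Z(D'_{A^\delta})$, this rewrites $AB$ as $(dn_1^*n_2 d)\,\alpha_{n_2}^\#(md^2m^*)$. Applying $\pi_\phi$ and Lemma~\ref{lem:semidiag}$(4')$ then gives $\pi_\phi(AB) = \phi(\alpha_{n_2}^\#(md^2m^*))\,\pi_\phi(dn_1^*n_2 d)$, and the scalar computes as
\[
\alpha_{n_2}(\phi)(md^2m^*) = \alpha_m(\phi)(mm^*)\cdot\alpha_{m^*}(\alpha_m(\phi))(d^2) = \phi(m^*m)\cdot\phi(d)^2 = \phi(m^*m),
\]
using $\alpha_m(\phi)(mm^*) = \phi(m^*m)$ (substitute $d:=mm^*$ into the defining relation of Lemma~\ref{lem:alpha-n}\eqref{it:alphan}) and $\alpha_{m^*} = \alpha_m^{-1}$ from Lemma~\ref{lem:alpha-n}\eqref{it:alphan*}.

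The main obstacle will be the support containment $\osupp(md^2m^*)\subseteq\osupp(n_2n_2^*)$ used in the second application of \eqref{it:d<->n}: this is the one step where the hypothesis $\alpha_m|_{U_2}=\alpha_{n_2}|_{U_2}$ enters non-trivially, rather than merely as the legitimacy condition for writing down the various $U$'s. Everything else is careful bookkeeping of the normalising scalars and the fact that elements of $D$ become scalars modulo $J_\phi$.
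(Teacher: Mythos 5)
Your proof is correct and follows essentially the same route as the paper: parts (1) and (2) are the same immediate computations, and for (3) you carry out in detail the ``quick calculation'' the paper only sketches, using the same key facts ($\alpha_m(\phi)(mm^*)=\phi(m^*m)$ and moving elements of $D$ past normalisers via $\alpha^\#$, which is the content of the paper's appeal to $\alpha_{n_i^*m}(\phi)=\phi$). The only cosmetic differences are your use of a single $d$ supported in $U_1\cap U_2$ rather than separate $d_1,d_2$ (harmless by the independence-of-choice statement in Lemma~\ref{lem:unitary-normalizers}) and the unfortunate reuse of the letter $A$ for $dn_1^*md$.
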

\begin{proof}
By normalising, we can assume that $\phi(n_i^*n_i) = \phi(m^*m) = 1$. For~(1), just
calculate: $\pi_\phi(dn_i^*n_id) = \phi(d)^2\phi(n_i^*n_i) \cdot 1_{D'_{A^\delta}/J_\phi}
= 1_{D'_{A^\delta}/J_\phi}$. Statement~(2) follows from
Lemma~\ref{lem:unitary-normalizers} because $\pi_\phi$ is a *-homomorphism. For~(3), take
$d_i$ supported on $U_i$ with $\phi(d_i) = 1$. A quick calculation using that
$\alpha_{n_i}(\phi)(mm^*) = \phi(m^*m)$ and that $\alpha_{n_i^*m}(\phi) = \phi$ gives
$U_{n_1^*m}U_{m^*n_2} = \phi(m^*m)\phi(d_1d_2)\pi_\phi(d_1 n_1^*n_2d_2) = U_{n_1^*n_2}$.
\end{proof}

We are now ready to describe the elements of $\Hh(A,D,\delta)$.

\begin{lem}\label{lem:equivalence-construction}
Let $A$ be a separable $C^*$-algebra, $\delta$ a coaction of a discrete group $\Gamma$ on
$A$, and $D$ an abelian $C^*$-subalgebra of $A^\delta$ containing an approximate unit for $A^\delta$. Define $\sim$ on $\{(n,\phi) : n
\in N_\star(D), \phi\in\osupp(n^*n)\}$ by $(n, \phi) \sim (m,\psi)$ if and only if
\begin{itemize}
\item[(R1)] $\phi = \psi$,
\item[(R2)] $n^*m \in A^\delta$,
\item[(R3)] there exists an open neighbourhood $U$ of $\phi$ in $\widehat{D}$ such
    that $U\subseteq \osupp(n^*n)\cap\osupp (m^*m)$ and $\alpha_m \vert_U = \alpha_n
    \vert_U$, and
\item[(R4)] the unitary $U^\phi_{n^*m}$ of Notation~\ref{ntn:Un*m} belongs to the
    connected component $\mathcal{U}_0(D'_{A^\delta}/J_\phi)$ of the identity in the
    unitary group of $D'_{A^\delta}/J_\phi$.
\end{itemize}
Then $\sim$ is an equivalence relation.
\end{lem}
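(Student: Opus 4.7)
The plan is to verify the three equivalence-relation axioms by checking (R1)--(R4) in turn.

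\emph{Reflexivity.} For any $(n,\phi)$, (R1) is trivial; (R2) follows from $n^*n \in D \subseteq A^\delta$ by Lemma~\ref{lem:alpha-n}\eqref{it:nn*,n*n in D}; (R3) holds with $U := \osupp(n^*n)$ because $\alpha_n = \alpha_n$; and (R4) follows from Lemma~\ref{lem:unitary}\eqref{it:u1}, which gives $U^\phi_{n^*n} = 1_{D'_{A^\delta}/J_\phi} \in \mathcal{U}_0(D'_{A^\delta}/J_\phi)$.

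\emph{Symmetry.} Suppose $(n,\phi)\sim(m,\phi)$. Conditions (R1) and (R3) are manifestly symmetric in $n$ and $m$, and (R2) for $(m,\phi)\sim(n,\phi)$ follows because $m^*n = (n^*m)^*$ and $A^\delta$ is self-adjoint. For (R4), Lemma~\ref{lem:unitary-normalizers} shows that $U^\phi_{m^*n} = (U^\phi_{n^*m})^*$, and $\mathcal{U}_0(D'_{A^\delta}/J_\phi)$, being a subgroup of the unitary group, is closed under taking adjoints.

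\emph{Transitivity.} Suppose $(n_1,\phi) \sim (m,\phi) \sim (n_2,\phi)$, with $n_i \in \Nn_{g_i}(D)$ and $m \in \Nn_h(D)$. Then (R1) is immediate, and (R3) follows by taking the intersection $V$ of the two given neighbourhoods, so that $\alpha_{n_1}|_V = \alpha_m|_V = \alpha_{n_2}|_V$. Granting (R2), condition (R4) follows from Lemma~\ref{lem:unitary}\eqref{it:u3}, which yields $U^\phi_{n_1^*n_2} = U^\phi_{n_1^*m}\,U^\phi_{m^*n_2}$: a product of two elements of the subgroup $\mathcal{U}_0(D'_{A^\delta}/J_\phi)$, hence itself in $\mathcal{U}_0$.

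The main obstacle is (R2) in transitivity: showing $n_1^*n_2 \in A^\delta$. My plan is to exploit the grading. Since the spectral subspaces $\{A_g : g \in \Gamma\}$ are linearly independent (the maps $\Phi_g$ satisfy $\Phi_g|_{A_h} = 0$ for $g \ne h$) and $n_1^*m \in A_{g_1^{-1}h}$ is assumed to lie in $A_e = A^\delta$, we get $n_1^*m = 0$ or $g_1 = h$. To rule out the first possibility, note that Lemma~\ref{lem:unitary-normalizers} gives
\[
\pi_\phi(d\,n_1^*m\,d) = \phi(n_1^*n_1)^{1/2}\phi(m^*m)^{1/2}\,U^\phi_{n_1^*m},
\]
which is a nonzero scalar multiple of a unitary and therefore nonzero in $D'_{A^\delta}/J_\phi$; in particular $n_1^*m \ne 0$, forcing $g_1 = h$. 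Applying the same argument to the pair $(m, n_2)$ yields $h = g_2$, so $n_1^*n_2 \in A_{g_1^{-1}g_2} = A_e = A^\delta$, completing the verification.
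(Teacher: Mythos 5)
Your proof is correct and follows essentially the same route as the paper's: reflexivity and symmetry via Lemma~\ref{lem:unitary}(\ref{it:u1})--(\ref{it:u2}), transitivity of (R3) by intersecting the two neighbourhoods, (R4) from the multiplicativity $U^\phi_{n_1^*n_2}=U^\phi_{n_1^*m}U^\phi_{m^*n_2}$ of Lemma~\ref{lem:unitary}(\ref{it:u3}), and (R2) by observing that a nonzero element cannot lie in two distinct spectral subspaces. The only (equally valid) point of divergence is the verification that $n_1^*m\neq 0$: the paper computes $\phi(n_1^*mm^*n_1)=\phi(n_1^*n_1)\phi(m^*m)>0$ using Lemma~\ref{lem:alpha-n}, whereas you note that $\pi_\phi(dn_1^*md)$ is a nonzero multiple of the unitary $U^\phi_{n_1^*m}$ in the unital quotient $D'_{A^\delta}/J_\phi$.
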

\begin{proof}
Reflexivity follows from Lemma~\ref{lem:unitary}\eqref{it:u1}. Symmetry follows from
Lemma~\ref{lem:unitary}\eqref{it:u2} and that if $U^\phi_{n^*m}\in
\mathcal{U}_0(D'_{A^\delta}/J_\phi)$, then $(U^\phi_{n^*m})^*\in
\mathcal{U}_0(D'_{A^\delta}/J_\phi)$.

For transitivity, suppose $(n_1, \phi) \sim (m, \psi) \sim (n_2, \omega)$. Then $\phi = \psi =
\omega$, $n_1^*m, m^*n_2 \in A^\delta$, there are open sets $\phi \in U_i \subseteq \osupp
(n_i^*n_i) \cap \osupp (m^*m)$ such that $\alpha_{n_i} \vert_{U_i} = \alpha_m \vert_{U_i}$, and
$U_{n_1^*m}, U_{m^*n_2} \in \mathcal{U}(D'_{A^\delta}/J_\phi)_0$.

The set $V := U_1\cap U_2\subseteq \osupp (n_1^*n_1) \cap \osupp (n_2^* n_2)$ is open,
contains $\phi$ and satisfies $\alpha_{n_1} \vert_V = \alpha_{n_2} \vert_V$. We have $n_1
\in A_{g_1}$, $m \in A_{h}$ and $n_2 \in A_{g_2}$ for some $g_1, h, g_2 \in \Gamma$.
Since $\alpha_{n_1}(\phi) = \alpha_m(\phi)$, Lemma~\ref{lem:alpha-n} gives
$\phi(n_1^*mm^*n_1) = \phi(n_1^*n_1)\alpha_m(\phi)(mm^*) = \phi(n_1^*n_1)\phi(m^*m)$, so
$n_1^*m \not= 0$. So $n_1^*m \in A^\delta$ forces $g_1 = h$. Likewise $h = g_2$, so
$n_1^*n_2 \in A^\delta$.

Finally, $U_{n_1^*n_2}=U_{n_1^*m}U_{m^*n_2}$ by Lemma~\ref{lem:unitary}\eqref{it:u3}, so
$U_{n_1^*m},U_{m^*n_2}\in \mathcal{U}_0(D'_{A^\delta}/J_\phi)$ forces $U_{n_1^*n_2}\in
\mathcal{U}_0(D'_{A^\delta}/J_\phi)$, and thus $(n_1, \phi) \sim (n_2, \omega)$.
\end{proof}

\begin{rmk}
In the situation of Lemma~\ref{lem:equivalence-construction}, if $n \in N_\star(D)$ and
$\phi \in \osupp(n^*n)$, then $m := \phi(n^*n)^{-1/2} n$ satisfies $(n,\phi) \sim
(m,\phi)$ and $\phi(m^*m) = 1$. So we can and frequently will, without loss of
generality, choose representatives $(n,\phi)$ of the equivalence classes for $\sim$ such
that $\phi(n^*n) = 1$. If $\phi(m^*m) = \phi(n^*n) = 1$ and $\alpha_m$ and $\alpha_n$
agree on a neighbourhood $U \owns \phi$, then $U_{n^*m} = \pi_\phi(dn^*md)$ for any $d$
supported on $U$ with $\phi(d) = 1$.
\end{rmk}

We now construct our extended Weyl groupoid.

\begin{prp}\label{prp:groupoid-construction}
Let $A$ be a separable $C^*$-algebra, $\delta$ a coaction of a discrete group $\Gamma$ on
$A$, and $D$ an abelian $C^*$-subalgebra of $A^\delta$ containing an approximate unit for $A^\delta$. Let $\sim$ be the equivalence
relation of Lemma~\ref{lem:equivalence-construction}, and for $n \in \Nn_\star(D)$ and
$\phi \in \osupp(n^*n)$, let $[n,\phi]$ denote the equivalence class of $(n,\phi)$ under
$\sim$. Define
\[
\Hh(A,D,\delta) := \{[n, \phi] : n \in N_\star(D), \phi \in \osupp(n^*n)\}.
\]
There are maps
\begin{gather*}
    r, s \colon \Hh(A, D, \delta) \to \widehat{D},\\
       M \colon \Hh(A, D, \delta) \mathbin{_s\times_r} \Hh(A, D, \delta) \to \Hh(A, D, \delta),\text{ and}\\
       I \colon \Hh(A, D, \delta) \to \Hh(A, D, \delta)
\end{gather*}
such that
\begin{gather*}
    r([n, \phi] ) = \alpha_n(\phi), \quad s([n,\phi]) = \phi, \\
    \quad M([n, \phi],[m,\psi]) = [nm, \psi], \quad \text{and} \quad I([n,\phi]) = [n^*, \alpha_n(\phi)].
\end{gather*}
Moreover, $\Hh(A, D, \delta)$ is a groupoid under these operations, and there is a
cocycle $c_\delta : \Hh(A, D, \delta) \to \Gamma$ such that $c_\delta([n,\phi]) = g$ if
and only if $n \in A_g$.
\end{prp}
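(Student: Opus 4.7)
The plan is to verify in turn (a) that each of $r, s, M, I$ descends to a well-defined operation on equivalence classes, (b) that the resulting operations satisfy the groupoid axioms, and (c) that $c_\delta$ is well-defined and is a cocycle.

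Well-definedness of $s$ is immediate from (R1), and well-definedness of $r$ follows from (R3): if $(n,\phi) \sim (m,\phi)$, then $\alpha_n$ and $\alpha_m$ agree on a neighbourhood of $\phi$, so $\alpha_n(\phi) = \alpha_m(\phi)$. For the inverse, given $(n,\phi)\sim(m,\phi)$ with $n\in A_g$, the relation (R2) and the spectral-subspace grading force $m\in A_g$, so $nm^* \in A_e = A^\delta$; (R3) for $(n^*,\alpha_n(\phi))\sim(m^*,\alpha_m(\phi))$ follows from Lemma~\ref{lem:alpha-n}\eqref{it:alphan*} applied to the common neighbourhood's image under $\alpha_n$; and (R4) follows from Lemma~\ref{lem:unitary}\eqref{it:u2}, since $U^{\alpha_n(\phi)}_{nm^*} = (U^\phi_{n^*m})^*$ lies in $\mathcal{U}_0(D'_{A^\delta}/J_{\alpha_n(\phi)})$ whenever $U^\phi_{n^*m}$ does.

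Well-definedness of $M$ is the delicate step. I would split the check $[nm,\psi] = [n'm',\psi]$ (for $(n,\phi)\sim(n',\phi)$, $(m,\psi)\sim(m',\psi)$, and $\alpha_m(\psi)=\phi$) into two stages $[nm,\psi]\sim[n'm,\psi]\sim[n'm',\psi]$. In each stage (R1) is trivial, (R2) follows from the grading argument above ($n,n' \in A_g$ and $m,m'\in A_h$, so $(nm)^*(n'm) \in A^\delta$), and (R3) follows from the chain-rule $\alpha_{nm} = \alpha_n\circ\alpha_m$ in Lemma~\ref{lem:alpha-n}\eqref{it:alphanm} together with the hypothesised agreements. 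The crucial (R4) is handled by Lemma~\ref{lem:unitary}\eqref{it:u3}: choosing representatives normalised so that $\phi(n^*n)=\phi(m^*m)=1$, one writes
\[
U^\psi_{(nm)^*(n'm)} = U^\psi_{m^* n^*n'm},
\]
and shows this equals a conjugate (by a unitary lift of $m$-data) of $U^\phi_{n^*n'}$ in $D'_{A^\delta}/J_\psi$; since $U^\phi_{n^*n'}\in\mathcal{U}_0$ and inner automorphisms preserve the connected component of the identity, the result is in $\mathcal{U}_0(D'_{A^\delta}/J_\psi)$. An analogous identity handles the second stage.

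With well-definedness in hand, the groupoid axioms are routine. Associativity of $M$ descends from associativity of multiplication in $A$; the units are the classes $[d,\phi]$ for any $d\in D$ with $\phi(d)=1$, and Lemma~\ref{lem:alpha-n}\eqref{it:alphan*} shows $I$ is a two-sided inverse on nose: $[n,\phi][n^*,\alpha_n(\phi)] = [nn^*,\alpha_n(\phi)]$ represents $r([n,\phi])$ because $\pi_{\alpha_n(\phi)}(nn^*) = \alpha_n(\phi)(nn^*)\cdot 1$, and similarly on the other side. Finally, $c_\delta$ is well-defined because if $(n,\phi)\sim(m,\phi)$ with $n\in A_g$, $m\in A_h$, then $n^*m\in A_{g^{-1}h}\cap A^\delta$ forces $g=h$; and it is a cocycle because $A_g\cdot A_h\subseteq A_{gh}$. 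The main obstacle throughout will be the (R4) check for $M$; everything else reduces quickly to Lemmas~\ref{lem:alpha-n} and~\ref{lem:unitary}.
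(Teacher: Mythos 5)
Your proposal follows essentially the same route as the paper: the same two-stage decomposition $[nm,\psi]\sim[n'm,\psi]\sim[n'm',\psi]$ for well-definedness of $M$, the conjugation-by-$m$ mechanism for transporting the (R4) unitary, the grading argument for (R2) and for $c_\delta$, and the identification of units with classes $[d,\phi]$.

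One step is stated imprecisely. In your check of (R4) for the inverse map you write $U^{\alpha_n(\phi)}_{nm^*} = (U^\phi_{n^*m})^*$, but the left-hand side lives in $D'_{A^\delta}/J_{\alpha_n(\phi)}$ while the right-hand side lives in $D'_{A^\delta}/J_{\phi}$, so the equation is not meaningful as written; Lemma~\ref{lem:unitary} only relates unitaries over the \emph{same} character. The correct statement, which is what the paper proves, is $U^{\alpha_n(\phi)}_{nm^*} = \iota_{n^*}\big(U^\phi_{m^*n}\big)$, where $\iota_{n^*} : D'_{A^\delta}/J_\phi \to D'_{A^\delta}/J_{\alpha_n(\phi)}$ is the transport isomorphism of Lemma~\ref{lem:trans} ($a \mapsto nan^*$ modulo the ideals, after normalising $\phi(n^*n)=1$); since an isomorphism of $C^*$-algebras carries $\mathcal{U}_0$ to $\mathcal{U}_0$, membership in the identity component is preserved. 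This is exactly the ``conjugation by a unitary lift'' you invoke for $M$, so the fix is to make that same tool explicit here. Note also that in the second stage of the $M$ argument no conjugation is needed at all: both $U^\psi_{(nm)^*(nm')}$ and $U^\psi_{m^*m'}$ already live over $\psi$, and a direct computation using Lemma~\ref{lem:alpha-n} gives $U^\psi_{(nm)^*(nm')} = U^\psi_{m^*m'}$, which is simpler than an ``analogous'' conjugation identity. (Your citation of part (3) of Lemma~\ref{lem:unitary} is not the relevant tool for well-definedness of $M$ --- that identity is what gives transitivity of $\sim$ --- but the mechanism you actually describe is the right one.) With these adjustments the argument is complete and agrees with the paper's.
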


We need the following lemma for the proof of Proposition~\ref{prp:groupoid-construction}.

\begin{lem}\label{lem:trans}
Let $A$ be a separable $C^*$-algebra, $\delta$ a coaction of a discrete group $\Gamma$ on
$A$, and $D$ an abelian $C^*$-subalgebra of $A^\delta$ containing an approximate unit for $A^\delta$. Suppose $m\in N(D)$,
$\phi\in\osupp(m^*m)$, and $\phi(m^*m)=1$. Then there is an isomorphism
$\iota_m:D'_{A^\delta}/J_{\alpha_m(\phi)}\to D'_{A^\delta}/J_\phi$ such that
$\iota_m(\pi_{\alpha_m(\phi)}(a))=\pi_\phi(m^*am)$ for $a\in D'_{A^\delta}$.
\end{lem}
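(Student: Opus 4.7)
The plan is to define $\iota_m$ via an auxiliary local unit and then verify well-definedness, multiplicativity, and invertibility in turn. Choose $d \in D$ with $\osupp(d) \subseteq \osupp(m^*m)$ and $\phi(d) = 1$, as provided by condition~(\ref{it:sd unital sections}) of Definition~\ref{dfn:semidiag}, and set $\iota_m(\pi_{\alpha_m(\phi)}(a)) := \pi_\phi(dm^*amd)$ for $a \in D'_{A^\delta}$. This is the rigorous meaning of the formula $\pi_\phi(m^*am)$ in the lemma: $m^*am$ itself need not lie in $D'_{A^\delta}$, but the extra $d^2$ is absorbed modulo $J_\phi$ because Lemma~\ref{lem:semidiag} gives $\pi_\phi(d^2) = \phi(d)^2 \cdot 1 = 1$.

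The first substantive step is to show $dm^*amd = m^*amd^2$ lies in $D'_{A^\delta}$. Applying the adjoint form of Lemma~\ref{lem:alpha-n}\eqref{it:d<->n}, any $d' \in I(m^*m)$ satisfies $d'm^* = m^*\alpha_{m^*}^\#(d')$ and $\alpha_{m^*}^\#(d')m = md'$, so for $a \in D'_{A^\delta}$ this yields $d'm^*am = m^*amd'$. In particular $dm^*amd = m^*amd^2$, and replacing $d'$ by $ed^2 \in I(m^*m)$ for arbitrary $e \in D$ shows that $m^*amd^2$ commutes with every $e \in D$. Independence of the chosen $d$ and well-definedness modulo $J_{\alpha_m(\phi)}$ both rest on exhibiting the relevant differences as lying in $\ker\phi \cdot D'_{A^\delta} \subseteq J_\phi$. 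For independence, factor $m^*am(d^2 - (d')^2) = (d-d')\cdot m^*am(d+d')$ using the commutation just proved; since $d-d' \in \ker\phi$ and $m^*am(d+d') \in D'_{A^\delta}$, the product lies in $J_\phi$. For well-definedness, approximate generators of $\ker(\alpha_m(\phi)) \cdot D'_{A^\delta}$ by elements $jb$ with $j \in \ker(\alpha_m(\phi)) \cap I(mm^*)$ (obtained by multiplying $j$ by a local unit in $I(mm^*)$ at $\alpha_m(\phi)$ and passing to norm-limits); the identity $m^*j = \alpha_m^\#(j)m^*$ together with $\alpha_m^\#(j) \in \ker\phi$ from Lemma~\ref{lem:alpha-n}\eqref{it:alphand} then places $m^*(jb)md^2 = \alpha_m^\#(j) \cdot m^*bmd^2$ in $\ker\phi \cdot D'_{A^\delta}$.

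Linearity and $^*$-preservation of $\iota_m$ are immediate, while multiplicativity reduces to the identity $m^*am \cdot m^*bm = (m^*m)(m^*abm)$, obtained by commuting $mm^* \in D$ past $a \in D'_{A^\delta}$, combined with $\pi_\phi(m^*m) = \phi(m^*m) = 1$. For invertibility, the symmetric construction at $\alpha_m(\phi) \in \osupp(mm^*)$ produces $\iota_{m^*} : D'_{A^\delta}/J_\phi \to D'_{A^\delta}/J_{\alpha_m(\phi)}$, valid because $\alpha_m(\phi)(mm^*) = \phi(\alpha_m^\#(mm^*)) = \phi(m^*m) = 1$ by Lemma~\ref{lem:alpha-n}. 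The composition $\iota_{m^*} \circ \iota_m$ reduces to conjugation of $a$ by $mm^*$, which acts trivially on $D'_{A^\delta}/J_{\alpha_m(\phi)}$ since $\pi_{\alpha_m(\phi)}(mm^*) = 1$ and $a$ commutes with $D$; the other composition is handled symmetrically. The main technical obstacle is the bookkeeping with the local unit $d$ throughout and the norm-approximation argument reducing to $j \in I(mm^*)$ in the well-definedness step; everything else is routine given the normaliser calculus in Lemma~\ref{lem:alpha-n}.
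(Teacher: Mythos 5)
Your proof takes the same route as the paper's: the map is $\pi_{\alpha_m(\phi)}(a) \mapsto \pi_\phi(m^*am)$, multiplicativity comes from commuting $mm^*\in D$ past $a$ together with $\pi_\phi(m^*m)=\phi(m^*m)=1$, and the inverse is the symmetric map $\iota_{m^*}$. Your verifications of multiplicativity and of invertibility (including the check that $\alpha_m(\phi)(mm^*)=\phi(m^*m)=1$, so the symmetric construction applies at $\alpha_m(\phi)$) are correct, and the extra bookkeeping with the local unit $d$ is harmless: since $m^*am$ commutes with all of $I(m^*m)$, one has $\pi_\phi(dm^*amd)=\pi_\phi(d^2)\pi_\phi(m^*am)$ whenever the latter makes sense, so your map agrees with the paper's. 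The worry you are addressing --- that $m^*am$ might not lie in $D'_{A^\delta}$ --- is one the paper silently skips; it can be settled directly from the polar decomposition $m=v|m|$ and the relations recorded in the proof of Lemma~\ref{lem:alpha-n}, but your cutoff is a legitimate alternative.

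The one step that does not work as written is the reduction in your well-definedness argument. A generator $jb$ of $J_{\alpha_m(\phi)}$, with $j\in\ker(\alpha_m(\phi))$ and $b\in D'_{A^\delta}$, cannot in general be norm-approximated by elements $j'b'$ with $j'\in\ker(\alpha_m(\phi))\cap I(mm^*)$: if $j$ is supported off $\osupp(mm^*)$, then $jb$ has nonzero image in the quotient of $D'_{A^\delta}$ by the ideal generated by $I(mm^*)$, whereas every such $j'b'$ has zero image there. Fortunately you do not need this approximation, because your cutoff $d$ performs the localisation automatically: Lemma~\ref{lem:alpha-n}\eqref{it:d<->n} applied to $m^*$ gives $dm^*=m^*\alpha_{m^*}^\#(d)$, so
\[
dm^*(jb)md=m^*\big(\alpha_{m^*}^\#(d)j\big)bmd,
\]
and $\alpha_{m^*}^\#(d)j$ lies in $\ker(\alpha_m(\phi))\cap I(mm^*)$ exactly, with no limiting argument. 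Your identity $m^*j'=\alpha_m^\#(j')m^*$ then exhibits the result as $\alpha_m^\#\big(\alpha_{m^*}^\#(d)j\big)\cdot(m^*bmd)$, which lies in $\ker(\phi)\cdot D'_{A^\delta}\subseteq J_\phi$ because $m^*bmd$ commutes with every element of $D$ by the same argument you used for $m^*amd^2$. With that repair the proof is complete and matches the paper's (much terser) argument.
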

\begin{proof}
By Lemma~\ref{lem:alpha-n}, $a\in J_{\alpha_m(\phi)}\implies m^*am\in J_\phi$. Thus
$a\mapsto m^*am$ descends to a linear $^*$-preserving map
$\iota_m:D'_{A^\delta}/J_{\alpha_m(\phi)}\to D'_{A^\delta}/J_\phi$. For $a_1,a_2\in
D'_{A^\delta}$,
\begin{align*}
\pi_\phi(m^*a_1m)\pi_\phi(m^*a_2m)
&=\pi_\phi(m^*a_1mm^*a_2m)=\pi_\phi(m^*mm^*a_1a_2m)\\
&=\phi(m^*m)\pi_\phi(m^*a_1a_2m)=\pi_\phi(m^*a_1a_2m)
\end{align*}
so $\iota_m$ is multiplicative and hence a *-homomorphism. Symmetry gives a
*-homomorphism $\iota_{m^*}:D'_{A^\delta}/J_\phi\to D'_{A^\delta}/J_{\alpha_m(\phi)}$
such that $\iota_{m^*}(\pi_\phi(a))=\pi_{\alpha_m(\phi)}(mam^*)$ for $a\in
D'_{A^\delta}$. It is easy to check that $\iota_{m^*}$ is an inverse to $\iota_m$.
\end{proof}

\begin{proof}[Proof of Proposition~\ref{prp:groupoid-construction}]
If $(n,\phi) \sim (m,\psi)$, then $\phi = \psi$, and since $\alpha_n = \alpha_m$ on a
neighbourhood of $\phi$, we have $\alpha_n(\phi) = \alpha_m(\phi)$; so $r$ and $s$ are
well defined. Suppose that $[n, \phi] = [n', \phi]$ and $r([m, \psi] ) = s([n, \phi])$.
Then $nm,n'm\in N_\star(D)$. We claim that $(nm , \psi)\sim (n' m, \psi)$. Indeed, (R1)
is clear, and~(R2) is immediate because $(n, \phi)\sim (n', \phi)$ forces
$(nm)^*n'm=m^*n^*n'm\in A^\delta$. Take an open $U$ with $\phi \in U \subseteq
\osupp(n^*n) \cap ((n')^*n')$ and $\alpha_n|_U = \alpha_{n'}|_U$. Fix $d$ supported on
$U$ with $\phi(d) = 1$. Since $(n,\phi) \sim (n',\phi)$, we have $U^\phi_{n^*n'} \in
\mathcal{U}_0(D'_{A^{\delta}}/J_\phi)$. Now $V:=\alpha_m^{-1}(U\cap \osupp(mm^*))$ is a
neighbourhood of $\psi$, and Lemma~\ref{lem:alpha-n} gives $\alpha_{nm}|_V =
\alpha_{n'm}|_V$, giving~(R3). For~(R4), we may assume that $\psi(m^*m) = 1$. The
isomorphism $\iota_m$ of Lemma~\ref{lem:trans} satisfies
$U^\psi_{(nm)^*(n'm)}=\iota_m(U^\phi_{n^*n'})$, so $U^\phi_{n^*n'} \in
\mathcal{U}_0(D'_{A^{\delta}}/J_\phi)$ forces
$U^\psi_{(nm)^*(n'm)}\in\mathcal{U}_0(D'_{A^\delta}/J_\psi)$, giving~(R4). So $(nm ,
\psi)\sim (n' m, \psi)$. Therefore
\begin{equation}\label{eq:1stfactor}
    \text{if }[n,\phi] = [n',\phi]\text{ and } s([n, \phi] ) = r([m, \psi]),
     \text{ then } [nm,\psi] = [n'm,\psi].
\end{equation}

Now suppose that $[m,\psi] = [m',\psi]$ and $r([m,\psi])=s([n,\phi])$. We have $nm,nm'\in
N_\star(D)$. We claim that $(nm,\psi)\sim (nm',\psi)$. Both (R1)~and~(R2) are immediate
as above. The argument for~(R3) is very similar to that in the preceding paragraph.
For~(R4), we may assume that $\psi(m^*m) = \phi(n^*n) = 1$. Choose $V\owns \psi$ open
such that $V\subseteq\osupp((nm)^*nm)\cap \osupp((nm')^*nm')$, and $d\in D$ with
$\osupp(d)\subseteq V$ and $\psi(d)=1$. Then
\begin{align*}
U^\psi_{(nm)^*nm'}
&=\pi_\psi(d(nm)^*nm'd)=\pi_\psi(\alpha_m^\#(\alpha_{m^*}^\#(d)n^*n)m^*m'd)=\phi(n^*n)\pi_\psi(dm^*m'd)\\
&=\pi_\psi(dm^*m'd)=U^\psi_{m^*m'}\in\mathcal{U}_0(D'_{A^\delta}/J_\psi).
\end{align*}
Thus, $(nm,\psi)\sim (nm',\psi)$. Hence
\begin{equation}\label{eq:2ndfactor}
    \text{if }[m,\psi] = [m',\psi]\text{ and } r([n,\phi]) = s([m,\psi]),
     \text{ then }[nm,\psi] = [nm',\psi].
\end{equation}

By \eqref{eq:1stfactor}~and~\eqref{eq:2ndfactor}, if $[n, \phi] = [n', \phi]$, $[m, \psi]
= [m', \psi]$, and $r([m, \psi]) = s([n, \phi] )$, then $[nm, \psi] = [n'm, \psi] =
[n'm', \psi]$, so $M$ is well-defined.

To see that $I$ is well-defined, suppose that $[n,\phi] = [m,\phi]$. We claim that $(n^*,
\alpha_n(\phi))\sim (m^*, \alpha_{m}(\phi))$. Again (R1)~and~(R2) are clear, and~(R3) is
routine because $\alpha_{n^*} = \alpha_n^{-1}$ and similarly for $\alpha_{m^*}$.
For~(R4), we may assume that $\psi(m^*m) = \phi(n^*n) = 1$. For an open $U \owns \phi$
with $\alpha_n|_U = \alpha_m|_U$ and $d$ supported on $U$ with $\phi(d) = 1$,
\begin{align*} U^{\alpha_n(\phi)}_{nm^*} &=
\pi_{\alpha_n(\phi)}\big((ndn^*)nm^*(ndn^*)\big)\\ &= \iota_{n^*}(\pi_\phi(dm^*nd))
=\iota_{n^*}(U^\phi(m^*n)) \in \mathcal{U}_0(D'_{A^{\delta}}/J_{\alpha_n(\phi)}).
\end{align*}So~(R4) is also satisfied, and $(n^*, \psi)\sim (m^*, \psi)$. Thus $I$ is
well-defined.

The multiplication defined by $M$ is associative because multiplication in $A$ is
associative. By construction, we have $[n, \phi]^{-1}[n,\phi] = [n^*n,\phi] = [d, \phi]$
for any $d \in D$ with $\phi(d) > 0$. Similarly, $[n,\phi][n,\phi]^{-1} = [c,
\alpha_n(\phi)]$ for any $c \in D$ with $\alpha_n(\phi)(c) > 0$. Since $\pi_\phi(d) =
\phi(d)1_{D'_{A^\delta}/J_\phi}$ for $d \in D$, it is routine to check using the
definition of $M$ that
\[
[c, \alpha_m(\psi)][m,\psi] = [m,\psi] = [m,\psi][d, \psi]
\]
for any $c,d \in D$ such that $\alpha_m(\psi)(c) > 0$ and $\psi(d) > 0$. So $\Hh(A, D,
\delta)$ is a groupoid.

Finally, the formula $c_\delta([n,\phi]) = g$ if $n \in \Nn_g(D)$ is well defined
by~(R2), and it is multiplicative because $n \in A_g$ and $m \in A_h$ implies $nm \in
A_{g+h}$.
\end{proof}

We now show how to make $(\Hh(A, D, \delta), c_\delta)$ into a graded Hausdorff \'etale
groupoid.

\begin{thm}\label{thm:lche gpd}
Let $A$ be a separable $C^*$-algebra, $\delta$ a coaction of a countable discrete group
$\Gamma$ on $A$, and $D$ an abelian $C^*$-subalgebra of $A^\delta$ containing an approximate unit for $A^\delta$. Let $\Hh(A, D,
\delta)$ be the groupoid of Proposition~\ref{prp:groupoid-construction}. For $n \in
\Nn_\star(D)$ and an open set $X \subseteq \widehat{D}$ contained in $\osupp(n^*n)$ let
\[
    Z(n, X) := \{[n,\phi] : \phi \in X\} \subseteq \Hh(A, D, \delta).
\]
Then
\begin{equation}\label{eq:basis}
    \{Z(n, X) : n \in \Nn_\star(D),\ X \subseteq \widehat{D}\text{ is open and } X\subseteq\osupp(n^*n)\}
\end{equation}
constitutes a countable basis for a locally compact locally Hausdorff \'etale topology on
$\Hh(A, D, \delta)$, and $c_\delta$ is continuous with respect to this topology.
\end{thm}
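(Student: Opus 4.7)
The plan is to verify the basis axiom first, and then derive the étale, locally compact, locally Hausdorff, and cocycle-continuity properties from the observation that the source map $s$ restricts to a bijection from each $Z(n,X)$ onto the open set $X \subseteq \widehat{D}$. The crux is an \emph{extension-of-equivalence} statement: if $(n,\phi) \sim (m,\phi)$, then there exists an open neighbourhood $V$ of $\phi$ in $\osupp(n^*n) \cap \osupp(m^*m)$ on which $(n,\psi) \sim (m,\psi)$ for every $\psi \in V$. Once this is established, everything else falls out essentially by bookkeeping.

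For the extension step, conditions (R1)--(R2) of Lemma~\ref{lem:equivalence-construction} do not depend on $\phi$, and (R3) holds on a full neighbourhood by hypothesis, so the real work is (R4). Because $D$ is abelian, contains an approximate unit for $A^\delta$, and lies in the centre of $D'_{A^\delta}$, the relative commutant $D'_{A^\delta}$ is naturally a $C_0(\widehat{D})$-algebra with fibres $D'_{A^\delta}/J_\psi$; condition~(4) of Definition~\ref{dfn:semidiag} provides a local section realising the unit of each fibre. In particular, $\psi \mapsto \|\pi_\psi(a)\|$ is upper semicontinuous for each $a \in D'_{A^\delta}$. Since $U^\phi_{n^*m} \in \mathcal{U}_0(D'_{A^\delta}/J_\phi)$, write $U^\phi_{n^*m} = \prod_{j=1}^k \exp(ih_j)$ for self-adjoint $h_j \in D'_{A^\delta}/J_\phi$, lift each $h_j$ to a self-adjoint $\tilde h_j \in D'_{A^\delta}$, and consider the lifted unitary section $\psi \mapsto \prod_j \exp(i\pi_\psi(\tilde h_j))$, which lies in $\mathcal{U}_0(D'_{A^\delta}/J_\psi)$ at every $\psi$. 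Representing $U^\phi_{n^*m}$ by a fixed element $w \in D'_{A^\delta}$ via Notation~\ref{ntn:Un*m} with a $d \in D$ giving the unit throughout a neighbourhood of $\phi$, upper semicontinuity of the norm of the difference of the two sections (which vanishes at $\phi$) produces a neighbourhood $V$ on which $\pi_\psi(w)$ differs from the lifted unitary by less than $2$ in norm; any such unitary lies in $\mathcal{U}_0$ by functional calculus, and hence so does $U^\psi_{n^*m}$.

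With the extension step in hand, the basis property is immediate: for $[n,\phi] \in Z(n_1,X_1) \cap Z(n_2,X_2)$ we have $\phi \in X_1 \cap X_2$ and $(n,\phi) \sim (n_i,\phi)$, so the extension step yields an open $V$ through $\phi$ in $X_1 \cap X_2 \cap \osupp(n^*n)$ with $Z(n,V) \subseteq Z(n_1,X_1) \cap Z(n_2,X_2)$. Countability uses that separability of $D$ makes $\widehat{D}$ second countable, and that separability of $A$ together with countability of $\Gamma$ makes $\Nn_\star(D) = \bigcup_{g\in\Gamma} \Nn_g(D)$ separable (each $\Nn_g(D) = \Nn(D) \cap A_g$ is closed in $A$); combining a countable dense subset of $\Nn_\star(D)$ with a countable basis of $\widehat{D}$ and using the extension step once more to shrink to representatives from the countable set, one extracts a countable subcollection of $\{Z(n,X)\}$ that remains a basis. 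The étale, locally Hausdorff, and locally compact properties transfer from $\widehat{D}$ via the source-map homeomorphisms $s\vert_{Z(n,X)} : Z(n,X) \to X$, and $c_\delta$ is constant on each $Z(n,X)$ (equal to the grading of $n$) hence locally constant and continuous. The principal obstacle is the (R4) extension via the upper-semicontinuous bundle argument; all other verifications are routine given the machinery of Section~\ref{sec:weakly cartan pairs}.
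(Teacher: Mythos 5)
Your overall architecture coincides with the paper's: an extension-of-equivalence lemma (the paper's Lemma~\ref{lem:classes agree}) from which the basis axiom follows, the \'etale, locally Hausdorff and locally compact properties read off from the homeomorphisms $s|_{Z(n,X)} : Z(n,X) \to X$, and continuity of $c_\delta$ from local constancy. Your proof of the (R4) extension --- writing $U^\phi_{n^*m}$ as a finite product of exponentials, lifting the self-adjoint logarithms to $D'_{A^\delta}$, and invoking upper semicontinuity of $\psi \mapsto \|\pi_\psi(\,\cdot\,)\|$ for the $C_0(\widehat{D})$-algebra $D'_{A^\delta}$ --- is a sound variant of the paper's, which instead discretises a path of unitaries, lifts the sample points, and uses upper semicontinuity to keep consecutive lifts within $\|\pi_\psi(a_j)^{-1}\|^{-1}$ of one another so that \cite[Propositions 2.1.8 and 2.1.11]{RLL} apply. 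In either version one must first normalise so that $\psi(n^*n)=\psi(m^*m)=1$ on a neighbourhood and use Definition~\ref{dfn:semidiag}(\ref{it:sd unital sections}) to compare against an honest element of $D'_{A^\delta}$ rather than of its unitisation; you gesture at this adequately.

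The genuine gap is in the countability claim. You propose to pass to a countable dense subset of $\Nn_\star(D)$ and a countable basis of $\widehat{D}$ and then ``use the extension step once more to shrink to representatives from the countable set.'' But the extension step only upgrades an equality $[n,\phi]=[m,\phi]$ that you \emph{already have} at one character to an equality on a neighbourhood; it gives no way to establish $[n,\phi]=[n_i,\phi]$ for some $n_i$ in the countable set in the first place, and that is the real content of the countability assertion. What is needed is a perturbation statement: if $n_i \in \Nn_g(D)$ lies in the same spectral subspace as $n$ (so that (R2) holds) and is close to $n$ in norm, then $[n_i,\psi]=[n,\psi]$ for $\psi$ in a suitable set. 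This requires two estimates, neither of which follows from the extension lemma: (i) that $\alpha_{n_i}(\psi)=\alpha_n(\psi)$ wherever $\psi(n^*n) > \|n\|^2/2$ --- the paper's Lemma~\ref{lem:same alpha}, proved by a Urysohn argument after replacing $n$ by $nd$ to sit at the top of the spectrum of $n^*n$; and (ii) that $\sup_\psi\|U^\psi_{n^*n_i} - U^\psi_{n^*n}\| \to 0$ as $n_i \to n$, so that $U^\psi_{n^*n_i}$ lies within distance $2$ of the identity and hence in $\mathcal{U}_0(D'_{A^\delta}/J_\psi)$. Both are compatible with your framework but are missing ideas rather than bookkeeping. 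A smaller omission: an \'etale topology on the groupoid also requires continuity of multiplication and inversion, which does not follow formally from $s$ being a local homeomorphism and needs the short bisection argument the paper supplies.
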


To prove the theorem, we need some preliminary lemmas.

\begin{lem}\label{lem:classes agree}
Let $A$ be a separable $C^*$-algebra, $\delta$ a coaction of a countable discrete group
$\Gamma$ on $A$, and $D$ an abelian $C^*$-subalgebra of $A^\delta$ containing an approximate unit for $A^\delta$. Let $\Hh(A, D,
\delta)$ be the groupoid of Proposition~\ref{prp:groupoid-construction}. If $[n,\phi],
[m, \phi] \in \Hh(A, D, \delta)$ and $[n, \phi] = [m, \phi]$, then there is an open $U
\subseteq \widehat{D}$ with $\phi \in U \subseteq \osupp(m^*m) \cap \osupp(n^*n)$ such
that $[n, \psi] = [m, \psi]$ for all $\psi \in U$.
\end{lem}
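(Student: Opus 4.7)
The conditions (R1)--(R3) defining $\sim$ present no difficulty for $\psi$ near $\phi$: (R1) is vacuous, (R2) does not depend on $\psi$ and is inherited from $[n,\phi]=[m,\phi]$, and (R3) holds throughout the open set $U$ supplied by the hypothesis. Thus the task reduces to verifying (R4), namely that $U^\psi_{n^*m}$ lies in $\mathcal{U}_0(D'_{A^\delta}/J_\psi)$ for all $\psi$ in some open neighbourhood of $\phi$ inside $U$. My strategy is to construct a single global element $h'\in D'_{A^\delta}$ whose image in every nearby fibre $D'_{A^\delta}/J_\psi$ is automatically in $\mathcal{U}_0$, and then to use norm-closeness to transfer this property to $U^\psi_{n^*m}$.

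First, using Lemma~\ref{lem:semidiag} together with condition~(\ref{it:sd unital sections}) of Definition~\ref{dfn:semidiag}, I would refine $d$ (multiplying by a bump function in $D$ if necessary) so that $\osupp(d)\subseteq U$, $\phi(d)=1$, and there is an open neighbourhood $V_0\ni\phi$ on which $\pi_\psi(d)=1_{D'_{A^\delta}/J_\psi}$ for every $\psi\in V_0$. Next, write $U^\phi_{n^*m}=\exp(ia_1)\cdots\exp(ia_k)$ for self-adjoint $a_j\in D'_{A^\delta}/J_\phi$ (the standard description of the connected component of the identity in the unitary group of a unital $C^*$-algebra), lift each $a_j$ to a self-adjoint $b_j\in D'_{A^\delta}$ via the self-adjoint lifting property, and set $h:=\exp(ib_1)\cdots\exp(ib_k)$ in the unitisation $(D'_{A^\delta})^+$. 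Since $\exp(ib_j)-1\in D'_{A^\delta}$ for each $j$, we have $h-1\in D'_{A^\delta}$, and therefore $h':=dhd\in D'_{A^\delta}$. For every $\psi\in V_0$,
\[
    \pi_\psi(h')=\pi_\psi(h)=\exp\bigl(i\pi_\psi(b_1)\bigr)\cdots\exp\bigl(i\pi_\psi(b_k)\bigr)\in\mathcal{U}_0\bigl(D'_{A^\delta}/J_\psi\bigr).
\]

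Finally, put $y:=dn^*md-(\phi(m^*m)\phi(n^*n))^{1/2}h'\in D'_{A^\delta}$. By construction $\pi_\phi(y)=0$, so $y\in J_\phi$; since $D'_{A^\delta}$ is a $C_0(\widehat{D})$-algebra with fibres $D'_{A^\delta}/J_\psi$ via its central subalgebra $D$, the map $\psi\mapsto\|\pi_\psi(y)\|$ is upper semicontinuous and vanishes at $\phi$. Combining this with continuity of $\psi\mapsto(\psi(m^*m)\psi(n^*n))^{-1/2}$ and the identity $U^\psi_{n^*m}=(\psi(m^*m)\psi(n^*n))^{-1/2}\pi_\psi(dn^*md)$ yields $\|U^\psi_{n^*m}-\pi_\psi(h')\|\to 0$ as $\psi\to\phi$. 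Restricting to an open $V\subseteq V_0\cap U$ on which this distance is strictly less than $1$, the element $\pi_\psi(h')^*U^\psi_{n^*m}$ is a unitary within distance $1$ of the identity, hence in $\mathcal{U}_0(D'_{A^\delta}/J_\psi)$; multiplying on the left by $\pi_\psi(h')\in\mathcal{U}_0$ gives $U^\psi_{n^*m}\in\mathcal{U}_0(D'_{A^\delta}/J_\psi)$, establishing (R4) and hence $[n,\psi]=[m,\psi]$ throughout $V$. The main obstacle is exactly this last step, namely promoting the single-fibre statement at $\phi$ to an open set; the key device is that lifting a \emph{product of exponentials} (rather than trying to lift an entire homotopy through the quotient) produces a global section in $D'_{A^\delta}$ whose fibres are \emph{automatically} in $\mathcal{U}_0$, so the continuity argument reduces to upper semicontinuity of norm.
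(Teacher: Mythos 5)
Your proposal is correct. The reduction to (R4), the choice of a bump function $d$ with $\psi(d)=1$ (hence $\pi_\psi(d)=1_{D'_{A^\delta}/J_\psi}$ by Lemma~\ref{lem:semidiag}) on a neighbourhood of $\phi$, the membership $h'=dhd\in D'_{A^\delta}$, and the use of upper semicontinuity of $\psi\mapsto\|\pi_\psi(y)\|$ for the $C_0(\widehat D)$-algebra $D'_{A^\delta}$ all check out, and the final step (a unitary within distance less than $2$ of the identity lies in $\mathcal{U}_0$) closes the argument.

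The overall strategy coincides with the paper's --- only (R4) is at issue, one lifts finitely many witnesses of $U^\phi_{n^*m}\in\mathcal{U}_0(D'_{A^\delta}/J_\phi)$ from the fibre at $\phi$ to global elements of $D'_{A^\delta}$, and then upper semicontinuity of the fibre norms spreads the conclusion to a neighbourhood --- but the mechanism differs. The paper discretises a path of unitaries from $U^\phi_{n^*m}$ to $1$ into points at mutual distance less than $\tfrac14$, lifts each point to an arbitrary element $a_j$ of $D'_{A^\delta}$, and then needs several simultaneous norm estimates on a neighbourhood to guarantee that each $\pi_\psi(a_j)$ is invertible and that consecutive ones are close enough to be homotopic through invertibles, finishing with \cite[Propositions 2.1.11 and 2.1.8]{RLL}. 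You instead factor $U^\phi_{n^*m}$ as a product of exponentials of self-adjoints and lift the self-adjoint exponents; the resulting global element is then \emph{automatically} a unitary in $\mathcal{U}_0$ of every nearby fibre, so only a single closeness estimate is needed and no detour through homotopy of invertibles is required. Your version is somewhat slicker at the cost of invoking the exponential description of $\mathcal{U}_0$ and self-adjoint lifting; the paper's version uses only the most elementary perturbation facts about invertibles. Both are complete proofs.
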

\begin{proof}
Since $[n,\phi] = [m, \phi]$, there is an open $U'$ with $\phi \in U' \subseteq
\osupp(n^*n) \cap \osupp (m^*m)$ such that $\alpha_n \vert_{U'} = \alpha_m \vert_{U'}$.
By scaling $m$ and $n$ by appropriate elements of $D$, we may assume that $\psi(m^*m) =
\psi(n^*n) = 1$ for all $\psi \in U'$. Since $[n,\phi] = [m,\phi]$ we have $n^*m \in
A^\delta$, so~(R1), (R2)~and~(R3) are satisfied for $(n,\psi)$ and $(m,\psi)$ for each
$\psi \in U'$.

Since $U^\phi_{n^*m} = \pi_\phi(dn^*md)\in \mathcal{U}_0(D'_{A^\delta}/J_\phi)$, there is
a path of unitaries in $D_{A^\delta}'/J_\phi$ from $\pi_\phi(dn^*md)$ to
$1_{D'_{A^\delta}/J_\phi}$. Choose $0 = t(0) < t(1) < \cdots < t(k) = 1$ such that
$\|w_{t(j)} - w_{t(j+1)} \| < \frac{1}{4}$ for all $j$. Let $a_0 = dn^*md$ and $a_k=d$,
and choose $a_j \in \pi_\phi^{-1}(w_{t(j)})$, $0 < j < k$. Since $\psi \mapsto
\|\pi_\psi(a)\|$ is upper semicontinuous, there is an open $U \owns \phi$ such that
\[
\|\pi_\psi(a_j a_j^* - a_k)\| < \frac{1}{4},\hskip1.5ex
    \|\pi_\psi(a_j^* a_j - a_k)\| < \frac{1}{4},\hskip1.5ex
    \|\pi_\psi(a_j^*)\|<2,\hskip1.5ex\text{and}\hskip1.5ex
    \|\pi_\psi (a_j - a_{j+1})\| < \frac{1}{4}
\]
for all $\psi \in U$ and $j < k$. We claim that $(n,\psi)\sim (m,\psi)$ for $\psi \in U$.

Fix $\psi\in U$. We have already seen that (R1)--(R3) are satisfied for $(n,\psi)$ and
$(m,\psi)$. The first two properties of the $a_j$ ensure that $\pi_\psi (a_ja_j^*)$,
$\pi_\psi (a_ja_j^*)$, and $\pi_\psi(a_j)$ are invertible, and that $\|\pi_\psi
(a_ja_j^*)^{-1}\|<4/3$. So $\|\pi_\psi (a_j - a_{j+1})\| < 3/8
<\|\pi_\psi(a_j)^{-1}\|^{-1}$. So \cite[Proposition~2.1.11]{RLL} shows that each
$\pi_\psi(a_j) \mathbin{\sim_h} \pi_\psi(a_{j+1})$ in
$\big(D'_{A^\delta}/J_\psi\big)^{-1}$. Hence $U^\psi_{n^*m} \mathbin{\sim_h}
\pi_\psi(a_k)=1_{D'_{A^\delta}/J_\psi}$ in $\big(D'_{A^\delta}/J_\psi\big)^{-1}$, and
then \cite[Proposition~2.1.8]{RLL} gives $U^\psi_{n^*m} \mathbin{\sim_h}
1_{D'_{A^\delta}/J_\psi}$ in the unitary group of $D'_{A^\delta}/J_\psi$. So~(R4) is
satisfied, and hence $(n,\psi)\sim (m,\psi)$.
\end{proof}

\begin{lem}\label{lem:same alpha}
Let $A$ be a separable $C^*$-algebra, $\delta$ a coaction of a countable discrete group
$\Gamma$ on $A$, and $D$ an abelian $C^*$-subalgebra of $A^\delta$ containing an approximate unit for $A^\delta$. Suppose that $n,m \in
\Nn(D)$ satisfy $\|n-m\| < \|n\|/5$. Then $\alpha_n(\phi) = \alpha_m(\phi)$ for all $\phi
\in \widehat{D}$ such that $\phi(n^*n) > \|n\|^2/2$.
\end{lem}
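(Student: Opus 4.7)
The plan is to argue by contradiction. Suppose $\alpha_n(\phi) \neq \alpha_m(\phi)$ as characters of $D$. The key idea is to use Urysohn's lemma to choose an element $e \in D$ that separates these two hypothetical characters, and then to bound $\phi(n^*en) - \phi(m^*em)$ from below via the character identities and from above via a norm estimate.

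I would begin by verifying that $\alpha_m(\phi)$ is actually defined, i.e.\ that $\phi \in \osupp(m^*m)$. Writing $\epsilon = m - n$, the identity $m^*m - n^*n = \epsilon^*m + n^*\epsilon$ yields $\|m^*m - n^*n\| \leq \|\epsilon\|(2\|n\| + \|\epsilon\|) < 11\|n\|^2/25$, which together with $\phi(n^*n) > \|n\|^2/2$ gives $\phi(m^*m) > 3\|n\|^2/50 > 0$. Since $\widehat{D}$ is locally compact Hausdorff and the characters are assumed distinct, Urysohn's lemma produces $e \in D$ with $0 \leq e \leq 1$, $\alpha_n(\phi)(e) = 1$, and $\alpha_m(\phi)(e) = 0$. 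Applying the defining formula of Lemma~\ref{lem:alpha-n}\eqref{it:alphan} to $n$ and $m$ then converts these into scalar identities $\phi(n^*en) = \phi(n^*n)$ and $\phi(m^*em) = 0$, so
\[
|\phi(n^*en - m^*em)| = \phi(n^*n) > \|n\|^2/2.
\]

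The contradiction now comes from a direct norm estimate. Expanding $m^*em - n^*en = n^*e\epsilon + \epsilon^*en + \epsilon^*e\epsilon$ and using $0 \leq e \leq 1$ together with $\|\epsilon\| < \|n\|/5$, the triangle inequality yields $\|n^*en - m^*em\| \leq 2\|n\|\|\epsilon\| + \|\epsilon\|^2 < 11\|n\|^2/25 < \|n\|^2/2$. Since $\phi$ is contractive on $D$, this contradicts the lower bound above; hence $\alpha_n(\phi) = \alpha_m(\phi)$.

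I do not foresee major obstacles. The subtlest step is the verification that $\phi(m^*m) > 0$, without which Urysohn would have nothing to produce. The constant $\|n\|/5$ in the hypothesis is tight precisely because the estimate $2 \cdot (1/5) + (1/5)^2 = 11/25$ must fall strictly below $1/2$; any looser separation between $\|n-m\|$ and $\|n\|$, or between $\phi(n^*n)$ and $\|n\|^2$, would break the decisive inequality.
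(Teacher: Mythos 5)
Your proof is correct and is essentially the paper's argument run in contrapositive form: both hinge on the same estimate $\|n^*dn - m^*dm\| \le 2\|n\|\,\|n-m\| + \|n-m\|^2 < \|n\|^2/2$ for $0 \le d \le 1$, the identity $\phi(n^*dn) = \phi(n^*n)\alpha_n(\phi)(d)$, and Urysohn's lemma (the paper argues directly that $\alpha_m^{-1}(\alpha_n(\phi))$ cannot be separated from $\phi$, whereas you separate $\alpha_n(\phi)$ from $\alpha_m(\phi)$ and derive a contradiction). Your explicit check that $\phi(m^*m) > 0$, so that $\alpha_m(\phi)$ is defined, is a worthwhile detail that the paper leaves implicit.
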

\begin{proof}
Since $\alpha_n = \alpha_{n/\|n\|}$ and $\alpha_m = \alpha_{m/\|n\|}$, it suffices to
prove the result for $\|n\| = 1$. So, by assumption, $\|n-m\| < 1/5$ giving $\|m\| \le
6/5$. For any $d \in D$ with $\|d\| \le 1$, we have
\begin{equation}\label{eq:estimate}\textstyle
\|n^* d n - m^* d m\| \le \|n^*dn - n^*dm\| + \|n^*dm - m^*dm\|
    < \frac{1}{5} + \frac{6}{25} < \frac12.
\end{equation}
Fix $\phi \in \widehat{D}$ with $\phi(n^*n) > \frac12$, and $d \in D$ with $0 \le d \le
1$ and $\phi(d) = 1$. Then $\alpha_n(\phi)(n dn^*) = \phi(d) \alpha_n(\phi)(nn^*) =
\phi(n^*n) > \frac12$, so~\eqref{eq:estimate} gives $\alpha_n(\phi)(mdm^*) >
\alpha_n(\phi)(ndn^*) - \frac12 > 0$. Hence $0 < \alpha_n(\phi)(m dm^*) =
\alpha_m^{-1}(\alpha_n(\phi))(d) \alpha_n(\phi)(mm^*)$, giving
$\alpha_m^{-1}(\alpha_n(\phi))(d) \not= 0$. So if $0 \le d \le 1$ and $\phi(d) = 1$, then
$\alpha^{-1}_{m}(\alpha_n(\phi))(d) > 0$. So Urysohn's lemma forces
$\alpha^{-1}_{m}(\alpha_n(\phi)) = \phi$.
\end{proof}

\begin{proof}[Proof of Theorem~\ref{thm:lche gpd}]
To see that the $Z(n,X)$ are a basis for a topology, suppose that $[n,\phi] \in Z(n,X)
\cap Z(n',Y)$. Then $\phi \in X \cap Y$, and $[n,\phi] = [n',\phi]$. By
Lemma~\ref{lem:classes agree}, there is an open $W$ with $\phi \in W \subseteq X \cap Y$
such that $[n,\psi] = [n',\psi]$ for all $\psi \in W$. Hence $Z(n,W) \subseteq Z(n,X)
\cap Z(n',Y)$.

To see that this topology is second countable, we first claim that each $\Nn_g(D)$ has a
countable dense subset. For this, fix a dense sequence $a_i$ in $A_g$. For $i,j \in \NN$
fix $n_{i,j} \in B(a_i, 1/j) \cap \Nn_g(D)$ if this set is nonempty, and otherwise let
$n_{i,j} = 0 \in \Nn_g(D)$. Fix $n \in \Nn_g(D)$. Choose $i_k$ with $\|a_{i_k} - n\| <
\frac{1}{3k}$. Then $n_{i_k, 2k} \in B(a_{i_k}, 1/2k)$, forcing $\|n_{i_k, 2k} - n\| <
\frac{1}{k}$. So $n \in \overline{\{n_{i,j} : i,j \in \NN\}}$, proving the claim. Now
since $\Gamma$ is countable, $\Nn_\star(D)$ has a countable dense sequence
$(n_i)_{i=1}^\infty$. Choose a countable basis $\{U_j\}$ for $\widehat{D}$. We claim that
for $n\in N_\star(D)$, an open subset $X\subseteq\osupp(n^*n)$, and $\phi\in X$, there
are $i , j \in \NN$ such that $[n,\phi]\in Z(n_i,U_j)\subseteq Z(n,X)$. Since $[n,\phi] =
[nd,\phi]\in Z(nd,X\cap\osupp(d))\subseteq Z(n,X)$ for $d\in D$ with $0\le d\le 1$ and
$\phi(d)=1$, we may assume that $\phi(n^*n) = \|n\|^2$. Choose $j$ so that $\phi \in U_j
\subseteq X$ and $\psi(n^*n) > \|n\|^2/2$ for all $\psi \in U_j$. Fix a subsequence $(
n_{i_k} )_{k = 1}^\infty$ of $( n_i )_{i = 1}^\infty$ converging to $n$ with each
$n^*n_{i_k} \in A^\delta$  and $\| n_{ i_k } - n \| < \| n \| / 5$.  So,
Lemma~\ref{lem:same alpha} gives $\alpha_n|_{U_j} = \alpha_{n_{i_k}}|_{U_j}$. Since
$\sup_{\psi \in U_j}\|\pi_\psi( d_\psi n^*n_{i_k} d_\psi  ) - \pi_\psi( n^*n )\| \to 0$
where $d_\psi \in D$ with $\| d \| = 1$, $\osupp ( d_\psi ) \subseteq U_j$, and $\psi
(d_\psi ) = 1$, we have $\sup_{\psi \in U_j} \|U^\psi_{n^*n_{i_k}} - U^\psi_{n^*n}\| \to
0$. Since $U^\psi_{n^*n} = 1_{D'_{A^\delta}/J_\psi}$ and since unitaries in
$B(1_{D'_{A^\delta}/J_\psi}; 2)$ are homotopic to $1_{D'_{A^\delta}/J_\psi}$, for large
$k$, we have $U^\psi_{n^*n_{i_k}} \mathbin{\sim_h} 1_{D'_{A^\delta}/J_\psi}$ for $\psi
\in U_j$.  Thus, $[n, \phi] \in Z(n_{i_k}, U_j) \subseteq Z(n, X)$. So the $Z(n_i, U_j)$
form a countable basis for the topology.

To see that $\Hh(A, D, \delta)$ is locally Hausdorff and \'etale, fix a basic open set
$Z(n, X)$. The source map $[n,\psi] \mapsto \psi$ is a homeomorphism $h : Z(n,X) \to X$:
it is bijective by definition of $\sim$, continuous as $h^{-1}(Y) = Z(n,Y)$ for $Y
\subseteq X$, and open because each open subset of $Z(n,X)$ is a union of sets of the
form $Z(n,Y)$ with $Y \subseteq X$ open, and each $h(Z(n,Y)) = Y$ is open. Similarly, the
range map is a homeomorphism $Z(n,X) \to \alpha_n(X)$. Thus, since $\widehat{D}$ is
Hausdorff, $\Hh(A, D, \delta)$ is locally Hausdorff and \'etale.

The map $I$ is a homeomorphism because $I(Z(n,X)) = Z(n^*, \alpha_n(X))$ and $\alpha_n$
is a homeomorphism on $\osupp(n^*n)$. To see that $M$ is continuous, suppose that
$[n_i,\phi_i] \to [n,\phi]$, that $[m_i,\psi_i] \to [m,\psi]$, and that each $\phi_i =
\alpha_{m_i}(\psi_i)$. Then the preceding paragraph gives $s([n,\phi]) = r([m,\psi])$,
and then $M([n,\phi],[m,\psi]) = [nm,\psi]$. Fix an open $V$ with $\psi \in V \subseteq
\osupp(m^*m) \cap \alpha_m^{-1}(\osupp(n^*n))$. Then $Z(m,V) \owns [m,\psi]$ and $Z(n,
\alpha_m(V)) \owns [n,\phi]$ are open, giving $[n_i,\phi_i] \in Z(n, \alpha_m(V))$ and
$[m_i,\psi_i] \in Z(m,V)$ for large $i$. Since $Z(m,V)$ and $Z(n, \alpha_m(V))$ are
bisections, for large $i$ we have $[n_i,\phi_i] = [n, \phi_i]$ and $[m_i, \psi_i] = [m,
\psi_i]$, so $M([n_i,\phi_i],[m_i,\psi_i]) = [nm, \psi_i]$. In particular, $\psi_i \to
\psi$, and as $s$ is a homeomorphism on $Z(mn,V)$, we obtain $[nm, \psi_i] \to
[nm,\psi]$. So $M$ is continuous.

For local compactness, fix $\gamma \in \Hh(A, D, \delta)$ and an open $W \owns \gamma$.
Choose $n \in \Nn_\star(D)$ and $X$ open with $\gamma \in Z(n,X)$. Since $\widehat{D}$ is
locally compact, there is a compact neighbourhood $K$ of $s(\gamma)$ in $X$. Now $\{[n,
\phi] : \phi \in K\}$ is the inverse image of $K$ under the homeomorphism $s|_{Z(n,X)}$,
and hence a compact neighbourhood of $\gamma$. Finally, $c_\delta$ is continuous because
it is constant on basic open sets.
\end{proof}

Our results so far do not require that $\clsp \Nn_\star(D)$ is all of $A$; but our next
lemma indicates that this is the situation of greatest interest.

\begin{lem}
Let $\delta$ be a coaction of a discrete group $\Gamma$ on a separable $C^*$-algebra $A$,
and $D$ an abelian $C^*$-subalgebra of $A^\delta$ containing an approximate unit for $A^\delta$. Let $A_\Nn = \clsp \Nn_\star(D)
\subseteq A$. Then $A_\Nn$ is a $C^*$-algebra, $\delta_\Nn := \delta|_{A_\Nn}$ is a
coaction, $D$ is an abelian $C^*$-subalgebra of $A_\Nn^{\delta_\Nn}$ and contains an approximate unit for $A_\Nn^{\delta_\Nn}$, and $\Hh(A, D, \delta) \cong
\Hh(A_\Nn, D, \delta_\Nn)$.
\end{lem}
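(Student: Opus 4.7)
The plan is to verify the four conclusions in order. For the first, Lemma~\ref{lem:alpha-n}\eqref{it:alphanm} shows that $\Nn(D)$ is closed under products, and each $n \in \Nn_g(D)$, $m \in \Nn_h(D)$ satisfies $nm \in A_g A_h \subseteq A_{gh}$, so $\Nn_g(D)\Nn_h(D) \subseteq \Nn_{gh}(D)$; combined with the observation that $n \in \Nn_g(D)$ implies $n^* \in \Nn_{g^{-1}}(D)$, this makes $\Nn_\star(D)$ a $*$-subsemigroup of $A$, so $A_\Nn = \clsp\Nn_\star(D)$ is a $C^*$-subalgebra. For the coaction, each $n \in \Nn_g(D)$ satisfies $\delta(n) = n \otimes \lambda_g \in A_\Nn \otimes C^*_r(\Gamma)$, so $\delta$ restricts to a homomorphism $A_\Nn \to A_\Nn \otimes C^*_r(\Gamma)$ satisfying the coaction identity; nondegeneracy is immediate from $A_\Nn = \clsp\bigcup_g \Nn_g(D) \subseteq \clsp\bigcup_g (A_\Nn)_g$, where $(A_\Nn)_g = A_\Nn \cap A_g$.

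For the third claim, $D$ is abelian, and since $D \subseteq \Nn_e(D) \subseteq A_\Nn^{\delta_\Nn}$, any approximate unit for $A$ contained in $D$ is an approximate unit for $A_\Nn$. It remains to check conditions~\eqref{it:sd unital fibres} and~\eqref{it:sd unital sections} of Definition~\ref{dfn:semidiag}. I would fix $\phi \in \widehat{D}$, use condition~\eqref{it:sd unital sections} for the original pair to produce $d_0 \in D$ and an open $U \owns \phi$ with $\psi(d_0) = 1$ for all $\psi \in U$, and show that the same $d_0$ witnesses the condition for $A_\Nn^{\delta_\Nn}$: for $a \in D'_{A_\Nn^{\delta_\Nn}}$, $\psi \in U$, and an approximate unit $(e_\lambda) \subseteq D$, each $e_\lambda - d_0 e_\lambda \in D \cap \ker\psi$, so $(e_\lambda - d_0 e_\lambda)a \in J_\psi^{A_\Nn} := \overline{\ker\psi \cdot D'_{A_\Nn^{\delta_\Nn}}}$; passing to the limit gives $a - d_0 a \in J_\psi^{A_\Nn}$, so $d_0 + J_\psi^{A_\Nn} = 1_{D'_{A_\Nn^{\delta_\Nn}}/J_\psi^{A_\Nn}}$, delivering both conditions.

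For the final assertion, since every normaliser of $D$ in $A$ already lies in $A_\Nn$, the identity map on pairs $(n,\phi)$ descends to a set-theoretically surjective candidate $\Phi\colon\Hh(A_\Nn,D,\delta_\Nn)\to\Hh(A,D,\delta)$. Conditions (R1)--(R3) of the equivalence relation depend only on the data $(n, m, \phi, D, \alpha_n, \alpha_m)$, which are intrinsic to $A$ and $D$, so they transfer between the two algebras; for (R4), the inclusion $D'_{A_\Nn^{\delta_\Nn}} = D'_{A^\delta} \cap A_\Nn \hookrightarrow D'_{A^\delta}$ induces a unital $*$-homomorphism $\pi\colon B' := D'_{A_\Nn^{\delta_\Nn}}/J_\phi^{A_\Nn} \to B := D'_{A^\delta}/J_\phi^A$, which by continuity sends $\mathcal{U}_0(B')$ into $\mathcal{U}_0(B)$, so $\Phi$ is well-defined.

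The main obstacle is injectivity of $\Phi$, which I would attack by showing that $\pi$ is actually a $*$-isomorphism. Injectivity of $\pi$ reduces to proving $J_\phi^A \cap D'_{A_\Nn^{\delta_\Nn}} \subseteq J_\phi^{A_\Nn}$, and surjectivity to $D'_{A^\delta} \subseteq D'_{A_\Nn^{\delta_\Nn}} + J_\phi^A$; both should follow from an adaptation of the approximate-unit argument of the preceding paragraph, together with the upper-semicontinuity of fibrewise norms in the $C_0(\widehat{D})$-algebras $D'_{A^\delta}$ and $D'_{A_\Nn^{\delta_\Nn}}$ exploited in Lemma~\ref{lem:classes agree}. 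Once $\pi$ is a $*$-isomorphism, $\mathcal{U}_0(B') = \pi^{-1}(\mathcal{U}_0(B))$, so (R4) transfers in both directions and $\Phi$ is bijective. Finally, the basic open sets $Z(n, X)$ are indexed by the same normalisers and the same open subsets of $\widehat{D}$ in either groupoid, and $\Phi$ takes the $A_\Nn$-class of a basic set onto its $A$-counterpart, so $\Phi$ is automatically a homeomorphism; it visibly intertwines the cocycles $c_{\delta_\Nn}$ and $c_\delta$.
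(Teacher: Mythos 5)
Your treatment of the first three assertions coincides with the paper's proof: closure of $\Nn_\star(D)$ under products and adjoints, the restriction of $\delta$, and the approximate-unit argument showing that an element $d_0\in D$ with $\pi_\phi(d_0)=1_{D'_{A^\delta}/J_\phi}$ also represents the identity of $D'_{A_\Nn^{\delta_\Nn}}/J^{\Nn}_\phi$ are exactly the steps the paper takes (your version of the last point, via $(e_\lambda-d_0e_\lambda)a$, is in fact spelled out more carefully than the paper's).

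The gap is in the final step. You are right that (R4) is the only condition that does not transfer for free, and right that the inclusion $D'_{A_\Nn^{\delta_\Nn}}\hookrightarrow D'_{A^\delta}$ induces a unital $^*$-homomorphism $\pi$ of fibres carrying $\Uu_0$ into $\Uu_0$, so that the identity map $\Hh(A_\Nn,D,\delta_\Nn)\to\Hh(A,D,\delta)$ is well defined and surjective. Injectivity of $\pi$ also holds, via the standard fibre-norm formula $\|a+J_\phi\|=\inf\{\|fa\| : f\in C_0(\widehat{D}),\ 0\le f\le 1,\ f(\phi)=1\}$, which shows the fibre map is isometric. But your argument for injectivity of $\Phi$ hinges on \emph{surjectivity} of $\pi$, i.e.\ on $D'_{A^\delta}\subseteq D'_{A_\Nn^{\delta_\Nn}}+J_\phi$, and this does not follow from ``an adaptation of the approximate-unit argument'': that argument only produces the identity of the quotient and gives no way to approximate an arbitrary element of $D'_{A^\delta}$ modulo $J_\phi$ by elements of $A_\Nn$. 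Without surjectivity, the backward transfer of (R4) is unproven, since an isometric unital embedding does not reflect membership in $\Uu_0$ (compare $C(\TT)\hookrightarrow\Bb(\ell^2(\NN))$, where the generating unitary becomes connected to the identity). For what it is worth, the paper itself declares this last step ``trivial'' and supplies no argument either, so you have isolated a genuine subtlety; but your proposal does not close it.
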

\begin{proof}
As $\Nn_\star(D)$ is closed under multiplication and adjoints, $A_\Nn$ is a
$C^*$-algebra. Since $D \subseteq \Nn_e(D)$, we have $D \subseteq A_\Nn$, and $D$ clearly
contains an approximate unit for $A^{\delta_\Nn}_\Nn$. Since $A_\Nn = \clsp \bigcup_g
A_g$, the restriction of $\delta_\Nn := \delta|_{A_\Nn}$ takes values in $A_\Nn \otimes
C^*_r(\G)$. It is nondegenerate because $D \subseteq A^{\delta_\Nn}_\Nn$ contains an
approximate unit. Finally, it is easy to see that $[n, \phi] \mapsto [n, \phi]$ is an isomorphism from $\Hh(A_\Nn, D, \delta_\Nn)$ to $\Hh(A, D, \delta)$.
\end{proof}

\section{The interior of the isotropy in an \texorpdfstring{\'etale}{etale}
groupoid}\label{sec:intiso}

This section contains some technical results that we need in order to prove our
reconstruction results in the next section. Our proof of the first,
Lemma~\ref{lem:bundle}, is largely due to Becky Armstrong; the key elements, in the more
general situation of twisted groupoid $C^*$-algebras, will appear in her PhD thesis.

\begin{lem}[Armstrong]\label{lem:bundle}
Let $\G$ be a locally compact Hausdorff \'etale groupoid such that $\Io{\G}$ is abelian.
Then $C^*(\Io{\G}) = C^*_r(\Io{\G})$. The inclusion $C_0(\go) \hookrightarrow
C^*(\Io{\G})$ makes $C^*(\Io{\G})$ into a $C_0(\go)$-algebra.  The fibre homomorphisms
$\pi_u : C^*(\Io{\G}) \to C^*(\Io{\G})_u$ have the property that $u \mapsto \|\pi_u(a)\|$
is continuous for $a \in C^*(\Io{\G})$. For each $u \in \go$ there is an isomorphism
$C^*(\Io{\G}))_u\cong C^*(\Io{\G}_u)$ that takes $\pi_u(d)$ to $d(u)1_{C^*(\Io{\G}_u)}$
for $d \in C_0(\go)$.
\end{lem}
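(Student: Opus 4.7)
The plan is to handle the four assertions in the order stated, exploiting throughout that $\Io{\G}$ is an open subgroupoid of $\G$, hence itself a locally compact Hausdorff \'etale groupoid, and that being inside the isotropy forces $r = s$ on $\Io{\G}$. So $\Io{\G}$ is an \'etale group bundle over $\go$ with discrete abelian fibers $\Io{\G}_u$. Each fiber is amenable, and a standard argument for \'etale group bundles propagates fiberwise amenability to groupoid amenability of $\Io{\G}$, whence $C^*(\Io{\G}) = C^*_r(\Io{\G})$.

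Next I would establish the $C_0(\go)$-algebra structure. Since $\go$ is clopen in $\Io{\G}$, the canonical inclusion $C_0(\go) \hookrightarrow C^*(\Io{\G})$ is available. Centrality is the crucial check: for $f \in C_c(\go)$ and $g \in C_c(\Io{\G})$ the convolution formula collapses to $(f * g)(\eta) = f(r(\eta)) g(\eta) = f(s(\eta)) g(\eta) = (g * f)(\eta)$, the middle equality using $r(\eta) = s(\eta)$ on $\Io{\G}$. Nondegeneracy follows from an approximate unit of $C_0(\go)$, so $C_0(\go) \hookrightarrow ZM(C^*(\Io{\G}))$ nondegenerately, and the ideals $I_u$ are defined.

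To identify each fiber $C^*(\Io{\G})_u$ with $C^*(\Io{\G}_u)$, I would use the restriction map $q_u : C_c(\Io{\G}) \to C_c(\Io{\G}_u)$, which is a $*$-homomorphism because convolutions on a group bundle restrict cleanly to each fiber. Bounding $q_u$ by the regular representation on $\ell^2(\Io{\G}_u)$ extends it to a surjection $C^*(\Io{\G}) \to C^*(\Io{\G}_u)$; the inclusion $I_u \subseteq \ker q_u$ is immediate, and the reverse follows from faithfulness of the regular representation on the amenable group $C^*$-algebra $C^*(\Io{\G}_u)$ together with a standard partition-of-unity argument on $\go$ cutting off away from $u$. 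The compatibility $q_u(d) = d(u) 1_{C^*(\Io{\G}_u)}$ for $d \in C_0(\go)$ is then immediate.

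The main obstacle is the \emph{continuity} (rather than mere upper semicontinuity) of $u \mapsto \|\pi_u(a)\|$, since the generic $C_0(X)$-algebra framework only delivers the latter. To upgrade to lower semicontinuity, I would exploit the discrete abelian structure of the fibers and apply Pontryagin duality fiberwise: assemble the compact abelian dual groups into a bundle $\widehat{\Io{\G}} := \bigsqcup_{u \in \go} \widehat{\Io{\G}_u}$ over $\go$, with topology declared so that $\chi_{u_i} \to \chi_u$ precisely when $\chi_{u_i}(\eta_i) \to \chi_u(\eta)$ for every convergent net $\eta_i \to \eta$ along continuous local bisections of $\Io{\G}$. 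The fiberwise Fourier transform should identify $C^*(\Io{\G})$ with the continuous sections of this dual bundle vanishing at infinity, so that $\|\pi_u(a)\| = \sup_{\chi \in \widehat{\Io{\G}_u}} |\widehat{a}(\chi)|$. Lower semicontinuity then follows by lifting a near-maximizing character $\chi_u$ over $u$ to a continuous local section of $\widehat{\Io{\G}}$ over a neighborhood of $u$ and using continuity of $\widehat{a}$. Verifying that $\widehat{\Io{\G}}$ is genuinely locally compact Hausdorff with enough continuous local sections, and that the Fourier transform is a bijection onto the section algebra, is the technical heart of the argument.
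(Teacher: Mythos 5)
Your treatment of the first three assertions is sound and essentially matches the paper's: amenability is imported fibrewise (the paper cites \cite[Theorem~3.5]{Ren2015}), centrality of $C_0(\go)$ is the same one-line computation with $r=s$, and the fibre identification is the same in substance --- though the paper's route is slicker and worth noting: rather than proving $\ker q_u \subseteq I_u$ directly, it builds a homomorphism in the \emph{opposite} direction $C^*(\Io{\G}_u) \to C^*(\Io{\G})_u$ from the unitary representation $\gamma \mapsto \pi_u(a_\gamma)$ (universal property of the full group $C^*$-algebra) and checks that the two compositions are the identity. Your ``partition-of-unity cutting off away from $u$'' argument for $\ker q_u \subseteq I_u$ quietly requires upper semicontinuity of $v \mapsto \|\lambda_v(a)\|$ (to see that $\|ga\| = \sup_v |g(v)|\,\|\lambda_v(a)\|$ is small when $g$ is supported near $u$), which is not yet available at that stage; the paper's two-sided-inverse trick avoids this circularity.

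The genuine gap is in the continuity of $u \mapsto \|\pi_u(a)\|$, which you correctly identify as the crux but do not prove. Your dual-bundle plan reduces everything to the assertion that a near-maximizing character $\chi \in \widehat{\Io{\G}_u}$ lifts to a continuous local section of $\bigsqcup_v \widehat{\Io{\G}_v}$ near $u$, and that is precisely the nontrivial content, not a routine verification. It is not automatic: for $v$ near $u$ there is no containment between $\Io{\G}_v$ and $\Io{\G}_u$, and the subgroup of $\Io{\G}_v$ obtained by continuing the bisections through elements of $\Io{\G}_u$ could a priori satisfy \emph{extra} relations that obstruct the transfer of $\chi$ (e.g.\ a germ of infinite order at $u$ becoming torsion at $v$). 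Ruling this out uses Hausdorffness in an essential way --- two open bisections in $\Io{\G}$ agreeing at $u$ agree on a neighbourhood, and conversely agreement along a net converging to $u$ forces agreement at $u$, so relations among finitely many bisection germs are locally constant --- after which one extends from the finitely generated subgroup meeting $\supp(a)$ to all of $\Io{\G}_v$ using divisibility of $\TT$. None of this appears in your proposal. (Also, the dual bundle generally fails to be Hausdorff or to have continuously varying fibres, so asking it to be ``locally compact Hausdorff with enough continuous local sections'' is the wrong target; only the section-lifting statement for finitely supported data is needed.) For comparison, the paper's route is more elementary: if $a$ is supported on a single bisection then $\pi_u(a)$ is a scalar multiple of a unitary in $C^*(\Io{\G}_u)$, so $\|\pi_u(a)\| = |a(\sigma(u))|$ is manifestly continuous, and the general case is obtained by approximation (with the remaining details --- in particular passing from bisection-supported elements to sums of them, where norms do not add --- deferred to Armstrong's thesis). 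Either route can be completed, but as written yours stops exactly where the work begins.
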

\begin{proof}
Theorem~3.5 of \cite{Ren2015} shows that $C^*(\Io{\G}) = C^*_r(\Io{\G})$. For $f \in
C_c(\Io{\G})$, $d \in C_0(\go)$ and $\gamma \in \Io{\G}$, we have that $(df)(\gamma) =
d(r(\gamma))f(\gamma) = f(\gamma)d(s(\gamma)) = (fd)(\gamma)$. So $C_0(\go)$ is central
in $C^*(\Io{\G})$ by continuity, and any approximate unit for $C_0(\go)$ is an
approximate unit for $C^*(\Io{\G})$. Thus $C^*(\Io{\G})$ is a $C_0(\go)$-algebra.

Fix $u \in \go$. For each $\gamma \in \Io{G}_u$, choose $a_\gamma \in C_c(\Io{\G})$
supported on a bisection with $a_\gamma(\gamma) = 1$. Then $\gamma \mapsto
\pi_u(a_\gamma)$ is a unitary representation of $\Io{\G}_u$ in $C^*(\Io{\G})_u$, and thus
determines a homomorphism $\tilde\pi_u : C^*(\Io{\G}_u) \to C^*(\Io{\G})_u$.

The regular representation $\rho_u : C^*_r(\Io{\G}) \to \Bb(\ell^2(\Io{\G}_u))$
satisfies $\rho_u(f) = 0$ for $f \in C_0(\go\setminus\{u\})$, and hence descends to a
representation
\[
\tilde\rho_u : C^*(\Io{\G})_u \to \Bb(\ell^2(\Io{\G}_u)).
\]
The representation $\tilde\rho_u \circ \tilde{\pi}_u$ is precisely the regular
representation $C^*(\Io{\G}_u)$ and hence faithful since $\Io{\G}_u$ is abelian.
Identifying $C^*_r(\Io{\G}_u)$ with $C^*(\Io{\G}_u)$,
\[
\tilde{\pi}_u \circ \tilde\rho_u\big(\pi_u(a_\gamma)) = \pi_u(a_\gamma)
\]
for $\gamma \in \Io{\G}_u$, and so $\tilde{\pi}_u \circ \tilde\rho_u =
\id_{C^*(\Io{\G})_u}$. Since $\tilde\rho_u(\pi_u(d)) = d(u)1_{C^*(\Io{\G}_u)}$, this
$\tilde\rho_u : C^*(\Io{\G})_u \to C^*(\Io{\G}_u)$ is the desired isomorphism.

Fix $a \in C_c(\Io{\G})$ supported on a bisection $U$. Write $\sigma : s(U) \to U$ for
the inverse of the source map. Then $\|\pi_u(a)\| = |a(\sigma(u))|$, so $u \mapsto
\|\pi_u(a)\|$ is continuous. An $\frac{\varepsilon}{3}$-argument now shows that $u
\mapsto \|\pi_u(a)\|$ is continuous for all $a \in C^*(\Io{\G})$.
\end{proof}

The continuity of the map $u \mapsto \|\pi_u(a)\|$ above also follows from \cite[Corollary 5.6]{LR}. For our next lemma, recall from Section~\ref{sec:background} that there is a
norm-decreasing injection $a \mapsto f_a$ from $C^*_r(\G)$ to $C_0(\G)$ given by
$f_a(\gamma) = \big(\rho_{s(\gamma)}(a) e_{s(\gamma)} \mid e_\gamma\big)$. We show
that $\{a \in C^*_r(\G) : f_a\text{ is supported on }\Io{\G}\}$ is a $C_0(\go)$-algebra.

\begin{lem}\label{lem:fibres}
Let $\G$ be a locally compact Hausdorff \'etale groupoid such that $\Io{\G}$ is abelian.
Let $D := C_0(\go) \subseteq C^*_r(\G)$. Let $A := \{a \in C^*_r(\G) : \osupp(f_a)
\subseteq \Io{\G}\}$. Then $A$ is a $C_0(\go)$-algebra with respect to $D \hookrightarrow
A$. For each $u \in \go$, let $J_u := \overline{\{f \in D : f(u) = 0\}A}$, the ideal of
$A$ generated by $\{f \in D : f(u) = 0\}$, and let $A_u := A/J_u$. Then there is an
isomorphism $\tilde\Phi_u : A_u \to C^*(\Io{\G}_u)$ such that
\[
\tilde\Phi_u(f) = \sum_{\gamma \in \Io{\G}_u} f(\gamma) \lambda_\gamma
    \quad\text{for all $f \in C_c(G) \cap A$,}
\]
where $\lambda_\gamma$ is the image of $\gamma$ in the left regular representation of $\Io{\G}_u$ in $C^*(\Io{\G}_u)$.
\end{lem}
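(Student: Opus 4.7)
My plan is to establish the lemma in three stages: first, showing that $\lambda_u(a)$ preserves $\ell^2(\Io{\G}_u) \subseteq \ell^2(\G_u)$ for $a \in A$ and $u \in \go$; second, using this to prove $A$ is a $C_0(\go)$-subalgebra and constructing $\tilde\Phi_u$ as a compression of the regular representation; and third, identifying $A$ with the embedded image of $C^*(\Io{\G})$ in $C^*_r(\G)$ so as to transport the fibre structure of Lemma~\ref{lem:bundle}.

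For the first stage, the key observation is that for $a \in A$, $\alpha \in \Io{\G}_u$, and $\gamma \in \G_u \setminus \Io{\G}_u$, the matrix coefficient $(\lambda_u(a)\delta_\alpha \mid \delta_\gamma) = f_a(\gamma\alpha^{-1})$ vanishes. This reduces to showing $\gamma\alpha^{-1} \notin \Io{\G}$: if $\gamma \notin \operatorname{Iso}(\G)$ then $\gamma\alpha^{-1} \notin \operatorname{Iso}(\G) \supseteq \Io{\G}$; and if $\gamma \in \operatorname{Iso}(\G) \setminus \Io{\G}$, then $\gamma\alpha^{-1} \notin \Io{\G}$, since otherwise $\gamma = (\gamma\alpha^{-1})\alpha$ would lie in $\Io{\G}$ because $\Io{\G}$ is a subgroupoid. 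So $\lambda_u(a)$ preserves $\ell^2(\Io{\G}_u)$. Applying this to $\lambda_u(ab)\delta_u = \lambda_u(a)\lambda_u(b)\delta_u \in \ell^2(\Io{\G}_u)$ shows $f_{ab}$ is supported on $\Io{\G}$, and together with the straightforward $^*$- and norm-closure of $A$ (via $f_{a^*}(\gamma) = \overline{f_a(\gamma^{-1})}$ and contractivity of $a \mapsto f_a$), this shows $A$ is a $C^*$-subalgebra of $C^*_r(\G)$. Centrality of $D$ in $A$ follows from $f_{da}(\gamma) = d(r(\gamma))f_a(\gamma) = d(s(\gamma))f_a(\gamma) = f_{ad}(\gamma)$ since $r = s$ on $\osupp(f_a) \subseteq \Io{\G}$, and an approximate unit in $D$ serves for $A$, making $A$ a $C_0(\go)$-algebra.

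Next, I define $\Phi_u(a) := \lambda_u(a)\vert_{\ell^2(\Io{\G}_u)}$, a $^*$-homomorphism $A \to \Bb(\ell^2(\Io{\G}_u))$ by the preservation property. A direct computation on $\delta_\alpha$ for $\alpha \in \Io{\G}_u$ gives $\Phi_u(f) = \sum_{\gamma \in \Io{\G}_u} f(\gamma) \lambda_\gamma \in C^*(\Io{\G}_u)$ for $f \in C_c(\G) \cap A$. Also $\Phi_u(d) = d(u) I$ for $d \in D$ (since $\lambda_u(d)\delta_\alpha = d(r(\alpha))\delta_\alpha = d(u)\delta_\alpha$), so $J_u \subseteq \ker\Phi_u$, and $\Phi_u$ descends to $\tilde\Phi_u : A_u \to \Bb(\ell^2(\Io{\G}_u))$.

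Finally, I would show that the inclusion $C_c(\Io{\G}) \hookrightarrow C_c(\G)$ extends to an isometric $^*$-embedding $\iota : C^*(\Io{\G}) = C^*_r(\Io{\G}) \to C^*_r(\G)$ with image exactly $A$. The isometry is obtained by decomposing, for $f \in C_c(\Io{\G})$ and $u \in \go$, the operator $\lambda_u^\G(f)$ as a direct sum over $v \in r(\G_u)$ of copies of the regular representation $\lambda_v^{\Io{\G}}(f)$ on $\ell^2(\Io{\G}_v)$; the abelianness of $\Io{\G}_v$ ensures $\lambda_v^{\Io{\G}}$ is faithful on $C^*_r(\Io{\G}_v)$, and Lemma~\ref{lem:bundle} identifies $C^*_r(\Io{\G}_v)$ with $C^*(\Io{\G}_v)$. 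The image of $\iota$ lies in $A$ by continuity of $a \mapsto f_a$. The main obstacle is proving that $\iota$ is surjective onto $A$: my plan is to leverage the $C_0(\go)$-algebra structure on both algebras, together with the continuous fibre norms on $C^*(\Io{\G})$ from Lemma~\ref{lem:bundle}, to show fibre-wise via comparison with $\Phi_u$ that $\iota$ induces isomorphisms $C^*(\Io{\G}_u) \cong A_u$, and then invoke a Stone--Weierstrass-type result for $C_0(X)$-algebras with continuous fibre norms to conclude $\iota(C^*(\Io{\G})) = A$. Once this identification is in hand, the isomorphism $A_u \cong C^*(\Io{\G}_u)$ and the required formula for $\tilde\Phi_u$ on $C_c(\G) \cap A$ follow from Lemma~\ref{lem:bundle} together with the computation of $\Phi_u(f)$ above.
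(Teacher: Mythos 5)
Your first two stages are sound, and the observation that $\lambda_u(a)$ leaves $\ell^2(\Io{\G}_u)$ invariant for every $a \in A$ is a clean way to get multiplicativity of the compression on $A$ (the paper instead compresses $\lambda_u$ by the projection onto $\ell^2(\Io{\G}_u)$ on all of $C^*_r(\G)$ and checks multiplicativity on $A$ from the convolution formula $f_{ab} = f_a * f_b$). The gap is in your third stage, and it is the heart of the lemma. Comparing $\Phi_u$ with $\iota$ only tells you that $\tilde\Phi_u \circ \iota_u$ agrees with the isomorphism $C^*(\Io{\G})_u \cong C^*(\Io{\G}_u)$ of Lemma~\ref{lem:bundle}; this yields surjectivity of $\tilde\Phi_u$ and injectivity of the induced fibre maps $\iota_u : C^*(\Io{\G})_u \to A_u$, but says nothing about \emph{injectivity} of $\tilde\Phi_u$ --- equivalently surjectivity of $\iota_u$, equivalently the inclusion $\ker\Phi_u = \{a \in A : f_a|_{\Io{\G}_u} = 0\} \subseteq J_u$. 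That inclusion is the only nontrivial assertion in the lemma, and your proposal never proves it: you plan to deduce it from $\iota(C^*(\Io{\G})) = A$, but that identity is exactly Corollary~\ref{cor:M<->C*r}, which the paper derives \emph{from} the present lemma, and your proposed mechanism for establishing it (``show fibre-wise that $\iota$ induces isomorphisms $C^*(\Io{\G}_u) \cong A_u$'') is the statement under proof. The Stone--Weierstrass/Fell gluing step has nothing to act on until the fibrewise surjectivity of $\iota_u$ is known.

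What is needed is a direct localization argument for $\ker\Phi_u \subseteq J_u$ that makes no detour through the global identification. The paper's route: $\{a \in A : f_a|_{\Io{\G}_u} = 0\}$ is a closed ideal of $A$ containing $\{f \in D : f(u) = 0\}$, hence contains $J_u$; conversely, given $a$ in this set, upper semicontinuity and vanishing at infinity of the fibre norms of the $C_0(\go)$-algebra $A$ produce $g_n \in C_0(\go)$ with $g_n(u) = 0$ and $\|a - g_n a\| < 1/n$, so that $a = \lim_n g_n a \in J_u$. (Controlling $\|(1 - g_n)a\|$ by the behaviour of $a$ near the fibre over $u$ is where your coset decomposition of $\lambda_v(a)$ into blocks unitarily equivalent to the operators $\Phi_w(a)$ would earn its keep.) Once $J_u = \{a \in A : f_a|_{\Io{\G}_u} = 0\}$ is in hand, injectivity of $\tilde\Phi_u$ follows because $b \notin J_u$ forces $f_{b^*b}(u) = \sum_{\gamma \in \G_u} |f_b(\gamma)|^2 > 0$ and hence $\tilde\Phi_u(b^*b) \ne 0$. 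I recommend reorganising so that this fibrewise statement is proved first and the identification $A = \iota(C^*(\Io{\G}))$ is deduced afterwards, which is the order the paper uses.
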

\begin{proof}
For $b \in C^*_r(\G)$ and $g \in D$, we have $f_{gb}(\gamma) = g(r(\gamma)) f_b(\gamma)$
and $f_{bg}(\gamma) = f_b(\gamma)g(s(\gamma))$, and so $D \hookrightarrow A$ is a central
inclusion. Since $D$ contains an approximate unit for $C^*_r(\G)$, we deduce that $A$ is
a $C_0(\go)$-algebra. So we fix $u \in \go$ and show that there is an isomorphism
$\tilde\Phi_u : A_u \to C^*(\Io{\G}_u)$.

Let $P \in \Bb(\ell^2(\G_u))$ be the orthogonal projection onto $\ell^2(\Io{\G}_u)$.
Define $\Phi_u : C^*_r(\G) \to \Bb(\ell^2(\Io{\G}_u))$ by $\Phi_u(a) := P \rho_u(a)
P$. Then for $f \in C_c(\G)$ we have
\[\textstyle
\Phi_u(f) = \sum_{\gamma \in \Io{\G}_u} f(\gamma) \lambda_\gamma \in C^*(\Io{\G}_u),
\]
and hence $\Phi_u(C^*_r(\G)) \subseteq C^*(\Io{\G}_u)$ by continuity.

If $a \in \overline{\{f \in D : f(u) = 0\}A}$, then $\Phi_u(a) = 0$. Hence $\Phi_u$
induces a map $\tilde\Phi_u : A_u \to C^*(\Io{\G}_u)$. Proposition~4.2 of
\cite{Renault80} shows that $f_{ab}(\gamma) = \sum_{\alpha\beta = \gamma} f_a(\alpha)
f_b(\beta) = (f_a * f_b)(\gamma)$ and $f_{a^*}(\gamma) = \overline{f_a(\gamma^{-1})} =
f^*_a(\gamma)$, so $\tilde\Phi_u$ is a $*$-homomorphism. This $\tilde\Phi_u$ is
surjective because its image contains the canonical generators of $C^*(\Io{\G}_u)$.

To see that $\tilde\Phi_u$ is injective, we first claim that $J_u = \{a \in A :
f_a|_{\Io{\G}_u} = 0\}$. For $\subseteq$, observe that  $\{a \in A : f_a|_{\Io{\G}_u} =
0\}$ is an ideal containing $\{f \in D : f(u) = 0\}$. For the reverse containment,
suppose that $f_a|_{\Io{\G}_u} = 0$ and $\|a\| \le 1$. Since $f_a|_{\Io{\G}_u} = 0$, we
have $\|\pi_u(a)\| = 0$. The map $\go \owns v \mapsto \|\pi_v(a)\|$ is upper
semicontinuous by \cite[Proposition~C.10(a)]{Williams:cp}. So for $n \in \NN$ the set
$U_n := \{v \in \go : \|\pi_v(a)\| < 1/n\}$ is an open neighbourhood of $u$, and $\go
\setminus U_n$ is compact. So there exists $g_n \in C_0(\go)_+$ with $g_n \le 1$, $g_n(u)
= 0$, and $g_n \equiv 1$ on $\go \setminus U_n$. Now $\|a - g_na\| < \frac{1}{n}$. Since
each $g_n a \in J_u$, we deduce that $a = \lim_n g_n a \in J_u$. This proves the claim.
Now suppose $b\in A\setminus J_u$. Then $f_b|_{\Io{\G}_u} \not= 0$, so $f_{b^*b}(u) \not=
0$. Hence $(\Phi_u(b^*b) \delta_u \mid \delta_u) = f_{b^*b}(u) \not= 0$, forcing
$\tilde\Phi_u(b^*b) \not= 0$. Thus $\tilde\Phi_u(b) \not= 0$ because $\tilde\Phi_u$ is a
homomorphism. Hence $\tilde{\Phi}_u$ is injective.
\end{proof}

For the next result, recall from \cite[Proposition~1.9]{Christopher} that if $\G$ is an
\'etale groupoid then the canonical inclusion $C_c(\Io{\G}) \hookrightarrow C_c(\G)$
extends to an injective homomorphism $\iota : C^*_r(\Io{\G}) \hookrightarrow C^*_r(G)$.

\begin{cor}\label{cor:M<->C*r}
Let $\G$ be a locally compact Hausdorff \'etale groupoid such that $\Io{\G}$ is abelian.
Let $A := \{a \in C^*_r(\G) : \osupp(f_a) \subseteq \Io{\G}\}$. Then $A =
C_0(\go)'_{C^*_r(\G)} = \iota(C^*(\Io{\G}))$.
\end{cor}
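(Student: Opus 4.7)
The plan is to prove the two equalities by establishing the cycle of inclusions $\iota(C^*(\Io{\G})) \subseteq A \subseteq C_0(\go)'_{C^*_r(\G)} \subseteq \iota(C^*(\Io{\G}))$, where the last inclusion is obtained by chaining $C_0(\go)'_{C^*_r(\G)} \subseteq A \subseteq \iota(C^*(\Io{\G}))$.

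First I would check $\iota(C^*(\Io{\G})) \subseteq A$ by density. Since $\Io{\G}$ is open in $\G$, extension by zero identifies $C_c(\Io{\G})$ with a subspace of $C_c(\G)$, and for such an $f$ a short calculation with the definition~\eqref{eq:Jean's jmap} gives $f_{\iota(f)} = \iota(f)$, so $\osupp(f_{\iota(f)}) \subseteq \Io{\G}$. Since $a \mapsto f_a$ is norm-decreasing into $C_0(\G)$, the pointwise vanishing outside $\Io{\G}$ persists in the limit, giving the desired inclusion.

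Next I would prove $A = C_0(\go)'_{C^*_r(\G)}$. Writing $D = C_0(\go)$ and using the product formula for $f_{da}$ and $f_{ad}$ (as recalled just before Lemma~\ref{lem:fibres}), the identity $da = ad$ for all $d \in D$ is equivalent, by injectivity of $a \mapsto f_a$, to $(d(r(\gamma)) - d(s(\gamma)))f_a(\gamma) = 0$ for every $d \in D$ and $\gamma \in \G$. If $a \in A$, this holds because $f_a(\gamma) \neq 0$ forces $\gamma \in \Io{\G}$ and hence $r(\gamma) = s(\gamma)$; conversely, if $a$ commutes with $D$, then since $D$ separates points of $\go$ we deduce $\osupp(f_a) \subseteq \operatorname{Iso}(\G)$, and since $f_a$ is continuous, $\osupp(f_a)$ is open and therefore contained in the interior $\Io{\G}$, so $a \in A$.

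Finally, for $A \subseteq \iota(C^*(\Io{\G}))$ I would combine the two structural lemmas of the section. By Lemma~\ref{lem:bundle}, $\iota(C^*(\Io{\G}))$ is a $C_0(\go)$-algebra with fibre at $u$ canonically isomorphic to $C^*(\Io{\G}_u)$. By Lemma~\ref{lem:fibres}, $A$ is a $C_0(\go)$-algebra with fibre at $u$ isomorphic to $C^*(\Io{\G}_u)$ via $\tilde\Phi_u$. The inclusion $\iota \colon C^*(\Io{\G}) \hookrightarrow A$ is $C_0(\go)$-linear, and the two fibre identifications both send $f \in C_c(\Io{\G})$ to $\sum_{\gamma \in \Io{\G}_u} f(\gamma)\lambda_\gamma \in C^*(\Io{\G}_u)$, so the induced map on the fibre at $u$ is the identity, hence an isomorphism. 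A standard $C_0(X)$-algebra argument, using that the norm on each of these $C_0(\go)$-algebras is recovered from the fibres as an upper-semicontinuous supremum and that $\go$ admits local partitions of unity from $C_0(\go)$, then upgrades fibrewise surjectivity of $\iota$ to global surjectivity onto $A$.

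The main obstacle is this last step: translating the pointwise agreement of fibres into a global identification of the two subalgebras of $C^*_r(\G)$. The first two inclusions are essentially a direct computation with $f_a$ together with the observation that $\osupp(f_a)$ is automatically open, but closing the cycle requires either invoking a general theorem that a $C_0(X)$-linear $*$-homomorphism with isomorphic fibres is an isomorphism, or writing out a direct partition-of-unity approximation argument using the continuity of $u \mapsto \|\pi_u(a)\|$ afforded by Lemma~\ref{lem:bundle}.
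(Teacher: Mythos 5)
Your proposal is correct and follows essentially the same route as the paper: the middle equality $A = C_0(\go)'_{C^*_r(\G)}$ is the same computation with $f_{da}$ and $f_{ad}$ plus the observation that the open set $\osupp(f_a)$ lands in the interior of the isotropy, and the final inclusion is closed exactly as in the paper by combining Lemma~\ref{lem:bundle} and Lemma~\ref{lem:fibres} to see that $\iota$ is a $C_0(\go)$-linear map with isomorphic fibres (the paper packages this last step as an application of Fell's bundle-isomorphism result rather than a partition-of-unity argument, but the content is identical). Your direct uniform-limit proof that $\iota(C^*(\Io{\G})) \subseteq A$ is a harmless variant of the paper's observation that $C_c(\Io{\G})$ commutes with $C_0(\go)$.
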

\begin{proof}
Fix $a \in A$. For $d \in C_0(\go)$, we have $f_d = d$ and
\cite[Proposition~4.2]{Renault80} gives $f_{ad}(\gamma) = (f_a * f_d)(\gamma) =
f_a(\gamma)d(s(\gamma))$ and similarly $f_{da}(\gamma) = d(r(\gamma)) f_a(\gamma)$. Since
$a \in A$, we have $f_a(\gamma) \not= 0$ only if $r(\gamma) = s(\gamma)$, so $f_{ad} =
f_{da}$. Since $b \mapsto f_b$ is injective, it follows that $A \subseteq
C_0(\go)'_{C^*_r(\G)}$. Now fix $a \not\in A$. Since $f_a$ is continuous, there is then
$\gamma \not\in \operatorname{Iso}(\G)$ such that $f_a(\gamma)\ne 0$. Fix $d \in
C_0(\go)$ with $d(r(\gamma')) = 1$ and $d(s(\gamma')) = 0$. Then $f_{da}(\gamma) =
d(r(\gamma))f_a(\gamma) = f_a(\gamma) \not= 0$, and $f_{ad}(\gamma) = f_a(\gamma)
d(s(\gamma)) = 0$. So $da \not= ad$. Hence $C_0(\go)'_{C^*_r(\G)} \subseteq A$, and thus
$C_0(\go)'_{C^*_r(\G)} = A$.

If $f \in C_c(\Io{\G})$ and $d \in C_0(\go)$, then $(df)(\gamma) = d(r(\gamma))f(\gamma)$
and $(fd)(\gamma) = f(\gamma)d(s(\gamma))$ for every $\gamma \in \G$. Since $\osupp(f)
\subseteq \Io{\G}$, we obtain $df = fd$. Thus $\iota(C^*(\Io{\G})) \subseteq A$.

Both $A$ and $\iota(C^*_r(\Io{\G}))$ are $C_0(\go)$-algebras with respect to the
inclusion of $D = C_0(\go)$ in both. Write $J_u$ for the ideal of $A$ generated by
$C_0(\go\setminus\{u\}) \subseteq D$ and $K_u$ for the ideal of $\iota(C^*(\Io{\G}))$
generated by $C_0(\go\setminus\{u\}) \subseteq D$, so $A_u = A/J_u$ and
$\iota(C^*(\Io{\G}))_u = \iota(C^*(\Io{\G}))/K_u$.

Lemma~\ref{lem:fibres} gives isomorphisms $\tilde\Phi_u^{-1} : \iota(C^*_r(\Io{\G}))_u
\to A_u$ such that $\tilde\Phi_u^{-1}(\iota(f) + K_u) = f + K_u$ for $f \in
C_c(\Io{\G})$. Thus $\{m_f : u \mapsto f + J_u \mid f \in C_c(\Io{\G})\}$ is a fibrewise
dense vector space of continuous sections of $\mathcal{A} := \bigsqcup_u A_u$ that are
the images of a fibrewise dense vector space of continuous sections $k_f$ of $\mathcal{I}
:= \bigsqcup_u \iota(C^*_r(\Io{\G}))_u$. So \cite[Proposition~1.6]{Fell} gives
$\mathcal{A} \cong \mathcal{I}$ as bundles, and hence there is an isomorphism
$\Gamma_0(\mathcal{A}) \cong \Gamma_0(\mathcal{I})$ carrying $k_f$ to $m_f$. Since $f
\mapsto k_f$ is an isomorphism $C^*_r(\Io{\G}) \to \Gamma_0(\mathcal{I})$ and $f \mapsto
m_f$ is an isomorphism $A \to \Gamma_0(\mathcal{A})$, we obtain an isomorphism
$\iota(C^*_r(\Io{\G})) \cong A$ extending $\id_{C_c(\Io{\G})}$. So $A =
\overline{C_c(\Io{\G})} = \iota(C^*_r(\Io{\G}))$.
\end{proof}

We take the opportunity to resolve a loose end from \cite{BNRSW}.

\begin{cor}[{cf. \cite[Theorem~4.3]{BNRSW}}]\label{cor:C*r masa}
Let $\G$ be a locally compact Hausdorff \'etale groupoid such that $\Io{\G}$ is abelian.
Then $\iota(C^*(\Io{\G})) \subseteq C^*_r(\G)$ is maximal abelian.
\end{cor}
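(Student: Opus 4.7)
The plan is to show that this corollary is essentially a direct consequence of Corollary~\ref{cor:M<->C*r}, which already identifies $\iota(C^*(\Io{\G}))$ with the relative commutant of $C_0(\go)$ inside $C^*_r(\G)$. The only extra ingredient needed is the observation that $\iota(C^*(\Io{\G}))$ is itself abelian.

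First, I would argue that $\iota(C^*(\Io{\G}))$ is abelian. Since $\Io{\G}$ is a group bundle over $\go$ whose fibres $\Io{\G}_u$ are abelian, any two elements $\alpha, \beta \in \Io{\G}$ that compose must lie in the same fibre, and hence commute. This makes the convolution product on $C_c(\Io{\G})$ commutative (by a reindexing of the convolution sum using $\alpha\beta = \beta\alpha$), so $C^*(\Io{\G})$ is abelian by continuity. Since $\iota$ is an injective $^*$-homomorphism (by \cite[Proposition~1.9]{Christopher} cited in the excerpt), $\iota(C^*(\Io{\G}))$ is abelian in $C^*_r(\G)$.

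Next, suppose $b \in C^*_r(\G)$ commutes with every element of $\iota(C^*(\Io{\G}))$. I would observe that $C_0(\go) \subseteq C^*(\Io{\G})$ via the canonical inclusion, and $\iota$ restricts to the identity on $C_0(\go)$ (viewed inside $C^*_r(\G)$), so in particular $b$ commutes with every element of $C_0(\go)$. Hence $b \in C_0(\go)'_{C^*_r(\G)}$, and Corollary~\ref{cor:M<->C*r} now gives $b \in \iota(C^*(\Io{\G}))$. This shows that $\iota(C^*(\Io{\G}))$ equals its own commutant in $C^*_r(\G)$, and hence is maximal abelian.

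There is no real obstacle here: the hard work has already been done in establishing the $C_0(\go)$-algebra structure of $A = \{a \in C^*_r(\G) : \osupp(f_a) \subseteq \Io{\G}\}$ in Lemma~\ref{lem:fibres} and the identification $A = C_0(\go)'_{C^*_r(\G)} = \iota(C^*(\Io{\G}))$ in Corollary~\ref{cor:M<->C*r}. The only subtlety worth stressing is that passing from ``commutes with $\iota(C^*(\Io{\G}))$'' to ``commutes with $C_0(\go)$'' is enough, which is why the abelian assumption on $\Io{\G}$ yields maximality rather than merely commutativity of the inclusion.
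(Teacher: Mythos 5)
Your proof is correct, and it hinges on the same key fact as the paper's proof, namely the identification $C_0(\go)'_{C^*_r(\G)} = \iota(C^*(\Io{\G}))$ from Corollary~\ref{cor:M<->C*r}; the difference is in how the reduction to that fact is carried out. The paper simply invokes Lemma~4.4 of \cite{BNRSW}, which asserts that maximal abelianness follows once one knows the containment $\{a \in C^*_r(\G) : \osupp(f_a) \subseteq \Io{\G}\} \subseteq \iota(C^*(\Io{\G}))$. You instead give a self-contained deduction: since $C_0(\go) \subseteq \iota(C^*(\Io{\G}))$ (the inclusion $\iota$ restricts to the canonical copy of $C_0(\go)$), any element commuting with $\iota(C^*(\Io{\G}))$ commutes with $C_0(\go)$ and hence lies in $C_0(\go)'_{C^*_r(\G)} = \iota(C^*(\Io{\G}))$; and commutativity of $\iota(C^*(\Io{\G}))$ itself follows from abelianness of the isotropy fibres (your reindexing of the convolution sum is valid because elements of $\Io{\G}$ compose only within a fibre; alternatively this is implicit in Lemma~\ref{lem:bundle}, whose fibre isomorphisms $C^*(\Io{\G})_u \cong C^*(\Io{\G}_u)$ exhibit $C^*(\Io{\G})$ as a bundle of abelian group $C^*$-algebras). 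Note that your route uses the full equality in Corollary~\ref{cor:M<->C*r} rather than only the single containment required by the cited external lemma, which is precisely what lets you dispense with the reference to \cite{BNRSW}; both steps you supply are sound, so nothing is missing.
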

\begin{proof}
Lemma~4.4 of \cite{BNRSW} says that $\iota(C^*(\Io{\G})) \subseteq C^*_r(\G)$ is maximal
abelian if $\{a \in C^*_r(\G) : \osupp(f_a) \subseteq \Io{\G}\} \subseteq
\iota(C^*(\Io{\G}))$, which follows from Corollary~\ref{cor:M<->C*r}.
\end{proof}

\section{Reconstruction of groupoids}\label{sec:reconstruction}

Let $\Gamma$ be a discrete group. If $\G$ is an \'etale groupoid, and $c : \G \to \Gamma$ is a
continuous cocycle, we call $(G, c)$ a $\Gamma$-graded groupoid. To state our main theorem, we
first show that $c$ induces a coaction on $C^*_r(\G)$. Recall that, for $g \in \Gamma$, we write
$\lambda_g \in \Bb(\ell^2(\Gamma))$ for the image of $g$ in the left regular representation of
$\Gamma$.

\begin{lem}\label{lem:coaction}
Let $\G$ be a locally compact Hausdorff \'etale groupoid. Suppose that $c : \G \to
\Gamma$ is a continuous cocycle. Then there is a coaction $\delta_c : C^*_r(\G)\to
C^*_r(\G)\otimes C^*_r(\Gamma)$ such that $\delta_c(f) = f\otimes \lambda_g$ whenever
$g\in\Gamma$ and $f\in C_c(\G)$ satisfy $\supp(f)\subseteq c^{-1}(g)$.
\end{lem}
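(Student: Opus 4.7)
The plan is to first define $\delta_c$ on the dense $^*$-subalgebra $C_c(\G)$ and then extend it to $C^*_r(\G)$ by a unitary-conjugation argument. Since $\Gamma$ is discrete and $c$ is continuous, each fibre $c^{-1}(g)$ is clopen in $\G$. Thus any $f \in C_c(\G)$ decomposes as a finite sum $f = \sum_{g \in F} f_g$ with $f_g \in C_c(c^{-1}(g))$ (set $f_g := f \cdot 1_{c^{-1}(g)}$), and I define $\delta_c(f) := \sum_g f_g \otimes \lambda_g \in C^*_r(\G) \odot C^*_r(\Gamma)$. That this is a $^*$-homomorphism on $C_c(\G)$ is a direct cocycle computation: if $\supp(f) \subseteq c^{-1}(g)$ and $\supp(f') \subseteq c^{-1}(g')$, then $\supp(f * f') \subseteq c^{-1}(gg')$ and $\supp(f^*) \subseteq c^{-1}(g^{-1})$, which matches $(f \otimes \lambda_g)(f' \otimes \lambda_{g'}) = f f' \otimes \lambda_{gg'}$ and $(f \otimes \lambda_g)^* = f^* \otimes \lambda_{g^{-1}}$.

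The main obstacle is showing that $\delta_c$ extends (boundedly) to $C^*_r(\G)$, i.e.\ that $\|\delta_c(f)\|_{\min} \le \|f\|_{C^*_r(\G)}$. For this I will exhibit, for each $u \in \go$, a unitary $W_u$ on $\ell^2(\G_u) \otimes \ell^2(\Gamma)$ that intertwines $\lambda_u(f) \otimes 1$ and $(\lambda_u \otimes \id)(\delta_c(f))$. Define
\[
W_u(\delta_\gamma \otimes \delta_h) := \delta_\gamma \otimes \delta_{c(\gamma) h} \qquad (\gamma \in \G_u,\ h \in \Gamma);
\]
this permutes the obvious orthonormal basis and so is unitary. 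A direct check using the cocycle identity $c(\alpha\gamma) = c(\alpha)c(\gamma)$ shows, for $f \in C_c(\G)$ with $\supp(f) \subseteq c^{-1}(g)$, that
\[
W_u\bigl(\lambda_u(f) \otimes 1\bigr)W_u^* = \lambda_u(f) \otimes \lambda_g = (\lambda_u \otimes \id)\bigl(\delta_c(f)\bigr),
\]
and by linearity this holds for all $f \in C_c(\G)$. Since $\bigoplus_u \lambda_u$ is a faithful representation of $C^*_r(\G)$ and the canonical inclusion $C^*_r(\Gamma) \hookrightarrow \Bb(\ell^2(\Gamma))$ is faithful, their tensor product realises the minimal tensor norm. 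Therefore
\[
\|\delta_c(f)\|_{\min} = \sup_u \|(\lambda_u \otimes \id)(\delta_c(f))\| = \sup_u \|\lambda_u(f) \otimes 1\| = \|f\|_{C^*_r(\G)},
\]
so $\delta_c$ is isometric on $C_c(\G)$ and extends uniquely to a $^*$-homomorphism $\delta_c : C^*_r(\G) \to C^*_r(\G) \otimes C^*_r(\Gamma)$.

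It remains to verify the coaction identity and nondegeneracy. For $f \in C_c(\G)$ with $\supp(f) \subseteq c^{-1}(g)$ both $(\delta_c \otimes \id)(\delta_c(f))$ and $(\id \otimes \delta_\Gamma)(\delta_c(f))$ equal $f \otimes \lambda_g \otimes \lambda_g$, so by density of such $f$ the coaction identity holds on all of $C^*_r(\G)$. For nondegeneracy, note that $\delta_c(f_h)(1 \otimes \lambda_k) = f_h \otimes \lambda_{hk}$ for $f_h \in C_c(c^{-1}(h))$; since $hk$ sweeps over all of $\Gamma$ as $k$ does and $\sum_h C_c(c^{-1}(h)) = C_c(\G)$ is dense in $C^*_r(\G)$, the set $\delta_c(C^*_r(\G))(1 \otimes C^*_r(\Gamma))$ spans a dense subspace of $C^*_r(\G) \otimes C^*_r(\Gamma)$. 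This completes the proof.
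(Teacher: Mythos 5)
Your argument is correct, and it reaches the conclusion by a route that is conceptually the same as the paper's but technically more elementary. The paper completes $C_c(\G)$ to a Hilbert $C_0(\go)$-module $\Hh$, notes that the subspaces $\Hh_g = \overline{C_c(c^{-1}(g))}$ are mutually orthogonal, builds isometries $V_h : \Hh \to \Hh\otimes\ell^2(\Gamma)$ with $V_h\xi = \xi\otimes e_{gh}$ for $\xi\in\Hh_g$, and defines $\delta_c = (\pi^{-1}\otimes\id)\circ\big(\bigoplus_h \Ad V_h\circ\pi\big)$, where $\pi : C^*_r(\G)\to\Ll(\Hh)$ is the faithful representation by left multiplication taken from \cite{aHKS}. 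Your unitary $W_u(\delta_\gamma\otimes\delta_h) = \delta_\gamma\otimes\delta_{c(\gamma)h}$ is in effect the localisation of that construction at the unit $u$, so both proofs rest on the same spatial implementation of $f_g\mapsto f_g\otimes\lambda_g$; the difference is that you work directly with the defining family of regular representations $\lambda_u$, which lets you verify boundedness (indeed isometry) of $\delta_c$ on $C_c(\G)$ by a bare-hands intertwining computation plus the fact that a tensor product of faithful representations computes the minimal norm, rather than appealing to the module-level faithfulness result. Your preliminary steps are all sound: $c^{-1}(g)$ is clopen so $f = \sum_g f\cdot 1_{c^{-1}(g)}$ is a finite sum of elements of $C_c(c^{-1}(g))$; the cocycle identity gives $\supp(f*f')\subseteq c^{-1}(gg')$ and $\supp(f^*)\subseteq c^{-1}(g^{-1})$, so $\delta_c$ is a $^*$-homomorphism on $C_c(\G)$; and your verifications of the coaction identity and of nondegeneracy (which the paper leaves as ``routine'') are correct. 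No gaps.
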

\begin{proof}
Let $\Hh$ be the Hilbert $C^*(\go)$-module completion of $C_c(\G)$ under $\langle f,
g\rangle_{C_0(\go)} = (f^*g)|_{\go}$. For $g \in \Gamma$, write $C_c(\G)_g :=
C_c(c^{-1}(g)) \subseteq C_c(\G)$, and let $\Hh_g = \overline{C_c(\G)_g} \subseteq \Hh$.
The $\Hh_g$ are mutually orthogonal because $\go \subseteq c^{-1}(e)$, so a calculation
using inner product shows that there are isometries $V_h : \Hh \to \Hh \otimes
\ell^2(\Gamma)$ such that $V_h(\xi) = \xi \otimes e_{gh}$ for $\xi \in \Hh_g$. As in
\cite[Appendix~A]{aHKS} there is a faithful representation $\pi : C^*_r(\G) \to\Ll(\Hh)$
extending left multiplication. So $\bigoplus_h \Ad V_h \circ \pi : C^*_r(\G) \to \Ll(\Hh
\otimes \ell^2(\Gamma))$ is faithful.

A routine calculation shows that for $f \in C_c(\G)_g$ and $\xi \in C_c(\G)_k$ and $h,l
\in \Gamma$, we have $\bigoplus_h \big(\Ad V_h \circ \pi\big)(f)(\xi \otimes e_l) =
e_{kh,l} (\pi(f) \otimes \lambda_g)(\xi \otimes \delta_l)$. So $\delta_c := (\pi^{-1}
\otimes \id) \circ (\bigoplus_h \Ad V_h \circ \pi)$ satisfies $\delta_c(f) = f \otimes
\lambda_g$. It is routine to check that this is a coaction.
\end{proof}

\begin{thm}\label{thm:1}
Fix a discrete group $\Gamma$ and $\Gamma$-graded second-countable locally compact
Hausdorff \'etale groupoids $(\G_1,c_1), (\G_2,c_2)$ with $\Io{c_i^{-1}(\id_\Gamma)}$ torsion-free
and abelian.
\begin{enumerate}
\item Suppose that $\kappa : \G_2 \to \G_1$ is an isomorphism satisfying
    $c_1\circ\kappa=c_2$. Then there is an isomorphism $\phi : C^*_r(\G_1)\to
    C^*_r(\G_2)$ such that $\phi(f) = f \circ \kappa$ for $f \in C_c(\G_1)$. We have
    $\phi(C_0(\G_1^{(0)})) = C_0(\G_2^{(0)})$ and $\delta_{c_2}\circ\phi =
    (\phi\otimes\id)\circ\delta_{c_1}$.
\item Suppose that $\phi:C^*_r(\G_1)\to C^*_r(\G_2)$ is an isomorphism satisfying
    $\phi(C_0(\G_1^{(0)}))=C_0(\G_2^{(0)})$ and
    $\delta_{c_2}\circ\phi=(\phi\otimes\id)\circ\delta_{c_1}$. Then there is an
    isomorphism $\kappa : \G_2 \to \G_1$ such that $\kappa|_{\G_2^{(0)}}$ is the
    homeomorphism induced by $\phi|_{C_0(\G_1^{(0)})}$ and $c_1 \circ \kappa = c_2$.
\end{enumerate}
\end{thm}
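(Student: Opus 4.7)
The two parts demand rather different techniques. Part~(1) is essentially functoriality of the reduced groupoid $C^*$-algebra and follows from unitary equivalence of regular representations. Part~(2) is the substantive direction and should be deduced from the graded analogue of Proposition~\ref{prp:isomorphism-noaction} (reconstructing $(\G,c)$ from the triple $(C^*_r(\G), C_0(\go), \delta_c)$), which I take to be stated earlier in this section.

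For~(1), the pull-back $f \mapsto f\circ\kappa$ is a $^*$-algebra isomorphism $C_c(\G_1)\to C_c(\G_2)$ since $\kappa$ preserves multiplication, inverses and the unit space. For each $u\in\G_2^{(0)}$, $\kappa$ restricts to a bijection $\G_2^u \to \G_1^{\kappa(u)}$, which determines a unitary $U_u : \ell^2(\G_2^u)\to\ell^2(\G_1^{\kappa(u)})$ satisfying $U_u\lambda^{\G_2}_u(f\circ\kappa)U_u^* = \lambda^{\G_1}_{\kappa(u)}(f)$; taking the supremum over $u$ shows the pull-back is reduced-norm isometric and hence extends to an isomorphism $\phi : C^*_r(\G_1)\to C^*_r(\G_2)$. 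Since $\kappa(\G_2^{(0)}) = \G_1^{(0)}$, $\phi(C_0(\G_1^{(0)}))=C_0(\G_2^{(0)})$. For coaction intertwining, $f \in C_c(c_1^{-1}(g))$ implies $f\circ\kappa\in C_c(c_2^{-1}(g))$ by $c_1\circ\kappa = c_2$, so $\phi$ maps the $g$-spectral subspace of $\delta_{c_1}$ into that of $\delta_{c_2}$; density and continuity give $\delta_{c_2}\circ\phi = (\phi\otimes\id)\circ\delta_{c_1}$.

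For~(2), I would show $\phi$ induces a graded isomorphism $\Hh(\phi):\Hh(C^*_r(\G_1), C_0(\G_1^{(0)}), \delta_{c_1}) \to \Hh(C^*_r(\G_2), C_0(\G_2^{(0)}), \delta_{c_2})$ and then transport through the reconstruction isomorphisms $\Psi_i:\G_i\to\Hh(C^*_r(\G_i),C_0(\G_i^{(0)}),\delta_{c_i})$ to obtain $\kappa := \Psi_1^{-1}\circ\Hh(\phi)^{-1}\circ\Psi_2$. Let $\widehat{\phi_D}:\widehat{C_0(\G_2^{(0)})}\to\widehat{C_0(\G_1^{(0)})}$ be the homeomorphism induced by $\phi|_{C_0(\G_1^{(0)})}$. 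Because $\phi$ preserves the diagonal and the spectral subspaces, it carries $\Nn_\star(C_0(\G_1^{(0)}))$ to $\Nn_\star(C_0(\G_2^{(0)}))$ preserving the grading, and the formula $\Hh(\phi)[n,\chi] := [\phi(n),\,\chi\circ\widehat{\phi_D}^{-1}]$ is the natural candidate for a graded groupoid isomorphism. The essential checks are that $\phi$ conjugates $\alpha_n$ to $\alpha_{\phi(n)}$ (via $\widehat{\phi_D}$), and that $\phi$ descends to $^*$-isomorphisms of the relative commutants modulo $J_\chi$ sending $U^\chi_{n^*m}$ to $U^{\chi\circ\widehat{\phi_D}^{-1}}_{\phi(n)^*\phi(m)}$; relation~(R4) is then preserved because $^*$-isomorphisms carry $\mathcal{U}_0$ to $\mathcal{U}_0$. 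Continuity of $\Hh(\phi)$ follows from the description of the basic open sets $Z(n,X)$, and applying the same procedure to $\phi^{-1}$ yields the inverse. The resulting $\kappa$ satisfies $c_1\circ\kappa = c_2$ (because $\Hh(\phi)$ intertwines the grading cocycles by construction) and restricts on unit spaces to $\widehat{\phi_D}$ under the usual identifications.

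\textbf{Main obstacle.} Essentially all the content of the theorem is already absorbed by the graded reconstruction result, so what remains is careful bookkeeping for $\Hh(\phi)$. The key technical point is to verify that the descended $^*$-isomorphism $D'_{A_1^{\delta_{c_1}}}/J_\chi \to D'_{A_2^{\delta_{c_2}}}/J_{\chi\circ\widehat{\phi_D}^{-1}}$ carries $U^\chi_{n^*m}$ to the corresponding unitary on the other side. This reduces to a direct computation using the explicit formula $U^\chi_{n^*m} = \pi_\chi\big(\chi(m^*m)^{-1/2}\chi(n^*n)^{-1/2} dn^*md\big)$ together with the observation that $\phi(d)$ is a valid witness at $\chi\circ\widehat{\phi_D}^{-1}$. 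Once this is in hand, preservation of~(R4) is immediate from the fact that $^*$-isomorphisms of unital $C^*$-algebras preserve the connected component of the identity in the unitary group.
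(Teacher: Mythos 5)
Your proposal is correct and follows essentially the same route as the paper: part~(1) by the standard unitary intertwining of regular representations (which the paper dismisses as clear), and part~(2) by checking that a diagonal-preserving equivariant isomorphism induces a graded isomorphism of extended Weyl groupoids via $[n,\chi]\mapsto[\phi(n),\chi\circ\phi^{-1}]$ and then conjugating by the reconstruction isomorphisms of Proposition~\ref{prp:isomorphism}. The verification steps you single out (compatibility of $\alpha_n$ with $\alpha_{\phi(n)}$, the induced isomorphisms of the quotients $D'/J_\chi$ carrying $U^\chi_{n^*m}$ to $U^{\chi\circ\phi^{-1}}_{\phi(n)^*\phi(m)}$, and preservation of $\mathcal{U}_0$) are exactly the content the paper leaves implicit.
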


The first step in proving Theorem~\ref{thm:1} is showing that $C^*_r(c^{-1}(\id_\Gamma)) \cong
C^*_r(G)^{\delta_c} \subseteq C^*_r(\G)$.

\begin{lem}\label{lem:neutral subalg}
Let $(\G, c)$ be a $\Gamma$-graded locally compact Hausdorff \'etale groupoid. The
canonical inclusion $\iota : C_c(c^{-1}(\id_\Gamma)) \to C_c(\G)$ extends to an isomorphism
$C^*_r(c^{-1}(\id_\Gamma)) \cong C^*_r(\G)^{\delta_c}$.
\end{lem}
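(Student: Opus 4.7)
The plan is to establish three things in turn: (i) the inclusion $\iota$ extends to an isometric $^*$-embedding of $C^*_r(c^{-1}(e))$ into $C^*_r(\G)$, (ii) the image lies in $C^*_r(\G)^{\delta_c}$, and (iii) the image is all of $C^*_r(\G)^{\delta_c}$. Steps (ii) and (iii) are routine; the main obstacle is~(i).

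For~(i), I would fix $u \in \go$ and analyse the regular representation $\lambda_u$ on $\ell^2(\G_u)$. Since $c$ is a continuous cocycle, $\G_u$ decomposes as the disjoint union $\bigsqcup_{g \in \Gamma} (\G_u \cap c^{-1}(g))$ of clopen subsets, so $\ell^2(\G_u) = \bigoplus_{g \in \Gamma} \ell^2(\G_u \cap c^{-1}(g))$. Since $f \in C_c(c^{-1}(e))$ is supported on $c$-degree $e$, each summand is invariant under $\lambda_u(f)$. For each $g$ with $\G_u \cap c^{-1}(g) \neq \emptyset$, pick $\gamma_0 \in \G_u \cap c^{-1}(g)$ and set $v = r(\gamma_0)$. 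Then the map $\alpha \mapsto \alpha\gamma_0$ is a bijection $(c^{-1}(e))_v \to \G_u \cap c^{-1}(g)$, yielding a unitary $U_{\gamma_0}\colon \ell^2((c^{-1}(e))_v) \to \ell^2(\G_u \cap c^{-1}(g))$. A direct change-of-variables calculation gives
\[
U_{\gamma_0}^* \, \lambda_u(f) \, U_{\gamma_0} = \lambda^{c^{-1}(e)}_v(f),
\]
so $\|\lambda_u(f)\vert_{\ell^2(\G_u \cap c^{-1}(g))}\| = \|\lambda^{c^{-1}(e)}_v(f)\|$. Taking the supremum over $g$ yields $\|\lambda_u(f)\| \le \|f\|_{C^*_r(c^{-1}(e))}$, and taking the supremum over $u$ gives $\|f\|_{C^*_r(\G)} \le \|f\|_{C^*_r(c^{-1}(e))}$. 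The reverse inequality is immediate by applying the $g = e$ case: $\lambda^{c^{-1}(e)}_u(f)$ is the restriction of $\lambda_u(f)$ to the invariant subspace $\ell^2((c^{-1}(e))_u)$.

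For~(ii), since $\delta_c(f) = f \otimes \lambda_e$ on the dense subalgebra $C_c(c^{-1}(e))$ (by the defining formula in Lemma~\ref{lem:coaction}), continuity of $\delta_c$ extends this identity to all of $\overline{\iota(C_c(c^{-1}(e)))}$, placing the image inside $C^*_r(\G)^{\delta_c}$.

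For~(iii), I would use the conditional expectation $\Phi^{\delta_c} = \Phi_e : C^*_r(\G) \to C^*_r(\G)^{\delta_c}$. For any $f \in C_c(\G)$, compactness of $\supp(f)$ and continuity of $c$ into the discrete group $\Gamma$ imply that $c(\supp(f))$ is finite, so $f = \sum_{g \in F} f_g$ with each $f_g := f\vert_{c^{-1}(g)} \in C_c(c^{-1}(g))$; applying $\delta_c$ and then $\id \otimes \operatorname{Tr}$ gives $\Phi_e(f) = f_e \in \iota(C_c(c^{-1}(e)))$. Since $\Phi_e$ is norm-decreasing and $C_c(\G)$ is dense in $C^*_r(\G)$, it follows that $\Phi_e(C^*_r(\G)) \subseteq \overline{\iota(C_c(c^{-1}(e)))}$. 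For $a \in C^*_r(\G)^{\delta_c}$ we have $\Phi_e(a) = a$, so $a$ lies in the image of $\iota$, completing the proof.
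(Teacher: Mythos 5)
Your proof is correct, and its overall skeleton (isometric embedding, image contained in the fixed\-/point algebra, surjectivity via the conditional expectation $\Phi^{\delta_c}$) matches the paper's. The one genuine divergence is in the embedding step: the paper simply cites \cite[Proposition~1.9]{Christopher}, which gives an injective extension of $\iota$ for any open subgroupoid of $\G$ containing $\go$, whereas you prove this by hand using the decomposition $\ell^2(\G_u) = \bigoplus_{g}\ell^2(\G_u \cap c^{-1}(g))$ and the unitaries $U_{\gamma_0}$ implementing right translation. Your computation is sound: each $\G_u \cap c^{-1}(g)$ is a single orbit of the left $c^{-1}(e)$-action (via $\gamma \mapsto (\gamma\gamma_0^{-1})\gamma_0$), so the restriction of $\lambda_u(f)$ there is unitarily equivalent to $\lambda^{c^{-1}(e)}_{r(\gamma_0)}(f)$, and the $g=e$ summand gives the reverse norm inequality. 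What the by-hand argument buys is a self-contained proof and the explicitly isometric (not merely injective) nature of the extension, at the cost of being special to cocycle kernels; the cited result covers arbitrary open subgroupoids. Your step (iii) is the same as the paper's, written out more carefully (finiteness of $c(\supp f)$, $\Phi_e(f) = f|_{c^{-1}(e)}$, norm-decrease of $\Phi_e$, and closedness of the image of the isometric $\iota$).
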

\begin{proof}
Proposition~1.9 of \cite{Christopher} shows that $\iota$ extends to an injective homomorphism from
$C^*_r(c^{-1}(\id_\Gamma))$ to $C^*_r(\G)$. Clearly $\iota(C^*_r(c^{-1}(\id_\Gamma))) \subseteq
C^*_r(\G)^{\delta_c}$. For $g \in \Gamma$, if $f \in C_c(\G)_g$, then $\Phi^{\delta_c}(\iota(f)) =
\delta_{\id_\Gamma,g}(\iota(f))$, so $C^*_r(\G) = \clsp \bigcup_g C_c(\G)_g$. It follows that
$\iota(C^*_r(c^{-1}(\id_\Gamma))) = \Phi^{\delta_c}(C^*_r(\G)) = C^*_r(\G)^{\delta_c}$ by
linearity and continuity.
\end{proof}

We now show that graded groupoids determine triples as in Theorem~\ref{thm:lche gpd}.

\begin{lem}\label{lem:gpd sds}
Let $\Gamma$ be a locally compact group, let $\G$ be a second-countable locally compact
Hausdorff \'etale groupoid and let $c : \G \to \Gamma$ be a continuous cocycle. Then $C^*_r(\G)^{\delta_c}$ is separable $C^*$-algebra and $C_0(\go)$ is an abelian $C^*$-subalgebra of the
generalised fixed-point algebra $C^*_r(\G)^{\delta_c}$ and contains an approximate unit for $C^*_r(\G)^{\delta_c}$.
\end{lem}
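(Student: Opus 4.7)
The plan is to reduce the verification of Definition~\ref{dfn:semidiag}\eqref{it:sd abelian}--\eqref{it:sd unital sections} for the pair $(C^*_r(\G)^{\delta_c}, C_0(\go))$ to the ungraded setting of the subgroupoid $H := c^{-1}(e)$. Since $\Gamma$ is discrete and $c$ is continuous, $H$ is an open subgroupoid of $\G$ containing $\go$, and it is a second-countable locally compact Hausdorff \'etale groupoid in its own right whose interior isotropy $\Io{H} = \Io{c^{-1}(e)}$ is abelian by hypothesis. By Lemma~\ref{lem:neutral subalg}, the canonical inclusion $C_c(H) \hookrightarrow C_c(\G)$ extends to an isomorphism $C^*_r(H) \cong C^*_r(\G)^{\delta_c}$, so it suffices to show that $C_0(\go)$ is weakly Cartan inside $C^*_r(H)$. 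Conditions~\eqref{it:sd abelian} and~\eqref{it:sd ai} of Definition~\ref{dfn:semidiag} are immediate: $C_0(\go)$ is abelian, and since $\go = H^{(0)}$, any approximate unit for $C_0(\go)$ is an approximate unit for $C^*_r(H)$.

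For the remaining two conditions, the key step is to compute the relative commutant $D'$ of $D := C_0(\go)$ in $C^*_r(H)$ and its fibres. Corollary~\ref{cor:M<->C*r}, applied to $H$, gives $D' = \iota(C^*(\Io{H}))$. By Lemma~\ref{lem:bundle}, $C^*(\Io{H}) = C^*_r(\Io{H})$ is a $C_0(\go)$-algebra with respect to the canonical inclusion of $C_0(\go)$, the map $u \mapsto \|\pi_u(a)\|$ is continuous, and the fibre at $u \in \go$ is $C^*(\Io{H}_u)$, with $\pi_u(d) = d(u)\cdot 1_{C^*(\Io{H}_u)}$ for $d \in C_0(\go)$. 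Each character $\phi \in \widehat{D}$ is evaluation $\operatorname{ev}_u$ at some $u \in \go$, and I will identify the ideal $J_\phi = \overline{\ker(\phi) D'}$ with the $C_0(\go)$-algebra ideal $I_u \subseteq D'$ generated by $\{f \in C_0(\go) : f(u) = 0\}$; the two coincide because $D' = \iota(C^*(\Io{H}))$ and $D$ acts centrally on $D'$ via the $C_0(\go)$-algebra structure. Then $D'/J_\phi \cong C^*(\Io{H}_u)$, which is unital as the full group $C^*$-algebra of a discrete abelian group; this gives condition~\eqref{it:sd unital fibres}.

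For condition~\eqref{it:sd unital sections}, fix $\phi = \operatorname{ev}_u \in \widehat{D}$. By local compactness of $\go$ and Urysohn's lemma, there is $d \in C_c(\go) \subseteq C_0(\go)$ and an open neighbourhood $U$ of $u$ such that $d(v) = 1$ for every $v \in U$. Under the identification $D'/J_{\operatorname{ev}_v} \cong C^*(\Io{H}_v)$ from the previous paragraph, the image of $d$ is $d(v)\cdot 1_{C^*(\Io{H}_v)} = 1_{D'/J_{\operatorname{ev}_v}}$ for every $v \in U$, as required.

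The only real subtlety — and hence the main obstacle — is the identification $J_\phi = I_u$, i.e., showing that the Banach-space ideal $\overline{\ker(\phi) D'}$ generated in $D'$ by $\ker(\phi)$ agrees with the $C_0(\go)$-algebra ideal at $u$. This requires that $D$ act centrally and nondegenerately on $D'$, which is the content of Lemma~\ref{lem:bundle} together with Corollary~\ref{cor:M<->C*r}; once this identification is in hand, the rest is formal from the bundle picture of $C^*(\Io{H})$.
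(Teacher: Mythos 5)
Your proposal is correct and follows essentially the same route as the paper: reduce to the trivially graded subgroupoid $c^{-1}(e)$ via Lemma~\ref{lem:neutral subalg}, then obtain conditions (\ref{it:sd unital fibres}) and (\ref{it:sd unital sections}) from Corollary~\ref{cor:M<->C*r} and the $C_0(\go)$-bundle description of $C^*(\Io{c^{-1}(e)})$ in Lemma~\ref{lem:bundle}. Your write-up simply makes explicit the details (the identification $J_{\operatorname{ev}_u} = I_u$, unitality of $C^*(\Io{c^{-1}(e)}_u)$, and the Urysohn argument) that the paper's terse proof leaves to the reader; the only minor omission is the remark that second countability of $\G$ gives the separability of $C^*_r(\G)^{\delta_c}$ required by Definition~\ref{dfn:semidiag}.
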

\begin{proof}
Since $\G$ is second-countable, $C^*_r(\G)^{\delta_c}$ is separable. Clearly $C_0(\go)$
is abelian, contains an approximate unit for $C^*_r(\G)^{\delta_c}$, and is contained in
$C^*_r(\G)^{\delta_c}$.
\end{proof}

To prove Theorem~\ref{thm:1} we show that $\big(\Hh(C^*_r(\G), C_0(\go), \delta_c),
c_{\delta_c}\big) \cong (\G, c)$ (cf. \cite[Proposition~4.13(ii)]{Ren2008},
\cite[Proposition~4.8]{BCW}, \cite[Corollary~3.11]{ABHS} and \cite[Proposition~3.6]{CR2}). For
$u \in \go$, we write $\hat{u} : C_0(\go) \to \CC$ for evaluation at $u$.

\begin{prp}\label{prp:isomorphism}
Let $\Gamma$ be a discrete group, and $(\G, c)$ a $\Gamma$-graded second-countable
locally compact Hausdorff \'etale groupoid with $\Io{c^{-1}(\id_\Gamma)}$ torsion-free and
abelian. There is an isomorphism $\theta : (\G, c) \to (\Hh(C^*_r(\G), C_0(\go),
\delta_c),c_{\delta_c})$ such that for $\gamma \in \G$ and $n \in C_c(\G)$ supported on a
bisection $U \subseteq c^{-1}(c(\gamma))$ with $n(\gamma) > 0$, we have $\theta(\gamma) =
[n, \widehat{s(\gamma)}]$.
\end{prp}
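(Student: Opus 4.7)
The plan is to construct $\theta$ via the stated formula, verify it is well-defined and a graded groupoid homomorphism, then establish bijectivity and bicontinuity. The essential ingredient throughout is the identification $D'_{C^*_r(\G)^{\delta_c}}/J_{\widehat u} \cong C^*(\Io{c^{-1}(e)}_u)$ for each $u \in \go$, obtained by combining Lemma~\ref{lem:neutral subalg} with Corollary~\ref{cor:M<->C*r} and Lemma~\ref{lem:fibres} applied to the subgroupoid $c^{-1}(e)$. Under this identification the unitaries $U^{\widehat u}_{m^* n}$ of Notation~\ref{ntn:Un*m} become unitaries in $C^*(\Io{c^{-1}(e)}_u)$, computable directly from the j-map and convolution formulas.

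For well-definedness, given two choices $n_1, n_2 \in C_c(\G)_{c(\gamma)}$ supported on bisections $U_1, U_2$ with $n_i(\gamma) > 0$, I would verify (R1)--(R4) of Lemma~\ref{lem:equivalence-construction} for $(n_i, \widehat{s(\gamma)})$. Conditions (R1)--(R3) are routine: (R2) uses that both $n_i$ have grade $c(\gamma)$, and (R3) uses that each $\alpha_{n_i}$ coincides with the range homeomorphism of $U_1 \cap U_2$ on a neighbourhood of $\widehat{s(\gamma)}$. For (R4), a convolution calculation shows $f_{n_1^* n_2}|_{\Io{c^{-1}(e)}_{s(\gamma)}}$ is supported only at $s(\gamma)$ with value $\overline{n_1(\gamma)}n_2(\gamma) > 0$, so after normalising, $U^{\widehat{s(\gamma)}}_{n_1^* n_2}$ becomes the identity. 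Multiplicativity, inverses, and cocycle-intertwining then follow by inspection: $n_1 n_2$ lives on the bisection $U_1 U_2$ with $(n_1 n_2)(\gamma_1 \gamma_2) = n_1(\gamma_1)n_2(\gamma_2) > 0$; $n^*$ lives on $U^{-1}$ with $n^*(\gamma^{-1}) = \overline{n(\gamma)} > 0$; and the grade of $n$ is $c(\gamma)$.

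Injectivity exploits torsion-freeness. If $\theta(\gamma_1) = \theta(\gamma_2)$, matching sources, ranges, and grades yields $\eta := \gamma_2^{-1}\gamma_1 \in \Io{c^{-1}(e)}_u$ where $u = s(\gamma_i)$, and the same convolution calculation identifies $U^{\widehat u}_{n_2^* n_1}$ with $\lambda_\eta \in C^*(\Io{c^{-1}(e)}_u)$. Writing $G := \Io{c^{-1}(e)}_u$, its Pontryagin dual $\widehat{G}$ is compact and connected (as $G$ is torsion-free), and since $\operatorname{Hom}_{\operatorname{cont}}(\widehat{G}, \mathbb{R}) = 0$, the only character $\widehat{G} \to S^1$ that is null-homotopic in $\mathcal{U}(C(\widehat{G}))$ is the trivial one. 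Hence $\lambda_\eta \in \mathcal{U}_0(C^*(G))$ forces $\eta = u$ and $\gamma_1 = \gamma_2$.

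Surjectivity, which I expect to be the main obstacle, runs as follows. Fix $[n, \widehat u]$ with $n \in A_g$; the j-map gives $\phi(n^*n) = \sum_{\gamma \in \G_u}|f_n(\gamma)|^2 > 0$, with $f_n$ supported in $c^{-1}(g)$. Computing $\alpha_n(\widehat u)(d) = \phi(n^*n)^{-1}\sum_\gamma |f_n(\gamma)|^2 d(r(\gamma))$ as a convex combination of characters of $D$ and using that it is itself a character forces a common value $v = r(\gamma)$ for all $\gamma$ with $f_n(\gamma) \ne 0$. Pick such a $\gamma_0$ and $f \in C_c(U)$ on a bisection $U \ni \gamma_0$ in $c^{-1}(g)$ with $f(\gamma_0) > 0$; then $U^{\widehat u}_{f^* n}$ is a unitary in $C^*(G)$, $G := \Io{c^{-1}(e)}_u$, whose class in $\mathcal{U}(C^*(G))/\mathcal{U}_0 \cong H^1(\widehat{G}, \ZZ) \cong G$ (Pontryagin duality) is some $\zeta_0 \in G$. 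Via Lemma~\ref{lem:unitary}(3) and a similar convolution calculation, translating $\gamma_0$ by an appropriate element of $G$ (and adjusting $f$) cancels $\zeta_0$, yielding $\gamma$ with $U^{\widehat u}_{f^* n} \in \mathcal{U}_0$, and hence $\theta(\gamma) = [n, \widehat u]$. Continuity of $\theta$ then follows from Lemma~\ref{lem:classes agree}, and openness from the fact that $\theta$ carries bisections $U \subseteq c^{-1}(g)$ to basic open sets $Z(f, \widehat{s(U)})$.
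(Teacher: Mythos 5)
Your proposal is correct and follows essentially the same route as the paper's proof: identify $D'_{A^\delta}/J_{\widehat u}$ with $C^*(\Io{c^{-1}(e)}_u)$, compute the unitaries $U^{\widehat u}_{n^*m}$ as point masses via the $j$-map, use $G \cong \Uu(C^*(G))/\Uu_0(C^*(G))$ for discrete torsion-free abelian $G$ to get injectivity and to correct by an isotropy element $\eta$ in the surjectivity step, and then deduce continuity from Lemma~\ref{lem:classes agree} and openness from the image of bisections. The only cosmetic differences are that you derive the ``common range'' fact from multiplicativity of $\alpha_n(\widehat u)$ rather than from Lemma~\ref{lem:fn<->alphan}, and you phrase the unitary-group fact via $H^1(\widehat G,\ZZ)\cong G$ where the paper cites Hofmann--Morris.
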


To prove proposition~\ref{prp:isomorphism}, we need two lemmas. We implicitly identify
$C_0(\go)\widehat{\;}$ with $\go$ via $\hat{u} \mapsto u$. Henceforth in this section, we identify
$C_0(\go)'_{C^*_r(c^{-1}(\id_\Gamma))}$ with $C^*_r(\Io{G})$ using Corollary~\ref{cor:M<->C*r} and
Lemma~\ref{lem:bundle}. For $u \in \go$, we identify $\big(C_0(\go)'_{C^*_r(c^{-1}(e) )}\big)_u$
with $C^*_r(\Io{c^{-1}(\id_\Gamma)}_u)$, and $\pi_u : C_0(\go)'_{C^*_r(c^{-1}(\id_\Gamma))} \to
C_0(\go)'_{C^*_r(c^{-1}(\id_\Gamma))}/J_{\hat{u}}$ with the regular representation $\rho_u :
C^*_r(\Io{c^{-1}(\id_\Gamma)}) \to \Bb(\ell^2(\Io{c^{-1}(\id_\Gamma)}_u))$.

\begin{lem}[{cf. \cite[Proposition~4.8]{Ren2008}, \cite[Lemma~4.10(ii)]{BCW}}]\label{lem:fn<->alphan}
Let $G$ be a second-countable locally compact Hausdorff \'etale groupoid. Let $D :=
C_0(\go) \subseteq C^*_r(G)$. If $n \in \Nn_{C^*_r(G)}(D)$ and $f_n$ as in~\eqref{eq:Jean's jmap}
satisfies $f_n(\gamma) \not= 0$, then $r(\gamma) = \alpha_n(s(\gamma))$.
\end{lem}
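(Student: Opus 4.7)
The plan is to unpack the defining identity
$\phi(n^*n)\,\alpha_n(\phi)(d)=\phi(n^*dn)$
from Lemma~\ref{lem:alpha-n}\eqref{it:alphan} at $\phi=\widehat{s(\gamma)}$ using the convolution formula for elements of $C^*_r(\G)$, and then to recognise the resulting expression for $\alpha_n(\widehat{s(\gamma)})(d)$ as a weighted average of point evaluations on $\go$ that must collapse to a single point.

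First I would verify that $\widehat{s(\gamma)}\in\osupp(n^*n)$. Since $f_{n^*}(\eta)=\overline{f_n(\eta^{-1})}$ and $f_{ab}=f_a*f_b$, a direct computation on the unit space gives
\[
f_{n^*n}(u)=\sum_{\beta\in\G_u}|f_n(\beta)|^2 \qquad\text{for every } u\in\go;
\]
this is strictly positive at $u=s(\gamma)$ because $\gamma\in\G_{s(\gamma)}$ and $f_n(\gamma)\neq 0$. An analogous calculation, in which the middle factor $d\in C_0(\go)$ forces the intermediate convolution to localise at the unit $s(\eta)$, gives
\[
f_{n^*dn}(u)=\sum_{\beta\in\G_u}|f_n(\beta)|^2\,d(r(\beta)) \qquad\text{for every } u\in\go.
\]

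Substituting these two formulas into Lemma~\ref{lem:alpha-n}\eqref{it:alphan} at $\phi=\widehat{s(\gamma)}$ yields
\[
\alpha_n(\widehat{s(\gamma)})(d)=\frac{\sum_{\beta\in\G_{s(\gamma)}}|f_n(\beta)|^2\,d(r(\beta))}{\sum_{\beta\in\G_{s(\gamma)}}|f_n(\beta)|^2} \qquad\text{for every } d\in C_0(\go).
\]
The right-hand side is the integral of $d$ against a probability measure $\mu$ on $\go$ supported on $\{r(\beta):\beta\in\G_{s(\gamma)},\ f_n(\beta)\neq 0\}$. On the other hand, $\alpha_n(\widehat{s(\gamma)})$ is a character of $C_0(\go)$, hence evaluation at some point $v\in\go$; by the Riesz representation theorem $\mu=\delta_v$. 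Consequently $r(\beta)=v$ for every $\beta\in\G_{s(\gamma)}$ with $f_n(\beta)\neq 0$, and the choice $\beta=\gamma$ gives $r(\gamma)=v$, i.e., $\alpha_n(\widehat{s(\gamma)})=\widehat{r(\gamma)}$.

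The only real obstacle is the convolution bookkeeping in the two displayed formulas: one must carefully unpack the range/source conventions under the inversion in $f_{n^*}$ and use $d\in C_0(\go)$ to collapse sums over $\G^{r(\eta)}$ to a single term. Once those identities are in hand, the final step — recognising that a character given as a convex combination of point evaluations on $\go$ must itself be a point evaluation with all positively-weighted points coinciding — is immediate.
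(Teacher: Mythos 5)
Your proof is correct, and the computations check out: $f_{n^*n}(u)=\sum_{\beta\in\G_u}|f_n(\beta)|^2=\|\lambda_u(n)\delta_u\|^2$ and $f_{n^*dn}(u)=\sum_{\beta\in\G_u}|f_n(\beta)|^2d(r(\beta))$ follow directly from the regular representation, both series converge absolutely, and since $n^*n,n^*dn\in D$ these values are exactly $\widehat u(n^*n)$ and $\widehat u(n^*dn)$, so the substitution into Lemma~\ref{lem:alpha-n}\eqref{it:alphan} is legitimate. The route is genuinely different from the paper's, though built from the same fibrewise ingredients. The paper argues by contradiction on the \emph{range} side: assuming $r(\gamma)\neq\alpha_n(s(\gamma))$, it picks orthogonal bump functions $d,d'$ at these two points, uses the intertwining relation $d'nn^*=n(d'\circ\alpha_n)n^*$ (i.e.\ Lemma~\ref{lem:alpha-n}\eqref{it:d<->n}) to move $d'$ across $n$, and then bounds a single term of the fibrewise sum for $(dn(d'\circ\alpha_n)n^*)(r(\gamma))$ below by $|f_n(\gamma)|^2>0$ while the product $dd'$ forces it to vanish. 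You instead work on the \emph{source} side with the defining identity $\phi(n^*n)\alpha_n(\phi)(d)=\phi(n^*dn)$, compute the entire sum, and recognise $\alpha_n(\widehat{s(\gamma)})$ as integration against a discrete probability measure on $\go$ that, being a character, must be a point mass. Your version is slightly longer on bookkeeping but yields more: it shows that \emph{every} $\beta\in\G_{s(\gamma)}$ with $f_n(\beta)\neq 0$ has the same range $\alpha_n(s(\gamma))$, i.e.\ that $\osupp(f_n)$ meets each source fibre in the graph of $\alpha_n$, which is the content one actually wants when normalisers are later shown to be supported on bisections. The paper's version is shorter and avoids naming the measure, at the cost of being purely a non-constructive contradiction. (One cosmetic slip: in your second displayed computation the localisation is at $r(\beta)$, not ``the unit $s(\eta)$''; the formula you state is nonetheless the right one.)
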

\begin{proof}
Suppose for contradiction that $r(\gamma) \not= \alpha_n(s(\gamma))$. Since
$nn^*(r(\gamma)) \ge |f_n(\gamma)|^2 > 0$, there exist orthogonal $d, d' \in
C_c(\osupp(nn^*))$ such that $d(r(\gamma)) = 1 = d'(\alpha_n(s(\gamma)))$. So
\begin{align*}
0 &= (dd' nn^*)(r(\gamma))
    = (dn (d'\circ\alpha_n) n^*)(r(\gamma))\\
    &\ge d(r(\gamma)) |n\sqrt{d' \circ \alpha_n}(\gamma)|^2
    = d(r(\gamma)) \, \big|f_n(\gamma) \sqrt{d'(\alpha_n(s(\gamma)))}\big|^2
    = |f_n(\gamma)|^2 > 0.\qedhere
\end{align*}
\end{proof}

\begin{lem}\label{lem:bisecton alpha}
Let $\G$ be a second-countable locally compact Hausdorff \'etale groupoid. Let $D :=
C_0(\go) \subseteq C^*(\G)$. Suppose that $n \in C_c(\G)$ is supported on a bisection.
Then $n \in \Nn_{C^*(\G)}(D)$, and $\alpha_n(s(\gamma)) = r(\gamma)$ for $\gamma \in \osupp(n)$. If
$c : \G \to \Gamma$ is a grading, then $C^*_r(G) = \clsp \Nn_\star(D)$.
\end{lem}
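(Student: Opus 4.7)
The plan splits naturally into three steps, one for each assertion.

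\textbf{Step 1: $n\in \Nn(D)$.} Let $U$ be an open bisection containing $\osupp(n)$. Since $U$ is a bisection, both $U^{-1}U$ and $UU^{-1}$ lie in $\go$. For $d\in D = C_0(\go)\subseteq C_c(\G)$ and $\gamma\in\G$, I would compute
\[
(n^*dn)(\gamma) = \sum_{\alpha\beta=\gamma}\overline{n(\alpha^{-1})}\, d(s(\alpha))\, n(\beta),
\]
and observe that each nonzero term requires $\alpha^{-1},\beta\in U$, so $\gamma = \alpha\beta\in U^{-1}U\subseteq \go$. Hence $n^*dn$ is supported on $\go$, so $n^*dn\in D$. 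The symmetric computation gives $ndn^*\in D$, proving $n\in \Nn(D)$.

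\textbf{Step 2: $\alpha_n(s(\gamma)) = r(\gamma)$ on $\osupp(n)$.} This is a direct application of Lemma~\ref{lem:fn<->alphan}. The map $a\mapsto f_a$ from~\eqref{eq:Jean's jmap} restricts to the identity on $C_c(\G)$, so $f_n = n$. Thus $n(\gamma)\ne 0$ implies $f_n(\gamma)\ne 0$, which by Lemma~\ref{lem:fn<->alphan} yields $r(\gamma) = \alpha_n(s(\gamma))$.

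\textbf{Step 3: $C^*_r(\G) = \clsp \Nn_\star(D)$.} Since $C_c(\G)$ is dense in $C^*_r(\G)$, it suffices to show every $f\in C_c(\G)$ lies in $\lsp \Nn_\star(D)$. Because $c$ is continuous and $\Gamma$ is discrete, each $c^{-1}(g)$ is clopen, so the sets $U\cap c^{-1}(g)$ with $U$ a bisection form a basis of open bisections on which $c$ is constant. Given $f\in C_c(\G)$, I would cover $\supp(f)$ by finitely many such bisections $U_1,\dots,U_k$ with $U_i\subseteq c^{-1}(g_i)$, choose a partition of unity $\{\chi_i\}$ on $\supp(f)$ subordinate to this cover, and write $f = \sum_i \chi_i f$. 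Each $\chi_i f$ is supported on $U_i$, hence by Step~1 lies in $\Nn(D)$, and by Lemma~\ref{lem:coaction} satisfies $\delta_c(\chi_i f) = \chi_i f\otimes \lambda_{g_i}$, so $\chi_i f\in \Nn_{g_i}(D)\subseteq \Nn_\star(D)$.

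None of the steps poses a serious obstacle: Step~1 is a bookkeeping exercise with convolutions on a bisection, Step~2 is an immediate appeal to the preceding lemma, and Step~3 is a standard partition-of-unity argument combined with the continuity of the cocycle into a discrete group.
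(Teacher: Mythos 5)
Your proof is correct. The only difference from the paper is in the first two assertions: the paper simply cites Renault's Proposition~4.8 of \cite{Ren2008} for the facts that a function supported on a bisection is a normaliser and that $\alpha_n$ implements $s(\gamma)\mapsto r(\gamma)$, whereas you prove them directly --- the convolution computation showing $n^*dn$ and $ndn^*$ are supported on $U^{-1}U\cup UU^{-1}\subseteq\go$ (using injectivity of $r|_U$ and $s|_U$), and then an appeal to Lemma~\ref{lem:fn<->alphan} together with the fact that $a\mapsto f_a$ is the identity on $C_c(\G)$. Both of these are sound; your route is more self-contained, at the cost of a little bookkeeping, and it makes transparent that the second claim is exactly the content of the preceding lemma specialised to $f_n=n$. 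For the third assertion your partition-of-unity decomposition of $f\in C_c(\G)$ over finitely many open bisections, each contained in a single fibre $c^{-1}(g_i)$ of the (necessarily clopen-fibred) cocycle, is the same argument the paper compresses into one line; spelling it out as you do is if anything clearer. One small point worth making explicit in Step~2: $\alpha_n(s(\gamma))$ is defined because $n^*n(s(\gamma))=\sum_{\alpha\in\G_{s(\gamma)}}|n(\alpha)|^2\ge|n(\gamma)|^2>0$, so $s(\gamma)\in\osupp(n^*n)$; this is implicit in your appeal to Lemma~\ref{lem:fn<->alphan} but deserves a word.
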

\begin{proof}
That $n \in \Nn_{C^*(\G)}(D)$ and $\alpha_n(s(\gamma)) = r(\gamma)$ for $\gamma \in \osupp(n)$
follow from \cite[Proposition~4.8]{Ren2008}. The Stone--Weierstrass theorem gives
\[
C_c(c^{-1}(g)) = \lsp\{f \in C_c(c^{-1}(g)) : \supp(f)\text{ is contained in a
bisection}\}
\]
for $g\in\Gamma$. Since $C_c(\G) = \lsp\bigcup_{g \in \Gamma}
C_c(c^{-1}(g))$,
it follows that $C^*_r(\G) = \clsp \Nn_\star(C_0(\go))$.
\end{proof}

\begin{proof}[Proof of Proposition~\ref{prp:isomorphism}]
Since $c$ is continuous, $\Gamma$ is discrete, and $\G$ is \'etale, there exists a
bisection $U \subseteq c^{-1}(c(\gamma))$ containing $\Gamma$, and then an element $n \in
C_c(U) \subseteq C_c(\G)$ with $n(\gamma) = 1$. Fix $n,m \in C_c(\G)$ supported on
bisections contained in $c^{-1}(c(\gamma))$ with $n(\gamma) = m(\gamma) = 1$. Choose an
open $\gamma \in U \subseteq \osupp(m) \cap \osupp(n)$. Lemma~\ref{lem:bisecton alpha}
shows that $\alpha_n = \alpha_m$ on $s(U)$. Let $u := s(\gamma)$.  For $d \in C_0(s(U))$
with $d(u) = 1$, we have $\osupp(nd), \osupp(md) \subseteq U$, so $(d n^*m
d)|_{\Io{c^{-1}(e)}_u}$ is just the point-mass $\delta_u$. Hence $U_{n^*m} = \pi_u(d n^*m
d) = 1_{C^*(\Io{G}_u)}$. Since $\alpha_{nd} = \alpha_n = \alpha_m = \alpha_{md}$ on
$\osupp(d) \subseteq U$, we have $[n,u] = [m,u]$. So $\theta$ is well-defined.

To see that $\theta$ is injective, fix $\gamma \not= \eta \in \G$. Choose $n, m$
supported on open bisections containing $\gamma$ and $\eta$ respectively, so
$\theta(\gamma) = [n,\widehat{s(\gamma)}]$ and $\theta(\eta) = [m, \widehat{s(\eta)}]$.
If $s(\gamma) \not= s(\eta)$, then clearly $\theta(\gamma) \not= \theta(\eta)$, so
suppose that $s(\gamma) = s(\eta) =: u$. If $r(\gamma) \not= r(\eta)$, then
$\alpha_n(\hat{u}) = r(\gamma) \not= r(\eta) = \alpha_m(\hat{u})$ by
Lemma~\ref{lem:bisecton alpha}, so again $\theta(\gamma) \not= \theta(\eta)$. So suppose
that $r(\gamma) = r(\eta) =: v$. Fix $d \in C_0(\go)$ with $\hat{u}(d) = 1$, and put $w =
d n^*m d$. Then $w|_{\Io{c^{-1}(e)}_u} = \delta_{\gamma^{-1}\eta} \in
C_c(\Io{c^{-1}(\id_\Gamma)}_u)$. Hence $U_{n^*m} = \pi_u(w) = U_{\gamma^{-1}\eta} \in
C^*(\Io{c^{-1}(\id_\Gamma)}_u)$. Since $\Io{c^{-1}(\id_\Gamma)}_u$ is a discrete torsion-free abelian
group, \cite[Theorem~8.57]{HofmannMorris} gives $\Io{c^{-1}(\id_\Gamma)}_u \cong
\Uu(C^*(\Io{c^{-1}(\id_\Gamma)}_u))/\Uu_0(C^*(\Io{c^{-1}(\id_\Gamma)}_u))$ via the map $\gamma \mapsto
U_\gamma\, \Uu_0(C^*(\Io{c^{-1}(\id_\Gamma)}_u))$; so $u_{\gamma^{-1}\eta} \not\in
\Uu_0(C^*(\Io{c^{-1}(\id_\Gamma)}_u))$, and $\theta(\gamma) \not= \theta(\eta)$.

To see that $\theta$ is surjective, fix $[n,\hat{u}] \in \Hh(C^*_r(\G), C_0(\go),
\delta_c)$, and let $\hat{v} := \alpha_n(\hat{u})$. Regard $n$ as an element of $C_0(\G)$
using~\eqref{eq:Jean's jmap}. Since $0 < n^*n(u) = nn^*(\alpha_n(u)) = nn^*(v) =
\sum_{\gamma \in \G^v} |n(\gamma)|^2$, we have $n(\gamma) \not=0$ for some $\gamma \in
G^v$. So Lemma~\ref{lem:fn<->alphan} gives $\alpha_n(\widehat{s(\gamma)}) =
\widehat{r(\gamma)} = \hat{v} = \alpha_n(\hat{u})$. Since $\alpha_n$ is bijective,
$s(\gamma) = u$. Fix an open bisection $B \owns \gamma$ with $f_n$ nonzero on $B$. Fix $m
\in C_0(B)$ with $m$ identically $1$ on a neighbourhood of $\gamma$.
Lemma~\ref{lem:fn<->alphan} gives $\alpha_n = \alpha_m$ on $s(\osupp(m))$, so
Lemma~\ref{lem:alpha-n} yields $\alpha_{n^*m} = \id$ on a neighbourhood of $\hat{u}$.
Thus $n^*m$ is supported on $\Io{c^{-1}(\id_\Gamma)}$ by Lemma~\ref{lem:fn<->alphan}. By
\cite[Theorem~8.57]{HofmannMorris}, $\Io{c^{-1}(\id_\Gamma)}_u \cong
\Uu(C^*(\Io{c^{-1}(\id_\Gamma)}_u))/\Uu_0(C^*(\Io{c^{-1}(\id_\Gamma)}_u))$, so $U_{n^*m} \sim_h U_\eta$ for
some $\eta \in \Io{c^{-1}(\id_\Gamma)}_u$. Fix a bisection neighbourhood $W$ of $\eta^{-1}$ and $h
\in C_c(W)$ with $h(\eta^{-1}) = 1$. Then $\pi_u(w_{n^*mh}) = \pi_u(w_{n^*m})\pi_u(h) =
\pi_u(w_{n^*m}) \delta_{\eta^{-1}} \mathbin{\sim_h} 1_{C^*(\Io{c^{-1}(\id_\Gamma)}_u)}$. Since
$\alpha_{h} = \id = \alpha_{n^*m}$, we have $\alpha_n = \alpha_{mh}$ on a neighbourhood
of $u$, so $[n,\hat{u}] = [mh,\hat{u}] = \theta(\gamma\eta^{-1})$.

To see that $\theta$ is open, recall that $\G$ is a normal space, so has a basis of open
bisections $U$ with closure contained in precompact open bisections $V$. For such $U,V$,
fix $n \in C_c(V)$ with $n|_U = 1$. Then $\theta(\gamma) = [n, \widehat{s(\gamma)}]$ for
every $\gamma \in U$, so $\theta(U) = Z(n, U)$ is open.

Finally, to see that $\theta$ is continuous, fix $n \in N_g(D)$ and an open set $U
\subseteq \osupp(n^*n)$. Fix $u \in U$. Since $\theta$ is surjective, $[n,u] =
\theta(\gamma) = [f,u]$ for some $f \in C_c(G)$ supported on a bisection $B\subseteq
c^{-1}(g)$ containing $\gamma$. So Lemma~\ref{lem:classes agree} yields an open
neighbourhood $V$ of $u$ such that $[n,v]=[f,v]$ for all $v\in V$. Then $\gamma \in BV
\subseteq \theta^{-1}(Z(n,U))$.
\end{proof}

\begin{proof}[Proof of Theorem~\ref{thm:1}]
Statement~(1) is clear. For~(2) let $h : \G_2^{(0)} \to \G_1^{(0)}$ be induced by
$\phi|_{C_0(\go_1)}$, so $f(h(x)) = \phi(f)(x)$ for $f \in C_0(\go_1)$. The formula
$\phi^*([n,x]) := [\phi^{-1}(n), h(x)]$ defines a graded isomorphism $\phi^* :
\Hh(C^*_r(\G_2), C_0(\G_2^{(0)}), \delta_{c_2}) \to \Hh(C^*_r(\G_1), C_0(\G_1^{(0)}),
\delta_{c_1})$. Proposition~\ref{prp:isomorphism} gives graded isomorphisms $\theta_i :
\G_i \to \Hh(C^*_r(\G_i), C_0(\G_i^{(0)}), \delta_{c_i})$. So $\kappa := \theta_1^{-1}
\circ \phi^* \circ \theta_2 : \G_2 \to \G_1$ is a graded isomorphism. For $x \in
\G_2^{(0)}$ and $a \in C_0(\G_2^{(0)})$ with $a(x) > 0$,
Proposition~\ref{prp:isomorphism} gives $\kappa(x) = \theta_1^{-1} \circ \phi^*([a, x]) =
\theta^{-1}_1(a \circ h^{-1}, h(x)) = h(x)$.
\end{proof}

\section{Group actions}\label{sec:group actions}
In this section we consider transformation groupoids of continuous actions of countable
discrete groups on topological spaces. We characterise isomorphism of transformation
groupoids in terms of continuous orbit equivalence of the actions. If in each group, the
subgroup $\{\gamma : \text{there is an open $U \subseteq X$ that $\gamma$ fixes
pointwise}\}$ is torsion-free and abelian, Theorem~\ref{thm:1} yields a generalisation of
\cite[Theorem 1.2]{Li}.

Fix a countable discrete group $\Gamma$ acting on the right of a second-countable locally
compact Hausdorff space $X$. We write $x\gamma$ for the action of $\gamma$ on $x$. Define
\[
X \rtimes \Gamma := X\times\Gamma
\]
under the product topology, let $(X \rtimes \Gamma)^{(2)} := \{\big((x_1, \gamma_1),
(x_2, \gamma_2)\big) : x_2 = x_1\gamma_1\}$, and define $(x_1,\gamma_1)(x_1\gamma_1,
\gamma_2) = (x_1, \gamma_1\gamma_2)$, and $(x, \gamma)^{-1} = (x\gamma, \gamma^{-1})$.
Then $X \rtimes \Gamma$ is a second-countable locally compact Hausdorff \'etale groupoid.
Its unit space is $X \times \{\id_\Gamma\}$, which we identify with $X$, so $r(x,\gamma) = x$ and
$s(x,\gamma) = x\gamma$. We have $C^*_r(X \rtimes \Gamma) \cong C_0(X) \rtimes_r \Gamma$
via an isomorphism that carries $C_0 ( (X \rtimes \Gamma)^{(0)})$ to $C_0 (X) \subseteq
C_0 ( X ) \rtimes_r \Gamma$ (see \cite[Example~3.2.8]{SimsNotes}).

For $x\in X$, we write
\[
\Stab(x) := \{\gamma\in\Gamma : x\gamma=x\}
\]
for the \emph{stabiliser subgroup} of $x$ in $\Gamma$; observe that then $(X \rtimes
\Gamma)^x_x = \{x\} \times \Stab(x)$. We also consider the \emph{essential stabiliser
subgroup}
\[
\Stab^{\ess}(x) := \{\gamma \in \Gamma : \gamma \in \Stab(y)\text{ for all $y$ in some neighbourhood $U$ of $x$}\}.
\]
Observe that $\Io{X\rtimes\Gamma} = \bigcup_{x\in X}\{x\}\times \Stab^{\ess}(x)$. We say
$(X,\Gamma)$ is \emph{topologically free} if each $\Stab^{\ess}(x) = \{\id_\Gamma\}$; a
Baire-category argument shows that $(X, \Gamma)$ is topologically free if and only if
$\overline{\{x \in X : \Stab(x) = \{\id_\Gamma\}\}} = X$.

\begin{dfn}\label{dfn:coe groups}
Let $\Gamma \curvearrowright X$ and $\Lambda \curvearrowright Y$ be actions of countable
discrete groups on locally compact Hausdorff spaces. A \emph{continuous orbit
equivalence} $(h, \pi, \eta)$ from $(X,\Gamma)$ to $(Y,\Lambda)$ consists of a
homeomorphism $h : X \to Y$ and continuous maps $\phi : X\times \Gamma \to \Lambda$ and
$\eta : Y\times\Lambda \to \Gamma$ such that $h(x\gamma) = h(x)\phi(x,\gamma)$ for all
$x,\gamma$ and $h^{-1}(y\lambda)=h^{-1}(y)\eta(y,\lambda)$ for all $y, \lambda$. We call
$h$ the \emph{underlying homeomorphism} of $(h,\phi,\eta)$.
\end{dfn}

For topologically free systems, the intertwining condition appearing in
Definition~\ref{dfn:coe groups} has some important consequences.

\begin{lem}\label{lem:coe consequences}
Let $\Gamma \curvearrowright X$ and $\Lambda \curvearrowright Y$ be topologically free
actions of countable discrete groups on locally compact Hausdorff spaces. Let $(h, \phi,
\eta)$ be a continuous orbit equivalence from $(X, \Gamma)$ to $(Y, \Lambda)$. Fix $x \in
X$. We have
\[
    \phi(x, \gamma\gamma') = \phi(x,\gamma)\phi(x\gamma, \gamma')\quad\text{for all $\gamma,\gamma' \in \Gamma$,}
\]
and each $\theta_x := \phi(x, \cdot) : \Gamma \to \Lambda$ is a bijection that carries
$\id_\Gamma$ to $\id_\Lambda$ and restricts to bijections $\Stab(x) \to \Stab(h(x))$ and
$\Stab^{\ess}(x) \to \Stab^{\ess}(h(x))$.
\end{lem}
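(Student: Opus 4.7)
The plan is to extract the intertwining identities from $(h, \phi, \eta)$ and then repeatedly exploit the fact that, since $\Gamma$ and $\Lambda$ are discrete, continuity of $\phi$ and $\eta$ forces each of $x \mapsto \phi(x,\gamma)$ and $y \mapsto \eta(y,\lambda)$ (for fixed $\gamma$, $\lambda$) to be \emph{locally constant}. Combined with topological freeness this upgrades stabiliser containment to exact equality.

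First I would establish the cocycle identity. Writing $h(x\gamma\gamma')$ in two ways via the intertwining gives
\[
h(x)\phi(x,\gamma\gamma') = h(x\gamma\gamma') = h(x\gamma)\phi(x\gamma,\gamma') = h(x)\phi(x,\gamma)\phi(x\gamma,\gamma'),
\]
so $\lambda(x') := \phi(x',\gamma\gamma')\phi(x'\gamma,\gamma')^{-1}\phi(x',\gamma)^{-1}$ lies in $\Stab(h(x'))$ for every $x'$. Since $\lambda$ is locally constant into $\Lambda$, it equals some fixed $\lambda_0$ on an open neighbourhood $U$ of $x$, and then $\lambda_0 \in \Stab(h(x'))$ for all $x' \in U$. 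Because $h$ is a homeomorphism, $h(U)$ is an open neighbourhood of $h(x)$, so $\lambda_0 \in \Stab^{\ess}(h(x)) = \{e_\Lambda\}$ by topological freeness of $\Lambda \curvearrowright Y$. This gives the cocycle identity, and specialising to $\gamma = \gamma' = e_\Gamma$ then forces $\theta_x(e_\Gamma) = e_\Lambda$.

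Next, to show $\theta_x$ is a bijection, I would run the symmetric argument using $\eta$. Substituting $y = h(x)$ and $\lambda = \phi(x,\gamma)$ into $h^{-1}(y\lambda) = h^{-1}(y)\eta(y,\lambda)$ yields $x\gamma = x\cdot\eta(h(x),\phi(x,\gamma))$, so the element $\gamma^{-1}\eta(h(x),\phi(x,\gamma))$ lies in $\Stab(x)$. The same local-constancy-plus-topological-freeness argument, now applied to $\Gamma \curvearrowright X$, forces $\eta(h(x),\phi(x,\gamma)) = \gamma$ for all $x,\gamma$. Hence $\lambda \mapsto \eta(h(x),\lambda)$ is a left inverse of $\theta_x$, and the analogous identity $\phi(h^{-1}(y),\eta(y,\lambda)) = \lambda$ makes it a right inverse.

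Finally I would deduce the stabiliser statements. If $\gamma \in \Stab(x)$, then $h(x)\theta_x(\gamma) = h(x\gamma) = h(x)$, so $\theta_x(\gamma) \in \Stab(h(x))$; the reverse containment follows by applying the same computation with $\eta$ in place of $\phi$ and using that $\theta_x^{-1}(\lambda) = \eta(h(x),\lambda)$. For essential stabilisers, given $\gamma \in \Stab^{\ess}(x)$, pick an open $U \owns x$ on which $\gamma$ acts trivially; shrinking $U$ if necessary, local constancy of $\phi(\cdot,\gamma)$ makes it constantly equal to $\theta_x(\gamma)$ on $U$, and then the ordinary-stabiliser case gives $\theta_x(\gamma) \in \Stab(h(x'))$ for every $x' \in U$, so $\theta_x(\gamma) \in \Stab^{\ess}(h(x))$ via the open neighbourhood $h(U)$; the reverse containment is symmetric. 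The only real obstacle is the passage from $\Stab$ to $\Stab^{\ess}$ in the first paragraph, but once local constancy is observed this is essentially automatic, and the rest of the lemma is bookkeeping.
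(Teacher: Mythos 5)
Your argument is correct, and it is a fully self-contained version of what the paper mostly outsources to Li. The paper cites \cite[Lemma~2.8]{Li} for the cocycle identity, cites \cite[Corollary~2.11]{Li} for bijectivity of $\theta_x$ at points with \emph{trivial} stabiliser, and then extends to arbitrary $x$ by choosing $x_n\to x$ with $\Stab(x_n)=\{e\}$ (using that topological freeness is equivalent to density of such points) and invoking local constancy of $\theta_{x_n}$; finally it disposes of the essential-stabiliser claim by simply noting that $\Stab^{\ess}(x)$ and $\Stab^{\ess}(h(x))$ are both trivial. You instead run the ``continuous into a discrete group, hence locally constant, hence the discrepancy lies in $\Stab^{\ess}=\{e\}$'' argument uniformly at every point, which avoids the Baire-category density fact entirely and also proves the essential-stabiliser equality by an argument that would survive without topological freeness; what the paper's route buys is brevity, while yours buys self-containment and a statement-by-statement mechanism that generalises. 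One harmless slip: from $x\gamma = x\cdot\eta(h(x),\phi(x,\gamma))$ the element $\gamma^{-1}\eta(h(x),\phi(x,\gamma))$ lies in $\Stab(x\gamma)$ rather than $\Stab(x)$ (for $\Stab(x)$ you want $\eta(h(x),\phi(x,\gamma))\gamma^{-1}$); either normalisation feeds into your local-constancy argument unchanged, since $U\gamma$ is again an open neighbourhood of $x\gamma$, so the conclusion $\eta(h(x),\phi(x,\gamma))=\gamma$ stands.
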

\begin{proof}
The first part is proved in the same way as \cite[Lemma~2.8]{Li}, the only difference is
that the actions considered in \cite{Li} are left actions and here we consider right
actions.

For the second, by \cite[Corollary~2.11]{Li}, for all $x \in X$ with $\Stab(x)$ trivial,
$\theta_x$ is a bijection.  For arbitrary $x \in X$, take $x_n \to x$ such that $\Stab
(x_n)$ is trivial.  Since $\phi$ is continuous and $\Lambda$ is a discrete group,
$\theta_{x_n} = \theta_x$ for large $n$.  So $\theta_x$ is a bijection.  That $\theta_x
(\id_\Gamma ) = \id_\Lambda$ follows from the first statement. The intertwining condition in
Definition~\ref{dfn:coe groups} and that $h$ is a homeomorphism gives $\theta_x(\Stab(x))
= \Stab(h(x))$; and $\theta_x(\Stab^{\ess}(x)) = \Stab^{\ess}(h(x))$ because both are
trivial.
\end{proof}

Lemma~\ref{lem:coe consequences} prompts the following definition.

\begin{dfn}
Let $\Gamma \curvearrowright X$ and $\Lambda \curvearrowright$ be actions of countable
discrete groups on locally compact Hausdorff spaces. Consider a map $\phi : X \times
\Gamma \to \Lambda$.
\begin{enumerate}
\item We call $\phi$ a \emph{cocycle} if $\phi(x, \gamma\gamma') =
    \phi(x,\gamma)\phi(x\gamma, \gamma')$ for all $x, \gamma, \gamma'$.
\item Let $h : X \to Y$ be a homeomorphism. We say that $(h,\phi)$ \emph{preserves
    stabilisers} if $\phi(x,\cdot)$ restricts to bijections $\Stab(x) \to
    \Stab(h(x))$, and that $(h,\phi)$ \emph{preserves essential stabilisers} if
    $\phi(x, \cdot)$ restricts to bijections $\Stab^{\ess}(x) \to
    \Stab^{\ess}(h(x))$.
\end{enumerate}
\end{dfn}

\begin{prp} \label{prp:group action}
Let $\Gamma \curvearrowright X$ and $\Lambda \curvearrowright Y$ be actions of countable
discrete groups on locally compact Hausdorff spaces. Suppose that $h : X \to Y$ is a
homeomorphism and $\phi : X\times \Gamma \to \Lambda$ is continuous. The following are
equivalent:
\begin{enumerate}
    \item there is an isomorphism $\Theta : X \rtimes \Gamma \to Y \rtimes \Lambda$
        such that $\Theta(x,\id_\Gamma)=(h(x),\id_\Lambda)$ and $\Theta(x,\gamma) =
        (h(x),\phi(x,\gamma))$ for all $x\in X$ and $\gamma\in\Gamma$;
    \item $\phi$ is a cocycle, $(h,\phi)$ preserves stabilisers, and there is a map
        $\eta : Y \times \Lambda \to \Gamma$ such that $(h,\phi,\eta)$ is a
        continuous orbit equivalence; and
    \item $\phi$ is a cocycle, $(h,\phi)$ preserves essential stabilisers, and there
        is a map $\eta : Y \times \Lambda \to \Gamma$ such that $(h,\phi,\eta)$ is a
        continuous orbit equivalence.
\end{enumerate}
\end{prp}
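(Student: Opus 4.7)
The plan is to prove the cycle $(1) \Rightarrow (2) \Rightarrow (3) \Rightarrow (1)$, exploiting throughout that $\Gamma$ and $\Lambda$ are discrete so that any continuous map into them is locally constant.

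For $(1) \Rightarrow (2)$, I would read the required properties of $\phi$ directly off $\Theta$. The source-preservation identity $s(\Theta(x,\gamma)) = \Theta(x\gamma, e_\Gamma)$ forces the orbit equivalence relation $h(x\gamma) = h(x)\phi(x,\gamma)$, and multiplicativity of $\Theta$ applied to composable pairs $\bigl((x,\gamma),(x\gamma,\gamma')\bigr)$ yields the cocycle identity for $\phi$. The map $\eta$ and its intertwining property come from the same analysis applied to $\Theta^{-1}$. Since $\Theta$ restricts to a group isomorphism between the isotropy groups $\{x\}\times\Stab(x)$ and $\{h(x)\}\times\Stab(h(x))$, the map $\phi(x,\cdot)$ automatically restricts to a bijection $\Stab(x) \to \Stab(h(x))$, giving stabiliser preservation.

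For $(2) \Rightarrow (3)$, only essential stabiliser preservation needs a separate argument; injectivity of $\phi(x,\cdot)$ on $\Stab^{\ess}(x)$ is inherited from its bijectivity on $\Stab(x)$. For the inclusion $\phi(x,\Stab^{\ess}(x)) \subseteq \Stab^{\ess}(h(x))$: if $\gamma$ stabilises every point in a neighbourhood $U$ of $x$, then continuity of $\phi$ together with discreteness of $\Lambda$ gives $\phi(y,\gamma) = \phi(x,\gamma)$ throughout a smaller neighbourhood $V \subseteq U$, and stabiliser preservation applied pointwise on $V$ yields $\phi(x,\gamma) \in \Stab(h(y))$ for each $y \in V$. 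Surjectivity onto $\Stab^{\ess}(h(x))$ is obtained by lifting $\mu \in \Stab^{\ess}(h(x))$ to its unique preimage $\gamma \in \Stab(x)$ via stabiliser preservation, then combining continuity of $\phi$ with the witnessing neighbourhood of $\mu$ to see that $\gamma$ stabilises points near $x$.

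The substantive implication is $(3) \Rightarrow (1)$. Define $\Theta(x,\gamma) := (h(x),\phi(x,\gamma))$; the orbit equivalence relation makes $\Theta$ source- and range-preserving, and the cocycle identity makes it multiplicative. Injectivity reduces to showing that $\tau \in \Stab(x)$ with $\phi(x,\tau) = e_\Lambda$ forces $\tau = e_\Gamma$: continuity of $\phi$ and discreteness of $\Lambda$ give $\phi(x',\tau) = e_\Lambda$ on a neighbourhood of $x$, so $h(x'\tau) = h(x')\phi(x',\tau) = h(x')$, and injectivity of $h$ then gives $\tau \in \Stab^{\ess}(x)$; essential stabiliser preservation forces $\tau = e_\Gamma$. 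For surjectivity, given $(y,\lambda)$, let $x := h^{-1}(y)$ and $\gamma_0 := \eta(y,\lambda)$; comparing $h(x\gamma_0) = h(h^{-1}(y\lambda)) = y\lambda$ with $h(x\gamma_0) = y\phi(x,\gamma_0)$ shows that $\mu := \phi(x,\gamma_0)\lambda^{-1} \in \Stab(y)$. Using continuity of both $\eta$ and $\phi$ (and discreteness of both $\Gamma$ and $\Lambda$), the same identity propagates to a whole neighbourhood of $y$, so $\mu \in \Stab^{\ess}(y)$; essential stabiliser preservation then furnishes $\sigma \in \Stab^{\ess}(x) \subseteq \Stab(x)$ with $\phi(x,\sigma) = \mu$, and the cocycle identity together with $\sigma \in \Stab(x)$ yields $\phi(x,\sigma^{-1}\gamma_0) = \lambda$. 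Continuity and openness of $\Theta$ follow because each $\phi(\cdot,\gamma)$ is locally constant, so $\Theta$ acts as $h$ times a coordinate shift on basic open slices $U \times \{\gamma\}$.

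The main obstacle will be the surjectivity step in $(3) \Rightarrow (1)$: one has to promote the \emph{a priori} assertion that the discrepancy $\mu = \phi(x,\eta(y,\lambda))\lambda^{-1}$ lies in $\Stab(y)$ into the stronger assertion that it lies in $\Stab^{\ess}(y)$, which requires leveraging continuity of $\phi$ and $\eta$ simultaneously to extend the defining identity from the single point $y$ to a whole neighbourhood.
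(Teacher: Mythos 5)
Your proposal is correct and follows essentially the same route as the paper: the same cycle of implications $(1)\Rightarrow(2)\Rightarrow(3)\Rightarrow(1)$, the same extraction of $\eta$ from $\Theta^{-1}$, and the same surjectivity argument via the discrepancy element $\phi(x,\eta(y,\lambda))\lambda^{-1}\in\Stab^{\ess}(y)$ (up to an inverse). The only divergence is the injectivity step of $(3)\Rightarrow(1)$: the paper deduces $\gamma_1\gamma_2^{-1}\in\Stab^{\ess}(x_1)$ by noting that $\Theta$ pulls the open unit space of $Y\rtimes\Lambda$ back into the interior of the isotropy of $X\rtimes\Gamma$, whereas you argue it directly from local constancy of $\phi$ together with the intertwining identity and injectivity of $h$ --- both work equally well.
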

\begin{proof}
(1)$\,\implies\,$(2): Define $\eta : Y \times \Lambda \to \Gamma$ by $\Theta^{-1}(y,
\lambda) = (h^{-1}(y), \eta(y,\lambda))$. Then $(h,\phi,\eta)$ is a continuous orbit
equivalence and $\phi$, $\eta$ are cocycles. The pair $(h,\phi)$ preserves stabilisers
because $\Theta\big(\{(x,\gamma):\gamma\in\Gamma_x\}\big) =
\{(h(x),\lambda):\lambda\in\Lambda_{h(x)}\}$.

(2)$\,\implies\,$(3): It suffices to show that $\phi(x,\gamma) \in \Stab^{\ess}(h(x))$ if
and only if $\gamma \in \Stab^{\ess}(x)$. First suppose that $\gamma\in\Stab^{\ess}(x)$.
Fix an open neighbourhood $U \owns x$ such that $x'\gamma = x'$ for all $x'\in U$. Since
$\phi$ is continuous and $\Lambda$ is discrete, we can assume that $\phi(x', \gamma) =
\phi(x,\gamma) =: \lambda$ for all $x' \in U$. Then $U':=h(U)$ is an open neighbourhood
of $h(x)$ such that for $u \in U'$,
\[
y\lambda = h(h^{-1}(y)) \lambda = h(h^{-1}(y)) \phi(h^{-1}(y), \gamma)
    = h(h^{-1}(y)\gamma) = h(h^{-1}(y)) = y.
\]
Hence $\phi(x,\gamma) \in \Stab^{\ess}(h(x))$.

Now suppose that $\phi(x,\gamma) \in \Stab^{\ess}(h(x))$. There is an open $U' \owns
h(x)$ such that $\lambda := \phi(x,\gamma)$ satisfies $y\lambda=y$ for all $y\in U'$. By
continuity of $h$ and $\phi$, there is an open $U \owns x$ such that $h(U)\subseteq U'$
and $\phi(x', \gamma) = \lambda$ for all $x' \in U$. So $x'\gamma = x'$ for all $x'\in
U$, giving $\gamma \in \Stab^{\ess}(x)$.

$(3)\implies (1)$: Define $\Theta : X \rtimes \Gamma \to Y \rtimes \Lambda$ by
$\Theta(x,\gamma) = (h(x),\phi(x,\gamma))$. Then $\Theta$ is open, continuous
homomorphism. To see that $\Theta$ is injective, suppose that $\Theta(x_1,\gamma_1) =
\Theta(x_2,\gamma_2)$. Then $x_1 = x_2$, $\gamma_1\gamma_2^{-1}\in\Gamma_{x_1}$, and
$\Theta(x_1,\gamma_1\gamma_2^{-1}) \in (Y \rtimes \Lambda)^{(0)}$. Since $Y \rtimes
\Lambda$ is \'etale, $(Y \rtimes \Lambda)^{(0)}$ is open in $Y \rtimes \Lambda$. Hence
$(Y \rtimes \Lambda)^{(0)} \subseteq \Io{Y \rtimes \Lambda}$. Since
$\Theta^{-1}(\operatorname{Iso}(Y \rtimes \Lambda)) \subseteq \operatorname{Iso}(X
\rtimes \Gamma)$ and $\Theta$ is continuous, $\Theta^{-1}(\Io{Y \rtimes \Lambda})
\subseteq \Io{X \rtimes \Gamma}$. Hence $(x_1,\gamma_1\gamma_2^{-1})\in \Io{X \rtimes
\Gamma}$. Since $\phi(x_1, \cdot) : \Stab^{\ess}(x_1) \to \Stab^{\ess}(h(x_1))$ is
bijective, $\gamma_1 = \gamma_2$, so $(x_1,\gamma_1) = (x_2,\gamma_2)$.

For surjectivity, fix $(y,\lambda) \in Y \rtimes \Lambda$. Then $y\phi(h^{-1}(y),
\eta(y,\lambda)) = h(h^{-1}(y)\eta(y,\lambda)) = h( h^{-1} ( y \lambda ) ) =  y\lambda$,
so $\lambda\phi(h^{-1}(y), \eta(y,\lambda))^{-1} \in \Stab(y)$. By continuity of $\phi$
and $\eta$ there is an open neighbourhood $U \owns y$ such that $y' \phi(h^{-1}(y'),
\eta(y',\lambda)) = h(h^{-1}(y') \eta(y', \lambda)) = y'\lambda$ for all $y' \in U$. For
$y' \in U$ we have $\lambda'\phi(h^{-1}(y'), \eta(y',\lambda'))^{-1} \in \Stab(y')$.
Hence $\lambda\phi(h^{-1}(y), \eta(y,\lambda))^{-1} \in \Stab^{\ess}(y)$. Since
$\phi(h^{-1}(y), \cdot) : \Stab^{\ess}(h^{-1}(y)) \to \Stab^{\ess}(y)$ is bijective,
there exists $\gamma\in \Stab^{\ess}(h^{-1}(y))$ such that $\phi(h^{-1}(y),\gamma) =
\lambda\phi(h^{-1}(y), \eta(y,\lambda))^{-1}$. Thus $\Theta(h^{-1}(y), \gamma
\eta(y,\lambda)) = (y,\lambda)$.

Finally, we have $\phi(x,\id_\Gamma)=\phi(x,\id_\Gamma^2)=\phi(x,\id_\Gamma)^2$ from which it follows that $\phi(x,\id_\Gamma)=\id_\Lambda$ and thus $\Theta(x,\id_\Gamma)=(h(x),\id_\Lambda)$.
\end{proof}

This gives the following generalisation of Li's rigidity theorem \cite[Theorem 1.2]{Li}.

\begin{cor}
Let $\Gamma \curvearrowright X$ and $\Lambda \curvearrowright Y$ be actions of countable
discrete groups on second-countable locally compact Hausdorff spaces. Suppose that
$\Stab^{\ess}(x)$ and $\Stab^{\ess}(y)$ are torsion-free and abelian for all $x\in X$ and
all $y\in Y$, and that $h$ is a homeomorphism from $X$ to $Y$. The following are
equivalent:
\begin{enumerate}
    \item there exist cocycles $\phi : X \times \Gamma \to \Lambda$ and $\eta : Y
        \times \Lambda \to \Gamma$ such that $(h, \phi, \eta)$ is a continuous orbit
        equivalence from $(X,\Gamma)$ to $(Y,\Lambda)$ and $(h, \phi)$ and $(h^{-1},
        \eta)$ preserve essential stabilisers;
    \item there is an isomorphism $\Theta : X \rtimes \Gamma \to Y \rtimes \Lambda$
        such that $\Theta(x,\id_\Gamma)=(h(x),\id_\Lambda)$ for all $x\in X$; and
    \item there is an isomorphism $\phi:C_0(X)\times_r\Gamma\to
        C_0(Y)\times_r\Lambda$ such that $\phi(C_0(X))=C_0(Y)$ and $\phi(f)=f\circ
        h^{-1}$ for $f\in C_0(X)$.
\end{enumerate}
\end{cor}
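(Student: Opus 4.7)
The plan is to reduce the corollary to two pieces we already have: Proposition~\ref{prp:group action} to handle (1)$\Leftrightarrow$(2), and Theorem~\ref{thm:1} (with trivial grading and trivial coaction) to handle (2)$\Leftrightarrow$(3). The key structural observation that makes Theorem~\ref{thm:1} applicable is that for $\G = X \rtimes \Gamma$, we have $\Io{\G} = \bigsqcup_{x \in X} \{x\} \times \Stab^{\ess}(x)$ as noted in the introduction to the section; this is torsion-free and abelian fibrewise by hypothesis, hence torsion-free and abelian as a groupoid. The same holds for $Y \rtimes \Lambda$. Also, $C^*_r(X \rtimes \Gamma) \cong C_0(X) \rtimes_r \Gamma$ carries $C_0((X \rtimes \Gamma)^{(0)})$ onto $C_0(X)$, and similarly for $Y \rtimes \Lambda$, so statement~(3) of the corollary is just the translation of Theorem~\ref{thm:1}(2) via this isomorphism.

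For (2)$\Leftrightarrow$(3), I apply Theorem~\ref{thm:1} with $\Gamma$ (in the notation of that theorem) trivial and $c_i$ trivial: the coaction condition then disappears, and the theorem says that isomorphisms of the groupoids correspond to diagonal-preserving isomorphisms of the reduced $C^*$-algebras inducing the same homeomorphism on unit spaces. A $\Theta$ with $\Theta(x,e) = (h(x),e)$ restricts to $h$ on unit spaces, so the corresponding $C^*$-isomorphism $\phi$ sends $f \in C_0(X)$ to $f \circ h^{-1}$; conversely, any $\phi$ as in (3) induces $\Theta$ with $\Theta|_{(X \rtimes \Gamma)^{(0)}} = h$, i.e. $\Theta(x,e) = (h(x),e)$.

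For (1)$\Leftrightarrow$(2), this is essentially Proposition~\ref{prp:group action}. Given (1), statement (3) of Proposition~\ref{prp:group action} holds, so (1) of that proposition yields an isomorphism $\Theta$ with $\Theta(x,\gamma) = (h(x), \phi(x,\gamma))$; in particular $\Theta(x,e) = (h(x), e)$, giving (2). Conversely, given $\Theta$ as in (2), range-preservation for groupoid isomorphisms shows that for each $\gamma$, $r(\Theta(x,\gamma)) = \Theta(x,e) = (h(x),e)$, so there is a unique $\phi(x,\gamma) \in \Lambda$ with $\Theta(x,\gamma) = (h(x), \phi(x, \gamma))$, and $\phi$ is continuous because $\Theta$ is continuous and $\Lambda$ is discrete. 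Then (1) of Proposition~\ref{prp:group action} holds, so (3) of that proposition gives that $\phi$ is a cocycle, $(h, \phi)$ preserves essential stabilisers, and there is an $\eta$ making $(h, \phi, \eta)$ a continuous orbit equivalence; applying the same proposition to $\Theta^{-1}$ (swapping the roles of the two systems) shows that $(h^{-1}, \eta)$ also preserves essential stabilisers, completing~(1).

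I do not anticipate any serious obstacle: the torsion-free-abelian hypothesis on essential stabilisers is exactly what feeds the hypothesis of Theorem~\ref{thm:1}, and Proposition~\ref{prp:group action} has already dealt with the subtler content (namely that the intertwining condition plus essential-stabiliser-preservation forces $\Theta$ to be a well-defined bijection at the groupoid level). The only bookkeeping to watch carefully is the direction of $h$ in (3): Theorem~\ref{thm:1}(2) gives $\phi(f) = f \circ \kappa$ with $\kappa : \G_2 \to \G_1$, so when $\G_1 = X \rtimes \Gamma$ and $\G_2 = Y \rtimes \Lambda$, the unit-space restriction of $\kappa$ is $h^{-1}$, matching the formula $\phi(f) = f \circ h^{-1}$ in statement~(3).
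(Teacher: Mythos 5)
Your proposal is correct and follows exactly the paper's route: the paper's proof is the one-line ``This follows directly from Theorem~\ref{thm:1} and Proposition~\ref{prp:group action}'', and your argument simply supplies the details of that reduction, including the needed observations that $\Io{X\rtimes\Gamma}=\bigcup_x\{x\}\times\Stab^{\ess}(x)$ is torsion-free and abelian and that the direction of $h$ matches the formula $\phi(f)=f\circ h^{-1}$.
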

\begin{proof}
This follows directly from Theorem~\ref{thm:1} and Proposition~\ref{prp:group action}.
\end{proof}

\begin{rmk}\label{rmk:KOQstuff}
Let $\Gamma \curvearrowright X$ and $\Gamma \curvearrowright Y$ be actions of a countable
discrete group on second-countable locally compact Hausdorff spaces, and consider the
reduced crossed-products $C_0(X) \times_r \Gamma$ and $C_0(Y) \times_r \Gamma$, with dual
coactions $\delta_X$ and $\delta_Y$ of $\Gamma$. As $(C_0(X) \times_r \Gamma)^{\delta_X}
= C_0(X)$ and likewise for $Y$, any equivariant isomorphism $C_0(X) \times_r \Gamma \to
C_0(Y) \times_r \Gamma$ restricts to an isomorphism $C_0(X) \to C_0(Y)$.
Theorem~\ref{thm:1} therefore shows that $C_0(X) \times_r \Gamma$ and $C_0(Y) \times_r
\Gamma$ are equivariantly isomorphic if and only if $X \rtimes \Gamma \cong Y \rtimes
\Gamma$ as graded groupoids. We have $X \rtimes \Gamma \cong Y \rtimes \Gamma$ as graded
groupoids if and only if there is a homeomorphism $h : X \to Y$ such that $h(x)\gamma =
h(x\gamma)$ for all $x \in X$ and $\gamma \in \Gamma$, so if and only if $(X, \Gamma)$
and $(Y, \Gamma)$ are topologically conjugate. So we recover \cite[Proposition~4.3]{KOQ}.
\end{rmk}

\section{Local homeomorphisms of locally compact Hausdorff spaces}\label{sec:Deaconu-Renault}

In this section we adapt the ideas of Section~\ref{sec:group actions} to actions of $\NN$
by local homeomorphisms. We first characterise an appropriate notion of continuous orbit
equivalence in terms of diagonal-preserving isomorphism of $C^*$-algebras. We then
characterise eventual conjugacy in terms of gauge-equivariant diagonal-preserving
isomorphisms.

A \emph{Deaconu--Renault system} is a pair $(X, \sigma)$ consisting of a locally compact
Hausdorff space $X$ and a local homeomorphism $\sigma : \dom(\sigma) \to \ran(\sigma)$
from an open set $\dom(\sigma) \subseteq X$ to an open set $\ran(\sigma) \subseteq X$.
Inductively define $\dom(\sigma^n) := \sigma^{-1}(\dom(\sigma^{n-1}))$, so each $\sigma^n
: \dom(\sigma^n) \to \ran(\sigma^n)$ is a local homeomorphism and $\sigma^m \circ
\sigma^n = \sigma^{m+n}$ on $\dom(\sigma^{m+n})$. We write $D_n := \dom(\sigma^n)$ and
$\sigma^0 := \id_X$. For $x\in X$ we define the \emph{stabiliser group} at $x$ by
\begin{equation*}
\Stab(x) := \{m-n:m,n\in\NN,\ x\in D_m\cap D_n,\text{ and }\sigma^n(x)=\sigma^m(x)\} \subseteq \ZZ,
\end{equation*}
and we define the \emph{essential stabiliser group} at $x$ by
\begin{multline*}
\Stab^{\ess}(x) := \{m-n:m,n\in\NN\text{ and there is an open neighbourhood }\\
    U\subseteq D_m\cap D_n\text{ of }x\text{ such that }\sigma^n|_U = \sigma^m|_U\} \subseteq
    \Stab(x).
\end{multline*}
With the convention that $\min(\emptyset) = \infty$, we define the \emph{minimal
stabiliser} of $x$ to be
\[
\Stab_{\min}(x) := \min\{n \in \Stab(x) : n \ge 1\},
\]
and the \emph{minimal essential stabiliser} at $x$ to be
\[
    \Stab^{\ess}_{\min}(x) := \min\{n \in \Stab^{\ess}(x) : n \ge 1\}.
\]

The Deaconu--Renault groupoid of $(X,\sigma)$ is
\[
\G = \G(X, \sigma) = \bigcup_{n,m \in \NN} \big\{(x, n-m , y) \in D_n \times\{n-m\} \times D_m : \sigma^n(x) = \sigma^m(y)\big\},
\]
under the topology with basic open sets $Z(U, n, m, V) := \{(x, n-m, y) : x \in U,\ y \in
V, \text{ and }\sigma^n(x) = \sigma^m(y)\}$ indexed by quadruples $(U, n, m, V)$ where
$n,m \in \NN$, $U \subseteq D_n$ and $V \subseteq D_m$ are open, and $\sigma^n|_U$ and
$\sigma^m|_V$ are homeomorphisms. Each $Z(U, n, m, V)$ can be written as $Z(U', n, m,
V')$ with $\sigma^n(U') = \sigma^m(V')$ (put $U' = U \cap (\sigma^n)^{-1}(V)$ and $V' = V
\cap (\sigma^m)^{-1}(U)$). This $\G$ is a locally compact Hausdorff \'etale groupoid,
with $\G^{(0)} = \{(x,0,x) : x \in X\}$ identified with $X$. It is also amenable by the
argument of \cite[Lemma~3.5]{SWiv}, so $C^*_r(\G) = C^*(\G)$. The isotropy subgroupoid of
$\G$ is $\{(x,n,x):x\in X,\ n\in\Stab(x)\}$, and the interior of the isotropy is
$\Io{\G}=\{(x,n,x):x\in X,\ n\in\Stab^{\ess}(x)\}$. So $\Io{\G}$ is torsion-free and
abelian. Taking $\Gamma = \{e\}$ and $c : \G \to \Gamma$ the trivial cocycle, we obtain a
(trivially) graded groupoid $\G$.

\subsection{Continuous orbit equivalence}
We show that stabiliser-preserving continuous orbit equivalence of Deaconu--Renault
systems characterises isomorphism of their groupoids.

\begin{dfn}
Let $(X, \sigma)$ and $(Y, \tau)$ be Deaconu--Renault systems. We say that $(X, \sigma)$
and $(Y, \tau)$ are \emph{continuous orbit equivalent} if there exist a homeomorphism $h
: X \to Y$  and continuous maps $k,l : \dom(\sigma) \to \NN$ and $k', l' : \dom(\tau) \to
\NN$ such that
\[
\tau^{l(x)}(h(x)) = \tau^{k(x)}(h(\sigma(x)))
    \quad\text{ and }\quad
\sigma^{l'(y)}(h^{-1}(y)) =
\sigma^{k'(y)}(h^{-1}(\tau(y)))
\]
for all $x, y$. We call $(h,l,k,l',k')$ a continuous orbit equivalence and we call $h$
the \emph{underlying homeomorphism}. We say that $(h,l,k,l',k')$ \emph{preserves
stabilisers} if $\Stab_{\min}(h(x))  < \infty \iff \Stab_{\min}(x) < \infty$, and
\begin{align*}
&\bigg|\sum_{n=0}^{\Stab_{\min}(x)-1} l(\sigma^n(x)) - k(\sigma^n(x)) \bigg| =
\Stab_{\min}(h(x))
\text{ and}\\
&\bigg|\sum_{n=0}^{\Stab_{\min}(y)-1} l'(\tau^n(y)) - k'(\tau^n(y)) \bigg| =
\Stab_{\min}(h^{-1}(y))
\end{align*}
whenever $\Stab(x), \Stab(y)$ are nontrivial, $\sigma^{\Stab_{\min}(x)}(x)=x$, and
$\tau^{\Stab_{\min}(y)}(y)=y$.

Likewise, we say that $(h,l,k,l',k')$ \emph{preserves essential stabilisers} if
$\Stab^{\ess}_{\min}(h(x)) < \infty\iff \Stab^{\ess}_{\min}(x) < \infty$, and
\begin{align*}
&\bigg|\sum_{n=0}^{\Stab^{\ess}_{\min}(x)-1} \left( l(\sigma^n(x)) - k(\sigma^n(x)) \right) \bigg| =
\Stab^{\ess}_{\min}(h(x))
\text{ and }\\
&\bigg|\sum_{n=0}^{\Stab^{\ess}_{\min}(y)-1}\left( l'(\tau^n(y)) - k'(\tau^n(y)) \right) \bigg| =
\Stab^{\ess}_{\min}(h^{-1}(y))
\end{align*}
whenever $\Stab^{\ess}_{\min}(x), \Stab^{\ess}_{\min}(y) < \infty$,
$\sigma^{\Stab^{\ess}_{\min}(x)}(x)=x$, and $\tau^{\Stab^{\ess}_{\min}(y)}(y)=y$.
\end{dfn}

Our definition of continuous orbit equivalence boils down to the usual notion for
homeomorphisms (see for instance \cite{BT}, \cite{GPS}, and \cite{Tomiyama}), and to
orbit equivalence of graphs if $(X, \sigma)$ and $(Y, \tau)$ are the shifts on their
boundary-path spaces (see \cite{BCW}). Our main theorem in this section generalises
\cite[Theorem~5.1]{BCW}, \cite[Theorem~2.3]{MM}, and \cite[Theorem~2]{Tomiyama}:

\begin{thm}\label{thm:DR oe<->gi}
Let $(X, \sigma)$ and $(Y, \tau)$ be Deaconu--Renault systems with $X, Y$ second
countable, and suppose that $h : X \to Y$ is a homeomorphism. Then the following are
equivalent:
\begin{enumerate}
\item\label{it:DR oe} there is a stabiliser-preserving continuous orbit equivalence
    from $(X, \sigma)$ to $(Y, \tau)$ with underlying homeomorphism $h$;
\item\label{it:DR G iso} there is a groupoid isomorphism $\Theta : \G(X, \sigma) \to
    \G(Y, \tau)$ such that $\Theta|_X = h$; and
\item\label{it:DR Cstar iso} there is an isomorphism $\phi : C^*(\G(X, \sigma)) \to
    C^*(\G(Y, \tau))$ such that $\phi(C_0(X)) = C_0(Y)$ with $\phi(f) = f\circ
    h^{-1}$ for $f \in C_0(Y)$.
\end{enumerate}
\end{thm}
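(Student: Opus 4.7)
The plan is to obtain (2)$\iff$(3) directly from Theorem~\ref{thm:1}, and to prove (1)$\iff$(2) by translating between cocycle data and groupoid morphisms. For (2)$\iff$(3), the preceding discussion already noted that $\Io{\G(X, \sigma)}$ and $\Io{\G(Y, \tau)}$ are torsion-free and abelian (as bundles of subgroups of $\ZZ$), so Theorem~\ref{thm:1} applied to the trivially graded pairs $(\G(X, \sigma), c \equiv e)$ and $(\G(Y, \tau), c \equiv e)$ gives the equivalence immediately, with the coaction-intertwining condition vacuous.

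For (2)$\implies$(1), continuity of $\Theta$ together with the \'etale topology on $\G(Y, \tau)$ lets me cover $\dom(\sigma)$ by open sets on each of which $x \mapsto \Theta(x, 1, \sigma(x))$ lies in a single basic bisection $Z(V, l_0, k_0, W)$, yielding locally constant (hence continuous) $l, k : \dom(\sigma) \to \NN$ with $\Theta(x, 1, \sigma(x)) = (h(x), l(x) - k(x), h(\sigma(x)))$; the intertwining identity $\tau^{l(x)}(h(x)) = \tau^{k(x)}(h(\sigma(x)))$ follows from the definition of $Z(V, l_0, k_0, W)$. The same construction applied to $\Theta^{-1}$ produces $l', k'$. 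For stabiliser preservation with $p := \Stab_{\min}(x)$ and $\sigma^p(x) = x$, factor
\[
(x, p, x) = (x, 1, \sigma(x)) \cdot (\sigma(x), 1, \sigma^2(x)) \cdots (\sigma^{p-1}(x), 1, x)
\]
and apply $\Theta$; multiplicativity gives $\Theta(x, p, x) = (h(x), S, h(x))$ with $S := \sum_{i=0}^{p-1}(l(\sigma^i(x)) - k(\sigma^i(x)))$. Since $\Theta$ restricts to a group isomorphism $\Stab(x) \to \Stab(h(x))$ carrying the generator $p$ to $S$, and $\Stab(h(x)) = \Stab_{\min}(h(x))\ZZ$, we deduce $|S| = \Stab_{\min}(h(x))$.

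For (1)$\implies$(2), set $L(x, n) := \sum_{i=0}^{n-1}(l(\sigma^i(x)) - k(\sigma^i(x)))$ for $x \in D_n$, and define $\Theta(x, n-m, y) := (h(x), L(x, n) - L(y, m), h(y))$ whenever $\sigma^n(x) = \sigma^m(y)$. Iterating the orbit-equivalence intertwiners shows that the target is a legitimate element of $\G(Y, \tau)$. Well-definedness follows because any two representatives of a groupoid element are comparable in the form $(n, m)$ and $(n+d, m+d)$, and the extra summands $L(\sigma^n(x), d)$ and $L(\sigma^m(y), d)$ coincide since $\sigma^n(x) = \sigma^m(y)$. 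Multiplicativity reduces to the telescoping identity
\[
L(y, m+p) = L(y, m) + L(\sigma^m(y), p) = L(y, p) + L(\sigma^p(y), m),
\]
and continuity of $\Theta$ is immediate from continuity of $l, k, h, \sigma$. A mirror construction from $(h^{-1}, l', k')$ produces a groupoid homomorphism $\Theta' : \G(Y, \tau) \to \G(X, \sigma)$ which is inverse to $\Theta$ on units.

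The main obstacle is verifying bijectivity in the last step: $\Theta' \circ \Theta$ and $\Theta \circ \Theta'$ act as the identity on unit spaces and, away from isotropy, $\Theta$ is clearly determined by $h$. On isotropy, the stabiliser-preservation hypothesis is precisely what is needed: the absolute-value condition forces $\Theta$ to send a generator of $\Stab(x)$ to a generator of $\Stab(h(x))$ (not a proper multiple), so the induced map on each isotropy group is a genuine isomorphism of subgroups of $\ZZ$, and composing with the analogous data for $\Theta'$ yields the identity after tracking a single sign per orbit.
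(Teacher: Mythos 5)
Your overall route is the same as the paper's: (2)$\iff$(3) is exactly Theorem~\ref{thm:1} applied to the trivial grading, and (1)$\iff$(2) is proved by translating between cocycle data and groupoid morphisms, which is the content of Lemmas \ref{lem:lmap}, \ref{lem:DR oe}, \ref{lem:Theta hom} and Proposition~\ref{prp:DR}. Two steps as written, however, have genuine gaps.

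First, in (2)$\implies$(1) you obtain $l,k$ by covering $\dom(\sigma)$ with open sets on which $\Theta(x,1,\sigma(x))$ lies in a single basic set $Z(V,l_0,k_0,W)$ and declaring $l,k$ to be the corresponding constants. This does not define functions: on an overlap of two such sets the pairs $(l_0,k_0)$ may differ (only the difference $l_0-k_0$ is forced by $\Theta$), and on a general locally compact space there is no canonical way to select a consistent locally constant choice from an open cover. The paper's fix is to take $l(x)$ to be the \emph{minimal} exponent with $\tau^{l(x)}(h(x))=\tau^{l(x)-n(x)}(h(\sigma(x)))$, where $n(x)=c_Y(\Theta(x,1,\sigma(x)))$, and to prove that this minimum is continuous (Lemma~\ref{lem:lmap}); that continuity is not automatic and uses local injectivity of $\tau$.

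Second, your bijectivity argument for (1)$\implies$(2) leans on the stabiliser-preservation hypothesis ``forcing $\Theta$ to send a generator of $\Stab(x)$ to a generator of $\Stab(h(x))$.'' But that hypothesis is only imposed at points satisfying $\sigma^{\Stab_{\min}(x)}(x)=x$, i.e.\ genuinely periodic points, whereas a point with nontrivial stabiliser need only be eventually periodic. To get bijectivity of the induced map $\pi_x:\Stab(x)\to\Stab(h(x))$ at every such $x$ you must first show that $\Stab(\cdot)$, $\Stab(h(\cdot))$ and $x\mapsto\pi_x$ are constant along $\sigma$-orbits (this is Lemma~\ref{lem:Theta hom}) and then transport the condition from a periodic point on the forward orbit of $x$. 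Relatedly, $\Theta'\circ\Theta$ is not the identity, only the identity modulo isotropy, so surjectivity of $\Theta$ requires combining $\Theta\circ\Theta'$ with surjectivity of $\pi_{h^{-1}(y)}$ on stabilisers, which again rests on the orbit-constancy just described. With these two repairs your argument coincides with the paper's.
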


To prove Theorem~\ref{thm:DR oe<->gi}, we need to relate isomorphism of Deaconu--Renault
groupoids to continuous orbit equivalence. Arklint, Eilers, and Ruiz \cite{AER} (see also
\cite{CW}) proved that isomorphism of graph groupoids (and hence diagonal-preserving
isomorphism of graph $C^*$-algebras) is characterised by continuous orbit equivalence of
the shift maps on their boundary path spaces with underlying homeomorphism $h$ satisfying
$\Stab^{\ess}(h(x)) = \{0\} \iff \Stab^{\ess}(x) = \{0\}$. The following is the analogous
result for Deaconu--Renault systems.

\begin{prp}\label{prp:DR}
Let $(X, \sigma)$ and $(Y, \tau)$ be Deaconu--Renault systems. Let $h : X\to Y$ be a
homeomorphism and let $l,k:\dom(\sigma)\to\NN$ and $l',k':\dom(\tau)\to\NN$ be continuous
maps such that $h(x)\in\dom(\tau^{l(x)})$ and $h(\sigma(x))\in\dom(\tau^{k(x)})$ for
$x\in\dom(\sigma)$, and $h^{-1}(y)\in\dom(\sigma^{l'(y)})$ and
$h^{-1}(\tau(y))\in\dom(\sigma^{k'(y)})$ for $y\in\dom(\tau)$. The following are
equivalent:
\begin{enumerate}
    \item there are groupoid isomorphisms $\Theta:G(X,\sigma)\to G(Y,\tau)$ and $\Theta':G(Y,\tau)\to G(X,\sigma)$ such that $\Theta|_X=h$, $\Theta'|_Y=h^{-1}$, $\Theta(x,1,\sigma(x))=(h(x),l(x)-k(x),h(\sigma(x)))$ for $x\in\dom(\sigma)$, and $\Theta'(y,1,\tau(y))=(h^{-1}(y),l'(y)-k'(y),h^{-1}(\tau(y)))$ for $y\in\dom(\tau)$;
    \item\label{it:DR esp} $(h,l,k,l',k')$ is an essential-stabiliser-preserving
    continuous orbit equivalence; and
    \item\label{it:DR sp} $(h,l,k,l',k')$ is a stabiliser-preserving continuous orbit equivalence.
\end{enumerate}
\end{prp}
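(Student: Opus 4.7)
The plan is to establish the cycle $(1) \Rightarrow (3)$, $(1) \Rightarrow (2)$, $(3) \Rightarrow (1)$, and $(2) \Rightarrow (1)$, from which $(2) \Leftrightarrow (3)$ follows by routing through $(1)$. As bookkeeping, set $L_n(x) := \sum_{j=0}^{n-1} l(\sigma^j(x))$ and $K_n(x) := \sum_{j=0}^{n-1} k(\sigma^j(x))$, and define $L'_n, K'_n$ symmetrically from $l', k'$. A short induction on $n$ starting from the hypothesised intertwining $\tau^{l(x)}(h(x)) = \tau^{k(x)}(h(\sigma(x)))$ gives
\[
\tau^{L_n(x)}(h(x)) = \tau^{K_n(x)}(h(\sigma^n(x))) \quad \text{for all } n \in \NN,\ x \in D_n,
\]
so that for any $(x, n-m, y) \in \G(X, \sigma)$ the pair $(L_n(x) + K_m(y),\, K_n(x) + L_m(y))$ is a valid witness for an element of $\G(Y, \tau)$ with source $h(y)$ and range $h(x)$.

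For $(1) \Rightarrow (3)$: applying the source and range maps to the prescribed formulae $\Theta(x, 1, \sigma(x)) = (h(x), l(x) - k(x), h(\sigma(x)))$ and $\Theta^{-1}(y, 1, \tau(y)) = (h^{-1}(y), l'(y) - k'(y), h^{-1}(\tau(y)))$ extracts the two orbit-equivalence identities. Since $\Theta$ is a groupoid isomorphism it restricts to one on isotropy, inducing group isomorphisms $\Stab(x) \cong \Stab(h(x))$ for each $x$. Factoring $(x, \Stab_{\min}(x), x)$ as the ordered product $\prod_{j=0}^{\Stab_{\min}(x)-1}(\sigma^j(x), 1, \sigma^{j+1}(x))$ and applying multiplicativity of $\Theta$, the image of the positive generator $\Stab_{\min}(x)$ equals $\sum_{j=0}^{\Stab_{\min}(x)-1} (l-k)(\sigma^j(x))$. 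The only group isomorphisms between nontrivial cyclic subgroups of $\ZZ$ send positive generators to plus or minus positive generators, so this sum has absolute value $\Stab_{\min}(h(x))$, yielding $(3)$. The implication $(1) \Rightarrow (2)$ is the same argument, using additionally that $\Theta$ is a homeomorphism and so carries $\Io{\G(X,\sigma)}$ onto $\Io{\G(Y, \tau)}$, hence restricts to group isomorphisms $\Stab^{\ess}(x) \cong \Stab^{\ess}(h(x))$.

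For $(3) \Rightarrow (1)$ (and analogously $(2) \Rightarrow (1)$): define
\[
\Theta(x, n-m, y) := \big(h(x),\, L_n(x) + K_m(y) - K_n(x) - L_m(y),\, h(y)\big).
\]
Independence of the representative $(n,m)$ follows because increasing both by $1$ perturbs the sum by $(l-k)(\sigma^n(x)) - (l-k)(\sigma^m(y))$, which vanishes since $\sigma^n(x) = \sigma^m(y)$. Multiplicativity is a direct computation using the additivity $M_{n+p}(x) = M_n(x) + M_p(\sigma^n(x))$ (for $M_n := L_n - K_n$) together with $\sigma^n(x) = \sigma^m(y)$ and $\sigma^p(y) = \sigma^q(z)$ for composable pairs; continuity is inherited from $h, l, k$; and the formula agrees with the prescribed value on $(x, 1, \sigma(x))$ by taking $n = 1, m = 0$. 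The symmetric construction from $(h^{-1}, l', k')$ produces a continuous homomorphism $\Theta'$ in the reverse direction with the correct prescribed values on generators.

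The main obstacle is verifying that $\Theta$ and $\Theta'$ are mutually inverse. The composite $\Theta' \circ \Theta$ fixes $X$ pointwise by construction; on a general element $(x, n-m, y)$ it produces $(x, n-m + t, y)$, where $t \in \Stab(x)$ is the value at the generator of the composed endomorphism $\Stab(x) \to \Stab(h(x)) \to \Stab(x)$ induced by the two cocycles. Stabiliser-preservation says each link in this composition sends a positive generator to plus or minus a positive generator of a cyclic subgroup of $\ZZ$ of matching index, so the composite is the identity and $t = 0$. For $(2) \Rightarrow (1)$, the same argument applies verbatim on the interior of the isotropy; one additionally observes that by continuity of $l, k$ the composite group endomorphism is locally constant in $x$ and hence the identity also on the full isotropy, so $\Theta' \circ \Theta = \id$ throughout. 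The reverse composite $\Theta \circ \Theta'$ is handled symmetrically.
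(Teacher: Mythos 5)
Your construction of $\Theta$ from the cocycle sums $L_n-K_n$ and the reduction of everything to the induced maps on stabilisers is exactly the paper's strategy, and your well-definedness, multiplicativity and continuity checks are fine. The proof breaks, however, at the claim that $\Theta$ and $\Theta'$ are mutually inverse. You argue that each of the two induced maps on a nontrivial stabiliser sends a positive generator to plus or minus a positive generator, ``so the composite is the identity''; but $(\pm 1)\cdot(\pm 1)=\pm 1$, and the composite can genuinely be $-\id$. Concretely, take $X=Y=\{*\}$ with $\sigma=\tau=\id$, so $\G(X,\sigma)\cong\ZZ$ and $\Stab(*)=\Stab^{\ess}(*)=\ZZ$; then $h=\id$, $l\equiv 0$, $k\equiv 1$, $l'\equiv 1$, $k'\equiv 0$ is a stabiliser-preserving (and essential-stabiliser-preserving) continuous orbit equivalence, your $\Theta$ is $n\mapsto -n$ and your $\Theta'$ is $n\mapsto n$, so $\Theta'\circ\Theta=-\id\neq\id$. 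Thus ``mutually inverse'' is simply false under hypothesis (2) or (3), and your proofs of $(3)\Rightarrow(1)$ and $(2)\Rightarrow(1)$ do not close. What is true, and what the paper proves instead, is that each induced homomorphism $\pi_x:\Stab(x)\to\Stab(h(x))$ is bijective: injectivity of $\Theta$ then follows from injectivity of $h$ together with injectivity of $\pi_x$, and surjectivity follows by using $\Theta'$ only to produce a candidate preimage of $(y,n,y')$ that is correct up to an element of $\Stab(y)$, which is then absorbed using surjectivity of $\pi_{h^{-1}(y)}$. You should restructure the endgame along these lines rather than trying to prove $\Theta'=\Theta^{-1}$.

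A second, related gap sits in your $(2)\Rightarrow(1)$, which is where the hard content of the paper's direct proof of $(2)\Rightarrow(3)$ has to live. The step ``by continuity of $l,k$ the composite group endomorphism is locally constant in $x$ and hence the identity also on the full isotropy'' is not an argument: knowing that $\rho_x=\id$ on $\Stab^{\ess}(x)$ together with local constancy says nothing about $\rho_x$ on $\Stab(x)\setminus\Stab^{\ess}(x)$ when $\Stab^{\ess}(x)$ is a proper subgroup (possibly trivial) of $\Stab(x)$. The point you actually need is that for $(x,p,x)$ in the isotropy, the relation $\Theta'(\Theta(x',n,x''))=(x',q,x'')$ persists on a basic open neighbourhood $Z(U,n,m,V)$ of $(x,p,x)$, so that $(x',p-q,x')\in\G(X,\sigma)$ for all $x'$ in a neighbourhood of $x$; hence the defect $p-q$ lies in $\Stab^{\ess}(x)$, and only then can you combine control on $\Stab^{\ess}(x)$ with the fact that a group endomorphism of a subgroup of $\ZZ$ is multiplication by a constant to determine $\rho_x$ on all of $\Stab(x)$. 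This is precisely the content of the paper's argument for $(2)\Rightarrow(3)$; as written, your sketch compresses it past the point where a proof remains.
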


Note: (\ref{it:DR esp})$\;\implies\;$(\ref{it:DR sp}) of Proposition~\ref{prp:DR} shows
that if $\sigma$ and $\tau$ are topologically free, then every continuous orbit
equivalence from $(X, \sigma)$ to $(Y, \tau)$ preserves stabilisers.

The rest of this subsection deals with the proofs of Proposition~\ref{prp:DR} and
Theorem~\ref{thm:DR oe<->gi}.

\begin{lem}\label{lem:lmap}
Let $(X, \sigma)$ be a Deaconu--Renault system. The function $l_X : \G(X, \sigma) \to
\NN$ given by $l_X(x, n, y) := \min\{l \in \NN : l \ge n\text{ and } \sigma^l(x) =
\sigma^{l-n}(y)\}$ is continuous.
\end{lem}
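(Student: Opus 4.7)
The plan is to show that $l_X$ is locally constant; since $\NN$ carries the discrete topology, this suffices for continuity. Given $(x_0, n, y_0) \in \G(X, \sigma)$ I would write $l := l_X(x_0, n, y_0)$ and construct a basic open neighbourhood of $(x_0,n,y_0)$ on which $l_X$ equals $l$. By definition $l \ge n$, $l \ge 0$, $x_0 \in D_l$, $y_0 \in D_{l-n}$, and $\sigma^l(x_0) = \sigma^{l-n}(y_0)$.

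For the upper bound $l_X \le l$, I would exploit that $\sigma^l$ and $\sigma^{l-n}$ are local homeomorphisms to choose open sets $U_1 \owns x_0$ in $D_l$ and $V_1 \owns y_0$ in $D_{l-n}$ on which $\sigma^l$ and $\sigma^{l-n}$ are homeomorphisms. Intersecting images, let $W := \sigma^l(U_1) \cap \sigma^{l-n}(V_1)$ and set $U := U_1 \cap \sigma^{-l}(W)$, $V := V_1 \cap \sigma^{-(l-n)}(W)$. Then $\sigma^l(U) = W = \sigma^{l-n}(V)$, so $Z(U, l, l-n, V)$ is a legitimate basic open neighbourhood of $(x_0,n,y_0)$, and every element $(x, n, y)$ in it satisfies $\sigma^l(x) = \sigma^{l-n}(y)$, giving $l_X(x,n,y) \le l$.

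For the matching lower bound, I would argue by contradiction via nets. Suppose a net $(x_\alpha, n_\alpha, y_\alpha) \to (x_0, n, y_0)$ in $Z(U, l, l-n, V)$ satisfies $l_X(x_\alpha, n_\alpha, y_\alpha) < l$ cofinally. The cocycle $(x,m,y)\mapsto m$ is constant on each basic open set, so $n_\alpha = n$ eventually. The set $\{l' \in \NN : l' \ge \max(n,0),\ l' < l\}$ is finite, so by pigeonholing and passing to a subnet I may assume $l_X(x_\alpha, n, y_\alpha) = l'$ is constant, whence $\sigma^{l'}(x_\alpha) = \sigma^{l'-n}(y_\alpha)$ for all $\alpha$. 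Because $D_{k+1} \subseteq D_k$ for every $k$, the containments $U \subseteq D_l \subseteq D_{l'}$ and $V \subseteq D_{l-n} \subseteq D_{l'-n}$ ensure $\sigma^{l'}$ and $\sigma^{l'-n}$ are defined and continuous on the relevant open sets. Passing to the limit and using Hausdorffness of $X$ yields $\sigma^{l'}(x_0) = \sigma^{l'-n}(y_0)$ with $l' < l$, contradicting the minimality of $l$.

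The main obstacle is the bookkeeping around the nested domains $D_k$ and ensuring that every application of $\sigma^{l'}$ and $\sigma^{l'-n}$ in the limit argument actually lands in a region where continuity is available; once the basic neighbourhood $Z(U,l,l-n,V)$ is chosen with $U \subseteq D_l$ and $V \subseteq D_{l-n}$, the inclusions $D_l \subseteq D_{l'}$ for $l' \le l$ handle this automatically, and the rest of the proof is a straightforward net argument.
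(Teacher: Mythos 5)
Your proof is correct and follows essentially the same strategy as the paper's: establish local constancy of $l_X$ by bounding it above on a basic bisection neighbourhood and below via a pigeonhole-and-limit argument against minimality. The only cosmetic difference is that you build the neighbourhood $Z(U,l,l-n,V)$ with exponent exactly $l = l_X(x_0,n,y_0)$, making the upper bound immediate, whereas the paper works in an arbitrary $Z(U,p,p-n,V)$ and uses local injectivity of $\sigma^{p-l}$ to descend to level $l$.
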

\begin{proof}
Suppose that $(x_i, n_i, y_i) \to (x,n,y) \in \G(X, \sigma)$. Then $n_i = n$ for large
$i$, so we can assume that $n_i = n$ for all $i$. We first show that $l_X(x_i, n, y_i)
\le l_X(x,n,y)$ for large $i$. To see this, fix a basic open neighbourhood $Z(U, p, p-n,
V)$ of $(x,n,y)$; so $\sigma^p(U) = \sigma^{p-n}(V)$, and $\sigma^p|_U$ and
$\sigma^{p-n}|_V$ are homeomorphisms. Since $(x_i, n, y_i) \to (x,n,y)$ we have $(x_i, n,
y_i) \in Z(U, p, p-n, V)$ for large $i$. So $\sigma^p(x_i) = \sigma^{p-n}(y_i)$ for large
$i$. Let $l := l_X(x,n,y)$. Then $l \le p$, say $p = l + q$. Hence
$\sigma^q(\sigma^l(x_i))
    = \sigma^p(x_i)
    = \sigma^{p-n}(y_i)
    = \sigma^q(\sigma^{l-n}(y_i))$
for large $i$. Since $\sigma^q$ is locally injective and $\lim_i \sigma^l(x_i) =
\sigma^l(x) = \sigma^{l-n}(y) = \lim_i \sigma^{l-n}(y_i)$, we deduce that $\sigma^l(x_i)
= \sigma^{l-n}(y_i)$ for large $i$. So $l_X(x_i, n, y_i) \le l$ for large $i$.

It now suffices to show that $l_X(x_i, n, y_i) \ge l$ for large $i$; equivalently, if
$l_X(x_i, n, y_i) = k$ for infinitely many $i$, then $k \ge l$. Suppose that $l(x_{i_j},
n, y_{i_j}) = k$ for all $j$. Then $k \ge n$ by definition of $l_X(x_i, n, y_i)$. Since
$\sigma^k(x_{i_j}) = \sigma^{k-n}(y_{i_j})$ for all $j$ and $x_{i_j} \to x$ and $y_{i_j}
\to y$, continuity forces $\sigma^k(x) = \sigma^{k-n}(y)$. So $k \ge l$.
\end{proof}

Given a Deaconu--Renault system $(X, \sigma)$, define $c_X : \G(X, \sigma) \to \ZZ$ by
$c_X(x, n, y) := n$.

\begin{lem}\label{lem:DR oe}
Let $(X, \sigma)$ and $(Y, \tau)$ be Deaconu--Renault systems. Suppose that $\Theta :
\G(X, \sigma) \to \G(Y, \tau)$ is an isomorphism of groupoids. Let $h : X \to Y$ be the
restriction of $\Theta$ to $\G(X, \sigma)^{(0)}$. For $p \in \NN$, the functions $l_p,
k_p : D_p \to \NN$ given by
\begin{align*}
l_p(x) &:= \min\{l \in \NN : \tau^l(h(x)) = \tau^{l - c_Y(\Theta(x, p,
    \sigma^p(x)))}(h(\sigma^p(x)))\},
        \quad\text{ and}\\
k_p(x) &:= l_p(x) - c_Y(\Theta(x, p, \sigma^p(x)))
\end{align*}
are continuous, and $\tau^{l_p(x)}(h(x)) = \tau^{k_p(x)}(h(\sigma^p(x)))$ for all $x \in
D_p$. For $p \in \NN$ and $x \in D_p$,
\[
\sum_{n=0}^{p-1}(l_1(\sigma^n(x)) - k_1(\sigma^n(x)))=l_p(x) - k_p(x) = c_Y(\Theta(x, p, \sigma^p(x))).
\]
\end{lem}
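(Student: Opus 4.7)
The plan is to reduce everything to Lemma~\ref{lem:lmap} applied to $(Y,\tau)$, using that $\Theta$ is a groupoid homomorphism and that $c_Y$ is a continuous cocycle on $\G(Y,\tau)$. First I would observe that since $\Theta$ restricts to $h$ on units, preservation of range and source forces $\Theta(x,p,\sigma^p(x)) = (h(x), n_p(x), h(\sigma^p(x)))$, where $n_p(x) := c_Y(\Theta(x,p,\sigma^p(x)))$. Because this triple lies in $\G(Y,\tau)$, the set defining $l_p(x)$ is nonempty, so $l_p(x)$ is finite. A quick inspection shows that the defining condition for $l_p(x)$ coincides with that for $l_Y(\Theta(x,p,\sigma^p(x)))$: the only apparent discrepancy is that the definition of $l_Y$ in Lemma~\ref{lem:lmap} explicitly requires $l \ge n$, but this is automatic from the requirement that $\tau^{l-n_p(x)}$ be a well-defined iterate. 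Hence $l_p = l_Y \circ \Theta \circ \iota_p$, where $\iota_p : D_p \to \G(X,\sigma)$ is the map $x \mapsto (x,p,\sigma^p(x))$.

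The map $\iota_p$ is continuous since for any open $U \subseteq D_p$ on which $\sigma^p$ restricts to a homeomorphism onto its image, $\iota_p(U) = Z(U, p, 0, \sigma^p(U))$ is open in $\G(X,\sigma)$. Continuity of $\Theta$ combined with Lemma~\ref{lem:lmap} then yields continuity of $l_p$. The cocycle $c_Y$ is locally constant, being constant on each basic open set $Z(V,n,m,W)$ of $\G(Y,\tau)$, so $n_p = c_Y \circ \Theta \circ \iota_p$ is continuous, and therefore so is $k_p = l_p - n_p$. The identity $\tau^{l_p(x)}(h(x)) = \tau^{k_p(x)}(h(\sigma^p(x)))$ is simply the defining condition for $l_p(x)$ at the value $l = l_p(x)$.

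For the sum formula, I would factor $(x,p,\sigma^p(x))$ in $\G(X,\sigma)$ as the composition $(x,1,\sigma(x))(\sigma(x),1,\sigma^2(x))\cdots(\sigma^{p-1}(x),1,\sigma^p(x))$, apply the homomorphism $\Theta$, and use additivity of $c_Y$ on composable pairs. This yields
\begin{equation*}
c_Y(\Theta(x,p,\sigma^p(x))) = \sum_{n=0}^{p-1} c_Y\big(\Theta(\sigma^n(x),1,\sigma^{n+1}(x))\big) = \sum_{n=0}^{p-1} \big(l_1(\sigma^n(x)) - k_1(\sigma^n(x))\big),
\end{equation*}
where the final equality uses the definition of $k_1$ at each $\sigma^n(x)$. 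The equality $l_p(x) - k_p(x) = c_Y(\Theta(x,p,\sigma^p(x)))$ is then immediate from the definition of $k_p$. The only real subtlety is verifying that the two descriptions of $l_p$ genuinely agree regardless of the sign of $n_p(x)$; everything else is routine bookkeeping.
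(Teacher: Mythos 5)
Your proof is correct and takes essentially the same route as the paper: identify $l_p$ as $l_Y\circ\Theta\circ\iota_p$, invoke Lemma~\ref{lem:lmap} and continuity of $\Theta$ and $c_Y$, and obtain the sum formula by factoring $(x,p,\sigma^p(x))$ into unit steps and using the cocycle property. Your explicit checks (nonemptiness of the minimising set, continuity of $\iota_p$, and the reconciliation of the $l\ge n$ clause) are details the paper leaves implicit, but they do not change the argument.
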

\begin{proof}
Since $l_p(x)=l_Y(\Theta(x,p,\sigma^p(x)))$ and $\Theta$ is continuous, $l_p$ is
continuous by Lemma~\ref{lem:lmap}. Now $k_p$ is continuous because $l_p$ and $c_Y$ are.
We have $\tau^{l_p(x)}(h(x)) = \tau^{k_p(x)}(h(\sigma^p(x)))$ and $l_p(x) - k_p(x) =
c_Y(\Theta(x, p, \sigma^p(x)))$ by definition of $k_p$. So $\sum_{n=0}^{p-1} \left(
l_1(\sigma^n(x)) - k_1(\sigma^n(x)) \right) =\sum_{n=0}^{p-1}c_Y(\Theta(\sigma^n(x), 1,
\sigma^{n+1}(x)))=c_Y(\Theta(x, p, \sigma^p(x)))$.
\end{proof}

\begin{lem}\label{lem:k,l on I}
Let $(X, \sigma)$ and $(Y, \tau)$ be Deaconu--Renault systems as above. Suppose that
$\Theta : \G(X, \sigma) \to \G(Y, \tau)$ is an isomorphism. Let $h, k_p, l_p$ be as in
Lemma~\ref{lem:DR oe} and let $x\in X$. Then $\Stab^{\ess}_{\min}(x) < \infty$ if and
only if $\Stab^{\ess}_{\min}(h(x)) < \infty$, and if $\Stab^{\ess}_{\min}(x) < \infty$
and $\sigma^{\Stab^{\ess}_{\min}(x)}(x) = x$, then $|l_{\Stab^{\ess}_{\min}(x)}(x) -
k_{\Stab^{\ess}_{\min}(x)}(x)| = \Stab^{\ess}_{\min}(h(x))$.
\end{lem}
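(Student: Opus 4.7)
The plan is to exploit the explicit description $\Io{\G(X,\sigma)}=\{(x,n,x):x\in X,\ n\in\Stab^{\ess}(x)\}$ (stated at the start of Section~\ref{sec:Deaconu-Renault}) together with the fact that $\Theta$ is an isomorphism of topological groupoids, and then apply the computation of $c_Y\circ\Theta$ provided by Lemma~\ref{lem:DR oe}.

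First I would observe that since $\Theta$ is an isomorphism of topological groupoids, it carries the isotropy subgroupoid onto the isotropy subgroupoid and respects their topologies, so it restricts to a topological groupoid isomorphism $\Io{\G(X,\sigma)}\to\Io{\G(Y,\tau)}$. Because $\Theta$ intertwines source and range, it then restricts, for each $x\in X$, to a group isomorphism between the isotropy groups $\Io{\G(X,\sigma)}^x_x$ and $\Io{\G(Y,\tau)}^{h(x)}_{h(x)}$. Under the explicit descriptions of these interiors, the maps $(x,n,x)\mapsto n$ and $(h(x),m,h(x))\mapsto m$ identify these two groups with $\Stab^{\ess}(x)$ and $\Stab^{\ess}(h(x))$ respectively; that is, $c_X$ and $c_Y$ restrict to group isomorphisms on the respective isotropy fibres. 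Composing, $\Theta$ induces a group isomorphism
\[
\Psi_x:\Stab^{\ess}(x)\xrightarrow{\ \cong\ }\Stab^{\ess}(h(x)),\qquad \Psi_x(n)=c_Y(\Theta(x,n,x)).
\]

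Next I would use that every subgroup of $\ZZ$ is cyclic: $\Stab^{\ess}(x)$ is trivial if and only if $\Stab^{\ess}_{\min}(x)=\infty$, and otherwise $\Stab^{\ess}(x)=\Stab^{\ess}_{\min}(x)\ZZ$; similarly for $h(x)$. Since $\Psi_x$ is a group isomorphism, $\Stab^{\ess}(x)=\{0\}$ if and only if $\Stab^{\ess}(h(x))=\{0\}$, which yields the first statement $\Stab^{\ess}_{\min}(x)<\infty\iff\Stab^{\ess}_{\min}(h(x))<\infty$. In the nontrivial case, $\Psi_x$ is an isomorphism $p\ZZ\cong q\ZZ$ (where $p=\Stab^{\ess}_{\min}(x)$ and $q=\Stab^{\ess}_{\min}(h(x))$), so it sends $p$ to $\pm q$.

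For the final identity, assume $p:=\Stab^{\ess}_{\min}(x)<\infty$ and $\sigma^p(x)=x$. Then Lemma~\ref{lem:DR oe} gives
\[
l_p(x)-k_p(x)=c_Y\bigl(\Theta(x,p,\sigma^p(x))\bigr)=c_Y(\Theta(x,p,x))=\Psi_x(p)=\pm q,
\]
and taking absolute values yields $|l_p(x)-k_p(x)|=q=\Stab^{\ess}_{\min}(h(x))$, as required.

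The only non-routine step is the very first observation, and even that reduces to the standard fact that an isomorphism of topological groupoids preserves both isotropy (an algebraic condition) and the topology (hence the interior of the isotropy bundle). Once this is in place, everything else is bookkeeping about cyclic subgroups of $\ZZ$ and a direct appeal to Lemma~\ref{lem:DR oe}.
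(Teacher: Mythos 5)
Your proof is correct and follows essentially the same route as the paper's: both arguments use that $\Theta$ restricts to an isomorphism of the interiors of the isotropy (hence induces an isomorphism $\Stab^{\ess}(x)\cong\Stab^{\ess}(h(x))$ of cyclic subgroups of $\ZZ$, carrying the minimal positive generator to $\pm$ the minimal positive generator) and then invoke the formula $l_p(x)-k_p(x)=c_Y(\Theta(x,p,\sigma^p(x)))$ from Lemma~\ref{lem:DR oe}.
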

\begin{proof}
For any Deaconu--Renault system $(Z, \beta)$ and $a \in Z$, $\Stab^{\ess}_{\min}(a) =
\infty$ if and only if $\Io{\G(Z, \beta)}_a = \{(a,0,a)\}$.  Thus $\Stab^{\ess}_{\min}(x)
= \infty \iff \Stab^{\ess}_{\min}(h(x)) = \infty$ as $\Theta$ restricts to an isomorphism
$\Io{\G(X, \sigma)} \to \Io{\G(Y, \tau)}$ mapping $\G(X, \sigma)^{(0)}$ onto $\G(Y,
\tau)^{(0)}$.

Suppose that $\Stab^{\ess}_{\min}(h(x)) < \infty$. Then $\Theta(x,
\Stab^{\ess}_{\min}(x), x) = (h(x), q, h(x))$ for some $q \in \ZZ$. Since $(x,
\Stab^{\ess}_{\min}(x), x)$ generates $\Io{\G(X, \sigma)}_x$, we deduce that $(h(x), q,
h(x))$ generates $\Io{\G(Y, \tau)}_{h(x)}$. Thus $q = \pm \Stab^{\ess}_{\min}(h(x))$. So
if $\sigma^{\Stab^{\ess}_{\min}(x)}(x)=x$, then
\[
|l_{\Stab^{\ess}_{\min}(x)}(x) - k_{\Stab^{\ess}_{\min}(x)}(x)|
    = |c_Y(\Theta(x,\Stab^{\ess}_{\min}(x),x))|
    = \Stab^{\ess}_{\min}(h(x)).\qedhere
\]
\end{proof}

Given a Deaconu--Renault system $(X, \sigma)$ and $l : \dom(\sigma) \to \NN$, we
inductively define $l_m : \dom(\sigma^m) \to \NN$, $m \ge 1$ by $l_1 = l$ and $l_{m+1}(x)
= l(x) + l_m(\sigma(x))$. For $m,n \ge 1$ we have
\begin{equation}\label{eq:lp formula}
l_m(x) = \sum^{m-1}_{i=0} l(\sigma^i(x))\quad\text{ and }\quad l_{m+n}(x) = l_m(x) + l_n(\sigma^m(x)).
\end{equation}

\begin{lem}\label{lem:oplus}
Let $(X, \sigma)$ and $(Y, \tau)$ be Deaconu--Renault systems and let $(h,l,k,l',k')$ be
a continuous orbit equivalence from $(X, \sigma)$ to $(Y, \tau)$. Then there is a
continuous cocycle $c_{(h,l,k,l',k')} : \G(X,\sigma) \to \ZZ$ such that
$c_{(h,l,k,l',k')}(x,m-n,x')= l_m (x) - k_m ( x ) - l_n (x') + k_n (x')$.
\end{lem}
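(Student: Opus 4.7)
My plan is to define $c := c_{(h,l,k,l',k')}$ by the given formula and then verify, in order, that it is well defined on $\G(X,\sigma)$, that it is continuous, and that it is a cocycle; the iteration identity~\eqref{eq:lp formula} does all the heavy lifting.

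First I will check well-definedness. An element $(x,m-n,x') \in \G(X,\sigma)$ comes with an equivalence class of representations: any pair $(m',n') \in \NN^2$ with $m'-n' = m-n$ and $\sigma^{m'}(x)=\sigma^{n'}(x')$ represents the same groupoid element. It suffices to compare any representation $(m,n)$ with $(m+p,n+p)$ for $p \in \NN$ (any two valid representations are related by such an increment, possibly after swapping their roles). Applying~\eqref{eq:lp formula} gives $l_{m+p}(x) = l_m(x)+l_p(\sigma^m(x))$ and $l_{n+p}(x') = l_n(x')+l_p(\sigma^n(x'))$, and likewise for $k$. Since $\sigma^m(x)=\sigma^n(x')$, the $l_p$ and $k_p$ terms cancel between the two expressions, so the right-hand side of the defining formula is independent of the choice of representation.

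Next I will verify continuity. A basic open set in $\G(X,\sigma)$ has the form $Z(U,m,n,V)$, on which the representation $(m,n)$ is constant. Because $l,k$ are continuous and $l_m, k_m, l_n, k_n$ are finite iterated sums of compositions of continuous maps, the defining expression $l_m(x)-k_m(x)-l_n(x')+k_n(x')$ depends continuously on $(x,m-n,x')$ on this basic open set. Hence $c$ is continuous on $\G(X,\sigma)$.

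Finally, the cocycle property. Given composable $(x,m-n,x')$ and $(x',m'-n',x'')$, the product equals $(x,(m+m')-(n+n'),x'')$, and $\sigma^{m+m'}(x) = \sigma^{m'}(\sigma^m(x)) = \sigma^{m'}(\sigma^n(x')) = \sigma^n(\sigma^{m'}(x')) = \sigma^{n+n'}(x'')$, so this representation is valid. I will compute
\[
c(x,(m+m')-(n+n'),x'') = l_{m+m'}(x)-k_{m+m'}(x)-l_{n+n'}(x'')+k_{n+n'}(x'')
\]
and expand using~\eqref{eq:lp formula} on both $l_{m+m'}(x) = l_m(x)+l_{m'}(\sigma^m(x))$ and $l_{n+n'}(x'') = l_{n'}(x'')+l_n(\sigma^{n'}(x''))$ (and analogously for $k$). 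Substituting $\sigma^m(x)=\sigma^n(x')$ and $\sigma^{n'}(x'')=\sigma^{m'}(x')$, the desired identity reduces to
\[
l_{m'}(\sigma^n(x'))+l_n(x') = l_{m'}(x')+l_n(\sigma^{m'}(x')),
\]
and the same with $k$ in place of $l$. But both sides equal $l_{n+m'}(x')$ by two applications of~\eqref{eq:lp formula}, so the identity holds. Therefore $c$ is a cocycle.

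I do not anticipate a genuine obstacle: the main bookkeeping step is the cocycle computation, whose only subtlety is choosing to expand $l_{m+m'}(x)$ and $l_{n+n'}(x'')$ with the iteration formula taken in opposite orders so that the intermediate points $\sigma^m(x)=\sigma^n(x')$ and $\sigma^{n'}(x'')=\sigma^{m'}(x')$ can be substituted to produce cancellations that collapse to a single application of~\eqref{eq:lp formula}.
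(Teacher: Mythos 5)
Your strategy matches the paper's own (quite terse) proof: well-definedness by comparing a representation $(m,n)$ with its increments $(m+p,n+p)$ via~\eqref{eq:lp formula}, continuity by observing that on a basic open set $Z(U,m,n,V)$ the exponents are fixed and the iterated sums are continuous, and then a telescoping computation for the cocycle identity. The well-definedness and continuity steps are correct as written.

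The cocycle step, however, contains a step that can fail in the generality of this paper, where $\sigma$ is only defined on an open set $\dom(\sigma)\subseteq X$ and $l,k$ are only defined on $\dom(\sigma)$. You assert that the product of $(x,m-n,x')$ and $(x',m'-n',x'')$ admits the representation $(m+m',n+n')$, and your computation needs $l_{m+m'}(x)$, $l_{n+n'}(x'')$, and (after expanding) $l_{m'}(\sigma^n(x'))$ and $l_n(\sigma^{m'}(x'))$ to be defined; this amounts to $x'\in D_{n+m'}$, which does not follow from $x'\in D_n\cap D_{m'}$. Concretely, if $X=\{a,b\}$ is discrete, $\dom(\sigma)=\{a\}$ and $\sigma(a)=b$, then the product of $(b,-1,a)$ and $(a,1,b)$ is the unit $(b,0,b)$, but $b\notin D_1$, so $(1,1)$ is not a valid representation and $l_1(b)$ is undefined. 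The repair is short and uses the well-definedness you have already proved: a valid representation of the product is $(m,\,n-m'+n')$ when $n\ge m'$ and $(m+m'-n,\,n')$ when $n<m'$, and in either case a single application of~\eqref{eq:lp formula} gives the telescoping you want; for example, when $n<m'$ one has $l_{m+m'-n}(x)=l_m(x)+l_{m'-n}(\sigma^n(x'))$ and $l_{m'}(x')=l_n(x')+l_{m'-n}(\sigma^n(x'))$, so $l_{m+m'-n}(x)=l_m(x)-l_n(x')+l_{m'}(x')$, and similarly for $k$ and for the other case. With that adjustment your argument is complete; note also that if $\sigma$ and $\tau$ are everywhere defined, your computation is correct exactly as written.
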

\begin{proof}
Suppose that $x\in D_{m+1}$, $x'\in D_{n+1}$, and $\sigma^m(x)=\sigma^n(x')$. A
computation shows that $l_m (x) - k_m ( x ) - l_n (x') + k_n (x') = l_{m+1} (x) - k_{m+
1} ( x ) - l_{n+1} (x') + k_{n+1} (x')$.  Therefore, $c_{(h,l,k,l',k')} : \G(X,\sigma)
\to \ZZ$ is a well-defined map.  It is easy to check that this map is a cocycle. For
continuity, suppose that $\sigma^m(x)=\sigma^n(x')$. Fix open subneighbourhoods $U \owns
x$ and $V \owns x'$ of $D_m$ and $D_n$ such that $\sigma^m|_U$ and $\sigma^n|_V$ are
homeomorphisms, $l,k,\dots,l\circ\sigma^m,k\circ\sigma^m$ are constant on $U$, and
$l,k,\dots,l\circ\sigma^n,k\circ\sigma^n$ are constant on $V$. Then $c_{(h,l,k,l',k')}$
is constant on $Z(U,m,n,V)$.
\end{proof}

\begin{lem}\label{lem:Theta hom}
Let $(X, \sigma)$ and $(Y, \tau)$ be Deaconu--Renault systems, and let $(h, k, l, k',
l')$ be a continuous orbit equivalence from $(X, \sigma)$ to $(Y, \tau)$. Then there is a
continuous groupoid homomorphism $\Theta_{k,l} : \G(X, \sigma) \to \G(Y, \tau)$ such that
$\Theta_{k,l}(x, m-n, x') = (h(x), l_m(x) - k_m(x) - l_n(x') + k_n(x'), h(x'))$ whenever
$\sigma^m(x) = \sigma^n(x')$. For each $x \in X$ there is a group homomorphism $\pi_x :
\Stab(x) \to \Stab(h(x))$ such that
\begin{equation}\label{eq:pi}
\pi_x(m-n)=l_m(x) - k_m(x) - l_n(x) + k_n(x)\text{ whenever } \sigma^m(x)=\sigma^n(x).
\end{equation}
For $x \in X$ and $m,n \in \NN$, we have $\Stab(\sigma^m(x)) = \Stab(\sigma^n(x))$,
$\Stab(h(\sigma^m(x))) = \Stab(h(\sigma^n(x)))$ and $\pi_{\sigma^m(x)} =
\pi_{\sigma^n(x)}$.
\end{lem}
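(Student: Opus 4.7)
The plan is to build $\Theta_{k,l}$ from the cocycle $c_{(h,l,k,l',k')}$ of Lemma~\ref{lem:oplus} by setting $\Theta_{k,l}(\gamma) := (h(r(\gamma)),\,c_{(h,l,k,l',k')}(\gamma),\,h(s(\gamma)))$, and then to define $\pi_x$ as the restriction of $\Theta_{k,l}$ to the isotropy group at $x$. The key preliminary fact I would establish first is the iterated orbit-equivalence identity
\[
\tau^{l_m(x)}(h(x)) = \tau^{k_m(x)}(h(\sigma^m(x))) \quad\text{for all } x \in D_m,\ m \ge 0,
\]
by induction on $m$: the base case $m=0$ is trivial, and the inductive step combines the defining relation $\tau^{l(x)}(h(x)) = \tau^{k(x)}(h(\sigma(x)))$ with the identity $l_{m+1}(x) = l(x) + l_m(\sigma(x))$ from~\eqref{eq:lp formula} (and similarly for $k$).

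Given this identity, for any $(x,m-n,x') \in \G(X,\sigma)$ with $\sigma^m(x)=\sigma^n(x')$ I would apply $\tau^{k_n(x')}$ to the identity for $x$ and $\tau^{k_m(x)}$ to the identity for $x'$ and use $h(\sigma^m(x))=h(\sigma^n(x'))$ to deduce
\[
\tau^{l_m(x)+k_n(x')}(h(x)) = \tau^{l_n(x')+k_m(x)}(h(x')),
\]
which certifies that $\Theta_{k,l}(x,m-n,x')$ lies in $\G(Y,\tau)$. Well-definedness of $\Theta_{k,l}$ and the displayed formula then fall out of Lemma~\ref{lem:oplus}; continuity is immediate from continuity of $h$ and of the cocycle; and the groupoid-homomorphism property reduces to the cocycle property of $c_{(h,l,k,l',k')}$, since $c_Y$ is additive across composition in $\G(Y,\tau)$. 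In particular $\Theta_{k,l}$ sends $\G(X,\sigma)^{(0)}$ into $\G(Y,\tau)^{(0)}$, so restricting to the isotropy group at $x$ delivers a group homomorphism $\pi_x : \Stab(x) \to \Stab(h(x))$ satisfying~\eqref{eq:pi} by the formula for $c_{(h,l,k,l',k')}$ in Lemma~\ref{lem:oplus}.

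For the invariance statement, a direct unpacking shows that $p = a-b \in \Stab(\sigma^m(x))$ iff $\sigma^{a+m}(x)=\sigma^{b+m}(x)$ iff $p \in \Stab(x)$, so $\Stab(\sigma^m(x)) = \Stab(x)$; the iterated identity above exhibits $h(x)$ and $h(\sigma^m(x))$ as points sharing a common forward $\tau$-iterate, and the analogous calculation in $(Y,\tau)$ then yields $\Stab(h(\sigma^m(x))) = \Stab(h(x))$. For the equality of the $\pi$'s, I would take $p = a-b \in \Stab(\sigma^m(x))$, rewrite $p = (a+m)-(b+m) \in \Stab(x)$, and expand each of $l_{a+m}(x), k_{a+m}(x), l_{b+m}(x), k_{b+m}(x)$ via~\eqref{eq:lp formula}; the $l_m(x)$ and $k_m(x)$ pieces then cancel in pairs, giving $\pi_x(p) = \pi_{\sigma^m(x)}(p)$, and symmetry in $m,n$ delivers $\pi_{\sigma^m(x)} = \pi_{\sigma^n(x)}$. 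The hard part is not conceptual but bookkeeping: getting the iterated orbit-equivalence identity right, and keeping track of which $l$'s and $k$'s cancel in the last calculation.
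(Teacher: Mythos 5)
Your proposal is correct, and the core of it coincides with the paper's proof: both build $\Theta_{k,l}$ from the cocycle $c_{(h,l,k,l',k')}$ of Lemma~\ref{lem:oplus} and obtain $\pi_x$ by restricting $\Theta_{k,l}$ to the isotropy group at $x$. You are in fact more explicit than the paper on the one point it leaves implicit, namely why $\big(h(x),\, c_{(h,l,k,l',k')}(x,m-n,x'),\, h(x')\big)$ actually lies in $\G(Y,\tau)$: your induction giving $\tau^{l_m(x)}(h(x)) = \tau^{k_m(x)}(h(\sigma^m(x)))$, followed by applying $\tau^{k_n(x')}$ and $\tau^{k_m(x)}$ to the two identities, is exactly the verification needed. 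The only genuine divergence is the final claim $\pi_{\sigma^m(x)} = \pi_{\sigma^n(x)}$. The paper writes $\Theta(\sigma^m(x), q, \sigma^m(x))$ as the conjugate of $\Theta(\sigma^n(x), q, \sigma^n(x))$ by $\Theta(\sigma^m(x), n-m, \sigma^n(x))$ and uses that the $\ZZ$-components of composable elements of $\G(Y,\tau)$ simply add, so the conjugation leaves the middle coordinate untouched. You instead expand $l_{a+m}(x)$, $k_{a+m}(x)$, $l_{b+m}(x)$, $k_{b+m}(x)$ via~\eqref{eq:lp formula} and cancel the $l_m(x)$ and $k_m(x)$ terms by hand; this computation is correct and in fact proves the slightly sharper statement $\pi_{\sigma^m(x)} = \pi_x$ directly, at the cost of the bookkeeping you flag. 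One caveat that applies equally to your write-up and the paper's: the assertions $\Stab(\sigma^m(x)) = \Stab(x)$ and the continuity of $\Theta_{k,l}$ into $\G(Y,\tau)$ (whose topology is not simply the relative product topology on $Y\times\ZZ\times Y$) each conceal a small amount of domain and local-constancy checking that neither account spells out.
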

\begin{proof}
Lemma~\ref{lem:oplus} yields a continuous homomorphism $\Theta_{k,l} : \G(X, \sigma) \to
\G(Y, \tau)$ such that $\Theta_{k,l}(x, m-n, x') = (h(x), l_m(x) - k_m(x) - l_n(x') +
k_n(x'), h(x'))$ whenever $\sigma^m(x) = \sigma^n(x')$.

For $x \in X$ the map $\pi_x : \Stab(x) \to \Stab(h(x))$ defined by $(h(x), \pi_x(p),
h(x)) = \Theta(x, p, x)$ is a homomorphism satisfying~\eqref{eq:pi}. That each $\Stab(x)
= \Stab(\sigma(x))$ follows from the definition of $\Stab$, and then induction gives
$\Stab(\sigma^m(x)) = \Stab(\sigma^n(x))$ for all $x$. Since $h$ intertwines
$\sigma$-orbits and $\tau$-orbits, it follows immediately that $\Stab(h(\sigma^m(x))) =
\Stab(h(\sigma^n(x)))$ for all $x$. For the final statement, let $p := l_m(\sigma^n(x)) -
k_m(\sigma^n(x)) - l_n(\sigma^m(x)) + k_n(\sigma^m(x))$ and calculate:
\begin{align*}
(h&(\sigma^m(x)), \pi_{\sigma^m(x)}(q), h(\sigma^m(x)))
    = \Theta(\sigma^m(x), q, \sigma^m(x)) \\
    &= \Theta(\sigma^m(x), n-m, \sigma^n(x))\Theta(\sigma^n(x), q, \sigma^n(x))\Theta(\sigma^n(x), m-n, \sigma^m(x))\\
    &= (h(\sigma^m(x)), -p, h(\sigma^n(x))) (h( \sigma^n(x) ) , \pi_{\sigma^n(x)}(q), h( \sigma^n(x)) )  (h(\sigma^m(x)), p, h(\sigma^n(x)))\\
    &= (h(\sigma^m(x)), \pi_{\sigma^n(x)}(q), h(\sigma^m(x))).\qedhere
\end{align*}
\end{proof}

\begin{proof}[Proof of Proposition~\ref{prp:DR}]
$(1)\implies (2)$: Let $k_p,l_p$ be as in Lemma~\ref{lem:DR oe}. Then $l - k = l_1 - k_1$
on $\dom(\sigma)$. Likewise, if $k'_p,l'_p : \dom(\tau^p)\to\NN$ are the functions
obtained from Lemma~\ref{lem:DR oe} for $\Theta^{-1}$, then $l'-k'=l'_1-k'_1$ on
$\dom(\tau)$. So Lemmas \ref{lem:DR oe}~and~\ref{lem:k,l on I} show that $(h,l,k,l',k')$
is an essential-stabiliser-preserving continuous orbit equivalence.

For both (2)$\,\implies\,$(3) and (3)$\,\implies\,$(1), fix a continuous orbit
equivalence $(h, k, l, k', l')$ from $(X, \sigma)$ to $(Y, \tau)$. Let
\[
\Theta : \G(X, \sigma) \to \G(Y, \tau),\quad\text{ and }\quad
\Theta' : \G(Y, \tau) \to \G(X, \sigma)
\]
be the homomorphisms of Lemma~\ref{lem:Theta hom} for $(h, k, l, k', l')$ and for
$(h^{-1}, k', l', k, l)$ respectively, and for each $x \in X$, let
\[
\pi_x : \Stab(x) \to \Stab(h(x))
\]
be the homomorphism~\eqref{eq:pi}.

(2)$\,\implies\,$(3). Using that, by Lemma~\ref{lem:Theta hom}, $\Stab(\cdot)$,
$\Stab(h(\cdot))$ and $x \mapsto \pi_x$ are constant on orbits, and that $(h,l,k,l',k')$
preserves essential stabilisers, it is easy to check that $\pi_x(\Stab^{\ess}(x)) =
\Stab^{\ess}(h(x))$ for all $x$. Fix $x\in X$ and $n\in\Stab(h(x))$. Since $\Theta \circ
\Theta'$ is continuous and $\Theta(\Theta'(h(x),n,h(x)))=(h(x),m,h(x))$ for some
$m\in\Stab(h(x))$, there exist $p,q\in\NN$ with $p-q=n$, and open neighbourhoods $U,V$ of
$h(x)$ such that $\tau^p|_U$ and $\tau^q|_V$ are homeomorphisms, $\tau^p(U)=\tau^q(V)$,
and $\Theta(\Theta'(y,n,y'))=(y,m,y')$ for $y\in U$, $y'\in V$, with
$\tau^p(y)=\tau^q(y')$. So $(y,n-m,y)=(y,n,y')(y,m,y')^{-1}\in \G(Y, \tau)$ for all $y\in
U$, giving $n-m\in\Stab^{\ess}(h(x))$. Hence $\pi_x(r) = n-m$ for some
$r\in\Stab^{\ess}(x)$. Thus $\pi_x(r+s)=n$ where $s=c_X(\Theta'(h(x),n,h(x)))$. So
$\Stab_{\min}(h(x))<\infty \implies \Stab_{\min}(x)<\infty$, and symmetry gives the
reverse implication.

Suppose that $\Stab_{\min}(x) < \infty$ and that $\sigma^{\Stab_{\min}(x)}(x)=x$. Since
$\Stab_{\min}(h(x))$ generates $\Stab(h(x))$ and $\Stab_{\min}(x)$ generates $\Stab(x)$,
we have $l_{\Stab_{\min}(x)}(x) - k_{\Stab_{\min}(x)}(x) = \pi_x(\Stab_{\min}(x)) = \pm
\Stab_{\min}(h(x))$. Hence $(h,l,k,l',k')$ preserves stabilisers.

$(3)\implies (1)$: Let $\Theta:=\Theta_{k,l}:G(X,\sigma)\to G(Y,\tau)$ and $\Theta':=\Theta_{k',l'}:G(Y,\tau)\to G(X,\sigma)$ be as in Lemma~\ref{lem:Theta hom}. Then $\Theta$ and $\Theta'$ are continuous groupoid homomorphisms. We show that $\Theta$ is bijective and $\Theta^{-1}$ is continuous.
For injectivity, suppose $\Theta(x_1,n_1,x_1')=\Theta(x_2,n_2,x'_2)$. As $h$ is a
homeomorphism, $x_1=x_2$ and $x_1'=x_2'$. So $\Theta(x_1,n_1-n_2,x_1) =
\Theta(x_1,n_1,x_1')\Theta(x_1,n_2,x_1')^{-1} = (h(x_1),0,h(x_1))$, giving $\pi_x(n_1 -
n_2) = 0$. As $(h, k, l, k', l')$ preserves stabilisers and $\Stab(\cdot)$,
$\Stab(h(\cdot))$ and $x \mapsto \pi_x$ are constant on orbits, each $\pi_x : \Stab(x)
\to \Stab(h(x))$ is bijective. Thus $(x_1,n_1,x_1')=(x_2,n_2,x'_2)$.

For surjectivity, fix $(y,n,y')\in \G(Y,\tau)$. We have $\Theta(\Theta'(y,n,y')) =
(y,m,y')$ for some $m\in\ZZ$, so $n-m \in \Stab(y)$. Since $\pi_{h^{-1}(y)}$ is
bijective, $n-m = \pi_{h^{-1}(y)}(p)$ for some $p \in \Stab(h^{-1}(y))$. So
$\Theta\big(h^{-1}(y), p + c_X(\Theta'(y,n,y')), h^{-1}(y')\big)=(y,n,y')$.

To see that $\Theta^{-1}$ is continuous, suppose $(y_n,m_n,y'_n) \to (y,m,y')$ in
$\G(Y,\tau)$. Fix $p,q\in\NN$ and open $U \owns h^{-1}(y)$ and $V \owns h^{-1}(y')$ such
that $\sigma^p|_U$ and $\sigma^q|_V$ are homeomorphisms, $\sigma^p(h^{-1}(y)) =
\sigma^q(h^{-1}(y'))$, and $\Theta^{-1}(y,m,y') = (h^{-1}(y),p-q,h^{-1}(y'))$. Choose
open subneighbourhoods $U' \owns h^{-1}(y)$ and $V' \owns h^{-1}(y')$ of $U, V$ such that
$\Theta(x,p-q,x') = (h(x),m,h(x'))$ whenever $x\in U'$, $x'\in V'$, and $\sigma^p(x) =
\sigma^q(x')$. Fix $N$ such that $y_n\in h(U')$, $y'_n\in h(V')$, and $m_n=m$ for $n\ge
N$. Then $\Theta^{-1}(y_n,m_n,y'_n)\in Z(U',p,q,V')\subseteq Z(U,p,q,V)$ for $n\ge N$. So
$\Theta^{-1}(y_n,m_n,y'_n) \to \Theta^{-1}(y,m,y')$.

We thus have that $\Theta$ is an isomorphism. A similar argument shows that $\Theta'$ is also an isomorphism.
\end{proof}

\begin{proof}[Proof of Theorem~\ref{thm:DR oe<->gi}]
The equivalence \mbox{(\ref{it:DR oe})\;$\iff$\;(\ref{it:DR G iso})} follows from
Proposition~\ref{prp:DR} and Lemma~\ref{lem:DR oe}, and the equivalence \mbox{(\ref{it:DR
G iso})\;$\iff$\;(\ref{it:DR Cstar iso})} follows from Theorem~\ref{thm:1}.
\end{proof}

\subsection{Eventual conjugacy}

Here we generalise \cite[Theorem 4.1]{CR} by showing that the isomorphism of
$C^*$-algebras in Theorem~\ref{thm:DR oe<->gi} is gauge-equivariant if and only if the
groupoid isomorphism is cocycle-preserving, which is if and only if the continuous orbit
equivalence is an eventual conjugacy.

\begin{dfn}
Let $(X, \sigma)$ and $(Y, \tau)$ be Deaconu--Renault systems. We say that $(X, \sigma)$
and $(Y, \tau)$ are \emph{eventually conjugate} if there is a stabiliser-preserving
continuous orbit equivalence $(h,l,k,l',k')$ from $(X, \sigma)$ to $(Y, \tau)$ such that
$l(x) = k(x) + 1$ for all $x \in X$.
\end{dfn}

Given $(X, \sigma)$, there is an action $\gamma^X : \TT \to \Aut(C^*(\G(X, \sigma)))$
such that $\gamma^X_z(f)(x, n, x') = z^n f(x, n, x')$ for all $z \in \TT$, $(x, n, x')
\in \G(X, \sigma)$ and $f \in C_c(\G(X, \sigma))$.

\begin{thm}\label{thm:ev conj equivalences}
Let $(X, \sigma)$ and $(Y, \tau)$ be Deaconu--Renault systems and let $h : X \to Y$ be a
homeomorphism. Then the following are equivalent:
\begin{enumerate}
\item\label{it:DR evconj} there is an eventual conjugacy from $(X, \sigma)$ to $(Y,
    \tau)$ with underlying homeomorphism $h$;
\item\label{it:DR G c-iso} there is an isomorphism $\Theta : \G(X, \sigma) \to \G(Y,
    \tau)$ such that $\Theta|_X = h$ and $c_X = c_Y \circ \Theta$; and
\item\label{it:DR graded iso} there is an isomorphism $\phi : C^*(\G(X, \sigma)) \to
    C^*(\G(Y, \tau))$ such that $\phi(C_0(X)) = C_0(Y)$, with $\phi(f) = f\circ
    h^{-1}$ for $f \in C_0(X)$, and $\phi \circ \gamma^X_z = \gamma^Y_z \circ \phi$.
\end{enumerate}
\end{thm}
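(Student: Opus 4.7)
The plan is to deduce this theorem by combining Proposition~\ref{prp:DR} with Theorem~\ref{thm:1}, specialising to $\Gamma = \ZZ$ with the degree cocycles $c_X, c_Y$. The two key translations are: (i) that the gauge action $\gamma^X$ and the coaction $\delta_{c_X}$ define the same spectral subspaces, so that $\phi$ is $\gamma$-equivariant if and only if it intertwines the coactions; and (ii) that the parameter $l(x) - k(x)$ of a continuous orbit equivalence is exactly the value of $c_Y \circ \Theta$ on $(x,1,\sigma(x))$, so the condition $l = k+1$ captures cocycle-preservation on a generating set.

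For the equivalence of (\ref{it:DR G c-iso}) and (\ref{it:DR graded iso}), I first observe that for $f \in C_c(\G(X,\sigma))$ supported on $c_X^{-1}(n)$ we have $\gamma^X_z(f) = z^n f$ and $\delta_{c_X}(f) = f \otimes \lambda_n$, so both the gauge-spectral and the coaction-spectral subspaces at $n$ are $\clsp\{f \in C_c(\G(X,\sigma)) : \supp(f) \subseteq c_X^{-1}(n)\}$; a density argument gives $\phi \circ \gamma^X_z = \gamma^Y_z \circ \phi$ iff $\delta_{c_Y} \circ \phi = (\phi \otimes \id) \circ \delta_{c_X}$. The isotropy of $c_X^{-1}(0) \subseteq \G(X,\sigma)$ consists only of units (an element $(x,0,x)$ in $c_X^{-1}(0)$ is automatically a unit), so $\Io{c_X^{-1}(0)} = \G(X,\sigma)^{(0)}$ is trivially torsion-free and abelian, and similarly for $Y$. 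Theorem~\ref{thm:1} therefore applies and gives the required bijection between graded groupoid isomorphisms $\Theta$ with $\Theta|_X = h$ and isomorphisms $\phi$ as in (\ref{it:DR graded iso}).

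For the equivalence of (\ref{it:DR evconj}) and (\ref{it:DR G c-iso}), I use Proposition~\ref{prp:DR}. If $(h,l,k,l',k')$ is an eventual conjugacy, then $l(x) - k(x) = 1$ for all $x$, so the induced $\Theta$ of Proposition~\ref{prp:DR} satisfies $c_Y(\Theta(x,1,\sigma(x))) = l(x) - k(x) = 1 = c_X(x,1,\sigma(x))$; the cocycle property and the fact that elements of the form $(x,p,\sigma^p(x))(y,q,\sigma^q(y))^{-1}$ generate $\G(X,\sigma)$ then force $c_X = c_Y \circ \Theta$ everywhere. Conversely, if $\Theta$ preserves the cocycle, then Lemma~\ref{lem:DR oe} produces continuous $l_p, k_p$ with $l_p(x) - k_p(x) = c_Y(\Theta(x,p,\sigma^p(x))) = p$; setting $l := l_1$, $k := k_1$ and analogously $l', k'$ from $\Theta^{-1}$ yields a continuous orbit equivalence (stabiliser-preserving, by Proposition~\ref{prp:DR}) with $l(x) - k(x) = 1$, which is an eventual conjugacy. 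The main subtlety, and hence the part requiring the most care, is the identification of gauge-equivariance with coaction-equivariance in a way that lets Theorem~\ref{thm:1} deliver a groupoid isomorphism respecting $c_X$ and $c_Y$; once that is in place the other implications are bookkeeping on top of Proposition~\ref{prp:DR}.
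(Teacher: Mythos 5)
Your proposal is correct and follows essentially the same route as the paper: the equivalence of (2) and (3) comes from Theorem~\ref{thm:1} applied to the $\ZZ$-coactions dual to the gauge actions (your spectral-subspace identification of gauge-equivariance with coaction-equivariance is the translation the paper leaves implicit), and the equivalence of (1) and (2) is exactly the paper's bookkeeping with $l_p - k_p = c_Y(\Theta(x,p,\sigma^p(x)))$ via Lemma~\ref{lem:DR oe}, the formula~\eqref{eq:lp formula}, Lemma~\ref{lem:Theta hom} and Proposition~\ref{prp:DR}. No gaps.
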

\begin{proof}
Theorem~\ref{thm:1} applied to the $\ZZ$-coactions dual to $\gamma^X$ and $\gamma^Y$
gives \mbox{(\ref{it:DR G c-iso})\,$\iff$\,(\ref{it:DR graded iso})}.

\mbox{(\ref{it:DR G c-iso})\,$\implies$\,(\ref{it:DR evconj})}. If $c_X = c_Y \circ
\Theta$, then the formula for $k_1$ in Lemma~\ref{lem:DR oe} gives $l_1(x) - k_1(x) = 1$,
so the continuous orbit equivalence constructed in the proof of \mbox{(\ref{it:DR G
iso})\;$\implies$\;(\ref{it:DR oe})} in Theorem~\ref{thm:DR oe<->gi} is an eventual
conjugacy.

\mbox{(\ref{it:DR evconj})\,$\implies$\,(\ref{it:DR G c-iso})}. Suppose that
$(h,l,k,l',k')$ is an eventual conjugacy from $(X, \sigma)$ to $(Y, \tau)$. The
formula~\eqref{eq:lp formula} gives $l_p(x) - k_p(x) = p$ for all $x \in X$. Thus,
$\Theta_{k,l}$ of Lemma~\ref{lem:Theta hom} satisfies $c_Y \circ \Theta_{k,l} = c_X$.  As
in the proof of \mbox{(\ref{it:DR oe})\;$\implies$\;(\ref{it:DR G iso})} in
Theorem~\ref{thm:DR oe<->gi}, $\Theta_{k,l}$ an isomorphism.
\end{proof}

\section{Homeomorphisms of compact Hausdorff spaces}\label{sec:homeomorphisms}
We now specialise to the case where $X$ is a compact Hausdorff space and $\sigma:X\to X$
is a homeomorphism. We combine the results and techniques developed in Sections
\ref{sec:group actions}~and~\ref{sec:Deaconu-Renault} to obtain a generalisation of Boyle
and Tomiyama's theorem \cite[Theorem 3.6]{BT}. If $\sigma : X \to X$ is a homeomorphism, then $\alpha :
\G(X,\sigma) \to X \rtimes \ZZ $, $\alpha(x,n,y)\mapsto (x,n)$ is an isomorphism, so
induced an isomorphism $\phi : C^*(\G(X,\sigma)) \cong C(X)\times_\sigma\ZZ$ with $\phi (
C(\G(X, \sigma)^{(0)}) ) = C(X)$.

Using Theorem~\ref{thm:DR oe<->gi}, we can prove the following generalisation of
\cite[Theorem 3.6]{BT} (and thus of \cite[Theorem 2.4]{GPS} and
\cite[Corollary]{Tomiyama}). Following \cite{BT}, we say that homeomorphisms $\sigma : X
\to X$ and $\tau : Y \to Y$ are \emph{flip conjugate} if there is a homeomorphism $h : X
\to Y$ such that either $h \circ \sigma = \tau \circ h$ or $h \circ \sigma =
\tau^{-1}\circ h$.

\begin{thm}\label{thm:bt}
Suppose that $\sigma : X \to X$ and $\tau : Y \to Y$ are homeomorphisms of
second-countable compact Hausdorff spaces. The following are equivalent:
\begin{enumerate}
    \item $\G(X,\sigma)$ and $\G(Y,\tau)$ are isomorphic;
    \item $C(X)\times_\sigma\ZZ \cong C(Y)\times_\tau\ZZ$ via an isomorphism that
        maps $C(X)$ to $C(Y)$; and
    \item there exist decompositions $X=X_1\sqcup X_2$ and $Y=Y_1\sqcup Y_2$ into
        disjoint open invariant sets such that $\sigma|X_1$ is conjugate to
        $\tau|Y_1$ and $\sigma|X_2$ is conjugate to $\tau^{-1}|Y_2$.
\end{enumerate}
If $\sigma$ and $\tau$ are topologically transitive or $X$ and $Y$ are connected, then
these conditions hold if and only $\sigma$ and $\tau$ are flip-conjugate.
\end{thm}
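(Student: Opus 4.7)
My plan is to establish the three equivalences and the final flip-conjugacy claim by tackling the easy implications first and concentrating on the substantive direction $(1) \implies (3)$. The equivalence $(1) \iff (2)$ is immediate from Theorem~\ref{thm:DR oe<->gi} applied to $(X, \sigma)$ and $(Y, \tau)$, combined with the identification $C^*(\G(X,\sigma)) \cong C(X) \rtimes_\sigma \ZZ$ recorded just before the statement, which carries the groupoid diagonal $C(\G(X,\sigma)^{(0)})$ onto $C(X)$.

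For $(3) \implies (1)$, I would build $\Theta \colon \G(X, \sigma) \to \G(Y, \tau)$ piecewise. A conjugacy $h_1 \colon (X_1, \sigma|_{X_1}) \to (Y_1, \tau|_{Y_1})$ functorially induces the isomorphism $(x, n, x') \mapsto (h_1(x), n, h_1(x'))$ of Deaconu--Renault groupoids $\G(X_1, \sigma|_{X_1}) \to \G(Y_1, \tau|_{Y_1})$, while a conjugacy $h_2 \colon (X_2, \sigma|_{X_2}) \to (Y_2, \tau^{-1}|_{Y_2})$ composed with the cocycle-inverting isomorphism $\G(Y_2, \tau^{-1}|_{Y_2}) \to \G(Y_2, \tau|_{Y_2})$ given by $(y, n, y') \mapsto (y, -n, y')$ yields an isomorphism $\G(X_2, \sigma|_{X_2}) \to \G(Y_2, \tau|_{Y_2})$. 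Since $X_1, X_2$ are clopen $\sigma$-invariant with $X = X_1 \sqcup X_2$, the groupoid $\G(X, \sigma)$ splits as a disjoint union of the clopen subgroupoids $\G(X_i, \sigma|_{X_i})$ (and similarly for $Y$), so these piecewise isomorphisms glue to the required $\Theta$.

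For the main direction $(1) \implies (3)$, I would apply Theorem~\ref{thm:DR oe<->gi} to produce from $\Theta$ a stabiliser-preserving continuous orbit equivalence $(h, l, k, l', k')$. Since $\sigma$ and $\tau$ are homeomorphisms, $\dom(\sigma) = X$ and $\dom(\tau) = Y$, so the integer-valued function $n := l - k \colon X \to \ZZ$ is continuous and satisfies $h(\sigma(x)) = \tau^{n(x)}(h(x))$ for every $x$. This $n$ is nowhere zero: if $n(x) = 0$ then $h(\sigma(x)) = h(x)$ forces $\sigma(x) = x$, and then the associated groupoid isomorphism from Proposition~\ref{prp:DR} maps the non-unit $(x, 1, x)$ to the unit $(h(x), 0, h(x))$, a contradiction. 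The natural candidates are $X_1 := \{x \in X : n(x) > 0\}$ and $X_2 := \{x \in X : n(x) < 0\}$, a clopen partition of $X$, with $Y_i := h(X_i)$. The hard part will be showing that $X_1$ and $X_2$ are $\sigma$-invariant and that $h|_{X_i}$ can be refined, on each piece, to an honest conjugacy with $\tau^{\pm 1}|_{Y_i}$. I expect this to proceed via a Boyle--Tomiyama-style analysis: the stabiliser-preservation condition forces $\sum_{i=0}^{p-1} n(\sigma^i(x)) = \pm p$ on each $\sigma$-periodic orbit of period $p$, controlling signs on such orbits, while the dual identity $\sum_{j=0}^{n(x)-1}(l'-k')(\tau^j(h(x))) = 1$ for $x$ with $n(x) > 0$, obtained by composing the orbit equivalence with its inverse, provides the rigidity needed to pin down the decomposition on the remainder.

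The final flip-conjugacy claim drops out of $(3)$: in both the connected and the topologically transitive case, the only $\sigma$-invariant clopen subsets of $X$ are $\emptyset$ and $X$ (and likewise for $\tau$ on $Y$), forcing one of the $X_i$ to be empty, whence $\sigma$ is conjugate to $\tau$ or to $\tau^{-1}$.
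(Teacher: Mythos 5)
Your treatment of $(1)\iff(2)$ and $(3)\implies(1)$ matches the paper's. The gap is in $(1)\implies(3)$: the candidate decomposition $X_1=\{x: n(x)>0\}$, $X_2=\{x:n(x)<0\}$ with $n:=l-k$ is not the right one, and no amount of refinement of $h$ on those pieces will repair it, because these sets need not be $\sigma$-invariant and need not coincide with the correct pieces. Concretely, let $\sigma=\tau$ be a minimal Cantor homeomorphism, pick a continuous $a:X\to\ZZ$ with $a(x_0)=0$ and $a(\sigma(x_0))=5$ for some $x_0$, and set $h(x)=\sigma^{-a(x)}(x)$. Then $\Theta(x,m,\sigma^m(x)):=\bigl(h(x),\,m+a(x)-a(\sigma^m(x)),\,h(\sigma^m(x))\bigr)$ is a groupoid automorphism of $\G(X,\sigma)$ whose associated $n$ satisfies $n(x_0)=1+a(x_0)-a(\sigma(x_0))=-4<0$, even though the whole space must land in $X_1$ (the identity is a conjugacy with $\tau$, not $\tau^{-1}$). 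Also note that your periodic-orbit criterion ($f(p,x)=\pm p$ on periodic orbits) sees nothing in this example, since minimal Cantor systems have no periodic points; so the ``remainder'' your plan defers to the dual identity is in general all of $X$.

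What is actually needed, and what the paper does, is to work with the full cocycle $f(m,x):=c_Y(\theta(x,m,\sigma^m(x)))=\sum_{i=0}^{m-1}n(\sigma^i(x))$ and decompose $X$ according to the \emph{eventual} sign of $m\mapsto f(m,x)$: using that $f(\cdot,x)$ is a bijection of $\ZZ$ for each $x$ together with a compactness argument producing a uniform $N$, one shows $X_1:=\{x: f(m,x)>0 \text{ and } f(-m,x)<0 \text{ for } m>N\}$ and its mirror $X_2$ are clopen, $\sigma$-invariant, and partition $X$. One then constructs a continuous transfer function, e.g.\ $a(x)=|\{m<0:f(m,x)\ge 0\}|-|\{m\ge 0:f(m,x)<0\}|$ on $X_1$, satisfying $n(x)=a(x)-a(\sigma(x))+1$, so that $h_1(x):=\tau^{a(x)}(h(x))$ is an honest conjugacy $(X_1,\sigma)\to(Y_1,\tau)$ (and similarly with $\tau^{-1}$ on $X_2$). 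These two steps -- the asymptotic-sign decomposition and the cohomological correction of $h$ -- are the substance of the Boyle--Tomiyama argument and are absent from your outline. The final flip-conjugacy remark is fine once $(3)$ is in hand.
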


Our proof of $(1)\implies(3)$ closely follows \cite{BT}, and requires some preliminary
results. Take $X$, $Y$, $\sigma$, $\tau$ as in Theorem~\ref{thm:bt}, and an isomorphism
$\theta:\G(X,\sigma)\to \G(Y,\tau)$. Define $h : X \to Y$ by
$\theta(x,0,x)=(h(x),0,h(x))$. Let $c_X : G(X, \sigma) \to \ZZ$ and $c_Y : G(Y, \tau) \to
\ZZ$ be the canonical cocycles. Define $f : X \times \ZZ \to \ZZ$ by
$f(n,x):=c_Y(\theta(x,n,\sigma^n(x)))$. Then
\begin{equation}\label{eq:fstuff}
f(m+n,x) = f(m,x) + f(n,\sigma^m(x))\quad\text{ for all $m,n\in\ZZ$ and $x\in X$.}
\end{equation}
For $x\in X$, $f(\cdot,x)$ is a bijection of $\ZZ$ with inverse $n\mapsto
c_X(\theta^{-1}(h(x),n,\tau^n(h(x))))$.

For $m,n\in\ZZ$, we let $[m,n]:=\{k\in\ZZ:m\le k\le n\}$.

\begin{lem}\label{lem:M}
For each $M \in \NN$ there exists $\overline{M} \in \NN$ such that
\begin{equation}\label{eq:Mproperty}
[-M,M]\subseteq \{f(n,x):n\in [-\overline{M},\overline{M}]\}\quad\text{ for all $x \in X$.}
\end{equation}
\end{lem}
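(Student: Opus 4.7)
The plan is to use the compactness of $X$ together with the observation that, for each fixed $n \in \ZZ$, the map $x \mapsto f(n,x)$ is locally constant on $X$. Indeed, since $\sigma$ is a homeomorphism of $X$, for each $n \in \ZZ$ the map $\iota_n : x \mapsto (x, n, \sigma^n(x))$ is a continuous section from $X$ into the bisection of $\G(X, \sigma)$ consisting of pairs with first coordinate $x$ and second coordinate $n$. Composing with the continuous isomorphism $\theta$ and the continuous cocycle $c_Y$ yields $f(n, \cdot) = c_Y \circ \theta \circ \iota_n : X \to \ZZ$, which is continuous and hence locally constant because $\ZZ$ is discrete.

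First I would fix $x \in X$. Since $f(\cdot, x) : \ZZ \to \ZZ$ is a bijection, for each $k \in [-M, M]$ there is a unique $n_k(x) \in \ZZ$ with $f(n_k(x), x) = k$. Set $N(x) := \max\{|n_k(x)| : k \in [-M, M]\}$, so that $[-M, M] \subseteq \{f(n, x) : n \in [-N(x), N(x)]\}$. By the local-constancy observation applied to each of the finitely many integers $n \in [-N(x), N(x)]$ and intersecting the resulting neighbourhoods, I would obtain an open neighbourhood $U(x) \owns x$ on which all the functions $f(n, \cdot)$ with $n \in [-N(x), N(x)]$ are constant. Consequently, for every $y \in U(x)$,
\[
[-M, M] \subseteq \{f(n, y) : n \in [-N(x), N(x)]\}.
\]

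Next I would invoke compactness of $X$ to extract a finite subcover $U(x_1), \dots, U(x_r)$ of $\{U(x) : x \in X\}$, and set $\overline{M} := \max\{N(x_i) : 1 \le i \le r\}$. Any $x \in X$ lies in some $U(x_i)$, so
\[
[-M, M] \subseteq \{f(n, x) : n \in [-N(x_i), N(x_i)]\} \subseteq \{f(n, x) : n \in [-\overline{M}, \overline{M}]\},
\]
establishing~\eqref{eq:Mproperty}.

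There is no real obstacle here beyond verifying the continuity input: one needs only that $\iota_n$ is continuous (clear because the source map $s$ restricts to a homeomorphism on the bisection $\theta(\iota_n(X))^{-1}$ and $\sigma^n$ is continuous on the compact space $X$), that $\theta$ is continuous as an isomorphism of topological groupoids, and that $c_Y$ is continuous with discrete target. The remainder of the argument is a straightforward compactness packaging, and the bijectivity of $f(\cdot, x)$ recorded just before the lemma supplies the pointwise surjectivity that compactness upgrades to a uniform bound.
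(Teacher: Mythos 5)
Your argument is correct and follows essentially the same route as the paper's proof: use bijectivity of $f(\cdot,x)$ to get a pointwise bound $N(x)$, use continuity of the finitely many functions $f(n,\cdot)$ (locally constant since $\ZZ$ is discrete) to propagate the bound to a neighbourhood of $x$, and then take a finite subcover by compactness. The extra justification of the continuity of $f(n,\cdot)$ via $c_Y\circ\theta\circ\iota_n$ is exactly the input the paper uses implicitly.
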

\begin{proof}
Let $x \in X$.  Fix $M_x \in \NN$ such that $[-M,M]\subseteq \{f(n,x):n\in [-M_x,M_x]\}$.
Continuity of the map $f(n,\cdot)$ for each $n$ implies $x$ has an open neighbourhood
$U_x$ such that $[-M,M]\subseteq \{f(n,x'):n\in [-M_x,M_x]\}$ for $x'\in U_x$.
Compactness of $X$ gives a finite $F \subseteq X$ such that $\bigcup_{x\in F}U_x=X$. So
$\overline{M}:=\max\{M_x:x\in F\}$ satisfies~\eqref{eq:Mproperty}.
\end{proof}

\begin{lem}\label{lem:N}
There is a positive integer $N$ such that
\begin{align*}
X_1 &:= \{x\in X : f(n,x)>0\text{ and }f(-n,x) < 0\text{ for }n> N\}\quad\text{ and}\\
X_2 &:= \{x\in X : f(n,x)<0\text{ and }f(-n,x) > 0\text{ for }n> N\}
\end{align*}
are clopen $\sigma$-invariant subsets such that $X=X_1\sqcup X_2$.
\end{lem}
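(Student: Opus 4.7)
The plan is to exploit two uniform properties of $f$. First, by compactness of $X$ and continuity of $f(\pm 1, \cdot)$ into the discrete group $\ZZ$, these maps take only finitely many values, so there is $C \in \NN$ with $|f(\pm 1, x)| \le C$ for all $x \in X$. The cocycle identity~\eqref{eq:fstuff} gives $f(n+1, x) - f(n, x) = f(1, \sigma^n(x))$ and $f(n-1, x) - f(n, x) = f(-1, \sigma^n(x))$, so the forward and backward increments of $f(\cdot, x)$ are bounded in absolute value by $C$, uniformly in $x$. Second, applying Lemma~\ref{lem:M} with $M = C$ yields $\overline{C} \in \NN$ such that $[-C, C] \subseteq \{f(n, x) : |n| \le \overline{C}\}$ for every $x$; since $f(\cdot, x)$ is a bijection of $\ZZ$, this forces $|f(n, x)| > C$ whenever $|n| > \overline{C}$.

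Set $N := \overline{C}$. For any $x \in X$ and any $n > N$, we have $|f(n, x)| > C \ge |f(n+1, x) - f(n, x)|$, so $f(n, x)$ and $f(n+1, x)$ have the same sign; inducting on $n$ shows the sign of $f(n, x)$ is constant for all $n > N$. The same argument applied backwards gives a constant sign of $f(n, x)$ for $n < -N$. Next, these two stable signs must be opposite. Indeed, if both were positive, then $f(\cdot, x)$ would map $\ZZ \setminus [-N, N]$ into the positive integers, so its image would miss every negative integer except possibly the at-most-$2N+1$ values in $f([-N, N], x)$, contradicting that $f(\cdot, x)$ is a bijection of $\ZZ$; the case where both are negative is symmetric. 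Hence every $x \in X$ lies in exactly one of $X_1$ or $X_2$, and $X = X_1 \sqcup X_2$.

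It remains to verify that $X_1$ and $X_2$ are clopen and $\sigma$-invariant. By stability of signs for $n > N$, we have $X_1 = \{x \in X : f(N+1, x) > 0\}$ and $X_2 = \{x \in X : f(N+1, x) < 0\}$; each is the preimage under the continuous map $f(N+1, \cdot) \colon X \to \ZZ$ of a clopen subset of the discrete space $\ZZ$, so each is clopen. For $\sigma$-invariance, the cocycle identity gives $f(n, \sigma(x)) = f(n+1, x) - f(1, x)$, and for $n > N$ we have $|f(n+1, x)| > C \ge |f(1, x)|$, so $f(n, \sigma(x))$ has the same sign as $f(n+1, x)$ and hence agrees with the stable sign assigned to $x$; the parallel calculation with $-n$ handles the negative tail, so $\sigma(X_i) = X_i$. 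The most delicate step is the opposite-signs claim in the second paragraph, but it falls out cleanly from bijectivity of $f(\cdot, x)$ together with the finite-exception set $f([-N,N], x)$.
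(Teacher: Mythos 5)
Your proof is correct and follows essentially the same route as the paper: bound $|f(\pm1,\cdot)|$ by compactness, invoke Lemma~\ref{lem:M} plus bijectivity of $f(\cdot,x)$ to force $|f(n,x)|$ above that bound for $|n|>N$, deduce sign stability of each tail and opposite signs from bijectivity, and get clopenness and $\sigma$-invariance from continuity and the cocycle identity. Your explicit reformulation $X_1=\{x: f(N+1,x)>0\}$ is a nice touch that makes clopenness immediate.
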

\begin{proof}
Compactness of $X$ implies that $M:=\max\{|f(1, x)|:x\in X\}$ is finite.
Lemma~\ref{lem:M} gives $N \in \NN$ with $[-M,M]\subseteq \{f(n,x):n\in [-N,N]\}$ for all
$x$.

To see that $X = X_1 \cup X_2$, fix $x\in X$. Choose $m > N$. Since $n\mapsto f(n,x)$ is
bijective and $[-M,M]\subseteq \{f(n,x):n\in [-N,N]\}$, we have $|f(m,x)|>M$. Since
$|f(m+1,x)-f(m,x)|=|f(1,\sigma^m(x))|\le M$, it follows that $f(m+1,x)$ and $f(m,x)$ have
the same sign. Similarly, $f(-m-1,x)$ and $f(-m,x)$ have the same sign. Since $n\mapsto
f(n,x)$ is bijective, $x \in X_1 \cup X_2$. Hence $X=X_1\cup X_2$. Continuity of $f(n,
\cdot)$ imply $X_i$ are clopen.

We show that $\sigma (X_1) \subseteq X_1$ (a similar argument gives $\sigma(X_2)
\subseteq X_2$).  Fix $x\in X_1$. Choose $m > N$. Then $f(m+1, x) > M$. Since $|f(m+1,x)
- f(m,\sigma(x))| = |f(1,x)| \le M$, we have $\sigma(x) \not\in X_2$; so $\sigma(x) \in
X_1$.
\end{proof}

\begin{lem}\label{lem:ab}
There are continuous functions $a:X_1\to\ZZ$ and $b:X_2\to\ZZ$ such that
$f(1,x)=a(x)-a(\sigma(x))+1$ for $x\in X_1$, and $f(1,x) = b(x)-b(\sigma(x)) - 1$ for
$x\in X_2$.
\end{lem}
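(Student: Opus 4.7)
The strategy is to construct $a$ on $X_1$ as a signed count measuring how much the bijection $k \mapsto f(k,x)$ of $\ZZ$ is shifted away from the identity at the origin, and obtain $b$ on $X_2$ by the symmetric construction applied to $-f$. For $x \in X_1$, Lemma~\ref{lem:N} gives $f(k,x) < 0$ for $k < -N$ and $f(k,x) > 0$ for $k > N$, so both of the sets $\{k \le 0 : f(k,x) \ge 1\}$ and $\{k \ge 1 : f(k,x) \le 0\}$ lie in $[-N, N] \cap \ZZ$ and are finite. I will define
\[
a(x) := \#\{k \le 0 : f(k,x) \ge 1\} - \#\{k \ge 1 : f(k,x) \le 0\}.
\]
Since each $f(k, \cdot) : X \to \ZZ$ is continuous into a discrete target, the indicator maps $x \mapsto [f(k,x) \ge 1]$ and $x \mapsto [f(k,x) \le 0]$ are locally constant on $X$, and $a(x)$ is a finite sum of such indicators over a range $-N \le k \le N$ that does not depend on $x$; hence $a$ is locally constant, and so continuous, on $X_1$.

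To verify $f(1,x) = a(x) - a(\sigma(x)) + 1$, set $m := f(1,x)$ and use \eqref{eq:fstuff} to write $f(k, \sigma(x)) = f(k+1, x) - m$. Substituting into the definition of $a(\sigma(x))$ and reindexing $j = k+1$ yields
\[
a(\sigma(x)) = \#\{j \le 1 : f(j,x) \ge m+1\} - \#\{j \ge 2 : f(j,x) \le m\}.
\]
For $m \ge 1$, the containments $\{f \ge m+1\} \subseteq \{f \ge 1\}$ and $\{f \le 0\} \subseteq \{f \le m\}$, together with the fact that $f(1,x) = m \ge 1$ so that $1 \in \{j : f(j,x) \ge 1\}$, give after a short computation $a(x) - a(\sigma(x)) = \#\{j \in \ZZ : 1 \le f(j,x) \le m\} - 1$; since $k \mapsto f(k,x)$ is a bijection $\ZZ \to \ZZ$ the swap set has exactly $m$ elements, yielding $a(x) - a(\sigma(x)) = m - 1$. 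Parallel arguments in the cases $m = 0$ and $m \le -1$ (where the containments reverse and the relevant correction term $[f(1,x) \le 0]$ toggles) give the same answer $m-1 = f(1,x)-1$.

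For $b: X_2 \to \ZZ$, observe that on $X_2$ the function $k \mapsto -f(k,x)$ exhibits the sign behaviour that $f$ does on $X_1$ (positive for $k > N$, negative for $k < -N$) and still satisfies the cocycle identity in the second argument. Applying the construction above with $-f$ in place of $f$ produces a continuous $\tilde a : X_2 \to \ZZ$ satisfying $-f(1,x) = \tilde a(x) - \tilde a(\sigma(x)) + 1$, so $b := -\tilde a$ satisfies $f(1,x) = b(x) - b(\sigma(x)) - 1$. The main obstacle is the bookkeeping in the case analysis of the second paragraph: one must carefully track how $\{f \ge 1\}$ versus $\{f \ge m+1\}$ and $\{f \le 0\}$ versus $\{f \le m\}$ differ by the ``swap region'' $\{k : \min(1, m+1) \le f(k,x) \le \max(0, m)\}$, and how the index intervals $(-\infty, 0]$ versus $(-\infty, 1]$, and $[1, \infty)$ versus $[2, \infty)$, differ by the singleton $\{1\}$, with the sign of $m$ dictating the direction of the containment.
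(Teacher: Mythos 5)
Your proof is correct and follows essentially the same route as the paper: you define $a$ as a signed count of the indices where the bijection $k\mapsto f(k,x)$ fails to preserve sign (your set $\{k\le 0: f(k,x)\ge 1\}$ equals the paper's $\{m<0:f(m,x)\ge 0\}$ since $f(0,x)=0$ and $f(\cdot,x)$ is injective), shift via the cocycle identity, and count the swap set using bijectivity of $f(\cdot,x)$. The case analysis you outline checks out, and handling $X_2$ by applying the construction to $-f$ is a clean way to package what the paper does by writing $b$ out explicitly.
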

\begin{proof}
Define $n(x) := f(1, x)$ for all $x \in X$. Fix $x\in X_1$. Since $f(n,x)>0$ and
$f(-n,x)<0$ for $n>N$, both $|\{n \ge 0 : f(n,x) < 0\}|$ and $|\{m < 0 : f(m,x) \ge 0\}|$
are finite. Let
\begin{equation*}
a(x) := \big|\{m<0:f(m,x)\ge 0\}\big| - \big|\{n\ge 0:f(n,x)<0\}\big|.
\end{equation*}
Continuity of $f(n,\cdot)$ implies that $a$ is continuous.

Take $x\in X_1$. Using~\eqref{eq:fstuff} at the third equality, we calculate
\begin{align*}
a(x) + 1
    &= \big|\{m<0:f(m,x)\ge 0\}\big| - \big|\{n\ge 0:f(n,x)<0\}\big| + 1\\
    &=\big|\{m<1:f(m,x)\ge 0\}\big| - \big|\{n\ge 1:f(n,x)<0\}\big|\\
    &=\big|\{m<1:f(m-1,\sigma(x))\ge - n(x)\}\big|
        - \big|\{n\ge 1:f(n-1,\sigma(x))<- n(x)\}\big|\\
    &=\big|\{m<0:f(m,\sigma(x))\ge - n(x)\}\big| - \big|\{n\ge 0:f(n,\sigma(x))<- n(x)\}\big|.
\end{align*}
Suppose now that $n(x)\ge 0$. Then
\begin{align*}
\big|\{m<0:f(m,\sigma(x))\ge -n(x)\}\big| - \big|&\{m<0 : f(m,\sigma(x))\ge 0\}\big|\\
    &= \big|\{m<0:0>f(m,\sigma(x))\ge -n(x)\}\big|,\text{ and}\\
\big|\{n\ge 0:f(n,\sigma(x))< 0\}\big| - \big|&\{n\ge 0: f(n,\sigma(x))< -n(x)\}\big| \\
    &= \big|\{n\ge 0:0>f(n,\sigma(x))\ge -n(x)\}\big|.
\end{align*}
Since $\{f(m,\sigma(x)):m\in\ZZ\}=\ZZ$, we have
\begin{equation*}
\big|\{m<0:0>f(m,\sigma(x))\ge -n(x)\}\big| + \big|\{n\ge 0:0>f(n,\sigma(x))\ge -n(x)\}\big| = n(x).
\end{equation*}
Hence
\begin{align*}
a(x) &= \big|\{m<0:f(m,\sigma(x))\ge -n(x)\}\big| - \big|\{n\ge 0:f(n,\sigma(x))<-n(x)\}\big| - 1\\
    &= \big|\{m<0:f(m,\sigma(x))\ge 0\}\big| - \big|\{n\ge 0:f(n,\sigma(x))<0\}\big| + n(x) - 1\\
    &= a(\sigma(x)) + n(x) - 1,
\end{align*}
so $n(x) = a(x) - a(\sigma(x)) + 1$. A similar argument applies for $n(x) < 0$.

Similarly, $b(x) := \big|\{m<0:f(m,x)\le 0\}\big| - \big|\{n\ge 0:f(n,x)>0\}\big|$
defines a continuous function such that $n(x)=b(x)-b(\sigma(x))=1$ for $x\in X_2$.
\end{proof}

\begin{proof}[Proof of Theorem~\ref{thm:bt}]
The equivalence of (1) and (2) follows directly from Theorem~\ref{thm:DR oe<->gi}.

$(3)\implies (1)$: Suppose $h_1:X_1\to Y_1$ and $h_2:X_2\to Y_2$ are homeomorphisms such
that $h_1(\sigma(x))=\tau(h_1(x))$ for $x\in X_1$ and $h_2(\sigma(y))=\tau^{-1}(h_2(y))$
for $y\in X_2$. Then
\begin{equation*}
\theta(x,n,y)=
\begin{cases}
(h_1(x),n,h_1(y))&\text{if }x,y \in X_1,\\
(h_2(x),-n,h_2(y))&\text{if }x,y \in X_2,
\end{cases}
\end{equation*}
defines an isomorphism $\theta:\G(X,\sigma)\to \G(Y,\tau)$.

$(1)\implies (3)$: Let $X_1$ and $X_2$ be as in Lemma~\ref{lem:N} and $a$ and $b$ be as
in Lemma~\ref{lem:ab}. Let $Y_1:=h(X_1)$ and $Y_2:=h(X_2)$. Define $h_1:X_1\to Y_1$ by
$h_1(x)=\tau^{a(x)}(h(x))$ and $h_2:X_2\to Y_2$ by $h_2(x)=\tau^{b(x)}(h(x))$. Then
$h_1(\sigma(x))=\tau^{a(\sigma(x))}(h(\sigma(x)))=\tau^{a(x)-f(1,x)+1}(h(\sigma(x)))=\tau^{a(x)+1}(h(x))=\tau(h_1(x))$
for $x\in X_1$ because $\tau^{f(1,x)}(h(x))=h(\sigma(x))$, and
$h_2(\sigma(x))=\tau^{b(\sigma(x))}(h(\sigma(x)))=\tau^{b(x)-f(1,x)-1}(h(\sigma(x)))=\tau^{b(x)-1}(h(x))=\tau^{-1}(h_1(x))$
for $x\in X_2$ because $\tau^{f(1,x)}(h(x))=h(\sigma(x))$.
\end{proof}

\section{Equivariant Morita equivalence}\label{sec:Eq Me}

In this section we define equivalence of graded groupoids and equivariant Morita
equivalence of nested pairs of $C^*$-algebras. We show that given coactions $\delta_i :
A_i \to A_i \otimes C^*_r(\Gamma)$ and abelian $C^*$-subalgebras $D_i \subseteq A_i$ with $D_i$ containing an approximate unit for $A_i$, the
pairs $(A_i, D_i)$ are equivariantly Morita equivalent if and only if their extended Weyl
groupoids are graded equivalent.

Groupoids $\G_1$ and $\G_2$ are \emph{equivalent} if there is a topological space $Z$
carrying commuting free and proper actions of $\G_1$ and $\G_2$ on the left and right
respectively such that $r : Z \to \go_1$ and $s : Z \to \go_2$ induce homeomorphisms
$\G_1\backslash Z \cong \go_2$ and $Z/\G_2 \cong \go_1$. The associated \emph{linking
groupoid} $L(\G_1, \G_2)$ \cite{SimWil} is
\[
L := L(\G_1, \G_2) = \G_1 \sqcup Z \sqcup Z^{\op} \sqcup \G_2
\]
with $L^{(0)} = \go_1 \cup \go_2$, the obvious range and source maps, and multiplication
determined by multiplication in $\G_1$ and $\G_2$, the actions of the $\G_i$ on $Z$ and
$Z^{\op}$, and the maps ${_{\G_1}[\cdot, \cdot]} : \{(z, y^{\op}) \in Z \times Z^{\op} :
s(z) = r(y^{\op})\} \to \G_1$ and ${[\cdot, \cdot]_{\G_2}} : \{(y^{\op}, z) \in Z^{\op}
\times Z : s(y^{\op}) = r(z)\} \to \G_2$ determined by ${_{\G_1}[z,y^{\op}]} \cdot y = z$
and $y \cdot [y^{\op}, z]_{\G_2} = z$. Conversely, if $\G$ is a groupoid and $K_1, K_2$
are complementary $\G$-full open subsets of $\go$, then $Z := K_1 L K_2$ is a $K_1 \G
K_1$--$K_2 \G K_2$-equivalence under the actions given by multiplication in $\G$.

\begin{dfn}\label{dfn:equivalence}
Let $c_i : \G_i \to \Gamma$, $i = 1,2$ be gradings of locally compact groupoids. A
\emph{graded} $(\G_1, c_1)$--$(\G_2, c_2)$-equivalence consists of a
$\G_1$--$\G_2$-equivalence $Z$ and a continuous map $c_Z : Z \to \Gamma$ satisfying
$c_Z(\gamma \cdot z \cdot \eta) = c_1(\gamma)c_Z(z)c_2(\eta)$ for all $\gamma, z, \eta$.
\end{dfn}

\begin{lem}\label{lem:equiv equiv}
Let $\Gamma$ be a discrete group. Then graded equivalence as described in
Definition~\ref{dfn:equivalence} is an equivalence relation on $\Gamma$-graded groupoids.
\end{lem}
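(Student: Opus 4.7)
The plan is to verify each of reflexivity, symmetry, and transitivity by writing down the obvious candidates for the equivalence bispaces and the obvious cocycles on them, and checking the axioms of Definition~\ref{dfn:equivalence} directly.

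For reflexivity, given a $\Gamma$-graded groupoid $(\G, c)$, I would take $Z = \G$ with the canonical left and right $\G$-actions by multiplication and set $c_Z := c$. The underlying bispace is the standard self-equivalence of $\G$, and the required identity $c(\gamma \cdot z \cdot \eta) = c(\gamma) c(z) c(\eta)$ is just the cocycle property of $c$ on the composable triple $(\gamma, z, \eta)$.

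For symmetry, given a graded $(\G_1, c_1)$--$(\G_2, c_2)$-equivalence $(Z, c_Z)$, I would take the opposite space $Z^{\op}$ with the standard $\G_2$--$\G_1$ bispace structure (so $\eta \cdot z^{\op} \cdot \gamma := (\gamma^{-1} z \eta^{-1})^{\op}$) and define $c_{Z^{\op}}(z^{\op}) := c_Z(z)^{-1}$. This is continuous because inversion in $\Gamma$ is continuous, and the compatibility condition follows from the identity $c_Z(\gamma^{-1} z \eta^{-1}) = c_1(\gamma)^{-1} c_Z(z) c_2(\eta)^{-1}$, giving $c_{Z^{\op}}(\eta \cdot z^{\op} \cdot \gamma) = c_2(\eta) c_{Z^{\op}}(z^{\op}) c_1(\gamma)$ as required.

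Transitivity is the only step with real content. Given graded equivalences $(Z, c_Z)$ from $(\G_1, c_1)$ to $(\G_2, c_2)$ and $(W, c_W)$ from $(\G_2, c_2)$ to $(\G_3, c_3)$, I would form the standard balanced fibre product
\[
Z *_{\G_2} W := \bigl(\{(z,w) \in Z \times W : s(z) = r(w)\}\bigr)\big/\G_2,
\]
where $\G_2$ acts diagonally by $(z,w) \cdot \eta = (z\eta, \eta^{-1} w)$, equipped with the induced $\G_1$--$\G_3$ bispace structure. This is a $(\G_1, \G_2)$-equivalence by the classical construction (see e.g.\ \cite{SimWil}). Define $c_{Z *W}([z, w]) := c_Z(z) c_W(w)$. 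Well-definedness on $\G_2$-orbits is the key check: for $\eta \in \G_2$ with $s(z) = r(\eta)$ and $s(\eta) = r(w)$,
\[
c_Z(z\eta) c_W(\eta^{-1} w) = c_Z(z) c_2(\eta) c_2(\eta)^{-1} c_W(w) = c_Z(z) c_W(w),
\]
using the compatibility conditions for $(Z, c_Z)$ and $(W, c_W)$. Continuity of $c_{Z * W}$ follows because it descends from the continuous map $(z,w) \mapsto c_Z(z) c_W(w)$ and the quotient map $Z \times_{\G^{(0)}_2} W \to Z *_{\G_2} W$ is open. The compatibility identity with the $\G_1$ and $\G_3$ actions is immediate from the corresponding identities for $c_Z$ and $c_W$. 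The main technical obstacle is keeping track of the quotient when verifying continuity and well-definedness, but since all data are continuous and the action is free and proper, this is routine.
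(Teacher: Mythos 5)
Your proposal is correct and follows essentially the same route as the paper: the paper's proof treats only transitivity, forming the same balanced fibre product $Z_1 *_{\G_2} Z_2$ (citing \cite{MRW}) and defining the cocycle as the product $c_{Z_1}(z_1)c_{Z_2}(z_2)$, with the same well-definedness check on $\G_2$-orbits. Your explicit verifications of reflexivity and symmetry are correct additions that the paper leaves implicit.
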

\begin{proof}
Suppose that $(\G_i, c_i)$ is a $\Gamma$-graded groupoid for $i = 1,2,3$ and that $(Z_i,
c_{Z_i})$ is a $(\G_i, c_i)$--$(\G_{i+1}, c_{i+1})$-equivalence for $i = 1,2$. Define
$\sim$ on $Z_1 \mathbin{_r\times_s} Z_2 := \{(z_1, z_2) \in Z_1 \times Z_2 : s(z_1) =
r(z_2)\}$ by $(z_1 \cdot \gamma, \gamma^{-1}\cdot z_2) \sim (z_1, z_2)$ for $\gamma \in
(\G_2)^{s(z_1)}$. By \cite[page~6]{MRW}, $Z_1 *_{\G_2} Z_2 := (Z_1 \mathbin{_r\times_s}
Z_2)/{\sim}$ is a $\G_1$--$\G_3$-equivalence with $\gamma_1\cdot[z_1, z_2] =
[\gamma_1\cdot z_1, z_2]$ and $[z_1, z_2]\cdot \gamma_3 = [z_1, z_2\cdot\gamma_3]$. For
$[z_1, z_2] \in Z_1 *_{\G_2} Z_2$ and $\gamma \in \G_2^{s(z_1)}$, we have
\[
c_{Z_1}(z_1 \cdot \gamma)c_{Z_2}(\gamma^{-1} \cdot z_2)
    = c_{Z_1}(z_1) c_2(\gamma)c_2(\gamma^{-1})c_{Z_2}(z_2)
    = c_{Z_1}(z_1) c_{Z_2}(z_2),
\]
so there is a map $\tilde{c} : Z_1 *_{\G_2} Z_2 \to \Gamma$ such that $\tilde{c}([z_1,
z_2]) = c_{Z_1}(z_1) c_{Z_2}(z_2)$. So for $\gamma_1 \in \G_1$, $[z_1, z_2] \in Z_1
*_{\G_2} Z_2$, and $\gamma_3 \in \G_3$ with $s(\gamma_1) = r([z_1, z_2])$ and $s([z_1,
z_2]) = r(\gamma_3)$, we have $\tilde{c}(\gamma_1 \cdot [z_1, z_2] \cdot \gamma_3) =
c_{Z_1}(\gamma_1 \cdot z_1) c_{Z_2}(z_2 \cdot \gamma_3) = c_1(\gamma_1)
\tilde{c}([z_1,z_2]) c_3(\gamma_3)$. So $(Z_1 *_{\G_2} Z_3, \tilde{c})$ is a $(\G_1,
c_1)$--$(\G_3, c_3)$-equivalence.
\end{proof}

\begin{lem}\label{lem:graded equiv}
Let $\Gamma$ be a discrete group and let $(\G_1, c_1)$, $(\G_2, c_2)$ be $\Gamma$-graded
locally compact Hausdorff \'etale groupoids with each $\Io{c_i^{-1}(\id_\Gamma)}$ torsion-free and
abelian. Suppose that $(Z, c_Z)$ is a graded equivalence from $(\G_1, c_1)$ to $(\G_2,
c_2)$. Let $\G = L(\G_1, \G_2)$, and define $c : \G \to \Gamma$ by $c|_{G_i} = c_i$,
$c|_Z = c_Z$ and $c(z^{\op}) = c_{Z}(z)^{-1}$ for $z \in Z$. Then $(\G, c)$ is a
$\Gamma$-graded groupoid, $\Io{c^{-1}(\id_\Gamma)}$ is torsion-free and abelian, and $(\go_i \G
\go_i, c) \cong (\G_i, c_i)$ for each $i$. Conversely, given a $\Gamma$-graded groupoid
$(\G, c)$ such that $\Io{c^{-1}(\id_\Gamma)}$ is torsion-free and abelian, and given complementary
open $\G$-full sets $K_1, K_2 \subseteq \go$ such that each $(K_i \G K_i, c) \cong (\G_i,
c_i)$, the pair $(K_1 \G K_2, c|_{K_1 \G K_2})$ is a $(\G_1, c_1)$--$(\G_2, c_2)$
equivalence under the left and right actions given by multiplication in $\G$.
\end{lem}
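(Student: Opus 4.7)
The plan is to verify piece-by-piece that the map $c$ defined on the linking groupoid is a continuous cocycle, and then to read off the isotropy statement from the structure of $\G = L(\G_1, \G_2)$.

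First I would check that $c$ is a cocycle. The only nontrivial multiplications involve products crossing between the four pieces $\G_1$, $Z$, $Z^{\op}$, $\G_2$ of $\G$. For $\gamma_1 \in \G_1$ and $z \in Z$ with $s(\gamma_1) = r(z)$, the equivariance $c_Z(\gamma_1 \cdot z) = c_1(\gamma_1) c_Z(z)$ from Definition~\ref{dfn:equivalence} is precisely $c(\gamma_1 z) = c(\gamma_1) c(z)$, and similarly on the right for $z \cdot \gamma_2$. For $z, z' \in Z$ with $s(z) = s(z')$, applying equivariance to ${_{\G_1}[z, z'^{\op}]} \cdot z' = z$ gives $c_Z(z) = c_1({_{\G_1}[z, z'^{\op}]}) \, c_Z(z')$, hence $c(z \cdot z'^{\op}) = c({_{\G_1}[z, z'^{\op}]}) = c_Z(z) c_Z(z')^{-1} = c(z)c(z'^{\op})$; an analogous calculation using $[\cdot, \cdot]_{\G_2}$ handles $z^{\op} \cdot z' \in \G_2$. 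The remaining mixed compositions (such as $\gamma_2 \cdot z^{\op}$ or $z^{\op} \cdot \gamma_1$) then follow by taking inverses of the cases already established together with $c(g^{-1}) = c(g)^{-1}$.

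Continuity of $c$ is easy because $\Gamma$ is discrete: the pieces $\G_1$, $Z$, $Z^{\op}$, $\G_2$ are open in $\G$ (since $\go_1, \go_2$ are open in $\go$), and $c$ is continuous on each, with continuity on $Z^{\op}$ coming from the homeomorphism $z \mapsto z^{\op}$ together with continuity of $c_Z$ and of inversion in $\Gamma$. For the isotropy claim, $\go_1$ and $\go_2$ are complementary open subsets of $\go$, hence clopen, so any element of $\operatorname{Iso}(\G)$ has $r = s$ in the same $\go_i$ and therefore lies in $\G_i$; this gives $\operatorname{Iso}(\G) = \operatorname{Iso}(\G_1) \sqcup \operatorname{Iso}(\G_2)$ as a disjoint union of clopen pieces, and on passing to interiors within $c^{-1}(e)$ one obtains $\Io{c^{-1}(e)} = \Io{c_1^{-1}(e)} \sqcup \Io{c_2^{-1}(e)}$, so the hypothesis on the $(\G_i, c_i)$ transfers. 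The identification $(\go_i \G \go_i, c) \cong (\G_i, c_i)$ is tautological from the definitions of $\G$ and $c$.

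For the converse, the fact that $Z := K_1 \G K_2$ is a $(K_1 \G K_1)$--$(K_2 \G K_2)$-equivalence under the multiplication actions is part of the standard linking-groupoid correspondence recalled just above Definition~\ref{dfn:equivalence}; transporting along the given isomorphisms $(K_i \G K_i, c) \cong (\G_i, c_i)$ turns $Z$ into a $\G_1$--$\G_2$-equivalence. The restriction $c_Z := c|_Z$ is continuous, and because $c$ is already a cocycle on $\G$, the identity $c_Z(\gamma \cdot z \cdot \eta) = c(\gamma) c(z) c(\eta) = c_1(\gamma) c_Z(z) c_2(\eta)$ follows automatically. The main obstacle in the whole argument is really just bookkeeping: enumerating the cases in the cocycle check without letting them proliferate, which is kept under control by the symmetry $c(g^{-1}) = c(g)^{-1}$.
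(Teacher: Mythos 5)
Your argument is correct and follows the same route as the paper's (much terser) proof: the cocycle property of $c$ is read off from the equivariance of $c_Z$, the isotropy of $L(\G_1,\G_2)$ decomposes as $\operatorname{Iso}(\G_1)\sqcup\operatorname{Iso}(\G_2)$ because $\go_1,\go_2$ are clopen, and the converse is the standard linking-groupoid correspondence plus the observation that the grading condition on $K_1\G K_2$ is automatic from $c$ being a cocycle. Your case-by-case verification via ${_{\G_1}[\cdot,\cdot]}$, $[\cdot,\cdot]_{\G_2}$ and the inverse trick just fills in details the paper leaves implicit.
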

\begin{proof}
Each $c_Z(\gamma \cdot z \cdot \eta) = c_1(\gamma) c_Z(z) c_2(\eta)$ so $c$ is a cocycle; and
$\Io{G} \cap c^{-1}(\id_\Gamma) = (\Io{\G_1} \cap c_1^{-1}(\id_\Gamma)) \sqcup (\Io{\G_2} \cap
c_2^{-1}(\id_\Gamma))$ is abelian and torsion free. As $(\go_i \G\go_i, c) \cong (\G_i, c_i)$, the
first statement follows. For the second, we saw that $K_1 \G K_2$ is a $\G_1$--$\G_2$-equivalence;
and each $c_Z(\gamma \cdot z \cdot \eta) = c(\gamma z \eta) = c(\gamma) c(z) c(\eta) = c_1(\gamma)
c_Z(z) c_2(\eta)$.
\end{proof}

We now turn to Morita equivalence of pairs $(A,D)$ where $A$ is a $C^*$-algebra and $D\subseteq A$ is a $C^*$-subalgebra. As in
Section~\ref{sec:background}, if $X$ is an $A_1$--$A_2$ imprimitivity bimodule, then
$X^*$ is its adjoint module and the linking algebra $A = A_1 \oplus X \oplus X^* \oplus
A_2$ contains $A_1,A_2$ as complementary full corners. Writing $P_i$ for $1_{M(A_i)}\in
M(A)$, the \emph{multiplier module of $X$} is $M(X) := P_1 M(A) P_2$ (see \cite{ER}); so
$M(X^*) \cong P_2 M(A) P_1 = M(X)^*$, and the adjoint operation in $M(A)$ determines an
extension to multiplier modules of the map $\xi \mapsto \xi^*$ from $X$ to $X^*$.

\begin{dfn}\label{dfn:sds Me}
Suppose that for $i = 1,2$, $A_i$ is a $C^*$-algebra carrying a coaction $\delta_i$ of a
discrete group $\Gamma$, and $D_i \subseteq A_i^{\delta_i}$ is a $C^*$-subalgebra. We say that
$(A_1, D_1)$ and $(A_2, D_2)$ are \emph{equivariantly Morita equivalent} if there are an
$A_1$--$A_2$-imprimitivity bimodule $X$ and a linear map $\zeta: X
\to M(X\otimes C^*_r(\Gamma))$ such that: $\zeta$ is a right-Hilbert bimodule morphism in the 
sense that $\zeta(a \cdot \xi \cdot b) = 
\delta_1(a)\cdot\zeta(\xi)\cdot\delta_2(b)$ for all $\xi, a, b$; we have $(\zeta\otimes\id_\Gamma)\circ\zeta = (\id_X\otimes\delta_\Gamma)\circ\zeta$; 
and for each $g \in \Gamma$, the subspace $X_g :=
\{x \in X : \zeta(x) = x \otimes \lambda_g\}$ satisfies
\begin{equation}\label{eq:Xg span}
X_g = \clsp\{\xi \in X_g : {_{A_1}\langle \xi, \xi\cdot D_2\rangle} \subseteq D_1
    \text{ and }\langle D_1 \cdot \xi, \xi\rangle_{A_2} \subseteq D_2\}.
\end{equation}

If $\Gamma = \{0\}$, we say that $(A_1, D_1)$ and $(A_2, D_2)$ are \emph{Morita
equivalent}.
\end{dfn}

The following is well known.

\begin{lem}\label{lem:separable}
Let $A_1, A_2$ be separable $C^*$-algebras and let $X$ be an $A_1$--$A_2$-imprimitivity
bimodule. Then $X$ is separable.
\end{lem}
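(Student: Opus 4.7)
The plan is to extract a countable subset of $X$ from separability of $A_2$, and then use the Rieffel imprimitivity condition to promote this to a countable dense subset of $X$ with the help of separability of $A_1$.

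First I would fix a countable dense subset $\{b_n\}$ of $A_2$. Since $X$ is an imprimitivity bimodule, the $A_2$-valued inner product is full, so $\clsp\{\langle x,y\rangle_{A_2} : x,y \in X\} = A_2$. Hence for each $n,k \in \NN$ I can choose finitely many $x^{(n,k)}_i, y^{(n,k)}_i \in X$ with
\[
    \Bigl\|b_n - \sum_i \langle x^{(n,k)}_i, y^{(n,k)}_i\rangle_{A_2}\Bigr\| < \tfrac{1}{k}.
\]
Let $T$ be the (countable) collection of all $x^{(n,k)}_i$ and $y^{(n,k)}_i$. By construction $\clsp\{\langle y, z\rangle_{A_2} : y,z \in T\} \supseteq \overline{\{b_n : n \in \NN\}} = A_2$.

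Next I would show $\clsp(A_1 \cdot T) = X$. A standard Hilbert-module computation with a countable approximate unit $(v_m)$ for $A_2$ gives $x\cdot v_m \to x$ for each $x \in X$, so $\clsp(X \cdot A_2) = X$. Given $x \in X$ and $b \in A_2$, I approximate $b$ to arbitrary precision by a finite sum $\sum_i \langle y_i, z_i\rangle_{A_2}$ with $y_i, z_i \in T$. The imprimitivity bimodule identity $x \cdot \langle y, z\rangle_{A_2} = {_{A_1}\langle x, y\rangle}\cdot z$ then gives
\[
    x \cdot \sum_i \langle y_i, z_i\rangle_{A_2} = \sum_i {_{A_1}\langle x, y_i\rangle}\cdot z_i \in \operatorname{span}(A_1 \cdot T).
\]
Letting the approximation improve, $x \cdot b \in \clsp(A_1 \cdot T)$, and taking closed span over all $x, b$ shows $\clsp(A_1 \cdot T) = X$.

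Finally, separability of $A_1$ provides a countable dense $\{a_m\} \subseteq A_1$, and then $\{a_m \cdot t : m \in \NN,\ t \in T\}$ is a countable subset of $X$ whose closed span is $\clsp(A_1 \cdot T) = X$. Hence $X$ is separable. There is no real obstacle here; the only subtlety is remembering to obtain the countable set $T$ \emph{via} the full $A_2$-valued inner product rather than trying to select elements of $X$ directly (one cannot invoke separability of $X$, which is what we are proving).
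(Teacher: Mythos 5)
Your proof is correct and follows essentially the same route as the paper's: use fullness of the $A_2$-valued inner product together with separability of $A_2$ to extract a countable set $T \subseteq X$, then apply the imprimitivity identity $x\cdot\langle y,z\rangle_{A_2} = {_{A_1}\langle x,y\rangle}\cdot z$ to show $X = \clsp(A_1\cdot T)$ and conclude from separability of $A_1$. The only cosmetic difference is that the paper invokes Cohen factorisation where you use an approximate-unit density argument for $\clsp(X\cdot A_2) = X$; both are standard and interchangeable here.
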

\begin{proof}
Since $\langle \cdot, \cdot\rangle_{A_2}$ is full, and $A_2$ is separable there are
sequences $(y_n), (z_n)$ in $X$ such that $(\langle y_n, z_n\rangle_{A_2})_n$ is dense in
$A_2$. Cohen factorisation and the imprimitivity condition give $X = \clsp\{x \cdot
\langle y_n, z_n\rangle_{A_2} : x \in X, n \in \NN\} = \clsp\{{_{A_1}\langle x,
y_n\rangle} \cdot z_n : x \in X, n \in \NN\} \subseteq \clsp\{A_1 \cdot z_n : n \in
\NN\}$, which is separable because $A_1$ is.
\end{proof}

We now state the main result of the section; the proof occupies the rest of the section.

\begin{thm}\label{thm:eqiv systems->eqiv groupoids}
Let $\Gamma$ be a discrete group, and let $A_1, A_2$ be separable $C^*$-algebras.
Suppose, for $i = 1,2$, that $\delta_i$ is a coaction of $\Gamma$ on $A_i$ and $D_i\subseteq A_i^{\delta_i}$ is an abelian $C^*$-subalgebra containing an approximate unit for $A_i^{\delta_i}$. Suppose that $(X, \zeta)$ is an equivariant
Morita equivalence between $(A_1, D_1)$ and $(A_2, D_2)$. Let $A$ be the linking algebra
of $X$, let $D := D_1 \oplus D_2 \subseteq A$ and let $\delta : A \to A \otimes
C^*_r(\Gamma)$ be the map that restricts to $\delta_i$ on each $A_i$, to $\zeta$ on $X$
and to $x^* \mapsto \zeta(x)^*$ on $X^*$. Then $\delta$ is a coaction and $D\subseteq A^\delta$ is an abelian $C^*$-subalgebra that contains an approximate unit for $A^\delta$. The sets $\widehat{D}_i$ are complementary full clopen subsets of
the unit space of $\Hh := \Hh(A, D, \delta)$, we have $(\widehat{D}_i\Hh\widehat{D}_i,
c_{\delta}) \cong (\Hh(A_i, D_i, \delta_i), c_{\delta_i})$ for $i = 1,2$, and
$(\widehat{D}_1 \Hh \widehat{D}_2, \delta)$ is an equivalence from $\big(\Hh(A_1, D_1,
\delta_1), c_{\delta_1}\big)$ to $\big(\Hh(A_2, D_2, \delta_2), c_{\delta_2}\big)$.
\end{thm}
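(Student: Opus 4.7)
The plan is to analyse the construction $\Hh(A, D, \delta)$ of Section~\ref{sec:sdcs} applied to the linking triple, in four steps. First, I would check that $\delta$ is a coaction on $A$. Writing elements of $A$ as $2 \times 2$ matrices $\big(\begin{smallmatrix} a_1 & x \\ y^* & a_2\end{smallmatrix}\big)$, the matrix multiplication formulas together with the assumption that $\zeta$ is a right-Hilbert bimodule morphism satisfying the coaction identity show that $\delta$ is a nondegenerate $^*$-homomorphism with $(\delta \otimes \id)\circ\delta = (\id \otimes \delta_\Gamma)\circ\delta$. The spectral decompositions for $\delta_1, \delta_2, \zeta$ then combine to give $A^\delta = A_1^{\delta_1} \oplus X_e \oplus X_e^* \oplus A_2^{\delta_2}$, the linking algebra of the $A_1^{\delta_1}$--$A_2^{\delta_2}$-imprimitivity bimodule $X_e$.

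Next I would verify that $D := D_1 \oplus D_2$ is weakly Cartan in $A^\delta$. Abelianness is immediate, and~\eqref{eq:Xg span} applied at $g = e$ ensures that an approximate unit for $D_1 \oplus D_2$ is also an approximate unit for $A^\delta$. Identify $\widehat{D}$ with $\widehat{D}_1 \sqcup \widehat{D}_2$. The commutation constraints $d_1 \cdot x = x \cdot d_2$ for varying $d_1 \in D_1$, $d_2 \in D_2$ force all off-diagonal elements of $D'_{A^\delta}$ to vanish, so $D'_{A^\delta} = (D_1)'_{A_1^{\delta_1}} \oplus (D_2)'_{A_2^{\delta_2}}$. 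For $\phi \in \widehat{D}_1$ extended by zero on $D_2$, the kernel of $\phi$ contains all of $D_2$ and hence an approximate unit for $A_2^{\delta_2}$, giving $J_\phi = J_\phi^1 \oplus (D_2)'_{A_2^{\delta_2}}$ where $J_\phi^1$ denotes the analogous ideal in $(D_1)'_{A_1^{\delta_1}}$; so $D'_{A^\delta}/J_\phi \cong (D_1)'_{A_1^{\delta_1}}/J_\phi^1$. Conditions~(3) and~(4) of Definition~\ref{dfn:semidiag} at $\phi$ therefore reduce to those for $(A_1, D_1, \delta_1)$, and symmetrically for $\widehat{D}_2$.

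Third I would unpack the structure of $\Hh := \Hh(A, D, \delta)$. The sets $\widehat{D}_1, \widehat{D}_2$ are complementary clopen subsets of $\widehat{D} = \Hh^{(0)}$ by construction. Writing a homogeneous normaliser $n \in \Nn_g(D)$ as a matrix, the condition $n D n^* \cup n^* D n \subseteq D_1 \oplus D_2$ forces each nonzero matrix entry to be individually a homogeneous normaliser of $D$; moreover a class $[n, \phi] \in \Hh$ with $\phi \in \widehat{D}_i$ and $\alpha_n(\phi) \in \widehat{D}_j$ depends only on the $(j,i)$-entry of $n$. Restricting to the $(i,i)$-corner and using the quotient identification from the previous step, the equivalence relation of Lemma~\ref{lem:equivalence-construction} matches the one for $(A_i, D_i, \delta_i)$, so $[n, \phi] \mapsto [n, \phi]$ is a well-defined graded isomorphism $(\Hh(A_i, D_i, \delta_i), c_{\delta_i}) \cong (\widehat{D}_i \Hh \widehat{D}_i, c_\delta)$.

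Finally, to obtain the equivalence, I would appeal to the standard construction (implicit in Lemma~\ref{lem:graded equiv}) whereby in any graded groupoid with complementary open full invariant subsets of the unit space, the off-diagonal piece is a graded equivalence of the two corner groupoids. So it remains to establish $\Hh$-fullness of $\widehat{D}_1$ and $\widehat{D}_2$. Given $\phi_1 \in \widehat{D}_1$, I need a homogeneous normaliser $\xi$ in the $X$-corner with $\phi_1(\xi\xi^*) > 0$. Condition~\eqref{eq:Xg span} provides, for each $g$, elements $\xi \in X_g$ with $\xi\xi^* \in D_1$ and $\xi^*\xi \in D_2$ (obtained as limits ${_{A_1}\langle \xi, \xi \cdot e_\lambda\rangle}$ along an approximate unit of $D_2$), making $\xi$ a homogeneous normaliser in the $X$-corner. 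The inequality $\xi\eta^*\eta\xi^* \leq \|\eta\|^2 \xi\xi^*$ places each $\xi\eta^*$ in the ideal of $A_1$ generated by $\xi\xi^*$; combined with the spanning in~\eqref{eq:Xg span} and fullness of the $A_1$-valued inner product, this shows the ideal of $A_1$ generated by the admissible $\xi\xi^*$ is all of $A_1$, and then condition~(3) of the weakly Cartan hypothesis for $(A_1, D_1)$ forces $\phi_1(\xi\xi^*) > 0$ for some such $\xi$. The main obstacle in the programme is the second step: correctly identifying $D'_{A^\delta}/J_\phi$ as intrinsic to the component $(A_i, D_i, \delta_i)$ is where the coaction spectral decomposition, the linking algebra structure, and condition~\eqref{eq:Xg span} must all cooperate, and the remaining steps are largely bookkeeping once this is secured.
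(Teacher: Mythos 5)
Your proposal follows essentially the same architecture as the paper's proof: the spectral decomposition $A^\delta = A_1^{\delta_1}\oplus X_e\oplus X_e^*\oplus A_2^{\delta_2}$; the computation $D'_{A^\delta} = (D_1)'_{A_1^{\delta_1}}\oplus(D_2)'_{A_2^{\delta_2}}$ and the resulting identification of $D'_{A^\delta}/J_{\overline{\phi}}$ with $(D_i)'_{A_i^{\delta_i}}/J_{i,\phi}$ (this is Lemma~\ref{lem-linking-unital-quotient}); the entrywise analysis of homogeneous normalisers of $D$ and the fact that $[n,\phi]$ with $\phi,\alpha_n(\phi)\in\widehat{D}_i$ depends only on the $(i,i)$-entry of $n$ (Lemmas \ref{lem:normalizer-linking-alg}~and~\ref{lem:normalizer-comp}, which also require the small verification, which you elide, that $\phi\in\osupp(n_i^*n_i)$ and that $\alpha_n=\alpha_{n_i}$ near $\phi$); and the reduction of the equivalence statement to fullness of the $\widehat{D}_i$ via the linking-groupoid picture of Lemma~\ref{lem:graded equiv}. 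All of that matches the paper, and the remaining topological checks on the corner isomorphism really are bookkeeping.

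The one genuine gap is the last step of your fullness argument. From ``the ideal of $A_1$ generated by the admissible elements $\xi\xi^*$ is all of $A_1$'' you cannot conclude that some admissible $\xi\xi^*$ is nonzero at a given $\phi_1\in\widehat{D}_1$: a family of elements of $D_1$ all lying in $\ker\phi_1$ can perfectly well generate $A_1$ as an ideal (already $e_{22}$ generates $M_2(\CC)$ as an ideal while vanishing at the character of the diagonal dual to $e_{11}$), and condition~(3) of Definition~\ref{dfn:semidiag} --- unitality of $(D_1)'/J_{\phi_1}$ --- does not exclude this, since weakly Cartan subalgebras need not detect ideals. The paper closes this step differently: starting from fullness of the inner product it approximates a suitable $\xi\in X$ by a finite sum of admissible homogeneous normalisers $\eta_j$, and then uses the imprimitivity identity
\[
\langle\eta_j,\eta_k\rangle_{A_2}\,\langle\eta_k,\eta_j\rangle_{A_2}
  \;=\; \big\langle \eta_j,\ {_{A_1}\langle\eta_k,\eta_k\rangle}\cdot\eta_j\big\rangle_{A_2}
  \;=\; \eta_j^*\,(\eta_k\eta_k^*)\,\eta_j
\]
together with $\phi(n^*dn)=\phi(n^*n)\alpha_n(\phi)(d)$ to produce a single admissible $\eta=\eta_j$ with $\phi(\eta^*\eta)\neq 0$, i.e.\ $\phi\in\osupp(\eta^*\eta)$, which is exactly what fullness of $\widehat{D}_1$ requires. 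You should replace your ideal-theoretic inference with an argument of this kind (or otherwise supply a reason why a character of $D_1$ cannot annihilate every admissible $\xi\xi^*$).
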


\begin{lem}\label{lem:LXdiagonal}
For $i = 1,2$, let $A_i$ be a separable $C^*$-algebra and $D_i \subseteq A_i$ an abelian
$C^*$-subalgebra containing an approximate unit for $A_i$. Let $X$ be an
$A_1$--$A_2$-imprimitivity bimodule such that
\[
X = \clsp\{x \in X : \langle D_1\cdot x,x\rangle_{A_2} \subseteq D_2
        \text{ and } {_{A_1}}\langle x,x\cdot D_2\rangle \subseteq D_1\}.
\]
Let $A$ be the linking algebra of $X$. Then $D := D_1 \oplus D_2$ is an abelian
$C^*$-subalgebra of $A$ and contains an approximate unit for $A$. The spaces $\Nn_{A_i}(D_i)
\subseteq A_i$ and
\[
\{\xi \in X : {_{A_1}}\langle \xi, \xi\cdot D_2\rangle \subseteq D_1\text{ and }\langle D_1\cdot \xi, \xi\rangle_{A_2} \subseteq D_2\}
\]
are all contained in $\Nn_A(D)$.
\end{lem}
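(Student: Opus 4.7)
The plan is to work in the $2\times 2$ matrix picture of the linking algebra $A$, writing elements as $\bigl(\begin{smallmatrix} a_1 & x \\ y^* & a_2\end{smallmatrix}\bigr)$ with $a_i\in A_i$, $x\in X$, $y^*\in X^*$. In this picture $D=D_1\oplus D_2$ consists of the diagonal matrices $\bigl(\begin{smallmatrix} d_1 & 0 \\ 0 & d_2\end{smallmatrix}\bigr)$. Abelianness of $D$ is then immediate: $D_1\oplus 0$ and $0\oplus D_2$ commute elementwise because all cross products land in zero off-diagonal corners, and each $D_i$ is itself abelian. For the approximate-unit claim I would fix approximate units $(e_\lambda)\subseteq D_1$ for $A_1$ and $(f_\mu)\subseteq D_2$ for $A_2$, and verify that the diagonal net $\bigl(\begin{smallmatrix} e_\lambda & 0 \\ 0 & f_\mu\end{smallmatrix}\bigr)$ is an approximate unit for $A$; the only nontrivial content is that $e_\lambda\cdot x\to x$ and $x\cdot f_\mu\to x$ for $x\in X$, which follows from nondegeneracy of the left and right actions on the imprimitivity bimodule $X$ (equivalently, from Cohen factorisation).

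For the inclusion $\Nn_{A_i}(D_i)\subseteq\Nn_A(D)$, I would view $n\in\Nn_{A_1}(D_1)$ as $\bigl(\begin{smallmatrix} n & 0 \\ 0 & 0\end{smallmatrix}\bigr)\in A$ and multiply out: $nDn^*$ consists of matrices whose only nonzero entry is the $(1,1)$-entry $nd_1n^*\in D_1$, so $nDn^*\subseteq D$. The argument for $n^*Dn$ is identical, and the case of $\Nn_{A_2}(D_2)$ is symmetric.

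The substantive content is the third inclusion. Given $\xi\in X$ with ${_{A_1}\langle \xi,\xi\cdot D_2\rangle}\subseteq D_1$ and $\langle D_1\cdot\xi,\xi\rangle_{A_2}\subseteq D_2$, I view $\xi$ as $\bigl(\begin{smallmatrix} 0 & \xi \\ 0 & 0\end{smallmatrix}\bigr)\in A$ and compute $\xi d\xi^*$ for a diagonal $d=d_1\oplus d_2\in D$: using the linking-algebra identity $uv^*={_{A_1}\langle u,v\rangle}$, the product collapses to the matrix whose $(1,1)$-entry is ${_{A_1}\langle \xi d_2,\xi\rangle}$ and whose other entries vanish. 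The $*$-property of the left inner product, ${_{A_1}\langle \xi\cdot b^*,\eta\rangle}={_{A_1}\langle \xi,\eta\cdot b\rangle}$, combined with the fact that $D_2$ is closed under adjoints, rewrites ${_{A_1}\langle \xi d_2,\xi\rangle}$ as ${_{A_1}\langle \xi,\xi d_2^*\rangle}\in D_1$ by hypothesis; hence $\xi D\xi^*\subseteq D_1\oplus 0\subseteq D$. A symmetric calculation using $u^*v=\langle u,v\rangle_{A_2}$ and the $*$-property $\langle a\cdot u,v\rangle_{A_2}=\langle u,a^*\cdot v\rangle_{A_2}$ of the right inner product gives $\xi^*d\xi\in 0\oplus D_2\subseteq D$.

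I do not anticipate any serious obstacle: the entire proof is essentially bookkeeping inside the linking-algebra matrix picture. The only subtlety is bridging the \emph{one-sided} hypotheses on the inner products and the \emph{two-sided} normaliser condition, which is handled precisely by flipping one factor across each inner product via its $*$-property and using that the $D_i$ are $*$-closed.
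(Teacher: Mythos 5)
Your proposal is correct and follows essentially the same route as the paper: check abelianness and the diagonal approximate unit, note $\Nn_{A_i}(D_i)\subseteq\Nn_A(D)$ by direct matrix multiplication, and verify the normaliser condition for $\xi\in X$ by computing $\xi d\xi^*$ and $\xi^* d\xi$ in the linking algebra, using the $*$-properties of the inner products and the $*$-closedness of the $D_i$ to match the one-sided hypotheses. Nothing is missing.
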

\begin{proof}
Fix approximate units $(u^i_j)_j$ for $A_i$ in $D_i$. Then $(u^1_j \oplus u^2_j)_j$ is an
approximate unit for $A$ in $D$, which is clearly an abelian $C^*$-subalgebra of $A$.
Clearly each $N_{A_i}(D_i) \subseteq N_A(D)$.

If ${_{A_1}}\langle \xi, \xi\cdot D_2\rangle \in D_1\text{ and }\langle D_1\cdot \xi,
\xi\rangle_{A_2} \in D_2$, then for $d_i \in D_i$,
\[
\begin{pmatrix} 0 & \xi\\ 0& 0\end{pmatrix}
\begin{pmatrix} d_1 & 0\\ 0& d_2\end{pmatrix}
\begin{pmatrix} 0 & \xi\\ 0& 0\end{pmatrix}^*
= \begin{pmatrix} _{A_1} \langle \xi, \xi \cdot d_2 \rangle & 0\\ 0&0 \end{pmatrix} \in D_1 \oplus D_2,
\]
and a similar computation shows that
\[
\begin{pmatrix} 0 & \xi\\ 0& 0\end{pmatrix}^*
\begin{pmatrix} d_1 & 0\\ 0& d_2\end{pmatrix} \begin{pmatrix} 0 &
\xi\\ 0& 0\end{pmatrix} = \begin{pmatrix} 0 & 0 \\ 0 & \langle d_1^* \cdot \xi , \xi \rangle_{A_2} \end{pmatrix} \in D.\qedhere
\]
\end{proof}

\begin{lem}\label{lem-linking-unital-quotient}
Let $\Gamma$ be a discrete group. For $i = 1,2$, let $\delta_i$ be a coaction of $\Gamma$
on a separable $C^*$-algebra $A_i$, and let $D_i \subseteq A_i^{\delta_i}$ be an abelian $C^*$-subalgebra containing an approximate unit for $A_i^{\delta_i}$. Suppose that $(X, \zeta)$ is an equivariant Morita equivalence between
$(A_1, D_1)$ and $(A_2, D_2)$. Let $A$ be the linking algebra of $X$, let $D = D_1 \oplus
D_2$, and define $\delta : A \to A \otimes C^*_r(\Gamma)$ by
\[
\delta \left(\begin{matrix} a_1 & \xi \\ \eta^* & a_2\end{matrix}\right)
    = \left(\begin{matrix} \delta_1(a_1) & \zeta(\xi) \\
    \zeta(\eta)^* & \delta_2(a_2)\end{matrix}\right).
\]
Then $\delta$ is a coaction of $\Gamma$ on $A$, $D$ is an abelian $C^*$-subalgebra of $A^\delta$ and contains an approximate unit for $A^\delta$, and $D'_{A^\delta} =
(D_1)'_{A_1^{\delta_1}} \oplus (D_2)'_{A_2^{\delta_2}}$. For $i = 1,2$, the pair $(P_i A P_i, P_i D_i)$ is equivariantly
isomorphic to $(A_i, D_i)$. We have $P_1 A P_2 = \clsp\{P_1 n P_2 : n \in
\Nn_\star(D)\}$.
\end{lem}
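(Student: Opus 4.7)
The plan is to verify the four assertions in order, exploiting the matrix structure $A = \left(\begin{smallmatrix} A_1 & X \\ X^* & A_2\end{smallmatrix}\right)$ and the fact that $P_1, P_2$ are complementary full projections in $M(A)$ with $P_i A P_i = A_i$, $P_1 A P_2 = X$, and (after the coaction is in place) $P_i \in A^\delta$.

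First, for the coaction $\delta$: writing $\delta$ block by block, multiplicativity and the $^*$-property reduce to the facts that each $\delta_i$ is a $^*$-homomorphism, that $\zeta$ is a right-Hilbert bimodule morphism (so $\zeta(a_1 \cdot \xi \cdot a_2) = \delta_1(a_1)\cdot \zeta(\xi)\cdot\delta_2(a_2)$ and $\zeta$ respects inner products), together with the matrix identity that reads off $xy^*$ and $y^*x$ as ${_{A_1}\langle x,y\rangle}$ and $\langle y,x\rangle_{A_2}$. Nondegeneracy of $\delta$ follows from nondegeneracy of $\delta_1$ and $\delta_2$ and of $\zeta$, and the coaction identity $(\delta \otimes 1)\circ\delta = (1\otimes\delta_\Gamma)\circ\delta$ is verified block by block from the same identities for $\delta_1$, $\delta_2$ and the standing hypothesis $(\zeta\otimes\id_\Gamma)\circ\zeta = (\id_X\otimes\delta_\Gamma)\circ\zeta$.

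Next, for the relative commutant, the inclusion $(D_1)'_{A_1^{\delta_1}}\oplus (D_2)'_{A_2^{\delta_2}} \subseteq D'_{A^\delta}$ is immediate. For the reverse, suppose $a = \left(\begin{smallmatrix} a_1 & \xi \\ \eta^* & a_2\end{smallmatrix}\right) \in A^\delta$ commutes with every $d_1 \oplus d_2 \in D$. Taking $d_1 \oplus 0$ and $0 \oplus d_2$ separately, the off-diagonal entries give $d_1 \cdot \xi = 0$ and $\xi \cdot d_2 = 0$ for all $d_i \in D_i$; since each $D_i$ contains an approximate unit for $A_i$, these force $\xi = 0$, and symmetrically $\eta = 0$. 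The diagonal entries then give $a_i \in (D_i)'_{A_i^{\delta_i}}$. With the commutant pinned down, the weakly Cartan conditions transfer as follows. That $D$ is abelian and contains an approximate unit for $A$ (hence for $A^\delta$) is Lemma~\ref{lem:LXdiagonal}. Since $D = D_1 \oplus D_2$, one has $\widehat{D} = \widehat{D}_1 \sqcup \widehat{D}_2$; for $\phi \in \widehat{D}_1$ (identified with $\phi_1 \oplus 0$), $\ker(\phi) = \ker(\phi_1) \oplus D_2$, and using $D_2 \subseteq (D_2)'_{A_2^{\delta_2}}$ together with the fact that $D_2$ contains an approximate unit for $(D_2)'_{A_2^{\delta_2}}$, a direct computation shows $J_\phi = J_{\phi_1} \oplus (D_2)'_{A_2^{\delta_2}}$. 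Hence $D'_{A^\delta}/J_\phi \cong (D_1)'_{A_1^{\delta_1}}/J_{\phi_1}$, which is unital by the weakly Cartan hypothesis on $D_1 \subseteq A_1^{\delta_1}$; the continuous-selection condition of Definition~\ref{dfn:semidiag}(\ref{it:sd unital sections}) then lifts from the corresponding condition for $D_1$ and $D_2$ via the clopen decomposition of $\widehat D$.

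Finally, the corner statements $P_i A P_i \cong A_i$ and $P_i D P_i = D_i$ are built into the construction, and equivariance is immediate from the block form of $\delta$. For the spanning statement $P_1 A P_2 = \clsp\{P_1 n P_2 : n \in \Nn_\star(D)\}$: because $\delta$ is a reduced coaction, $A = \clsp\bigcup_g A_g$, and multiplying on the left and right by $P_1, P_2 \in A^\delta$ gives $X = P_1 A P_2 = \clsp\bigcup_g P_1 A_g P_2 = \clsp \bigcup_g X_g$. So it suffices to handle each spectral subspace $X_g$. By hypothesis~\eqref{eq:Xg span}, $X_g$ is the closed span of elements $\xi \in X_g$ satisfying ${_{A_1}\langle \xi, \xi \cdot D_2\rangle} \subseteq D_1$ and $\langle D_1 \cdot \xi, \xi\rangle_{A_2} \subseteq D_2$; Lemma~\ref{lem:LXdiagonal} tells us that each such $\xi$, viewed as $P_1 \xi P_2 \in A$, is a normaliser of $D$, and it lies in $A_g$, so in $\Nn_g(D) \subseteq \Nn_\star(D)$. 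The main potential obstacle is the $J_\phi$ computation in the weakly Cartan step, where one has to use precisely that $D_2$ supplies an approximate unit not only for $A_2$ but for $(D_2)'_{A_2^{\delta_2}}$; everything else reduces to block-matrix bookkeeping.
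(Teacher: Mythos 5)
Your proposal is correct and follows essentially the same route as the paper: block-matrix verification of the coaction, killing the off-diagonal entries via Cohen factorisation to identify $D'_{A^\delta}$, identifying $D'_{A^\delta}/J_\phi$ with $(D_1)'_{A_1^{\delta_1}}/J_{1,\phi}$ for $\phi\in\widehat{D}_1$ (your explicit computation $J_\phi = J_{\phi_1}\oplus (D_2)'_{A_2^{\delta_2}}$ is exactly what underlies the paper's ``$\pi_\phi$ factors through $\pi_{1,\phi}$''), and deducing the spanning statement from $X=\clsp\bigcup_g X_g$, \eqref{eq:Xg span} and Lemma~\ref{lem:LXdiagonal}. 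No gaps.
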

\begin{proof}
It is routine to check that $\delta$ is a coaction and that $D$ is an abelian $C^*$-subalgebra of $A^\delta$ and contains an approximate unit for $A^\delta$. Clearly $(D_1)'_{A_1^{\delta_1}}
\oplus (D_2)'_{A_2^{\delta_2}} \subseteq D'_{A^\delta}$. For the reverse, fix
$\big(\begin{smallmatrix} a_1 & \xi \\ \eta^* & a_2 \end{smallmatrix}\big) \in
D'_{A^\delta}$. Then
\[
\begin{pmatrix} a_1  \otimes 1_{C^*_r(\Gamma)} & \xi \otimes 1_{C^*_r(\Gamma)} \\
                \eta^* \otimes 1_{C^*_r(\Gamma)} & a_2 \otimes 1_{C^*_r(\Gamma)} \end{pmatrix}
    = \begin{pmatrix} a_1 & \xi \\ \eta^* & a_2 \end{pmatrix} \otimes 1_{C^*_r(\Gamma)}
    = \delta\begin{pmatrix} a_1 & \xi \\ \eta^* & a_2 \end{pmatrix}
    = \begin{pmatrix} \delta_1(a_1) & \zeta(\xi) \\ \zeta(\eta)^* & \delta_2(a_2) \end{pmatrix},
\]
so each $a_i \in A^{\delta_i}$, and for $d_i \in D_i$,
\[
\begin{pmatrix}
a_1 d_1 & \xi \cdot d_2 \\
\eta^* \cdot d_1 & a_2 d_2
\end{pmatrix}
=
\begin{pmatrix}
a_1 & \xi \\
\eta^* & a_2
\end{pmatrix}
\begin{pmatrix}
d_1 & 0 \\
0 & d_2
\end{pmatrix}
=
\begin{pmatrix}
d_1 & 0 \\
0 & d_2
\end{pmatrix}
\begin{pmatrix}
a_1 & \xi \\
\eta^* & a_2
\end{pmatrix}
=
\begin{pmatrix}
d_1a_1 & d_1\cdot \xi \\
d_2 \cdot \eta^* & d_2a_2
\end{pmatrix},
\]
so each $a_i \in (D_i)'_{A_i^{\delta_i}}$. Moreover, $d_1 \cdot \xi = 0$ for all $d_1 \in
D_1$, so $\xi = 0$ by Cohen factorisation, and similarly $\eta^* = 0$. So
$\big(\begin{smallmatrix} a_1 & \xi \\ \eta^* & a_2
\end{smallmatrix}\big)  = \big(\begin{smallmatrix} a_1 & 0 \\ 0 & a_2
\end{smallmatrix}\big) \in (D_1)'_{A_1^{\delta_1}} \oplus (D_2)'_{A_2^{\delta_2}}$.

Each $(P_iA P_i, P_i D_i, \delta|_{P_i A P_i}) \cong (A_i, D_i, \delta_i)$ by
construction, and Lemma~\ref{lem:LXdiagonal} gives
\[
\{\xi \in X_g : {_{A_1}}\langle \xi, \xi\cdot D_2\rangle \subseteq D_1\text{ and }\langle D_1\cdot \xi, \xi\rangle_{A_2} \subseteq D_2\}
    \subseteq \Nn_\star(D)\quad\text{ for all $g \in G$.}
\]
Each $X_g \subseteq X = P_1 A P_2$, so $P_1 A P_2 = \clsp\{P_1 n P_2 : n \in
\Nn_\star(D)\}$ by~\eqref{eq:Xg span}.
\end{proof}

We call the system $(A, D, \delta)$ of Lemma~\ref{lem-linking-unital-quotient} the
\emph{linking system} for $(X, \zeta)$. The elements $P_i := 1_{M(D_i)} = 1_{M(A_i)}$ are
complementary full multiplier projections of $A$ and $(P_i A P_i, P_i D, \delta|_{P_i A
P_i}) \cong (A_i, D_i, \delta_i)$ for $i = 1,2$. So the $(A_i, D_i, \delta_i)$ are
complementary full subsystems of their linking system. The converse requires additional
hypotheses.

\begin{lem}\label{lem:linking->Me}
Let $A$ be a separable $C^*$-algebra, $\delta$ a coaction of a discrete group $\Gamma$ on
$A$, and $D\subseteq A$ an abelian $C^*$-subalgebra containing an approximate unit for $A^\delta$. Suppose that $P_1, P_2 \in M(D)$
are complementary full projections. Then for each $i$, we have that $\delta_i := \delta|_{P_i A P_i}$ is a coaction of $\Gamma$ on $A_i := P_i A P_i$ and $D_i := P_i D$ is an abelian $C^*$-algebra of $A_i^{\delta_i}$ and contains an approximate unit for $A_i^{\delta_i}$. If $P_1 A P_2 = \clsp\{P_1 n P_2 : n \in \Nn_\star(D)\}$, then $(P_1
A P_2, \delta|_{P_1 A P_2})$ is an equivariant Morita equivalence from $(A_1, D_1)$ to
$(A_2, D_2)$.
\end{lem}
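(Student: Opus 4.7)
The plan is to establish in turn that each $\delta_i$ is a coaction, that each $D_i$ is weakly Cartan in $A_i^{\delta_i}$, and that $(P_1 A P_2, \delta|_{P_1 A P_2})$ is an equivariant Morita equivalence. Since $P_1, P_2 \in M(D) \subseteq M(A^\delta)$ are complementary central full projections, the canonical extension of $\delta$ to $M(A)$ fixes each $P_i$; hence $\delta(P_i a P_i) = (P_i \otimes 1)\delta(a)(P_i \otimes 1) \in A_i \otimes C^*_r(\Gamma)$. The coaction identity passes to $\delta_i$ by restriction, and nondegeneracy follows by compressing $\clsp\delta(A)(1 \otimes C^*_r(\Gamma)) = A \otimes C^*_r(\Gamma)$ by $P_i \otimes 1$ on both sides. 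Clearly $D_i = P_i D$ is abelian, is contained in $A_i^{\delta_i}$, and contains the approximate unit $(P_i e_\lambda)_\lambda$ for $A_i$ obtained from any approximate unit $(e_\lambda)_\lambda \subseteq D$ for $A$.

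For the weak-Cartan property of $D_i$ in $A_i^{\delta_i}$, observe that $\widehat{D} = \widehat{D_1} \sqcup \widehat{D_2}$ as a disjoint union of clopen subsets, via extension-by-zero (each character of $D$ is zero on exactly one of $P_1, P_2$). Fixing $\phi_i \in \widehat{D_i}$ with extension $\tilde{\phi}_i \in \widehat{D}$, I will verify that $(D_i)'_{A_i^{\delta_i}} = P_i D'_{A^\delta}$ and that $J^{(D_i)'_{A_i^{\delta_i}}}_{\phi_i} = P_i J^{D'_{A^\delta}}_{\tilde{\phi}_i}$. Because $P_{3-i} \in \ker\tilde{\phi}_i$, we have $P_{3-i} D'_{A^\delta} \subseteq J^{D'_{A^\delta}}_{\tilde{\phi}_i}$, and the natural inclusion-induced map $(D_i)'_{A_i^{\delta_i}}/J_{\phi_i} \to D'_{A^\delta}/J_{\tilde{\phi}_i}$ is then a $^*$-isomorphism. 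This transfers condition~(3) of Definition~\ref{dfn:semidiag} and condition~($4'$) of Lemma~\ref{lem:semidiag} from $D$ to $D_i$ (for~($4'$), given $d \in D_i$, apply~($4'$) for $D$ to $d$ and use $\phi_i(d) = \tilde{\phi}_i(d)$).

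For the Morita equivalence, $X := P_1 A P_2$ is an $A_1$--$A_2$-imprimitivity bimodule because the $P_i$ are complementary full projections in $M(A)$; since the extension of $\delta$ to $M(A)$ fixes each $P_i$, the spectral subspace $X_g := \{x \in X : \delta(x) = x \otimes \lambda_g\}$ equals $P_1 A_g P_2$, and $\zeta := \delta|_X$ is a nondegenerate right-Hilbert bimodule morphism into $X \otimes C^*_r(\Gamma) \subseteq M(X \otimes C^*_r(\Gamma))$ satisfying the coaction identity $(\zeta \otimes \id)\circ\zeta = (\id_X \otimes \delta_\Gamma)\circ\zeta$. Applying the contractive spectral projection $\Phi_g : A \to A_g$ to the hypothesised density $X = \clsp\{P_1 n P_2 : n \in \Nn_\star(D)\}$ yields $X_g = \clsp\{P_1 n P_2 : n \in \Nn_g(D)\}$. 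For $n \in \Nn_g(D)$ and $d_2 = P_2 d \in D_2$ with $d \in D$, a direct computation in the linking algebra gives ${_{A_1}\langle P_1 n P_2, P_1 n P_2 \cdot d_2\rangle} = P_1 n (P_2 d^*) n^* P_1$; since $P_2 d^* \in D$ and $n \in \Nn(D)$, this lies in $P_1 D = D_1$, and a symmetric computation gives $\langle D_1 \cdot P_1 n P_2, P_1 n P_2\rangle_{A_2} \subseteq D_2$. Hence each $P_1 n P_2$ is a diagonal-preserving element of $X_g$, and~\eqref{eq:Xg span} holds.

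The main obstacle is the bookkeeping in the second step: carefully identifying the relative commutant and the $J$-ideal at $\phi_i$ inside $A_i^{\delta_i}$ with the expected corners of those for $A^\delta$, and then confirming that the induced map on quotients is a $^*$-isomorphism. Once these identifications are in place the rest of the argument is a sequence of routine manipulations with the projections $P_i$.
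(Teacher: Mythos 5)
Your proposal is correct and follows essentially the same route as the paper: fix the $P_i$ under the extended coaction to get $\delta_i$ and $X_g = P_1 A_g P_2$, identify $(D_i)'_{A_i^{\delta_i}}/J_{\phi_i}$ with $D'_{A^\delta}/J_{\tilde\phi_i}$ to transfer the weakly Cartan conditions, and verify~\eqref{eq:Xg span} by applying $\Phi_g$ and computing the inner products of the elements $P_1 n P_2$ with $n \in \Nn_g(D)$. The only difference is presentational: you carry out the commutant and ideal identifications directly, where the paper defers them to the computations in Lemma~\ref{lem-linking-unital-quotient}.
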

\begin{proof}
Let $A_i^{\delta_i}:=P_iA^\delta P_i$.
Fix an approximate unit $(u_j)_j$ for $A$ in $D$. It is obvious that $D_i$ is a $C^*$-subalgebra of $A_i^{\delta_i}$, that $(P_i u_j)_j$ is an approximate unit for $A_i^{\delta_i}$ in $D_i$, and that $\delta_i$ is a $*$-homomorphism. Since $P_i \in M(D_i) \subseteq M(A_i^{\delta_i})$, the extension $\tilde{\delta}$ of
$\delta$ to $M(A^{\delta_i}_i)$ satisfies $\tilde{\delta}(P_i) = P_i \otimes
1_{C^*_r(\Gamma)}$, so for $g \in \Gamma$ and $a \in A_g$, we have $\delta(P_i a P_i) =
\tilde{\delta}(P_i)(a \otimes \lambda_g)\tilde{\delta}(P_i) = P_i a P_i \otimes \lambda_g \in A_i
\otimes C^*_r(\Gamma)$. It follows that $\delta_i$ is a coaction of $\Gamma$ on $A_i$ with generalised fixed-point algebra $A_i^{\delta_i}$.

It is well known that $P_1 A P_2$ is an
$A_1$--$A_2$-imprimitivity bimodule. By definition of the inner products, $P_1 A P_2 =
\clsp\{P_1 n P_2 : n \in \Nn_\star(D)\}$ if and only if
\[
(P_1 A P_2)_g = \clsp\{\xi \in (P_1 A P_2)_g : {_{A_1}\langle \xi, \xi\cdot D_2\rangle} \subseteq D_1
    \text{ and }\langle D_1 \cdot \xi, \xi\rangle_{A_2} \subseteq D_2\}
\]
for each $g \in \Gamma$. That $\delta|_{P_1 A P_2}(a \cdot \xi \cdot b) = \delta_1(a)\cdot \delta|_{P_1 A P_2}(\xi)\cdot\delta_2(b)$ for all $\xi,a,b$ is just the homomorphism property of $\delta$. That $(\delta|_{P_1 A P_2} \otimes\id_\Gamma) \circ \delta|_{P_1 A P_2} =
(\id_{P_1 A P_2}\otimes\delta_\Gamma) \circ \delta|_{P_1 A P_2}$ follows from the
coaction identity for $\delta$.
\end{proof}

\begin{cor}\label{cor:ehn linking<->Me}
Let $\Gamma$ be a discrete group.  For $i = 1,2$, let $\delta_i$ be a coaction of
$\Gamma$ on a separable $C^*$-algebra $A_i$, and let $D_i \subseteq A_i^{\delta_i}$ be an abelian $C^*$-subalgebra containing an approximate unit for $A_i^{\delta_i}$ such that $\clsp\Nn_\star(D_i) = A_i$. Then $(A_1, D_1)$ and
$(A_2, D_2)$ are equivariantly Morita equivalent if and only if there exist a separable
$C^*$-algebra $A$, a coaction $\delta$ of $\Gamma$ on $A$, an abelian $C^*$-subalgebra $D\subseteq A^\delta$ that contains an approximate unit for $A^\delta$ such that $\clsp \Nn_\star(D) = A$, a pair of complementary full
projections $P_1, P_2\in M(D)$, and isomorphisms $\phi_i : P_i A P_i \to A_i$ such that
\[
    \phi_i(P_i D P_i) = D_i\qquad\text{and}\qquad \delta_i \circ \phi_i = (\phi_i \otimes \id_{C^*_r(\Gamma)}) \circ \delta|_{P_i A P_i}\quad\text{ for $i = 1,2$.}
\]
\end{cor}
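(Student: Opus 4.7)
The plan is to prove both directions by invoking Lemmas~\ref{lem-linking-unital-quotient} and~\ref{lem:linking->Me}; the corollary is essentially a repackaging of these two results, with the density hypothesis $\clsp \Nn_\star(D) = A$ added so that the correspondence between equivariant Morita equivalences and linking systems is tight.

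For the forward direction, assume $(X, \zeta)$ is an equivariant Morita equivalence between $(A_1, D_1)$ and $(A_2, D_2)$. I would form the linking system $(A, D, \delta)$ of Lemma~\ref{lem-linking-unital-quotient}, with $D = D_1 \oplus D_2$ and $P_i := 1_{M(D_i)} \in M(D)$; these are complementary full multiplier projections. Separability of $A$ follows from Lemma~\ref{lem:separable}, and Lemma~\ref{lem-linking-unital-quotient} supplies equivariant isomorphisms $\phi_i : P_i A P_i \to A_i$ carrying $P_i D P_i$ to $D_i$, together with the identity $P_1 A P_2 = \clsp\{P_1 n P_2 : n \in \Nn_\star(D)\}$. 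The only thing in the conclusion not already produced by these lemmas is the density $\clsp \Nn_\star(D) = A$.

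To verify this density, I would decompose $A = \bigoplus_{i,j=1}^{2} P_i A P_j$. The diagonal corners satisfy $P_i A P_i \cong A_i = \clsp \Nn_\star(D_i)$ by hypothesis, and the inclusion $\Nn_\star(D_i) \subseteq \Nn_\star(D)$ is immediate because $P_j \in M(D)$ commutes with $D$. For the off-diagonal corner $P_1 A P_2$, it suffices to show $P_1 n P_2 \in \Nn_\star(D)$ whenever $n \in \Nn_\star(D)$: for $d \in D$, abelianness of $D$ combined with $P_j \in M(D)$ yields $(P_1 n P_2) d (P_1 n P_2)^* = P_1 \bigl(n (P_2 d P_2) n^*\bigr) P_1 \in P_1 D P_1 \subseteq D$, and symmetrically $(P_1 n P_2)^* d (P_1 n P_2) \in D$; homogeneity is preserved because each $P_i$ lies in $M(A^\delta)$, so $P_1 A_g P_2 \subseteq A_g$. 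The corner $P_2 A P_1$ is handled by taking adjoints.

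For the reverse direction, given $(A, D, \delta, P_1, P_2, \phi_1, \phi_2)$ as in the statement, I would transport the structure along the $\phi_i$ so that $A_i$ is identified with $P_i A P_i$. Cutting the identity $\clsp \Nn_\star(D) = A$ by $P_1$ on the left and $P_2$ on the right yields $P_1 A P_2 = \clsp\{P_1 n P_2 : n \in \Nn_\star(D)\}$, which is precisely the hypothesis of Lemma~\ref{lem:linking->Me}. That lemma then produces the required equivariant Morita equivalence between $(P_1 A P_1, P_1 D P_1)$ and $(P_2 A P_2, P_2 D P_2)$, and hence, via the $\phi_i$, between $(A_1, D_1)$ and $(A_2, D_2)$. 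The only non-routine step in the whole argument is the normalizer-density check in the forward direction; everything else is bookkeeping with the two preceding lemmas.
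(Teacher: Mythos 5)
Your proposal is correct and follows essentially the same route as the paper: the forward direction is Lemma~\ref{lem-linking-unital-quotient} plus the check that $\clsp\Nn_\star(D)=A$ (diagonal corners from the hypotheses $\clsp\Nn_\star(D_i)=A_i$ and $\Nn_\star(D_i)\subseteq\Nn_\star(D)$, off-diagonal corners from the normaliser description of $P_1AP_2$), and the reverse direction is exactly the application of Lemma~\ref{lem:linking->Me} after cutting down by the $P_i$. Your explicit verification that $P_1nP_2\in\Nn_\star(D)$ for $n\in\Nn_\star(D)$ is a harmless elaboration of what the paper leaves implicit.
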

\begin{proof}
We have $P_2 A P_1 = (P_1 A P_2)^* \subseteq \Nn_\star(D)$, and each $P_i A P_i \subseteq
A \subseteq \clsp\Nn_\star(D)$. So Lemma~\ref{lem-linking-unital-quotient} gives ``only
if." Lemma~\ref{lem:linking->Me} gives ``if" because $A = \clsp \Nn_\star(D)$ forces $P_i
AP_i = \clsp\{P_i n P_i : n \in \Nn_\star(D)\} \subseteq \Nn_\star(D_i)$.
\end{proof}

\begin{lem}\label{lem:normalizer-linking-alg}
Let $\Gamma$ be a discrete group.  For $i = 1,2$, let $\delta_i$ be a coaction of
$\Gamma$ on a separable $C^*$-algebra $A_i$, and let $D_i \subseteq A_i^{\delta_i}$ be an abelian $C^*$-subalgebra containing an approximate unit for $A_i^{\delta_i}$. Suppose that $(X, \zeta)$ is an equivariant $(A_1,
D_1)$--$(A_2, D_2)$-equivalence. Let $(A, D, \delta)$ be the linking
system. Fix $g \in \Gamma$ and $n = \big(\begin{smallmatrix} a_1 & \xi \\
\eta^* & a_2 \end{smallmatrix}\big) \in \Nn_g(D)$. Then $a_1^*\cdot \xi = \xi \cdot a_2^*
= a_1  \cdot \eta = \eta \cdot a_2 = 0$. Moreover, $a_1 \in \Nn_g(D_1)$, $a_2 \in
\Nn_g(D_2)$, and $\langle D_1 \cdot \xi,  \xi \rangle_{A_2} \cup \langle D_1 \cdot \eta,
\eta \rangle_{A_2} \subseteq D_2$, and ${_{A_1}\langle \xi \cdot D_2, \xi \rangle} \cup
{_{A_1}\langle \eta \cdot D_2, \eta \rangle} \subseteq D_{2}$.
\end{lem}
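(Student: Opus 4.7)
The plan is to extract every conclusion from a single block-by-block expansion of $n d n^*$ and $n^* d n$ in the $2\times 2$-matrix picture of the linking algebra, using the hypothesis that both products lie in $D = D_1 \oplus D_2$. Throughout I would use the standard identifications $xy^* = {_{A_1}\langle x, y\rangle}$ and $x^* y = \langle x, y\rangle_{A_2}$ for $x, y \in X$, together with the imprimitivity-bimodule identity $\langle a \cdot x, y\rangle_{A_2} = \langle x, a^* \cdot y\rangle_{A_2}$.

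First I would settle the four orthogonality relations from the off-diagonal blocks. For $d = d_1 \oplus d_2$ a direct expansion gives $(n d n^*)_{12} = a_1 d_1 \eta + \xi d_2 a_2^*$ and $(n^* d n)_{12} = a_1^* d_1 \xi + \eta d_2 a_2$, both of which must vanish. Specialising to $d_2 = 0$ and letting $d_1$ run through an approximate unit for $A_1$ drawn from $D_1$ (which exists by Definition~\ref{dfn:semidiag}\eqref{it:sd ai}) kills the second summand in each case and produces $a_1 \eta = 0$ and $a_1^* \xi = 0$; specialising to $d_1 = 0$ and using the corresponding approximate unit for $A_2$ in $D_2$ yields $\xi a_2^* = 0$ and $\eta a_2 = 0$.

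The normaliser statements for $a_1$ and $a_2$ come from the pure diagonal blocks. With $d = d_1 \oplus 0$, the $(1,1)$-entries $a_1 d_1 a_1^*$ of $n d n^*$ and $a_1^* d_1 a_1$ of $n^* d n$ must lie in $D_1$, so $a_1 \in \Nn(D_1)$; the symmetric computation with $d = 0 \oplus d_2$ gives $a_2 \in \Nn(D_2)$. The grading $a_i \in (A_i)_g$ then follows by comparing the $(i,i)$-blocks in the identity $\delta(n) = n \otimes \lambda_g$ forced by $n \in A_g$: this reads $\delta_i(a_i) = a_i \otimes \lambda_g$, so $a_i \in \Nn_g(D_i)$.

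Finally, the inner-product statements fall out of the mixed diagonal blocks. The $(2,2)$-entry of $n^*(d_1 \oplus 0) n$ is $\xi^* d_1 \xi$, and the bimodule identity rewrites this as $\langle d_1^* \cdot \xi, \xi\rangle_{A_2}$; since $D_1$ is self-adjoint, the collection of such elements is precisely $\langle D_1 \cdot \xi, \xi\rangle_{A_2}$, and this must lie in $D_2$. The parallel computation with $n(d_1 \oplus 0)n^*$ gives the analogous statement for $\eta$, while the $(1,1)$-entries of $n(0 \oplus d_2)n^*$ and $n^*(0 \oplus d_2)n$ are ${_{A_1}\langle \xi \cdot d_2, \xi\rangle}$ and ${_{A_1}\langle \eta \cdot d_2, \eta\rangle}$, both of which land in $D_1$. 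There is no real obstacle here — the argument is essentially bookkeeping in the $2 \times 2$-matrix picture — and the only subtlety is applying the imprimitivity-bimodule convention in the correct direction in each block so that the expressions $\xi^* d_1 \xi$ and $\xi d_2 \xi^*$ land in the appropriate inner-product form.
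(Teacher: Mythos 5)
Your proof is correct and follows essentially the same route as the paper's: expand $n(d_1\oplus d_2)n^*$ and $n^*(d_1\oplus d_2)n$ blockwise in the linking algebra, read off the orthogonality relations from the vanishing off-diagonal entries (taking limits along approximate units from the $D_i$), get $a_i\in\Nn(D_i)$ and the inner-product containments from the diagonal entries, and extract the grading by comparing blocks in $\delta(n)=n\otimes\lambda_g$. The only remark worth making is that your conclusion ${_{A_1}\langle \xi\cdot D_2,\xi\rangle}\cup{_{A_1}\langle \eta\cdot D_2,\eta\rangle}\subseteq D_1$ is the correct one (these are $A_1$-valued inner products); the ``$\subseteq D_2$'' in the lemma statement is a typo.
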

\begin{proof}
Let $d_1 \in D_1$ and let $d_2 \in D_2$.  Then
\begin{align*}
n^* ( d_1 \oplus 0 ) n
    &= \begin{pmatrix}
        a_1^* d_1 a_1 & a_1^* d_1 \cdot \xi \\
        \xi^* \cdot d_1a_1 & \langle d_1^* \cdot \xi, \xi \rangle_{A_2}
    \end{pmatrix},
&n^* ( 0  \oplus d_2 ) n
    &= \begin{pmatrix}
        {_{A_1}\langle \eta , \eta\cdot d_2\rangle} & \eta\cdot d_2a_2 \\
        a_2^* d_2 \cdot \eta^* & a_2^* d_2 a_2
    \end{pmatrix},\\
n ( d_1 \oplus 0 ) n^*
    &= \begin{pmatrix}
        a_1 d_1 a_1^* & a_1 d_1 \cdot \eta \\
        \eta^*\cdot d_1a_1^* & \langle d_1^* \eta, \eta \rangle_{A_2}
    \end{pmatrix},\text{ and}
&n ( 0 \oplus d_2 ) n^*
    &= \begin{pmatrix}
        {_{A_1}\langle \xi, \xi \cdot d_2\rangle} & \xi \cdot d_2a_2^*  \\
        a_2 d_2 \cdot \xi^* & a_2 d_2 a_2^*
    \end{pmatrix}.
\end{align*}

That $n^* ( d_1 \oplus 0 ) n, n ( d_1 \oplus 0 )n^* \in D$, gives $a_1 \in \Nn(D_1)$ and
$(a_1^*d_1)\cdot \xi = (\xi_ \cdot d_2) \cdot a_2^* = (a_1 d_1 ) \cdot \eta = (\eta \cdot
d_2 ) \cdot a_2 = 0$. Taking the limit as the $d_i$ range over approximate identities for
the $A_i$ proves the first assertion. Since $n \in \Nn_g(D)$ we have
\[
\begin{pmatrix}
    a_1 \otimes \lambda_g & \xi \otimes \lambda_g \\
    \eta^* \otimes \lambda_g & a_2 \otimes \lambda_g \\
\end{pmatrix}
    = n \otimes \lambda_g
    = \delta(n)
    = \begin{pmatrix}
        \delta_1(a_1) & \zeta(\xi) \\
        \zeta(\eta)^* & \delta_2(a_2)
    \end{pmatrix},
\]
so $a_i \in \Nn_g(D_i)$. Since $n^* (d_1 \oplus 0) n, n(d_1 \oplus 0)n^* \in D_1 \oplus
D_2$, we have $\langle d_1^* \cdot \xi, \xi \rangle_{A_2}, \langle d_1^* \eta , \eta
\rangle_{A_2} \in D_2$ and ${_{A_1}\langle \eta , \eta\cdot d_2\rangle}, {_{A_1}\langle
\xi, \xi\cdot d_2 \rangle} \in D_1$, which proves the second assertion.
\end{proof}

In what follows, we sometimes deal with Weyl groupoids for multiple triples $(A, D,
\delta)$. If $\Hh = \Hh(A, D, \delta)$ is a Weyl groupoid, and if $n \in \Nn_\star(D)
\subseteq A$ and $\phi \in \osupp(n^*n) \subseteq \widehat{D}$, we write $[n, \phi]_\Hh$
for the corresponding element of $\Hh$.

\begin{lem}\label{lem:normalizer-comp}
Let $\Gamma$ be a discrete group.  For $i = 1,2$, let $\delta_i$ be a coaction of
$\Gamma$ on a separable $C^*$-algebra $A_i$, and let $D_i \subseteq A_i^{\delta_i}$ be an abelian $C^*$-subalgebra containing an approximate unit for $A_i^{\delta_i}$. Suppose that $(X, \zeta)$ is an equivariant $(A_1,
D_1)$--$(A_2, D_2)$-equivalence. Let $(A, D, \delta)$ be the linking system. Let $\Hh =
\Hh(A, D, \delta)$. Suppose that
\[
n = \begin{pmatrix}n_1 & \xi \\\eta^* & n_2\end{pmatrix}\in N_g(D)
    \quad\text{ where $n_1 \in A_1$, $n_2 \in A_2$, and $\xi,\eta \in X$.}
\]
Fix $i \in \{1,2\}$, and suppose that $\phi\in\osupp(n^*n)$ satisfies $[n,\phi]_\Hh \in
\widehat{D_i} \Hh \widehat{D_i}$. Then $\phi\in\osupp(n_i^*n_i)$, and $[n,\psi]_\Hh =
[n_i,\psi]_\Hh$ for all $\psi\in\osupp(n_i^*n_i)$.
\end{lem}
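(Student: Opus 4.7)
I will handle $i=1$; the case $i=2$ follows by the obvious symmetry (swap $\xi\leftrightarrow\eta$, $n_1\leftrightarrow n_2$, and $_{A_1}\langle\cdot,\cdot\rangle\leftrightarrow\langle\cdot,\cdot\rangle_{A_2}$). By Lemma~\ref{lem:normalizer-linking-alg} (and taking adjoints in $A$) I have $n_1\in \Nn_g(D_1)$, $_{A_1}\langle \eta,\eta\rangle\in D_1$, and
\[
n_1^*\xi \;=\; \xi^*n_1 \;=\; n_1\eta \;=\; \eta n_2 \;=\; \xi n_2^* \;=\; 0.
\]
A direct block-matrix computation in the linking algebra then gives
\[
n^*n \;=\; \bigl(n_1^*n_1 + {_{A_1}\langle \eta,\eta\rangle}\bigr)\oplus\bigl(\langle\xi,\xi\rangle_{A_2} + n_2^*n_2\bigr).
\]
The key observation is the following: since $n_1\cdot\eta=0$, the left $A_1$-linearity of $_{A_1}\langle \cdot,\cdot\rangle$ gives $n_1\cdot {_{A_1}\langle \eta,\eta\rangle} = {_{A_1}\langle n_1\cdot\eta,\eta\rangle}=0$, so $n_1^*n_1\cdot b=0$ where $b:={_{A_1}\langle \eta,\eta\rangle}\in D_1$. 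Thus $n_1^*n_1$ and $b$ are mutually orthogonal positive elements of $D_1$, with disjoint open supports in $\widehat{D_1}$.

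For the first conclusion, since $[n,\phi]_\Hh\in \widehat{D_1}\Hh\widehat{D_1}$, the range $\alpha_n(\phi)\in\widehat{D_1}$, so $\alpha_n(\phi)$ vanishes on $D_2$. A computation using $\eta n_2=0$ shows that for $d_2\in D_2$,
\[
\phi(n^*n)\,\alpha_n(\phi)(0\oplus d_2) \;=\; \phi\bigl(n^*(0\oplus d_2)n\bigr) \;=\; \phi\bigl({_{A_1}\langle \eta,\eta\cdot d_2\rangle}\bigr).
\]
Letting $d_2$ range over an approximate unit for $A_2$ in $D_2$, continuity of the inner product gives $\phi(b)=0$. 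Combined with the block decomposition of $n^*n$ and $\phi\in\widehat{D_1}$, this yields $\phi(n^*n)=\phi(n_1^*n_1)>0$, so $\phi\in\osupp(n_1^*n_1)$.

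For the second conclusion, fix $\psi\in\osupp(n_1^*n_1)$ and verify (R1)--(R4) of Lemma~\ref{lem:equivalence-construction} for $(n,\psi)\sim (n_1,\psi)$. (R1) is immediate. For (R2), note that $P_1$ is $\delta$-invariant (it lies in $M(D_1)\subseteq M(A^\delta)$), so $n_1=P_1 n P_1\in A_g$, whence $n^*n_1\in A_g^*A_g\subseteq A^\delta$. For (R3), the key observation gives $\psi(b)=0$; Cauchy--Schwarz applied to the positive sesquilinear form $(x,y)\mapsto\psi({_{A_1}\langle x,y\rangle})$ then gives $\psi({_{A_1}\langle \eta,\eta\cdot d_2\rangle})=0$ for every $d_2\in D_2$, and the block-decomposition formulas yield $\alpha_n(\psi)(d)=\alpha_{n_1}(\psi)(d)$ for every $d\in D$, so the open set $U:=\osupp(n_1^*n_1)$ witnesses (R3). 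For (R4), using $\xi^*n_1=0$ I get $n^*n_1 = n_1^*n_1$ as an element of $A$ (the off-diagonal entries vanish); choosing $d\in D_1$ with $\osupp(d)\subseteq\osupp(n_1^*n_1)$ and $\psi(d)=1$, the element $dn^*n_1 d = dn_1^*n_1 d$ lies in $D\subseteq D'_{A^\delta}$ and $\pi_\psi$ evaluates it to $\psi(n_1^*n_1)\cdot 1$; since $\psi(n^*n)=\psi(n_1^*n_1)$, the normalising factors collapse and $U_{n^*n_1}^\psi=1_{D'_{A^\delta}/J_\psi}$, which is trivially in $\mathcal{U}_0(D'_{A^\delta}/J_\psi)$.

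The main obstacle is pinpointing the key orthogonality $n_1^*n_1\cdot b=0$: without it, one can only verify (R3) on the possibly smaller set $\{\psi\in\osupp(n_1^*n_1):\psi(b)=0\}$, which is \emph{a priori} only a closed subset of $\osupp(n_1^*n_1)$ and not all of it. Once that single algebraic identity, rooted in $n_1\eta=0$ plus left $A_1$-linearity of the inner product, is isolated, the rest of the argument reduces to the symbolic calculus already developed for $\alpha_n$ and the Weyl unitaries.
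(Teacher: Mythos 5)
Your proof is correct and follows essentially the same route as the paper's: extract $n_1\in \Nn_g(D_1)$ and the vanishing products from Lemma~\ref{lem:normalizer-linking-alg}, use the block form of $n^*dn$ together with $\phi,\psi\in\widehat{D_1}$ to compare $\alpha_n$ with $\alpha_{n_1}$, and then check (R1)--(R4) with $n^*n_1=n_1^*n_1$ making (R4) trivial. The one place you genuinely improve on the paper's write-up is your ``key observation'': the paper's verification of (R3) simply asserts $\psi(n^*n)=\psi(n_1^*n_1)$ and then deduces $\alpha_n(\psi)=\alpha_{n_1}(\psi)$ from an inequality of characters, whereas you derive this equality explicitly from $n_1\cdot\eta=0$ via $n_1^*n_1\cdot{_{A_1}\langle\eta,\eta\rangle}=0$, so that $\osupp(n_1^*n_1)$ and $\osupp({_{A_1}\langle\eta,\eta\rangle})$ are disjoint and the $\eta$-contribution to $\psi(n^*(d_1\oplus d_2)n)$ dies (your Cauchy--Schwarz step for the cross term is also legitimate, since ${_{A_1}\langle\eta\cdot D_2,\eta\rangle}\subseteq D_1$ so $\psi$ is defined and positive on the relevant elements). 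This makes the argument self-contained where the paper is terse, at no cost in length.
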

\begin{proof}
By symmetry, it suffices to prove the result for $i = 1$. By
Lemma~\ref{lem:normalizer-linking-alg}, we have $n_1 \in \Nn_g(D_1)$ and $n_2 \in
\Nn_g(D_2)$. Since $[n, \phi]_\Hh \in \widehat{D_1} \Hh \widehat{D_1}$, we have $\phi,
\alpha_n(\phi) \in \widehat{D}_1$.

To see that $\phi\in\osupp(n_1^*n_1)$, fix $d_1 \in D_1$ with $\alpha_n(\phi)(d_1) = 1$.
Then
\begin{align*}
0 < \phi(n^*n)
    &= \alpha_n(\phi)(d_1)\phi(n^*n)
    = \phi(n^* d_1 n)\\
    &= \phi\begin{pmatrix}
        n_1^*d_1n_1 & n_1^*d_1\cdot\xi \\
        \xi^* \cdot d_1 n_1 & \langle\xi, d_1\cdot \xi\rangle_{A_2}
    \end{pmatrix}
    = \phi(n_1^*d_1n_1)
    = \alpha_{n_1}(\phi)(d_1) \phi(n_1^*n_1).
\end{align*}

Since Lemma~\ref{lem:normalizer-linking-alg} gives $n_1 \in \Nn_g(D_1)$, we have $n_1^*n
\in A^\delta$. Let $U:=\osupp(n_1^*n_1)\subseteq\osupp(n^*n)$ and $\psi\in U$. Then $\psi
\in \widehat{D}_1$, and therefore $\psi$ vanishes on $D_2$. So for $d =
\big(\begin{smallmatrix} d_1 & 0 \\ 0 & d_2\end{smallmatrix}\big) \in \widehat{D}_+$ with
$\alpha_{n_1}(\psi)(d) = 1$,
\begin{align*}
\alpha_n(\psi)(d)\psi(n^*n)
    &= \psi(n^*d n)
    = \psi\begin{pmatrix}
        n_1^*d_1 n_1 + {_{A_1}\langle \eta\cdot d_2, \eta\rangle} & a_1^*d_1\cdot \xi + \eta \cdot d_2 a_2\\
        \xi^* \cdot d_1 a_1 + a_2^* d_2 \cdot \eta & \langle\xi, d_1\cdot\xi\rangle_{A_2} + n^*_2d_2n_2
    \end{pmatrix}\\
    &= \psi(n_1^*d_1n_1 + {_{A_1}\langle \eta\cdot d_2, \eta\rangle})
    \ge \psi(n_1^*d_1n_1)
    = \alpha_{n_1}(\psi)(d)\psi(n_1^*n_1)
    > 0.
\end{align*}
Since $\psi(n^*n)=\psi(n_1^*n_1)$ and $D_+$ separates points in $\widehat{D}$, this gives
$\alpha_n(\psi) = \alpha_{n_1}(\psi)$, so $\alpha_{n_1}$ and $\alpha_n$ agree on $U$. To
see that $[n, \psi]_\Hh = [n_1, \psi]_\Hh$ it remains only to establish~(R4). We have
$n^* n_1 = \big(\begin{smallmatrix} n_1^* n_1 & 0 \\ \xi^* n_1 & 0\end{smallmatrix}\big)
= \big(\begin{smallmatrix} n_1^* n_1 & 0 \\ 0 & 0 \end{smallmatrix}\big)$. Thus for $d
\in D$ such that $\psi(d) = 1$ and $\osupp(d) \subseteq U$, we have $\pi_\psi
\big((\psi(n^* n)^{-1/2} \psi (n_1^* n_1))^{-1/2}(d n^* n_1 d) \big) =
1_{D'_{A^\delta}/J_\phi}$. Hence, $[n, \psi] = [n_1, \psi]$.
\end{proof}

\begin{proof}[Proof of Theorem~\ref{thm:eqiv systems->eqiv groupoids}]
We have that $\delta$ is a coaction and $D\subseteq A^\delta$ is an abelian $C^*$-subalgebra containing an approximate unit for $A^\delta$ by
Lemma~\ref{lem-linking-unital-quotient}. Clearly the $\widehat{D}_i$ are complementary
open subsets of $\widehat{D}$. We show that $\widehat{D}_1$ is full ($\widehat{D}_2$ is
full by a symmetric argument). Fix $\phi \in \widehat{D}_2$. Since $X$ is full in $A_2$,
there exists $\xi \in X$ such that $\langle \xi, \xi\rangle_{A_2} \in D_2$ and
$\phi(\langle \xi, \xi\rangle_{A_2}) \not= 0$. Lemma~\ref{lem-linking-unital-quotient}
shows that $\delta$ is a coaction, so as in Section~\ref{sec:background}, we have $A =
\clsp\bigcup_{g \in \Gamma} A_g$. Let $P_i = 1_{M(D_i)} \in M(A)$. Then $X = P_1 A P_2 =
\clsp\bigcup_{g \in \Gamma} P_1 A_g P_2 = \clsp\bigcup_{g \in \Gamma} X_g$.
Hence~\eqref{eq:Xg span} shows that
\[\textstyle
X = \clsp \bigcup_{g \in \Gamma} \{\eta \in X_g : {_{A_1}\langle \eta, \eta\cdot
    D_2\rangle} \subseteq D_1\text{ and } \langle D_1 \cdot\eta, \eta\rangle_{A_2} \subseteq D_2\}.
\]
In particular, we can approximate $\xi$ by an element $\sum_j \eta_j$ with each $\eta_j \in
\Nn_\star(D)$ satisfying $\eta_j^* D_2 \eta_j \subseteq D_1$. Since $\phi(\langle \xi,
\xi\rangle_{A_2}) \not= 0$, we may assume that there exist $j,k$ such that $\phi(\langle\eta_j,
\eta_k\rangle_{A_2}) \not= 0$. Using that $X$ is an imprimitivity bimodule at the last
step, we calculate:
\begin{align*}
0 &\not= |\phi(\langle\eta_j, \eta_k\rangle_{A_2})|^2
    = \phi(\langle\eta_j, \eta_k\rangle_{A_2}\langle \eta_k, \eta_j\rangle_{A_2})\\
    &= \phi\big(\big\langle \langle\eta_j, \eta_k \cdot \langle \eta_k, \eta_j\rangle_{A_2}\big\rangle_{A_2}\big)
    = \phi\big(\big\langle \eta_j, {_{A_1}\langle\eta_k, \eta_k\rangle}\cdot \eta_j\rangle_{A_2}\big\rangle_{A_2}\big).
\end{align*}
Rewriting this in terms of multiplication in $A$, we obtain $\phi(\eta_j^* \eta_k
\eta_k^*\eta_j) \not= 0$, and therefore $\eta := \eta_j$ satisfies
$\phi(\eta\eta^*)\alpha_{\eta}(\phi)(\eta_k \eta^*_k) \not= 0$. In particular, $\eta \in
\Nn_\star(D)$ and $\phi \in \dom(\alpha_\eta)$. Since $\eta D_2 \eta^* \subseteq D_1$, we
have $r([\eta, \phi]) = \alpha_\eta(\phi) \in \widehat{D}_1$. So $\phi = s([\eta,\phi])
\in s(\widehat{D}_1 \Hh)$.

We must show that each $\widehat{D}_i\Hh \widehat{D}_i \cong \Hh(A_i, D_i, \delta_i)$. It
suffices to do this for $i = 1$. Let $j_1 : A_1 \to A$ be the inclusion map. For $\phi
\in \widehat{D}_1$, let $\overline{\phi}$ be the extension of $\phi$ to $D$ given by
$\overline{\phi}(d_1)=\phi(d_1)$ for $d_1\in D_1$ and $\overline{\phi}(d_2)=0$ for $d_2\in
D$.

We claim that there is a map $\Theta \colon \mathcal{H}(A_1, D_1, \delta_1) \to
\widehat{D}_1\mathcal{H}(A, D, \delta) \widehat{D}_1$ such that
\[
    \Theta([n, \phi]_{\Hh_1}) = [j_1(n), \overline{\phi}]_\Hh \quad\text{ for all $[n, \phi] \in \Hh(A, D, \delta)$.}
\]

For this, suppose that $[n,\phi]_{\Hh_1} = [m,\psi]_{\Hh_1}$. Then $\phi = \psi$ by
definition of $\sim$ in $(A_1, D_1, \delta_1)$, so $(j_1(n),\overline{\phi})$ and
$(j_1(m), \overline{\psi})$ satisfy~(R1). Since $n^*m \in A_1^{\delta_1}$, we have $n,m
\in \Nn_g(D_1)$ for some $g$. Hence $\delta(j_1(n)) = j_1(n) \otimes \lambda_g$ and
$\delta(j_1(m)) = j_1(m) \otimes \lambda_g$, so $j_1(n)^*j_1(m) \in A^\delta$,
giving~(R2). Since $(n,\phi), (m,\psi)$ satisfy~(R3), there is a neighbourhood $U
\subseteq \widehat{D}_1$ of $\phi$ such that $U\subseteq\osupp(n^*n)\cap\osupp(m^*m)$,
and $\alpha_n|_U = \alpha_m|_U$. Since $\widehat{D}_1$ is open in $\widehat{D}$, the set
$\overline{U}:=\{\overline{\psi}:\psi\in U\}$ is a neighbourhood of $\overline{\phi}$ in
$\widehat{D}$. Lemma~\ref{lem:normalizer-comp} gives $\alpha_{j_1(n)}|_{\overline{U}} =
\alpha_n|_{\overline{U}} = \alpha_m|_{\overline{U}} = \alpha_{j_1(m)}|_{\overline{U}}$.
So $(j_1(n),\overline{\phi})$ and $(j_1(m), \overline{\psi})$ satisfy~(R3).
Lemma~\ref{lem-linking-unital-quotient} gives $D'_{A^\delta} = (D_1)'_{A_1^{\delta_1}}
\oplus (D_2)'_{A_2^{\delta_2}}$. Since $J_{\overline{\phi}} \lhd D'_{A^\delta}$ is
generated by $\{d \in D : \phi(d) = 0\}$, the corresponding ideal $J_{1,\phi} \lhd
(D_1)'_{A_1^{\delta_1}}$ satisfies $J_{\overline{\phi}} = j_1(J_{1,\phi}) \oplus
(D_2)'_{A_2^{\delta_2}}$. Hence the projection map $p_1 : D'_{A^{\delta}} \to
(D_1)'_{A_1^{\delta_1}}$ descends to an isomorphism $\tilde{p}_1 :
D'_{A^\delta}/J_{\overline{\phi}} \cong (D_1)'_{A_1^{\delta_1}}/J_{1,\phi}$. By
definition of $U^\phi_{n^*m}$ and $U^{\overline{\phi}}_{j_n(n)^*j_1(m)}$ as in
Notation~\ref{ntn:Un*m},
\begin{equation}\label{eq:Us consistent}
    \tilde{p}_1(U^{\overline{\phi}}_{j_n(n)^*j_1(m)}) = U^\phi_{n^*m}.
\end{equation}
Since $U^\phi_{n^*m} \sim_h 1$, we deduce that $U^{\overline{\phi}}_{j_n(n)^*j_1(m)}
\sim_h 1$. So $[j_1(n), \overline{\phi}] = [j_1(m), \overline{\psi}]$, and $\Theta$ is
well defined. For any $[n, \phi]_{\Hh_1} \in \Hh_1$, we have $s(\Theta([n,\phi])) =
\overline{\phi} \in \widehat{D}_1$, and $r(\Theta([n,\phi])) =
\alpha_{j_1(n)}(\overline{\phi}) = \overline{\alpha_n(\phi)} \in \widehat{D}_1$ by
Lemma~\ref{lem:normalizer-comp}. So $\Theta(\Hh(A_1, D_1, \delta_1)) \subseteq
\widehat{D}_1 \Hh \widehat{D}_1$.

To see that $\Theta$ is surjective, fix $[n, \phi]_\Hh \in
\widehat{D}_1\Hh \widehat{D}_1$. Write $n = \big(\begin{smallmatrix} n_1 & \xi \\
\eta^* & n_2\end{smallmatrix}\big)$. Then Lemma~\ref{lem:normalizer-comp} gives $[n,
\phi]_\Hh = [j_1(n_1), \overline{\phi}]_\Hh = \Theta([n_1, \phi]_{\Hh_1})$. For
injectivity, suppose that $\Theta([n,\phi]_{\Hh_1}) = \Theta([m,\psi]_{\Hh_1})$. Then
$\phi = \psi$, and $n^*m \in A_1^{\delta_1}$ because $j_1(n^*m) \in A^\delta$ and $\delta
\circ j_1 = (j_1 \otimes \id_{C^*_r(\Gamma)}) \circ \delta_1$.
Lemma~\ref{lem:normalizer-comp} gives $\alpha_{j_1(n)} = \alpha_n$ and $\alpha_{j_1(m)} =
\alpha_m$. So $\alpha_{n}$ and $\alpha_{m}$ agree on a neighbourhood of $\phi$. Since
$U^{\overline{\phi}}_{j_1(n)^*j_1(m)} \in \Uu_0(D'_{A^\delta}/J_\phi)$,
Equation~\eqref{eq:Us consistent} gives $U^\phi_{n^*m} \in
\Uu_0\big((D_1)'_{A_1^{\delta_1}}/J_{1,\phi}\big)$. So $[n,\phi]_{\Hh_1} =
[m,\psi]_{\Hh_1}$.

For $[n,\phi]_{\Hh_1} \in \Hh_1$, we have $s(\Theta([n,\phi]_{\Hh_1})) = s([j_1(n),
\overline{\phi}]_\Hh) = \overline{\phi} = \overline{s([n,\phi]_{\Hh_1})}$, so
Lemma~\ref{lem:normalizer-comp} gives $r(\Theta([n,\phi]_{\Hh_1})) =
\alpha_{j_1(n)}(\overline{\phi}) = \overline{\alpha_n(\phi)} = \overline{r([n,
\phi]_{\Hh_1})}$. Furthermore, if $[n,\phi]_{\Hh_1}$ and $[m,\psi]_{\Hh_1}$ are
composable, then since $j_1$ is a homomorphism,
\[
\Theta([n,\phi]_{\Hh_1})\Theta([m,\psi]_{\Hh_1})
    = [j_1(n),\overline{\phi}]_\Hh [j_1(m),\overline{\psi}]_\Hh
    = [j_1(nm), \overline{\psi}]_\Hh
    = \Theta([n,\phi]_{\Hh_1}[m,\psi]_{\Hh_1}).
\]
Thus $\Theta$ is a groupoid homomorphism.

We must show that $\Theta$ is a homeomorphism. For $n \in \Nn_\star(D_1)$ and $U
\subseteq \osupp(n^*n) \subseteq \widehat{D}_1$ open, $\Theta(Z(n, U)) = Z(j_1(n),
\overline{U})$. Hence $\Theta$ is an open map. Now fix a basic open set $Z(n, U) \cap
\widehat{D}_1\Hh\widehat{D}_1 \subseteq \widehat{D}_1\Hh\widehat{D}_1$. Write $n =
\big(\begin{smallmatrix} n_1 & \xi \\ \eta^* & n_2\end{smallmatrix}\big)$ and let
$U':=\{\phi\in\widehat{D_1}:\overline{\phi}\in U\}$. It then follows from
Lemma~\ref{lem:normalizer-comp} that if $[n,\phi]_\Hh\in Z(n, U) \cap
\widehat{D}_1\Hh\widehat{D}_1$, then $\Theta([n_1,\phi_1])=[j_1(n_1),
\overline{\phi_1}]=[n,\phi]$ where $\phi_1$ is the element of $\widehat{D_1}$ such that
$\overline{\phi_1}=\phi$. Hence $\Theta^{-1}(Z(n, U) \cap \widehat{D}_1\Hh\widehat{D}_1)
= Z(n_1,U')$ is open.
\end{proof}

\section{Equivalence of graded groupoids and equivariant Morita equivalence}\label{sec:E
vs Me}

In this section we show how equivalence of graded groupoids relates to equivariant Morita
equivalence of pairs $(A, D)$. Our main result in this direction is the following.

\begin{thm}\label{thm:2}
Let $\Gamma$ be a discrete group and let $(\G_1,c_1), (\G_2,c_2)$ be $\Gamma$-graded
second-countable locally compact Hausdorff \'etale groupoids, and suppose that each
$\Io{c_i^{-1}(\id_\Gamma)}$ is torsion-free and abelian. The following are equivalent:
\begin{enumerate}
\item\label{it:Gs equiv} the graded groupoids $(\G_1, c_1)$ and $(G_2, c_2)$ are
    graded equivalent;
\item\label{it:Gs linked} there exist a second-countable locally compact Hausdorff
    \'etale groupoid $G$, a grading $c$ of $\G$ by $\Gamma$ such that
    $\Io{c^{-1}(\id_\Gamma)}$ is torsion-free and abelian, a pair of complementary open
    $\G$-full subsets $K_1, K_2\subseteq \go$, and isomorphisms $\kappa_1 : K_1\G K_1
    \to \G_1$ and $\kappa_2 : K_2 \G K_2 \to \G_2$ such that $c_i \circ \kappa_i =
    c|_{K_i\G K_i}$;
\item\label{it:SDSes equiv} $(C^*_r(\G_1), C_0(\G_1^{(0)}))$ and $(C^*_r(\G_2),
    C_0(\G_2^{(0)}))$ are equivariantly Morita equivalent; and
\item\label{it:SDSes linked} there exist a separable $C^*$-algebra $A$, a coaction
    $\delta$ of $\Gamma$ on $A$, an abelian $C^*$-subalgebra $D\subseteq A^\delta$ containing an approximate unit for $A^\delta$ such
    that $\clsp \Nn_\star(D) = A$, a pair of complementary full projections $P_1,
    P_2\in M(D) \subseteq M(A)$, and isomorphisms $\phi_i : P_i A P_i \to
    C^*_r(\G_i)$ such that $\phi_i(P_i D P_i) = D_i$ and $\delta_i \circ \phi_i =
    (\phi_i \otimes \id_{C^*_r(\Gamma)}) \circ \delta|_{P_i A P_i}$.
\end{enumerate}
\end{thm}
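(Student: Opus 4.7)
The plan is to prove the cycle (\ref{it:Gs equiv})$\iff$(\ref{it:Gs linked})$\implies$(\ref{it:SDSes linked})$\iff$(\ref{it:SDSes equiv})$\implies$(\ref{it:Gs equiv}), leveraging the linking-object formalisms already developed at both the groupoid and $C^*$-algebraic level. The equivalence (\ref{it:Gs equiv})$\iff$(\ref{it:Gs linked}) is immediate from Lemma~\ref{lem:graded equiv}: given a graded equivalence $(Z, c_Z)$ from $(\G_1, c_1)$ to $(\G_2, c_2)$, the linking groupoid $L(\G_1, \G_2)$ equipped with the cocycle assembled from $c_1$, $c_2$, $c_Z$ and $c_Z^{-1}$ has complementary $\G$-full clopen unit-space pieces $\go_i$ whose reduced groupoids are $(\G_i, c_i)$; the converse just notes that in a common graded groupoid, $K_1 \G K_2$ inherits a canonical graded equivalence structure. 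Similarly, (\ref{it:SDSes equiv})$\iff$(\ref{it:SDSes linked}) is exactly Corollary~\ref{cor:ehn linking<->Me}, once one observes via Lemma~\ref{lem:bisecton alpha} that $\clsp\Nn_\star(C_0(\G_i^{(0)})) = C^*_r(\G_i)$.

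For (\ref{it:Gs linked})$\implies$(\ref{it:SDSes linked}), take $A := C^*_r(\G)$ with coaction $\delta := \delta_c$ of Lemma~\ref{lem:coaction} and $D := C_0(\go)$. By Lemma~\ref{lem:gpd sds}, $D$ is weakly Cartan in $A^\delta$; and $\clsp\Nn_\star(D)=A$ by Lemma~\ref{lem:bisecton alpha}. Since $K_1, K_2$ are complementary open sets in $\go$, each is also closed, and hence their characteristic functions give complementary projections $P_i := \chi_{K_i} \in C_b(\go) \subseteq M(C_0(\go)) \subseteq M(A)$. The fullness of $K_i$ in $\G$ translates directly to fullness of $P_i$ in $A$ (any $\xi \in C_c(\G)$ can be written as a sum over bisections, and multiplication by $\chi_{K_i}$ on either side produces $\G$-translates whose span covers $\G$). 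The compressions satisfy $P_i A P_i = C^*_r(K_i \G K_i)$ equivariantly with respect to $\delta_c$ and $\delta_{c_i}$, giving the required $\phi_i$ via the isomorphism of Theorem~\ref{thm:1}(1) applied to $\kappa_i$.

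For (\ref{it:SDSes equiv})$\implies$(\ref{it:Gs equiv}), apply Theorem~\ref{thm:eqiv systems->eqiv groupoids} to an equivariant Morita equivalence $(X,\zeta)$ between $(C^*_r(\G_1), C_0(\G_1^{(0)}), \delta_{c_1})$ and $(C^*_r(\G_2), C_0(\G_2^{(0)}), \delta_{c_2})$. Writing $\Hh$ for the extended Weyl groupoid of the linking system $(A,D,\delta)$, this theorem provides a graded equivalence $\widehat{D}_1\Hh\widehat{D}_2$ between the graded extended Weyl groupoids $(\Hh(C^*_r(\G_i), C_0(\G_i^{(0)}), \delta_{c_i}), c_{\delta_{c_i}})$. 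The grading on $\widehat{D}_1\Hh\widehat{D}_2$ is inherited from $c_\delta$, and the intertwining property for left/right actions follows because $c_\delta$ is already a cocycle on $\Hh$. Now Proposition~\ref{prp:isomorphism} supplies graded isomorphisms $\theta_i : (\G_i, c_i) \cong (\Hh(C^*_r(\G_i), C_0(\G_i^{(0)}), \delta_{c_i}), c_{\delta_{c_i}})$, through which the equivalence transports to a graded $(\G_1, c_1)$--$(\G_2, c_2)$-equivalence.

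The main obstacle is bookkeeping: the coactions, gradings and cocycles must remain strictly compatible through four very different constructions (the linking groupoid, the linking $C^*$-system, the extended Weyl groupoid of a linking system, and the reconstruction isomorphism of Proposition~\ref{prp:isomorphism}). In particular, one must verify in step (\ref{it:Gs linked})$\implies$(\ref{it:SDSes linked}) that the restriction of $\delta_c$ to the corner $\chi_{K_i} C^*_r(\G) \chi_{K_i}$ corresponds, under the natural identification with $C^*_r(K_i\G K_i) = C^*_r(\G_i)$, to $\delta_{c_i}$; and in step (\ref{it:SDSes equiv})$\implies$(\ref{it:Gs equiv}) that the grading $c_\delta$ on $\Hh(A,D,\delta)$ restricts on $\widehat{D}_i\Hh\widehat{D}_i$ to the grading that corresponds to $c_{\delta_{c_i}}$, so that the resulting equivalence is genuinely graded. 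Both reduce to the fact, built into the proof of Theorem~\ref{thm:eqiv systems->eqiv groupoids}, that the spectral subspaces of $\delta$ decompose compatibly with $P_1, P_2$.
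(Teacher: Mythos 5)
Your proposal is correct and follows essentially the same route as the paper: (1)$\iff$(2) via Lemma~\ref{lem:graded equiv}, (3)$\iff$(4) via Corollary~\ref{cor:ehn linking<->Me}, (3)$\implies$(1) via Theorem~\ref{thm:eqiv systems->eqiv groupoids} together with the reconstruction isomorphisms of Proposition~\ref{prp:isomorphism}, and (2)$\implies$(4) by taking $A = C^*_r(\G)$, $D = C_0(\go)$, $\delta = \delta_c$ and $P_i = 1_{K_i}$, using Lemmas \ref{lem:gpd sds}~and~\ref{lem:bisecton alpha}. The only cosmetic difference is that the paper invokes \cite[Theorem~4.1]{SimWil} for the fullness of the corner projections and the corner isomorphisms where you sketch the bisection argument directly.
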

\begin{proof}
Lemma~\ref{lem:graded equiv} gives \mbox{(\ref{it:Gs equiv})\,$\iff$\,(\ref{it:Gs
linked})}, Corollary~\ref{cor:ehn linking<->Me} gives \mbox{(\ref{it:SDSes
equiv})\,$\iff$\,(\ref{it:SDSes linked})}, and Theorem~\ref{thm:eqiv systems->eqiv
groupoids} gives \mbox{(\ref{it:SDSes equiv})\;$\implies$\;(\ref{it:Gs equiv})}. For
\mbox{(\ref{it:Gs linked})\;$\implies$\;(\ref{it:SDSes linked})}, let $(\G, c)$, $K_i$
and $\kappa_i$ be as in~(\ref{it:Gs linked}). Lemma~\ref{lem:gpd sds} shows that
$C_0(\go)$ is an abelian $C^*$-subalgebra of $C_r^*(G)^{\delta_c}$ and contains an approximate unit for $C_r^*(G)^{\delta_c}$. Theorem~4.1 of
\cite{SimWil} shows that $P_i := 1_{K_i} \in M(C_0(\go))$ defines complementary full
projections, and the inclusions $C_c(\G_i) \hookrightarrow C_c(\G)$ induce isomorphisms
$\phi_i : P_i C^*_r(\G) P_i \to C^*_r(\G_i)$ that carry $P_i C_0(\go)$ to $C_0(\go_i)$.
We have $P_1 C^*_r(\G) P_2 = \overline{C_c(\go_1 \G \go_2)} \subseteq C^*_r(\G)$. Since
the $\G_i$ are \'etale, the Haar system of \cite[Lemma~2.2]{SimWil} consists of counting
measures, so $\G$ is also \'etale. Hence Lemma~\ref{lem:bisecton alpha} gives $P_1
C^*_r(\G) P_2 = \clsp\{P_1 n P_2 : n \in \Nn_\star(C_0(\go))\}$. Fix $i \in \{1,2\}$. For
$g \in \Gamma$ and $f \in C_c(\G_i)_g$, $\supp(\phi_i(f)) = \supp(f) \subseteq
c_i^{-1}(g)$, and so $\delta_i(\phi_i(f))
    = \phi_i(f) \otimes \lambda_g
    = (\phi_i \otimes \id_{C^*_r(\Gamma)})(f \otimes \lambda_g)
    = (\phi_i \otimes \id_{C^*_r(\Gamma)})(\delta(f))$.
Since $P_i C^*_r(\G) P_i = \clsp \bigcup_g C_c(\G_i)_g$, we obtain $\delta_i \circ \phi_i
= (\phi_i \otimes \id_{C^*_r(\Gamma)}) \circ \delta|_{P_i A P_i}$.
\end{proof}

In some situations the following stronger form of equivalence than the one considered in
Theorem~\ref{thm:2}, is interesting (see Corollaries \ref{cor:strong equiv
ample}~and~\ref{cor:two-sided conjugacy}). If $\Gamma$ is the trivial group, then
Theorems \ref{thm:2}~and~\ref{thm:3} both reduce to Theorem~\ref{thm:no-coaction 2}.

\begin{thm}\label{thm:3}
Let $\Gamma$ be a locally compact group, let $\G_1$ and $\G_2$ be second-countable
locally compact Hausdorff \'etale groupoids, and let $c_i : \G_i \to \Gamma$ be
continuous cocycles such that $\Io{c_i^{-1}(\id_\Gamma)}$ is torsion-free and abelian. The
following are equivalent:
\begin{enumerate}
\item\label{it:Gs strong equiv} there is a graded $(\G_1, c_1)$--$(\G_2,
    c_2)$-equivalence $(Z, c_Z)$ such that $c_Z^{-1}(\id_\Gamma)$ is a
    $c_1^{-1}(e)$--$c_2^{-1}(\id_\Gamma)$-equivalence;
\item\label{it:Gs strong linked} there exist a second-countable locally compact
    Hausdorff \'etale groupoid $\G$, a grading $c$ of $\G$ by $\Gamma$ such that
    $\Io{c^{-1}(\id_\Gamma)}$ is torsion-free and abelian, a pair of complementary open
    $c^{-1}(\id_\Gamma)$-full subsets $K_1, K_2\subseteq \go$, and isomorphisms $\kappa_1 :
    K_1 \G K_1 \to \G_1$ and $\kappa_2 : K_2 \G K_2 \to \G_2$ such that $c_i \circ
    \kappa_i = c|_{K_i \G K_i}$;
\item\label{it:SDSes strong equiv} there exists an equivariant $(C^*_r(\G_1),
    C_0(\G_1^{(0)}))$--$(C^*_r(\G_2), C_0(\G_2^{(0)}))$-imprimitivity bimodule $(X,
    \zeta)$ such that $X_{\id_\Gamma}$ is a
    $C^*_r(\G_1)^{\delta_{c_1}}$--$C^*_r(\G_2)^{\delta_{c_2}}$-imprimitivity
    bimodule; and
\item\label{it:SDSes strong linked} there exist a separable $C^*$-algebra $A$, a
    coaction $\delta$ of $\Gamma$ on $A$, an abelian $C^*$-subalgebra $D\subseteq A^\delta$ containing an approximate unit for $A^\delta$ such that $\clsp \Nn_\star(D) = A$, a pair of complementary
    $A^\delta$-full projections $P_1, P_2\in M(D)$, and isomorphisms $\phi_i : P_i A
    P_i \to C^*_r(\G_i)$ such that $\phi_i(P_i D P_i) = D_i$ and $\delta_i \circ
    \phi_i = (\phi_i \otimes \id_{C^*_r(\Gamma)}) \circ \delta|_{P_i A P_i}$.
\end{enumerate}
\end{thm}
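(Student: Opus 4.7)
The plan is to mirror the proof of Theorem~\ref{thm:2}, upgrading every fullness hypothesis from the whole groupoid/algebra to its trivially-graded subobject. The four conditions break naturally into two pairs connected by linking constructions, and then one needs to show that (\ref{it:Gs strong linked}) and (\ref{it:SDSes strong linked}) imply each other (equivalently, that (\ref{it:Gs strong equiv}) and (\ref{it:SDSes strong equiv}) imply each other) by essentially the same arguments used in Theorem~\ref{thm:2}, but tracking the trivially-graded part throughout.

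For (\ref{it:Gs strong equiv})$\iff$(\ref{it:Gs strong linked}), I would apply Lemma~\ref{lem:graded equiv}: given a graded equivalence $(Z, c_Z)$, the linking groupoid $\G = \G_1 \sqcup Z \sqcup Z^{\op} \sqcup \G_2$ with cocycle $c$ extending $c_1$, $c_Z$ and $c_2$ has $K_i := \go_i$ complementary open subsets. The key observation is that $K_1 c^{-1}(e) K_2 = Z \cap c^{-1}(e) = c_Z^{-1}(e)$, so $c_Z^{-1}(e)$ is a $c_1^{-1}(e)$--$c_2^{-1}(e)$-equivalence precisely when $K_1, K_2$ are $c^{-1}(e)$-full in $\G$. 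Similarly for (\ref{it:SDSes strong equiv})$\iff$(\ref{it:SDSes strong linked}): apply Corollary~\ref{cor:ehn linking<->Me}, and observe that in the linking system $(A, D, \delta)$, we have $A^\delta = A_1^{\delta_1} \oplus X_e \oplus X_e^* \oplus A_2^{\delta_2}$ (the linking algebra of $X_e$), so $P_1, P_2$ are $A^\delta$-full exactly when $X_e$ is a full $A_1^{\delta_1}$--$A_2^{\delta_2}$-imprimitivity bimodule.

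For (\ref{it:Gs strong linked})$\implies$(\ref{it:SDSes strong linked}), I would follow the argument given in Theorem~\ref{thm:2} verbatim to produce $A = C^*_r(\G)$, $D = C_0(\go)$, $\delta = \delta_c$ and $P_i = 1_{K_i}$, with the isomorphisms $\phi_i$ induced by the inclusions $C_c(\G_i) \hookrightarrow C_c(\G)$. The only new ingredient is verifying that $P_i$ are $A^\delta$-full: by Lemma~\ref{lem:neutral subalg}, the generalised fixed-point algebra $A^\delta = C^*_r(\G)^{\delta_c}$ is isomorphic to $C^*_r(c^{-1}(e))$ via the inclusion of $C_c(c^{-1}(e))$, and under this identification $1_{K_i} \in M(C_0(\go))$ becomes the corresponding projection in $M(C^*_r(c^{-1}(e)))$; fullness of $K_i$ in $c^{-1}(e)$ then translates directly into $A^\delta$-fullness of $P_i$ via a standard computation with functions supported on bisections of $c^{-1}(e)$.

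Finally (\ref{it:SDSes strong equiv})$\implies$(\ref{it:Gs strong equiv}) uses Theorem~\ref{thm:eqiv systems->eqiv groupoids} to produce the graded equivalence $\widehat{D}_1 \Hh \widehat{D}_2$ from the linking system of $(X,\zeta)$, together with Proposition~\ref{prp:isomorphism} to identify this with a graded equivalence of the original groupoids. The main obstacle, and the step I expect to require the most care, is matching the trivially-graded part: one needs to show that $c_\delta^{-1}(e) \cap (\widehat{D}_1 \Hh \widehat{D}_2)$, viewed inside $\Hh(A, D, \delta)$, is a $c_1^{-1}(e)$--$c_2^{-1}(e)$-equivalence. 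The natural strategy is to observe that normalisers of $D$ in $A^\delta$ include all the $P_i n P_j$ with $n \in \Nn_\star(D) \cap A^\delta$, and that when $X_e$ is an honest imprimitivity bimodule, Theorem~\ref{thm:no-coaction eqiv systems->eqiv groupoids} applied to $X_e$ produces an equivalence of $\Hh(A_i^{\delta_i}, D_i)$'s; one then checks that this equivalence coincides, under the identifications of Proposition~\ref{prp:isomorphism}, with $c_\delta^{-1}(e) \cap (\widehat{D}_1 \Hh \widehat{D}_2)$. Concretely, a class $[n, \phi] \in \widehat{D}_1 \Hh \widehat{D}_2$ lies in $c_\delta^{-1}(e)$ iff $n$ may be taken in $P_1 A^\delta P_2 = X_e$, which is precisely the condition that defines the underlying set of the Weyl equivalence coming from $X_e$.
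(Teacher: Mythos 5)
Your proposal is correct and follows essentially the same route as the paper's proof: (\ref{it:Gs strong equiv})$\iff$(\ref{it:Gs strong linked}) via Lemma~\ref{lem:graded equiv} with the observation that $c_Z$-fullness of the $K_i$ is exactly the condition that $c_Z^{-1}(e)$ be a $c_1^{-1}(e)$--$c_2^{-1}(e)$-equivalence; (\ref{it:SDSes strong equiv})$\iff$(\ref{it:SDSes strong linked}) via Corollary~\ref{cor:ehn linking<->Me}, using that $A^\delta$ is the linking algebra of $X_e$; (\ref{it:Gs strong linked})$\implies$(\ref{it:SDSes strong linked}) by rerunning the proof of Theorem~\ref{thm:2} and deducing $C^*_r(\G)_e$-fullness of the $P_i$ from $c^{-1}(e)$-fullness of the $K_i$ via functions supported on bisections of $c^{-1}(e)$; and (\ref{it:SDSes strong equiv})$\implies$(\ref{it:Gs strong equiv}) via Theorem~\ref{thm:eqiv systems->eqiv groupoids}. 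Your additional care in identifying $c_\delta^{-1}(e)\cap(\widehat{D}_1\Hh\widehat{D}_2)$ with the Weyl equivalence arising from $X_e$ is a point the paper leaves implicit in its one-line citation for that last implication, so no correction is needed.
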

\begin{proof}
$(\ref{it:Gs strong equiv})\implies (\ref{it:Gs strong linked})$. By
Lemma~\ref{lem:graded equiv}, $G := L(G_1, G_2)$ and $c : G\to\Gamma$ given by $c|_{G_i}
= c_i$, $c|_Z = c_Z$ and $c(z^{\op}) = c_{Z}(z)^{-1}$ for $z \in Z$ constitute a graded
groupoid with $\Io{c^{-1}(\id_\Gamma)} \cong \Io{c_1^{-1}(\id_\Gamma)} \sqcup \Io{c_2^{-1}(\id_\Gamma)}$
torsion-free and abelian, and each $(\go_i \G \go_i, c) \cong (\G_i, c_i)$. Since
$c_Z^{-1}(\id_\Gamma)$ is a $c_1^{-1}(\id_\Gamma)$--$c_2^{-1}(\id_\Gamma)$-equivalence, each $\G_i^{(0)}$ is
$c^{-1}(\id_\Gamma)$-full.

The proof of Corollary~\ref{cor:ehn linking<->Me} gives \mbox{(\ref{it:SDSes strong
equiv})\,$\iff$\,(\ref{it:SDSes strong linked})}, and Theorem~\ref{thm:eqiv systems->eqiv
groupoids} gives \mbox{(\ref{it:SDSes strong equiv})\;$\implies$\;(\ref{it:Gs strong
equiv})}.

For \mbox{(\ref{it:Gs strong linked})\;$\implies$\;(\ref{it:SDSes strong linked})}, the
proof of \mbox{(\ref{it:Gs linked})\;$\implies$\;(\ref{it:SDSes linked})} in
Theorem~\ref{thm:2} shows that $P_i := 1_{K_i}$ defines full multiplier projections in
$M(C^*_r(\G))$ such that $P_i C^*_r(\G) P_i$ is equivariantly isomorphic to $C^*(\G_i)$.
To see that $P_1$ is $C^*_r(\G)^{\delta_c}$-full, we show that $C_c(\G^{(0)}) \subseteq
C^*_r(\G)^{\delta_c} P_1 C^*_r(\G)^{\delta_c}$. It suffices to show that for each $x \in \go$ there exists
$a \in C^*_r(G)^{\delta_c} P_1 C^*_r(\G)^{\delta_c} \cap C_0(\go)$ such that $a(x) > 0$. This is clear for
$x \in K_1$, so fix $x \in K_2$. Since $K_1$ is $c^{-1}(\id_\Gamma)$-full there is an open
bisection $U \subseteq c^{-1}(\id_\Gamma)$ with $x \in r(U)$ and $s(U) \subseteq K_1$. Write $U
\cap r^{-1}(x) = \{\gamma\}$. Fix $f \in C_c(U)$ with $f(\gamma) = 1$. Then $ff^* = f P
f^* \in C_0(K_2) \cap C^*_r(G)^{\delta_c} P_1 C^*_r(\G)^{\delta_c}$, and $ff^*(x) = 1$. Symmetry shows that
$P_2$ is also full.
\end{proof}

Next we specialise to ample graded groupoids. Recall that $\R$ is the discrete groupoid
$\NN\times\NN$, and $\mathcal{C}$ denotes the canonical diagonal in $C^*(\R) \cong \Kk$. Given a
grading $c : \G \to \Gamma$ of a groupoid $\G$, we define $\bar{c} : \G\times\R \to
\Gamma$ by $\bar{c}(\eta_1,\eta_2)=c(\eta_1)$.

As in \cite{CRS}, we say $\G_1$ and $\G_2$ are \emph{weakly Kakutani $c_1$--$c_2$
equivalent} if there are open $c_i^{-1}(\id_\Gamma)$-full subsets $X_i\subseteq \G_i^{(0)}$ and an
isomorphism $\kappa: X_1 \G_1 X_1 \to X_2 \G_2 X_2$ such that $c_2\circ\kappa = c_1$ on
$X_1 \G_1 X_1$. As in \cite{Matui}, if $X_1, X_2$ are clopen, we say $\G_1$ and $\G_2$
are \emph{Kakutani $c_1$--$c_2$ equivalent}. Theorem~\ref{thm:3} combined with the
results of \cite{CRS} gives the following.

\begin{cor}\label{cor:strong equiv ample}
Let $\Gamma$ be a discrete group, and let $(\G_1,c_1), (\G_2,c_2)$ be second-countable
$\Gamma$-graded ample Hausdorff groupoids such that each $\Io{c_i^{-1}(\id_\Gamma)}$ is
torsion-free and abelian. The following are equivalent:
\begin{enumerate}
\item there exists a graded $(\G_1, c_1)$--$(\G_2, c_2)$-equivalence $(Z, c_Z)$ such that
    $c_Z^{-1}(\id_\Gamma)$ is a $c_1^{-1}(\id_\Gamma)$--$c_2^{-1}(\id_\Gamma)$-equivalence;
\item there exist a second-countable ample Hausdorff groupoid $\G$, a grading $c$ of
    $\G$ by $\Gamma$ such that $\Io{c^{-1}(\id_\Gamma)}$ is torsion-free and abelian, a pair
    of complementary clopen $c^{-1}(\id_\Gamma)$-full subsets $X_1,X_2\subseteq \go$, and
    isomorphisms $\kappa_i: X_i \G X_i \to \G_i$ such that $X_1\cup X_2=\go$, and
    each $c_i \circ \kappa_i|_{X_i \G X_i} = c|_{X_i \G X_i}$;
\item there is an isomorphism $\kappa:\G_1\times\R\to\G_2\times\R$ such that
    $\bar{c}_2\circ\kappa=\bar{c}_1$;
\item $\G_1$ and $\G_2$ are Kakutani $c_1$--$c_2$ equivalent;
\item $\G_1$ and $\G_2$ are weakly Kakutani $c_1$--$c_2$ equivalent;
\item there is an equivariant $(C^*_r(\G_1), C_0(\G_1^{(0)}))$--$(C^*_r(\G_2),
    C_0(\G_2^{(0)}))$-imprimitivity bimodule $(X,\zeta)$ such that $X_{\id_\Gamma}$ is a
    $C^*_r(c_1^{-1}(\id_\Gamma))$--$C^*_r(c_2^{-1}(\id_\Gamma))$-imprimitivity bimodule;
\item there exist a separable $C^*$-algebra $A$, a coaction $\delta$ of $\Gamma$ on
    $A$, an abelian $C^*$-subalgebra $D\subseteq A^\delta$ containing an approximate unit for $A^\delta$, a pair of complementary
    $A^\delta$-full projections $P_1, P_2\in M(D)$, and isomorphisms $\phi_i :
    P_1AP_1\to C^*_r(\G_i)$ such that $P_1 A P_2 = \clsp\{P_1 n P_2 : n \in
    \Nn_\star(D)\}$ and such that $\phi_i(P_i D P_i) = D_i$ and $\delta_i \circ
    \phi_i = (\phi_i \otimes \id_{C^*_r(\Gamma)}) \circ \delta|_{P_i A P_i}$;
\item there is an isomorphism $\phi:C^*_r(\G_1) \otimes \Kk \to
    C^*_r(\G_2)\otimes\Kk$ satisfying $\phi(C_0(\G_1^{(0)}) \otimes \mathcal{C}) =
    C_0(\G_2^{(0)})\otimes \mathcal{C}$ and $(\delta_{c_2}\otimes\id_\Kk)\circ\phi =
    (\phi\otimes\id_{C^*_r(\Gamma)})\circ(\delta_{c_1}\otimes\id_\Kk)$;
\item there are $C^*_r(c_i^{-1}(e))$-full projections $p_i\in M(C_0(\G_1^{(0)}))$ and an
    isomorphism $\phi : p_1C^*_r(\G_1)p_1 \to p_2C^*_r(\G_2)p_2$ such that
    $\phi(p_1C_0(\G_1^{(0)})) = p_2C_0(\G_1^{(0)})$, and $\delta_{c_2} \circ \phi =
    (\phi\otimes\id_{C^*_r(\Gamma)}) \circ \delta_{c_1}$ on $p_1C^*_r(\G_1)p_1$; and
\item there are $C^*_r(c_i^{-1}(\id_\Gamma))$-full ideals $I_i \subseteq C_0(\G_i^{(0)})$ and an
    isomorphism of hereditary subalgebras $\phi : I_1C^*_r(\G_1)I_1 \to I_2C^*_r(\G_2)I_2$ such
    that $\phi(I_1)=I_2$ and $\delta_{c_2} \circ \phi = (\phi\otimes\id_{C^*_r(\Gamma)}) \circ
    \delta_{c_1}$ on $I_1C^*_r(\G_1)I_1$.
\end{enumerate}
\end{cor}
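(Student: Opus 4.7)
The plan is to leverage Theorem~\ref{thm:3} as the backbone, adding to it the ample-specific equivalences (3), (4), (5), (8), (9), (10). Applying Theorem~\ref{thm:3} directly (and observing that in the ample setting every open $c^{-1}(e)$-full subset of $\go$ contains a clopen $c^{-1}(e)$-full subset by the existence of a basis of compact open bisections) yields the equivalences among (1), (2), (6), (7). So what remains is to thread (3)--(5) and (8)--(10) into this web.

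For the groupoid-side additions, I would follow the scheme of \cite{CRS}. The implication \mbox{(4)\,$\Rightarrow$\,(5)} is trivial, and \mbox{(5)\,$\Rightarrow$\,(2)} is obtained by taking $\G$ to be the groupoid obtained from gluing $\G_1$ and $\G_2$ along the given isomorphism $X_1\G_1 X_1 \to X_2\G_2 X_2$, equipped with the unique cocycle restricting to $c_1$ and $c_2$, and $K_i := X_i$; and \mbox{(2)\,$\Rightarrow$\,(4)} uses that in the ample case an open $c^{-1}(e)$-full set contains a clopen $c^{-1}(e)$-full subset on which $\G$ restricts to a Kakutani $c_1$--$c_2$-equivalence. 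For (3), the usual argument showing $\G \cong \G\times\R$ whenever $\G$ admits two complementary clopen $\G$-full subsets adapts verbatim with gradings once one pulls back $c$ along the first coordinate projection; conversely, taking $K_i := \G_i^{(0)}\times\{(0,0)\}$ inside $(\G_i\times\R)^{(0)}$ recovers (2). This gives \mbox{(2)\,$\iff$\,(3)\,$\iff$\,(4)\,$\iff$\,(5)}, each time respecting the grading.

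For the $C^*$-algebra additions, the idea is to characterise the graded Morita equivalence of (6)/(7) at the level of stabilisations. Starting from (7), the Brown--Green--Rieffel theorem applied to the separable $C^*$-algebras $P_i A P_i$ furnishes an isomorphism $C^*_r(\G_1)\otimes\Kk \cong C^*_r(\G_2)\otimes\Kk$; the coaction $\delta$ on $A$ together with the dual coaction on the linking algebra's matrix units transports to $(\delta_{c_i}\otimes\id_\Kk)$, giving (8). The condition $P_1 A P_2 = \clsp\{P_1 n P_2 : n \in \Nn_\star(D)\}$ translates under this isomorphism into preservation of $C_0(\G_i^{(0)})\otimes c_0$, because the normalisers are precisely the homogeneous elements whose matrix units pair $D_1$ and $D_2$. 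Conversely, from (8), cutting by the rank-one projection $1_{C_0(\G^{(0)}_i)}\otimes e_{0,0}$ produces (9) with $p_i = 1\otimes e_{0,0}$, which is $C^*_r(c_i^{-1}(e))$-full exactly because $X_i$ is $c^{-1}(e)$-full. Finally, \mbox{(9)\,$\iff$\,(10)} because in the ample case multiplier projections of $C_0(\G^{(0)})$ correspond to clopen subsets and hence to ideals $I = C_0(U)$ with $U$ clopen, and every $C^*_r(c^{-1}(e))$-full ideal of $C_0(\G^{(0)})$ contains a $C^*_r(c^{-1}(e))$-full projection. Each of these implications transports the coaction intertwining automatically since the cutting-down maps are equivariant for the restrictions of $\delta_{c_i}\otimes\id_\Kk$.

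The main obstacle will be bookkeeping the equivariance of the coactions through the Brown--Green--Rieffel stabilisation \mbox{(7)\,$\Rightarrow$\,(8)}: the linking algebra $A$ of (7) carries the coaction $\delta$ whose restriction to $P_iAP_i$ is $\delta_{c_i}$, but extending the isomorphism $A\otimes\Kk \cong (C^*_r(\G_1)\oplus C^*_r(\G_2))\otimes\Kk$ provided by Brown--Green--Rieffel to one that intertwines $\delta\otimes\id_\Kk$ with $(\delta_{c_1}\oplus\delta_{c_2})\otimes\id_\Kk$ requires a careful choice of a homogeneous isometry in $M(A\otimes\Kk)$ implementing the Morita equivalence. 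Once this is checked, the remaining implications are routine cut-downs by projections or ideals that are manifestly preserved by the coaction.
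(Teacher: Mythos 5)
Your architecture for the groupoid-side statements matches the paper's: Theorem~\ref{thm:3} handles (1), (2), (6), (7), and (3)--(5) are obtained by grafting gradings onto the arguments of \cite{CRS}. One caveat there: your reduction of (2)$\Rightarrow$(4) (and later of (10)$\Rightarrow$(9)) to the claim that an open $c^{-1}(e)$-full set contains a clopen $c^{-1}(e)$-full subset is not the right mechanism and is not a one-line observation. Passing from the linking-groupoid picture to a genuine Kakutani equivalence is exactly the content of \cite[Theorem~3.2]{CRS}: one constructs a bisection $Y\subseteq c^{-1}(e)$ out of the equivalence with $r(Y)$ and $s(Y)$ clopen and full, and conjugates by it. The paper accordingly proves (1)$\Rightarrow$(4) and (10)$\Rightarrow$(5) by that bisection argument and never asserts (10)$\Rightarrow$(9) directly.

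The genuine gap is in your treatment of (8)--(10). You propose to obtain (8) from (7) by applying Brown--Green--Rieffel to the linking algebra and then arguing that the resulting isomorphism $C^*_r(\G_1)\otimes\Kk\to C^*_r(\G_2)\otimes\Kk$ carries $C_0(\G_1^{(0)})\otimes c_0$ to $C_0(\G_2^{(0)})\otimes c_0$. That is not bookkeeping: the BGR isomorphism is built from an isometry coming from fullness and a countable approximate unit, and gives no control whatsoever over the image of the diagonal; upgrading a Morita equivalence of pairs to a \emph{diagonal-preserving} stable isomorphism is precisely the hard content of \cite{CRS}, and in the ample case it is achieved by returning to the groupoid and taking the implementing isometry to be (the characteristic function of) a bisection. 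The paper sidesteps your obstacle entirely by proving (3)$\iff$(8) as an instance of Theorem~\ref{thm:1} applied to the graded groupoids $(\G_i\times\R,\bar c_i)$, whose reduced $C^*$-algebras are $C^*_r(\G_i)\otimes\Kk$ with diagonals $C_0(\G_i^{(0)})\otimes c_0$. Your step (8)$\Rightarrow$(9) has the same flaw in miniature: cutting down by $1\otimes e_{0,0}$ gives a corner of $C^*_r(\G_1)$, but its image under $\phi$ is the corner by $\phi(1\otimes e_{0,0})$, which is merely some projection in $M(C_0(\G_2^{(0)})\otimes c_0)$ and need not have the form $p_2\otimes e_{0,0}$; you therefore land in a corner of the stabilisation of $C^*_r(\G_2)$ rather than of $C^*_r(\G_2)$ itself, as (9) requires. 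The paper instead closes the loop by proving (4)$\Rightarrow$(9)$\Rightarrow$(10)$\Rightarrow$(5), obtaining (9) from a Kakutani equivalence (where the corners genuinely are reduced groupoid algebras of clopen reductions) and returning to (5) via the restriction isomorphisms and Theorem~\ref{thm:2}. You should reroute your $C^*$-algebraic implications along these lines rather than through Brown--Green--Rieffel.
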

\begin{proof}
Theorem~\ref{thm:3} gives (1)$\,\iff\,$(2)$\,\iff\,$(6)$\,\iff\,$(7). For equivalence of
(1)~and~(3)--(5), we just summarise the modifications needed to the arguments of
\cite{CRS}. The proof of (3)$\,\implies\,$(1) follows the second paragraph of the proof
of \cite[Theorem~2.1]{CRS}: if $\kappa:\G_1\times\R\to\G_2\times\R$ is an isomorphism
satisfying $\bar{c}_2\circ\kappa=\bar{c}_1$, then each $\G_i \times (\{1\} \times \NN)$
is a $(\G_i, c_i)$--$(\G_i \times \R, \overline{c}_i)$-equivalence, so
Lemma~\ref{lem:equiv equiv} shows that $(\G_1, c_1)$ and $(\G_2, c_2)$ are equivalent.
The argument of \cite[Theorem~2.1]{CRS} gives (1)$\,\implies\,$(3) once we prove that
given $(\G, c)$ and a clopen $c^{-1}(\id_\Gamma)$-full $K \subseteq \G^{(0)}$ there is an
isomorphism $\G \times \R \cong K \G K \times \R$ that is equivariant for $\overline{c}$
and $\overline{c}|_{K\G K \times \R}$. For this, apply \cite[Lemma~2.4]{CRS} to
$c^{-1}(\id_\Gamma)$ and $K$ to obtain a bisection $Y \subseteq c^{-1}(\id_\Gamma) \times \R$ with range $K
\times \NN$ and source $\G^{(0)} \times \NN$; then conjugation by this $Y$ implements a
graded isomorphism $\G \times \R \cong K\G K \times \R$.

For (1)$\;\iff\;$(4)$\;\iff\;$(5), we follow the proof of \cite[Theorem~3.2]{CRS}. The
implications (4)$\,\implies\,$(5)$\,\implies\,$(1) follow the first paragraph of that
proof using Lemma~\ref{lem:equiv equiv} for (5)$\,\implies\,$(1). For
(1)$\,\implies\,$(4), we follow the proof of \cite[Theorem~3.2]{CRS} observing that since
$r(c_Z^{-1}(\id_\Gamma)) = \G_1^{(0)}$ and $s(c_Z^{-1}(\id_\Gamma)) = \G_2^{(0)}$, we can choose the bisections $V_i$ to
belong to $c_Z^{-1}(\id_\Gamma)$. Thus the bisection $Y$ obtained in the penultimate paragraph of the
proof satisfies $Y \subseteq c^{-1}(\id_\Gamma)$. As in the proof of \cite[Theorem~3.2]{CRS},
conjugation by $Y$ is an isomorphism $r(Y) \G_1 r(Y) \cong s(Y) \G_2 s(Y)$, and it is
graded because $Y \subseteq c^{-1}(\id_\Gamma)$.

Theorem~\ref{thm:1} gives (3)\;$\iff\;$(8), so it suffices to prove
(4)$\,\implies\,$(9)$\,\implies\,$(10)$\,\implies\,$(5).
For (4)$\,\implies\,$(9), suppose that $\G_1$ and $\G_2$ are Kakutani
$c_1$--$c_2$-equivalent with respect to $X_i \subseteq \G^{(0)}_i$ and $\kappa :
X_1\G_1X_1 \to X_2\G_2 X_2$. Then $p_i := 1_{X_i} \in M(C^*(\G_i))$ defines
$C^*_r(\G_i)$-full projections such that $p_i C^*_r(\G_i) p_i \cong C^*_r(X_i \G_i X_i)$,
and $\kappa$ induces a $\delta_{c_1}$--$\delta_{c_2}$-equivariant isomorphism $C^*(X_i
\G_i X_i) \to C^*(X_i \G_i X_i)$.

For (9)$\,\implies\,$(10), take $I_i = p_iC_0(\G_i^{(0)})$. For (10)$\,\implies\,$(5),
fix $I_i \subseteq C_0(\G_i^{(0)})$ and $\phi$ as in~(10). Each $I_i = C_0(U_i)$ for some
open $U_i \subseteq \G_i^{(0)}$. Since each $I_i$ is $C^*_r(c_i^{-1}(\id_\Gamma))$-full, each
$U_i$ is $c_i^{-1}(\id_\Gamma)$-full by \cite[Theorem~4.3.3]{SimsNotes}. As in the proof of
\cite[Theorem~3.4.4]{SimsNotes}, restriction of functions induces isomorphisms $R_i :
I_iC^*_r(\G_i)I_i \to C^*_r(U_i \G_i U_i)$. Now $\psi := R_2 \circ \phi \circ R^{-1}_1 :
C^*_r(U_1 \G_1 U_1) \to C^*_r(U_2 \G_2 U_2)$ is an equivariant
isomorphism mapping $C_0(U_1)$ onto $C_0(U_1)$. Hence (2)$\,\implies\,$(1) in
Theorem~\ref{thm:1} yields a graded isomorphism $\kappa : U_1 \G_1 U_1 \to U_2 \G_2 U_2$.
\end{proof}

\subsection{Stable continuous orbit equivalence}

Let $X$ be a locally compact Hausdorff space, and $\sigma : X \to X$ a local
homeomorphism. Let $\widetilde{X} := X \times \NN$ with the product topology and define a
(surjective) local homeomorphism $\tilde{\sigma} : \widetilde{X} \to \widetilde{X}$ by
$\tilde{\sigma}(x,0) = (\sigma(x), 0)$ and $\tilde{\sigma}(x, n+1) = (x,n)$. We call
$(\tilde{X}, \tilde{\sigma})$ the \emph{stabilisation} of $(X, \sigma)$. Then
$\G(\widetilde{X}, \tilde\sigma) \cong \G(X, \sigma) \times \R$ via $((x,m), p, (y,n))
\mapsto \big((x, p-m+n,y), (m,n)\big)$. We have the following partial generalisation of
\cite[Corollary~6.3]{CEOR}.

\begin{cor}
Let $\sigma : X \to X$ and $\tau : Y \to Y$, be local homeomorphisms of second-countable
locally compact totally disconnected Hausdorff spaces. The following are equivalent:
    \begin{enumerate}
    \item there is a stabiliser-preserving continuous orbit equivalence from
        $(\widetilde{X}, \tilde\sigma)$ to $(\widetilde{Y}, \tilde{\tau})$;
    \item $G(X, \sigma)$ and $G(Y, \tau)$ are equivalent;
    \item $(C^*(\G(X, \sigma)), C_0(X))$ and $(C^*(\G(Y, \tau)), C_0(Y))$ are Morita
        equivalent; and
    \item there is an isomorphism $C^*(\G(X, \sigma)) \otimes \Kk \to C^*(\G(Y,
        \tau)) \otimes \Kk$ that carries $C_0(X) \otimes \mathcal{C}$ to $C_0(Y) \otimes
        \mathcal{C}$.
    \end{enumerate}
\end{cor}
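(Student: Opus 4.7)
The approach is to recognise that statements (2)--(4) can be read directly off Corollary~\ref{cor:no-coaction strong equiv ample} applied to the pair $\G(X, \sigma), \G(Y, \tau)$ (with trivial grading), and that statement~(1) corresponds to the stabilisation condition in that corollary via Theorem~\ref{thm:DR oe<->gi} applied to the stabilised systems.

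First I would verify that Corollary~\ref{cor:no-coaction strong equiv ample} applies to $\G(X, \sigma)$ and $\G(Y, \tau)$. Since $X$ and $Y$ are second-countable, locally compact, totally disconnected and Hausdorff, the Deaconu--Renault groupoids $\G(X, \sigma)$ and $\G(Y, \tau)$ are second-countable ample Hausdorff groupoids. For each $x \in X$ the isotropy group $\Io{\G(X, \sigma)}_x = \{x\} \times \Stab^{\ess}(x)$ is a subgroup of $\ZZ$, hence torsion-free and abelian; likewise for $Y$. So the hypotheses of Corollary~\ref{cor:no-coaction strong equiv ample} (and of Theorem~\ref{thm:no-coaction 2}) are satisfied. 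The equivalence (2)$\iff$(3) in the present corollary is then the equivalence of conditions~(1)~and~(3) in Theorem~\ref{thm:no-coaction 2}, and (2)$\iff$(4) is the equivalence between condition~(1) of Theorem~\ref{thm:no-coaction 2} and condition~(4) of Corollary~\ref{cor:no-coaction strong equiv ample}.

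For (1)$\iff$(2), I would invoke the identification $\G(\widetilde X, \tilde\sigma) \cong \G(X, \sigma) \times \R$ established in the paragraph just before the corollary. By condition~(1) of Corollary~\ref{cor:no-coaction strong equiv ample}, statement~(2) of the present corollary is equivalent to $\G(X, \sigma) \times \R \cong \G(Y, \tau) \times \R$, i.e.\ to $\G(\widetilde X, \tilde\sigma) \cong \G(\widetilde Y, \tilde\tau)$. Theorem~\ref{thm:DR oe<->gi} applied to the Deaconu--Renault systems $(\widetilde X, \tilde\sigma)$ and $(\widetilde Y, \tilde\tau)$ then translates this groupoid isomorphism into the existence of a stabiliser-preserving continuous orbit equivalence with some underlying homeomorphism $h$; note that no particular $h$ is specified in~(1), and that any groupoid isomorphism restricts on units to a homeomorphism that can serve as $h$, while conversely the homeomorphism is part of the data of an orbit equivalence.

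The main technical point is the bookkeeping of the underlying homeomorphism $h$ in Theorem~\ref{thm:DR oe<->gi}: that theorem is stated for a fixed $h$, so one has to observe that ``there is a stabiliser-preserving continuous orbit equivalence'' in~(1) is precisely the existential quantification of the theorem over $h$, matching ``there is a groupoid isomorphism $\G(\widetilde X, \tilde\sigma) \to \G(\widetilde Y, \tilde\tau)$'' with no further unit-space constraint. Once this is absorbed, the proof is a direct assembly of Theorem~\ref{thm:DR oe<->gi}, the stated identification of $\G(\widetilde X, \tilde\sigma)$ with $\G(X, \sigma) \times \R$, and Corollary~\ref{cor:no-coaction strong equiv ample}.
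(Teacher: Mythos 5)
Your proposal is correct and follows essentially the same route as the paper: translate~(1) into $\G(\widetilde X,\tilde\sigma)\cong\G(\widetilde Y,\tilde\tau)\cong\G(X,\sigma)\times\R\cong\G(Y,\tau)\times\R$ via Theorem~\ref{thm:DR oe<->gi} and the stated identification, then invoke the stable-isomorphism/Kakutani/Morita equivalences for ample groupoids with torsion-free abelian interior isotropy. The only cosmetic difference is that you cite the ungraded summary Corollary~\ref{cor:no-coaction strong equiv ample} where the paper applies Corollary~\ref{cor:strong equiv ample} to the trivial cocycles, which is the same statement.
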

\begin{proof}
Theorem~\ref{thm:DR oe<->gi} shows that~(1) holds if and only if
$\G(\widetilde{X},\tilde\sigma) \cong \G(\widetilde{Y}, \tilde\tau)$, so the discussion
preceding the corollary shows that~(1) holds if and only if $\G(X, \sigma) \times \R
\cong \G(Y, \tau) \times \R$. Now the result follows from
(1)$\;\iff\;$(3)$\;\iff\;$(6)$\;\iff\;$(8) in Corollary~\ref{cor:strong equiv ample}
applied to the trivial cocycles on $\G_1 = \G(X, \sigma)$ and $\G_2 = \G(Y, \tau)$.
\end{proof}

Let $\sigma:X\to X$ be a surjective local homeomorphism of a compact Hausdorff space. Let
$\TS{X} := \{\xi \in X^{\ZZ}:\sigma(\xi_n)=\xi_{n+1}\text{ for every }n\in\ZZ\}$, and
define $\TS{\sigma}:\TS{X}\to\TS{X}$ by $\TS{\sigma}(\xi)_n = \sigma(\xi_n)$. We call
$\sigma$ \emph{expansive} if there is a metrisation $(X,d)$ of $X$ and an $\epsilon>0$
such that $\sup_n d(\sigma^n(x),\sigma^n(x')) < \epsilon \implies x = x'$. We call
$\epsilon$ an \emph{expansive constant for $(X, d, \sigma)$}. The following generalises
\cite[Theorem~5.1]{CR}.

\begin{cor}\label{cor:two-sided conjugacy}
Let $X, Y$ be second-countable locally compact totally disconnected Hausdorff spaces and
let $\sigma : \dom(\sigma) \to \ran(\sigma)$ and $\tau : \dom(\tau) \to \ran(\tau)$ be
local homeomorphisms between open subsets of $X,Y$. The following are equivalent:
    \begin{enumerate}
    \item there are continuous, open maps $f:X\to Y$ and $f':Y\to X$, and continuous
        maps $a:X\to\NN$, $k:\dom(\sigma)\to\NN$, $a':Y\to\NN$, and
        $k':\dom(\tau)\to\NN$ such that $\sigma^{a(x)}(f'(f(x)))=\sigma^{a(x)}(x)$
        for $x\in X$, $\tau^{k(x)}(f(\sigma(x)))=\tau^{k(x)+1}(f(x))$ for $x\in
        \dom(\sigma)$, $\tau^{a'(y)}(f(f'(y)))=\tau^{a'(y)}(y)$ for $y\in Y$, and
        $\sigma^{k'(y)}(f'(\tau(y)))=\sigma^{k'(y)+1}(f'(y))$ for $y\in \dom(\tau)$;
    \item there is a graded $(G(X,\sigma),c_X)$--$(G(Y,\tau),c_Y)$-equivalence $(Z,
        c_Z)$ such that $c_Z^{-1}(0)$ is a $c_X^{-1}(0)$--$c_Y^{-1}(0)$-equivalence;
    \item there is an isomorphism $\kappa :G(X,\sigma)\times\R\to G(Y,\tau)\times\R$
        such that $\TS{c}_Y\circ\kappa=\TS{c}_X$;
    \item there is a $\TT$-equivariant Morita equivalence between $(C^*(\G(X,
        \sigma)), C_0(X))$ and $(C^*(\G(Y, \tau)), C_0(Y))$ with respect to the gauge
        actions $\gamma^X$ and $\gamma^Y$ whose fixed-point submodule is a $C^*(\G(X,
        \sigma)$--$C^*(\G(Y, \tau)$-imprimitivity bimodule; and
    \item there is an isomorphism $C^*(\G(X, \sigma)) \otimes \Kk(\NN) \to C^*(\G(Y,
        \tau)) \otimes \Kk(\NN)$ that carries $C_0(X) \otimes \mathcal{C}$ to $C_0(Y) \otimes
        \mathcal{C}$, and intertwines the actions $\gamma^X \otimes \id$ and $\gamma^Y
        \otimes \id$.
    \end{enumerate}
    If $X$ and $Y$ are compact, $\dom(\sigma)=\ran(\sigma)=X$, and
    $\dom(\tau)=\ran(\tau)=Y$, then each of the above five conditions implies
    \begin{itemize}
        \item[(6)] $(\TS{X},\TS{\sigma})$ and $(\TS{Y},\TS{\tau})$ are conjugate.
    \end{itemize}
    If in addition $\sigma$ and $\tau$ are expansive, then all six conditions are
    equivalent.
\end{cor}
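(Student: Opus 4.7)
The plan is to prove (1)--(5) are equivalent, then show each of them implies (6) when $X,Y$ are compact with $\dom=\ran$, and finally obtain the converse under expansivity. For the equivalences (1)--(5), I would apply Corollary~\ref{cor:strong equiv ample} to the canonical $\ZZ$-cocycles $c_X$ and $c_Y$ on $\G(X,\sigma)$ and $\G(Y,\tau)$; since the gauge actions $\gamma^X,\gamma^Y$ are Pontryagin-dual to the coactions $\delta_{c_X},\delta_{c_Y}$, equivariance in the two senses coincides. Conditions (2), (3), (4), (5) then match items (1), (3), (8), (10) of Corollary~\ref{cor:strong equiv ample} almost verbatim. The one non-routine equivalence is (1)$\iff$(2): I would adapt Proposition~\ref{prp:DR} and Lemma~\ref{lem:oplus}, building from the maps $f,f',a,k,a',k'$ in (1) a bi-space $Z:=\{(x,p-q,y)\in X\times\ZZ\times Y: \tau^p(f(x))=\tau^q(y)\text{ for some }p,q\in\NN\}$ with cocycle $c_Z(x,n,y)=n$; the identity $\tau^{k(x)}(f(\sigma(x)))=\tau^{k(x)+1}(f(x))$ is exactly what makes $c_Z$ agree with $c_X$ and $c_Y$ under the left and right actions, so that $(Z,c_Z)$ realises a graded equivalence whose degree-zero part $c_Z^{-1}(0)$ is an equivalence of $c_X^{-1}(0)$ and $c_Y^{-1}(0)$. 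For the converse, a graded equivalence yields $f,f',a,k,a',k'$ by local selection, using second countability and the \'etale structure, as in the proof of Lemma~\ref{lem:DR oe}.

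For (1)--(5)$\implies$(6) in the compact surjective setting, I would start from condition (1) and construct a homeomorphism $\TS{f}:\TS{X}\to\TS{Y}$ intertwining $\TS{\sigma}$ and $\TS{\tau}$. The condition $\tau^{k(\xi_n)+1}(f(\xi_n))=\tau^{k(\xi_n)}(f(\xi_{n+1}))$ applied along a bi-infinite $\sigma$-trajectory $(\xi_n)_{n\in\ZZ}\in\TS{X}$ shows that the sequences $(f(\xi_n))_{n\in\ZZ}$, after suitable normalisation by iterated application of $\tau$, assemble coherently into a point $\TS{f}(\xi)\in\TS{Y}$; continuity of $\TS{f}$ is inherited from continuity of $f$ and $k$ together with the compactness of $X,Y,\TS{X},\TS{Y}$. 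Symmetry gives $\TS{f'}:\TS{Y}\to\TS{X}$, and since $\TS{\sigma}$ and $\TS{\tau}$ are homeomorphisms (thanks to $\dom=\ran$), the relations $\sigma^{a(x)}(f'(f(x)))=\sigma^{a(x)}(x)$ and $\tau^{a'(y)}(f(f'(y)))=\tau^{a'(y)}(y)$ become $\TS{f'}\circ\TS{f}=\id_{\TS{X}}$ and $\TS{f}\circ\TS{f'}=\id_{\TS{Y}}$, establishing the conjugacy (6). The grading compatibility ensures a genuine conjugacy rather than a flip conjugacy.

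For the converse under expansivity, let $h:(\TS{X},\TS{\sigma})\to(\TS{Y},\TS{\tau})$ be a conjugacy and fix expansive constants $\varepsilon_X,\varepsilon_Y$. The plan is to extract one-sided data $f,f',a,k,a',k'$ realising (1). I would define $f(x):=h(\xi)_0$ for a choice of $\xi\in\TS{X}$ with $\xi_0=x$; this is only canonical modulo iteration of $\tau$, but expansivity together with uniform continuity of $h$ and $h^{-1}$ provides a uniform $N\in\NN$ such that any two such $\xi,\xi'$ have $\tau^N(h(\xi)_0)=\tau^N(h(\xi')_0)$, giving the cocycle $a'$. The intertwining relation $h\circ\TS{\sigma}=\TS{\tau}\circ h$ yields $k$, and symmetric arguments produce $f',a,k'$. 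The main obstacle, and the one place where expansivity is essential, is proving continuity of the $\NN$-valued cocycles $a,k,a',k'$: the expansive constant gives uniform control over how long one must iterate $\tau$ to detect disagreement between orbit segments, so the cocycles are locally constant, hence continuous. Once $(f,f',a,k,a',k')$ is constructed, condition (1) holds and the full equivalence is established.
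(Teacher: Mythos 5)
Your overall architecture coincides with the paper's: (2)--(5) are equivalent by Corollary~\ref{cor:strong equiv ample}, the dynamical condition (1) is tied to the groupoid conditions by a direct construction, and (6) is handled separately via a normalised map $\tau^m\circ f$ and, in the converse direction, via expansivity. Two of your cross-references are off --- conditions (4) and (5) here correspond to items (6) and (8) of Corollary~\ref{cor:strong equiv ample}, not (8) and (10) --- but that is cosmetic. Your treatment of (1)$\implies$(6) and (6)$\implies$(1) is essentially the paper's; the only slip is that the relations $\sigma^{a(x)}(f'(f(x)))=\sigma^{a(x)}(x)$ give $\TS{f'}\circ\TS{f}$ equal to a power of $\TS{\sigma}$ rather than the identity, which still yields bijectivity because $\TS{\sigma}$ is a homeomorphism. (In the paper's version of (6)$\implies$(1) the $\NN$-valued maps come out constant, so the continuity issue you worry about does not arise.)

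The genuine weak point is the bridge between (1) and the groupoid conditions. For (1)$\implies$(2) you propose the bi-space $Z=\{(x,p-q,y):\tau^p(f(x))=\tau^q(y)\}$ directly. The graded-compatibility computation you indicate does go through, but you have not specified a topology on $Z$ nor verified that the two actions are proper and that the moment maps induce homeomorphisms $Z/\G(Y,\tau)\cong X$ and $\G(X,\sigma)\backslash Z\cong Y$. None of this is automatic, because $f$ is merely continuous and open rather than locally injective, so the Deaconu--Renault-style basic sets $\{(x,p-q,y): x\in U,\ y\in V,\ \tau^p(f(x))=\tau^q(y)\}$ do not obviously make the moment maps local homeomorphisms or the connecting map $(\gamma\cdot z, z)\mapsto\gamma$ continuous. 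The paper sidesteps all of this: using total disconnectedness it builds an auxiliary injection $b:X\to\NN$ (to compensate for the non-injectivity of $f$) so that $x\mapsto(f(x),b(x))$ embeds $X$ as a full open subset $Z$ of $(\G(Y,\tau)\times\R)^{(0)}$ with $\G(X,\sigma)\cong Z(\G(Y,\tau)\times\R)Z$, and then the equivalence comes for free from Corollary~\ref{cor:strong equiv ample}. Similarly, your ``local selection'' for the converse is underspecified: you must produce a globally defined, continuous \emph{and open} $f$ together with continuous $\NN$-valued data from an abstract equivalence. The paper instead proves (3)$\implies$(1), reading $f$ and $f'$ off the unit-space homeomorphism $X\times\NN\to Y\times\NN$ and obtaining $a,k,a',k'$ from the continuous functions $l_X,l_Y$ of Lemma~\ref{lem:lmap}. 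I would recommend routing your argument through condition (3) in both directions; as written, the (1)$\iff$(2) step is a gap.
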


\begin{rmk}
If $(X,d)$ is a totally disconnected metric space and $\sigma:X\to X$ is a surjective
expansive local homeomorphism, then $(X,\sigma)$ is conjugate to the edge shift of a
finite graph with no sinks or sources (see \cite[Section~1]{BW} and
\cite[Theorem~1]{IT}), so the equivalence of~(2), (5)~and~(6) follows from
Theorem~\ref{thm:ev conj equivalences} and \cite[Theorem 5.1]{CR}.
\end{rmk}

\begin{proof}[Proof of Corollary~\ref{cor:two-sided conjugacy}]
Equivalence of (2)--(5) follows from equivalence of statements~(1), (3), (6)~and~(8) in
Corollary~\ref{cor:strong equiv ample}.

$(1)\implies (2)$. For each $n\in\NN$, fix a countable family $I_n$ of mutually disjoint
compact open subsets of $X$ such that $\sigma^n|_U$ is a homeomorphism for each $U\in
I_n$ and $\dom(\sigma^n)=\bigcup_{U\in I_n}U$. Fix an injection $\iota : \bigcup_n \{n\}
\times I_n \to \NN$. For $x \in X$, let $U_x$ be the element of $I_{a(x)}$ containing
$x$, and let $b(x) = \iota(a(x),U_x)$. Let $Z:=\{(f(x),b(x)): x\in X\} \subseteq Y \times
\NN$. Since $a$ is continuous and $f$ is open, $Z$ is open.

Fix $y\in Y$. Since $\tau^{a'(y)}(f(f'(y)))=\tau^{a'(y)}(y)$, we see that
$(y,0,f(f'(y)))\in G(Y,\tau)$. Thus $Z$ is $\TS{c}_Y^{-1}(0)$-full in
$(G(Y,\tau)\times\R)^{(0)}$. Corollary~\ref{cor:strong equiv ample} yields a graded
$G(Y,\tau)$--$Z (G(Y,\tau)\times\R) Z$-equivalence $(W, c_W)$ such that $c^{-1}_W(0)$ is
a $c_Y^{-1}(0)$--$\TS{c}_Y^{-1}(0)$-equivalence.

Since $\sigma^{a(x)}(f'(f(x)))=\sigma^{a(x)}(x)$ for $x\in X$, the formula $h(x) :=
(f(x),b(x))$ gives a homeomorphism $h:X\to Z$. Since each $\tau^{k(x)}(f(\sigma(x))) =
\tau^{k(x)+1}(f(x))$, we have $(f(x),n,f(x'))\in G(Y,\tau)$ for all $(x,n,x')\in
G(X,\sigma)$. There is an injective homomorphism $\phi : G(X,\sigma) \to
G(Y,\tau)\times\R$ given by $\phi(x,n,x') := ((f(x),n,f(x')),(b(x),b(x')))$. We have
$\phi(G(X,\sigma)) \subseteq Z(G(Y,\tau)\times\R)Z$. Since $h$ is a homeomorphism and
$\sigma^{k'(y)}(f'(\tau(y)))=\sigma^{k'(y)+1}(f'(y))$ for $y\in Y$, we see that
$\phi(G(X,\sigma))= Z(G(Y,\tau)\times\R)Z$, giving~(2).

$(3)\implies (1)$. Write $\pi_X : G(X,\sigma)\times\R \to G(X,\sigma)$ and $\pi_Y :
G(Y,\tau)\times\R \to G(Y,\tau)$ for the projection maps. Define $f : X\to Y$ and $f' : Y
\to X$ by $\pi_Y(\kappa( (x,0,x),(0,0))) = (f(x),0,f(x))$ and
$\pi_X(\kappa^{-1}((y,0,y),(0,0))) = (f'(y),0,f'(y))$. Clearly $f$ and $f'$ are
continuous and open. Define $l_X:G(X,\sigma)\to\NN$ and $l_Y:G(Y,\tau)\to\NN$ as in
Lemma~\ref{lem:lmap}. Fix $x\in X$. Then $\kappa( (x,0,x),(0,0))=( (f(x),0,f(x)),(n,n))$
for some $n\in\NN$. Since $\TS{c}_Y\circ\kappa=\TS{c}_X$, it follows that
$\pi_X(\kappa^{-1}( (f(x),0,f(x)),(n,0)))=(x,0,f'(f(x)))$. Let $a(x):=l_X(x,0,f'(f(x)))$.
Then $a : X \to \NN$ is continuous, and $\sigma^{a(x)}(f'(f(x)))=\sigma^{a(x)}(x)$ for
all $x$. Similarly, $a' : Y\to\NN$ defined by $a'(y):=l_Y(y,0,f(f'(y)))$ is continuous,
and $\tau^{a'(y)}(f(f'(y)))=\tau^{a'(y)}(y)$ for all $y$.

Fix $x\in\dom(\sigma)$. Then $\pi_Y(\kappa((x,1,\sigma(x)),(0,0))) =
(f(x),1,f(\sigma(x)))$ because $\TS{c}_Y\circ\kappa=\TS{c}_X$. Let
$k(x):=l_Y(f(x),1,f(\sigma(x)))$. So $k :\dom(\sigma) \to \NN$ is continuous, and
$\tau^{k(x)}(f(\sigma(x)))=\tau^{k(x)+1}(f(x))$ for $x\in \dom(\sigma)$. Similarly,
$k'(y) := l_Y(f'(y),1,f'(\tau(y)))$ defines a continuous $k' : \dom(\tau) \to \NN$ such
that $\sigma^{k'(y)}(f'(\tau(y))) = \sigma^{k'(y)+1}(f'(y))$.

$(1)\implies (6)$. Since $X$ is compact, $m := \sup_{x \in X} k(x)$ is finite. We have
$\tau^m(f(\sigma(x))) = \tau^{m-k(x)}(\tau^{k(x)+1}(f(x))) = \tau^{m+1}(f(x))$ for all
$x$. So $\phi:=\tau^m\circ f : X \to Y$ is continuous and satisfies $\phi(\sigma(x)) =
\tau^m(f(\sigma(x))) = \tau^{m+1}(f(x)) = \tau(\phi(x))$ for all $x$. Thus
$\TS{\phi}(\xi) := (\phi(\xi_n))_{n\in\ZZ}$ defines a continuous map $\TS{\phi} : \TS{X}
\to \TS{Y}$. By definition, $\TS{\tau}\circ\TS{\phi}=\TS{\phi}\circ\TS{\sigma}$. We will
show that $\TS{\phi}$ is bijective and hence a conjugacy.

For injectivity, suppose that $\TS{\phi}(\xi) = \TS{\phi}(\xi')$. Then each
$\tau^m(f(\xi_n)) = \phi(\xi_n) = \phi(\xi'_n) = \tau^m(f(\xi'_n))$. Let $p :=
\max\{\sup_{z \in X} a(z), \sup_{y \in Y} k'(y)\}$. Then $\sigma^p(x) =
\sigma^p(f'(f(x))$ for $x\in X$, and $\sigma^{p+j}(f'(y)) = \sigma^{p}(f'(\tau^j(y)))$
for $y\in Y$, $j \in \NN$. So for $n \in \ZZ$,
\[
\xi_{n+p+m} = \sigma^{p+m}(\xi_n) = \sigma^{p}(f'(\tau^m(f(\xi_n))))
    = \sigma^{p}(f'(\tau^m(f(\xi'_n)))) = \sigma^{p+m}(\xi'_n) = \xi'_{n+p+m}.
\]
Hence $\xi = \xi'$, and we deduce that $\TS{\phi}$ is injective.

For surjectivity, fix $\eta\in\TS{Y}$. Let $q := \sup_{z \in Y} \max\{a'(z), k'(z)\}$,
and put $\eta' := \TS{\tau}^{-m-q}(\eta)$. For $y \in Y$, $\sigma^q(f'(\tau(y))) =
\sigma^{q-k'(y)}(\sigma^{k'(y)}(f'(\tau(y)))) = \sigma^{q+1}(f'(y))$. Thus
$(\sigma^q(f'(\eta'_n)))_{n\in\ZZ}\in\TS{X}$. Since $\tau^{m+j}(f(x)) =
\tau^m(f(\sigma^j(x)))$ for all $x,j$, and since $\tau^q(f(f'(y)))=\tau^q(y)$ for $y\in
Y$,
\[
\phi(\sigma^q(f'(\eta'_n)))=\tau^m(f(\sigma^q(f'(\eta'_n))))
    =\tau^{m+q}(f(f'(\eta'_n)))=\tau^{m+q}(\eta'_n)
\]
for all $n \in \ZZ$. So $\TS{\phi}((\sigma^q(f'(\eta_n)))_{n\in\ZZ}) =
\TS{\tau}^{m+q}(\eta') = \eta$.

$(6)\implies (1)$. Suppose that $(X, d)$ and $(Y, d')$ are metrisations of $X, Y$ and
that $\epsilon, \epsilon'$ are expansive constants for $(X, D, \sigma)$ and $(Y, d',
\tau)$. Suppose that $\psi:\TS{X}\to\TS{Y}$ is a conjugacy. Fix $\delta>0$ such that
$\TS{d}(\xi, \xi') < \delta\implies \TS{d}(\psi(\xi),\psi(\xi')) < \epsilon$. Let $M :=
\sup_{x,x' \in X} d(x,x') < \infty$, and fix $N$ with $2^{-N+1}M < \delta$. Then $\TS{d}(
\xi, \xi')<\delta$ whenever $\xi_n=\xi'_n$ for all $n> -N$. So given $\xi, \xi' \in
\TS{X}$, and putting $\eta = \psi(\xi)$ and $\eta' = \psi(\xi')$, if $\xi_n = \xi'_n$ for
$n> -N$, then $d'(\eta_m,\eta'_m) \le \TS{d}'(\TS{\tau}^m(\eta), \TS{\tau}^m(\eta')) <
\epsilon$ for all $m$, giving $\eta_0 = \eta'_0$. So we can define $f:X\to Y$ by
$\psi(\xi) = (f(\xi_{n-N}))_{n\in\ZZ}$, and then $f\circ\sigma=\tau\circ f$.

To see that $f$ is continuous, fix $x\in X$ and $\gamma>0$. Choose $\xi\in\TS{X}$ such
that $x=\xi_{-N}$. Choose an open neighbourhood $U$ of $\xi$ in $\TS{X}$ such that
$\TS{d}'(\psi(\xi), \psi(\xi') < \gamma$ for every $\xi' \in U$. Since $\sigma$ is
surjective and open, there is an open $V \owns x$ such that $V \subseteq \{\xi'_{-N} :
\xi' \in U\}$. So $d'(f(x),f(x')) \le \sup_{\xi' \in U} \TS{d}'(\psi(\xi), \psi(\xi')) <
\gamma$ for all $x' \in V$.

To see that $f$ is open, fix $V \subseteq X$ open. Then $U := \{\xi\in\TS{X} :
\xi_{-N}\in V\}$ is open, so $\psi(U)$ is open. The coordinate projections $\TS{Y} \to Y$
are open maps, so $f(V)$ is open.

So $f$ is continuous and open, $\tau\circ f = f\circ \sigma$, and $\psi(\xi) =
(f(\xi_{n-N}))_{n\in\ZZ}$ for $\xi\in\TS{X}$. Symmetry gives a continuous open $f' :
(Y,\tau) \to (X,\sigma)$ with $\sigma \circ f' = f' \circ \tau$ and $\psi^{-1}(\eta) =
(f'(\eta_{n-N'}))_{n\in\ZZ}$ for $\eta\in\TS{Y}$. We have $f\circ f'=\sigma^{N+N'}$ and
$f\circ f'=\tau^{N+N'}$, giving~(1).
\end{proof}

\begin{rmk}
We deduce an interesting ``stable isomorphism implies isomorphism" statement. Let
$\sigma:X\to X$ and $\tau:Y\to Y$ be homeomorphisms of second-countable totally
disconnected compact Hausdorff spaces. If $(C(X)\times_\sigma\ZZ) \otimes \mathcal{K}$
and $(C(Y)\times_\tau\ZZ) \otimes \mathcal{K}$ are $(\gamma^X \otimes \id)$--$(\gamma^Y
\otimes \id)$-equivariantly isomorphic, then there is a
$\gamma^X$--$\gamma^Y$-equivariant (and diagonal-preserving by Remark~\ref{rmk:KOQstuff})
isomorphism $C(X)\times_\sigma\ZZ \to C(Y)\times_\tau\ZZ$.

To prove this, first observe that $(X,\sigma) \cong (\TS{X},\TS{\sigma})$ and $(Y,\tau)
\cong (\TS{Y},\TS{\tau})$ and then apply \mbox{(5)$\,\implies\,$(6)} of
Corollary~\ref{cor:two-sided conjugacy} to see that if there is a
\emph{diagonal-preserving} equivariant isomorphism $(C(X)\times_\sigma\ZZ) \otimes
\mathcal{K} \cong (C(Y)\times_\tau\ZZ) \otimes \mathcal{K}$, then there is an equivariant
isomorphism $C(X)\times_\sigma\ZZ \cong C(Y)\times_\tau\ZZ$. Next note that
$((C(X)\times_\sigma\ZZ) \otimes \Kk)^{\gamma^X \otimes \id} = C(X) \otimes \Kk$ and
likewise for $(Y, \tau)$; so any equivariant isomorphism $\phi : C(X)\times_\sigma\ZZ \to
C(Y)\times_\tau\ZZ$ carries $C(X) \otimes \mathcal{C}$ to a maximal abelian subalgebra of $C(Y)
\otimes \Kk$; that is, to $C(Y) \otimes D$ for some maximal abelian $D \subseteq \Kk$.
Fix a unitary $U \in M(\Kk)$ that conjugates $D$ to $\mathcal{D}$. Then $\phi' := (\id \otimes
\Ad_U) \circ \phi$ is a diagonal-preserving equivariant isomorphism.

This result could also be obtained without recourse to groupoids using the techniques of
\cite[Proposition~4.3]{KOQ} (it is at least implicitly contained in that result).
\end{rmk}

\end{document}